\newtheorem{thm}{Theorem}[section]
\newtheorem{cor}[thm]{Corollary}
\newtheorem{prop}[thm]{Proposition}
\newtheorem{lem}[thm]{Lemma}
\newtheorem{conj}[thm]{Conjecture}
\theoremstyle{definition}
\newtheorem{defn}[thm]{Definition}
\newtheorem{exmp}[thm]{Example}
\newtheorem{exmps}[thm]{Examples}
\theoremstyle{remark}
\newtheorem{rem}[thm]{Remark}
\let\c@equation\c@thm
\numberwithin{equation}{section}
\newcommand{\upperRomannumeral}[1]{\uppercase\expandafter{\romannumeral#1}}
\title[]{Ricci curvature and isometric actions with scaling nonvanishing property}
\author[]{Jiayin Pan}
\thanks{The first author enjoyed the hospitality of Capital Normal University at Beijing during the preparation of this paper. Thanks to them.}
\author[]{Xiaochun Rong}
	 \thanks{The second author is partially supported by a research fund from Capital Normal University.}
\subjclass[2010]{53C23,57S30}
\newcommand{\Addresses}{{
		\bigskip
		\footnotesize
		
		Jiayin Pan\par\nopagebreak
		\textsc{Department of Mathematics, University of California\ -\ Santa Barbara, CA, USA.}\par\nopagebreak
		\textit{E-mail address}: \texttt{jypan10@gmail.com}
		
		\bigskip
		
		Xiaochun Rong\par\nopagebreak
		\textsc{Department of Mathematics, Rutgers University - New Brunswick, NJ, USA}\par\nopagebreak
		\textsc{Department of Mathematics, Capital Normal University, Beijing, China.}\par\nopagebreak
		\textit{E-mail address}: \texttt{rong@math.rutgers.edu}
}}
\begin{document}

\begin{abstract}
   In the study manifolds of Ricci curvature bounded below, a stumbling obstruction is the
   lack of links between large-scale geometry and small-scale geometry at a fixed reference
   point. There have been few links (volume, dimension) when the unit ball at the point is not collapsed, that is, $\mathrm{vol}(B_1(p))\ge v>0$. In this paper, we conjecture a new link in terms of isometries: if the maximal displacement of an isometry $f$ on $B_1(p)$ is at least $\delta>0$, then the maximal displacement of $f$ on the rescaled unit ball $r^{-1}B_r(p)$ is at least $\Phi(\delta,n,v)>0$ for all $r\in(0,1)$. We call this scaling $\Phi$-nonvanishing property at $p$. We study the equivariant Gromov-Hausdorff convergence of a sequence of Riemannian universal covers with abelian $\pi_1(M_i,p_i)$-actions $(\widetilde{M}_i,\tilde{p}_i,\pi_1(M_i,p_i))\overset{GH}\longrightarrow(\widetilde{X},\tilde{p},G)$, where $\pi_1(M_i,p_i)$-action is scaling $\Phi$-nonvanishing at $\tilde{p_i}$. We establish a dimension monotonicity on the limit group associated to any rescaling sequence. As one of the applications, we prove that for an open manifold $M$ of non-negative Ricci curvature, if the universal cover $\widetilde{M}$ has Euclidean volume growth and $\pi_1(M,p)$-action on $R^{-1}\widetilde{M}$ is scaling $\Phi$-nonvanishing at $\tilde{p}$ for all $R$ large, then $\pi_1(M)$ is finitely generated.
\end{abstract}

\maketitle

We study the fundamental group of a complete $n$-manifold with Ricci curvature bounded below. A major open problem is the Milnor conjecture \cite{Mi68}.

\begin{conj}[Milnor]\label{milnor_conj}
	Let $M$ be an open $n$-manifold of $\mathrm{Ric}\ge 0$, then $\pi_1(M)$ is finitely generated.
\end{conj}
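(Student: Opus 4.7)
The plan is to argue by contradiction, combining Gromov's classical short-generator procedure with the equivariant Gromov--Hausdorff machinery that drives the rest of the paper. Assume $\pi_1(M)$ is not finitely generated. Fix a basepoint $p\in M$ with lift $\tilde{p}\in\widetilde{M}$, and inductively choose $\gamma_i\in\pi_1(M,p)$ to be an element of minimal displacement $R_i:=d(\tilde{p},\gamma_i\tilde{p})$ outside the subgroup $\langle\gamma_1,\ldots,\gamma_{i-1}\rangle$. Non-finite generation forces $R_i\to\infty$. Rescale the universal cover to unit size and pass to an equivariant Gromov--Hausdorff limit $(R_i^{-1}\widetilde{M},\tilde{p},\pi_1(M,p))\overset{GH}\longrightarrow(\widetilde{Y}_i,\tilde{y}_i,G_i)$, so that the rescaled image of $\gamma_i$ realizes an isometry in $G_i$ of displacement exactly $1$ at $\tilde{y}_i$.

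\textbf{Exploiting the limit structure.} By Cheeger--Colding, each $\widetilde{Y}_i$ is a Ricci limit space of dimension $\le n$; applying the almost splitting theorem to the near-minimizing segment from $\tilde{p}$ to $\gamma_i\tilde{p}$ (minimality of $\gamma_i$ among elements outside the previous subgroup ensures its midpoint is far from all earlier translates) forces $\widetilde{Y}_i$ to split off a Euclidean line on which the limit generator acts as a translation. The aim is to iterate this step so that each new index $i$ contributes a genuinely new translational direction, giving rise to a splitting $\mathbb{R}^{k_i}\times Z_i$ with $k_i$ strictly increasing in $i$. Since $k_i\le n$, this would yield the desired contradiction. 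This is precisely the dimension-monotonicity philosophy announced in the abstract.

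\textbf{Main obstacle.} The essential difficulty is that each previously chosen generator $\gamma_j$, $j<i$, must remain a nontrivial isometry in $G_i$ after the much larger rescaling by $R_i^{-1}$, so that the splitting directions accumulate rather than cancel. Equivalently, the displacement of a fixed $\gamma_j$ on rescaled balls $r^{-1}B_r(\tilde{p})$ must not vanish as $r$ varies. This is exactly the content of the scaling $\Phi$-nonvanishing hypothesis, and is the reason the paper's unconditional result requires both Euclidean volume growth of $\widetilde{M}$ and this nonvanishing property. Without such control, earlier generators can collapse to the identity in the limit, the rank $k_i$ fails to increase, and the dimension bookkeeping breaks down. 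To attack the full conjecture I would first try to reduce to the Euclidean-volume-growth case via Cheeger--Colding's analysis of universal covers with $\mathrm{Ric}\ge 0$, and then attempt to derive the scaling $\Phi$-nonvanishing property directly from the volume lower bound. I expect this last step to be the real bottleneck, and indeed the reason why the Milnor conjecture remains open in general.
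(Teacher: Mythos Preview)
The statement you are attempting to prove is labeled in the paper as \emph{Conjecture}~\ref{milnor_conj} (Milnor's Conjecture), and the paper does \emph{not} prove it. The paper only establishes the partial results Theorems~\ref{main_sg} and~\ref{main_milnor}, which require the additional hypotheses of Euclidean volume growth on $\widetilde{M}$ and the scaling $\Phi$-nonvanishing property. You appear to be aware of this: your final paragraph correctly isolates the scaling nonvanishing property as the bottleneck and concedes that this is ``the reason why the Milnor conjecture remains open in general.'' So what you have written is not a proof, and neither is anything in the paper.

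That said, two points in your outline deserve correction. First, your proposed reduction ``to the Euclidean-volume-growth case via Cheeger--Colding's analysis of universal covers with $\mathrm{Ric}\ge 0$'' is not available: no such reduction is known, and the paper makes no claim in this direction. Second, your mechanism for accumulating independent $\mathbb{R}$-factors is not quite the paper's. You want each new short generator to force a new line via almost-splitting at its midpoint, but the short-generator inequality $d(\gamma_j\tilde p,\gamma_i\tilde p)\ge |\gamma_i|$ does not by itself produce a line in the rescaled limit, and earlier generators have displacement $R_j/R_i\to 0$, so they contribute nothing without the nonvanishing hypothesis. The paper's actual argument (Section~4, using Theorem~\ref{dimension}) does not count Euclidean factors of the space; it runs an induction on the order $(\dim_T,\dim_R,\#\pi_0)$ of the \emph{limit group} $G$, passing to intermediate covers and critical rescalings to strictly decrease this order. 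Your heuristic captures the spirit but not the structure of that induction.
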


If $M$ has non-negative sectional curvature, then the Milnor conjecture is true; in fact, $M$ is homotopic to a compact manifold \cite{CG72b}. For non-negative Ricci curvature, the Milnor conjecture is difficult due to the absence of a strong relation between large and small scale geometry. Some partial results are proved on this conjecture. Anderson and Li independently proved that any manifold with Euclidean volume growth has a finite fundamental group \cite{An90b,Li86}. Sormani showed that the Milnor conjecture holds if the manifold has small linear diameter growth or linear volume growth \cite{Sor00}. The first author showed that if the Riemannian universal cover of $M$ has Euclidean volume growth and the unique tangent cone at infinity, then the Milnor conjecture is true (see \cite{Pan17b} for a more general statement). In dimension $3$, Liu classified open $3$-manifolds of non-negative Ricci curvature, which confirmed the Milnor conjecture \cite{Liu12}; later, the first author also presented a completely different proof of the Milnor conjecture in dimension $3$ \cite{Pan17}.

Gromov introduced a geometrical method to select a set of generators of $\pi_1(M,p)$ \cite{Gro78}, called \textit{short generators at $p$} and denoted as $S(p)$. Using Toponogov's triangle comparison theorem, Gromov proved that the number of $S(p,R)$ can be uniformly bounded when $M$ has a sectional curvature lower bound, where $S(p,R)$ is the subset of $S(p)$ consisting of elements of length less than $R$ .

\begin{thm}\cite{Gro78}\label{Gro}
	For any $n$ and $R>0$, there exists a constant $C(n,R)$ such that for any complete $n$-manifold $(M,p)$ of $\mathrm{sec}_M\ge -1$, $\#S(p,R)\le C(n,R)$ holds.
\end{thm}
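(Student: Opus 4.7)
The plan is to lift to the Riemannian universal cover $\pi\colon\widetilde{M}\to M$ with a base point $\tilde p$ over $p$ and work with the deck action of $\pi_1(M,p)$. Recall that the short generators are chosen inductively: $\gamma_1$ realizes the minimum displacement $d(\tilde p,\gamma\tilde p)$ among nontrivial deck transformations, and $\gamma_{k+1}$ realizes this minimum among $\pi_1(M,p)\setminus\langle\gamma_1,\dots,\gamma_k\rangle$. First I would record the standard consequence of this greedy choice: if $\gamma_i,\gamma_j\in S(p)$ with $i<j$ and lengths $\ell_i\le\ell_j$, then $\gamma_i^{-1}\gamma_j$ cannot lie in $\langle\gamma_1,\dots,\gamma_{j-1}\rangle$ (else $\gamma_j$ would too), hence by the minimizing property of $\gamma_j$ its displacement is at least $\ell_j$, i.e.\ $d(\gamma_i\tilde p,\gamma_j\tilde p)\ge\ell_j\ge\ell_i$.

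The second step is a Toponogov comparison in the geodesic triangle $\triangle(\tilde p,\gamma_i\tilde p,\gamma_j\tilde p)\subset\widetilde{M}$, which also satisfies $\mathrm{sec}\ge-1$. Let $\theta_{ij}$ denote the angle at $\tilde p$; the comparison triangle in the hyperbolic plane $\mathbb{H}^2$ has the same three side lengths, and its corresponding angle is at most $\theta_{ij}$. Because the side opposite to $\tilde p$ is the longest of the three, the hyperbolic law of cosines applied to sides $a=\ell_i$, $b=\ell_j$, $c\ge b$, all bounded by $R$, yields an inequality of the form $\cos\theta_{ij}\le \coth(\ell_j)\tanh(\ell_i/2)\le 1-\tfrac{1}{2\cosh^2(R/2)}$, a constant strictly less than $1$ depending only on $R$. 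Consequently $\theta_{ij}\ge\theta_0(R)>0$.

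Finally, consider the unit tangent directions $v_1,v_2,\dots\in T_{\tilde p}\widetilde{M}\cong S^{n-1}$ pointing to $\gamma_1\tilde p,\gamma_2\tilde p,\dots$ along minimizing geodesics. The angle bound shows these form a $\theta_0(R)$-separated set on $S^{n-1}$, so the cardinality is at most the sphere packing number $N(n,\theta_0(R))$ of caps of angular radius $\theta_0(R)/2$; setting $C(n,R):=N(n,\theta_0(R))$ finishes the proof.

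The main (and essentially only) obstacle is converting the combinatorial separation $d(\gamma_i\tilde p,\gamma_j\tilde p)\ge\ell_j$ into a uniformly positive angle bound. In the Euclidean model this is immediate and recovers the classical $\theta\ge\pi/3$; in negative curvature one must use the boundedness of all three sides by $R$ to rule out the degeneration $\theta\to 0$ that occurs for thin triangles escaping to infinity in $\mathbb{H}^2$. The explicit hyperbolic identity above is what delivers this, and it is the only place where the curvature lower bound is actually used.
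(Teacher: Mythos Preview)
Your argument is correct and is precisely the classical one: the paper does not supply its own proof of this cited result, but the surrounding text attributes it to Toponogov's triangle comparison, which is exactly the mechanism you use. Your hyperbolic estimate $\cos\tilde\theta\le\coth(\ell_j)\tanh(\ell_i/2)\le\coth(\ell_i)\tanh(\ell_i/2)=1-\tfrac{1}{2\cosh^2(\ell_i/2)}\le 1-\tfrac{1}{2\cosh^2(R/2)}$ is clean and gives the uniform angle gap, after which the sphere-packing bound finishes the job.
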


By a scaling trick, Theorem \ref{Gro} implies that the fundamental group of any open $n$-manifold with $\mathrm{sec}\ge 0$ can be generated by at most $C(n,1)$ many elements.

Kapovitch and Wilking proved an estimate of number of short generators for Ricci curvature \cite{KW11}. However, due to the absence of connections between large and small scale geometry, unlike Theorem \ref{Gro}, their result only bounds the number of short generators at some unspecific point $q$ near $p$. If one can bound the number exactly at $p$, then the Milnor conjecture would follow from a scaling trick.

\begin{thm}\cite{KW11}\label{KW}
	For any $n$ and $R>0$, there exists a constant $C(n,R)$ such that for any complete $n$-manifold $(M,p)$ of $\mathrm{Ric}\ge -(n-1)$, there is a point $q\in B_1(p)$ such that $\#S(q,R)\le C(n,R)$.
\end{thm}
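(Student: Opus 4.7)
The plan is to argue by contradiction, applying equivariant Gromov-Hausdorff compactness on the Riemannian universal covers and inducting on dimension via the Cheeger-Colding almost splitting theorem. Suppose the conclusion fails for some $R>0$: then there is a sequence of pointed $n$-manifolds $(M_i,p_i)$ with $\mathrm{Ric}\ge-(n-1)$ such that $\#S(q,R)\to\infty$ for every $q\in B_1(p_i)$. Lifting to universal covers and passing to a subsequence gives an equivariant pointed Gromov-Hausdorff limit
\[
(\widetilde M_i,\tilde p_i,\Gamma_i)\longrightarrow(\widetilde Y,\tilde y,G),
\]
where $\Gamma_i=\pi_1(M_i,p_i)$ and $G$ is a closed subgroup of $\mathrm{Isom}(\widetilde Y)$, hence a Lie group by the general structure of limits of isometry groups.

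The core step is a localize-and-split argument. Bishop-Gromov volume comparison uniformly bounds the number of deck transformations whose displacement at a fixed point $q$ lies in $[\epsilon,R]$ by some $C_0(n,R,\epsilon)$, so $\#S(q_i,R)\to\infty$ forces ever more short generators to have displacement tending to $0$ at $q_i$. Since short generators are always separated at their own base points, this forces the existence of a point $\tilde q\in B_1(\tilde y)$ at which the orbit $G\cdot\tilde q$ is positive-dimensional. Applying the Cheeger-Colding almost splitting theorem to a long orbit of a one-parameter limit subgroup through $\tilde q$, after an appropriate rescaling, yields an equivariant limit of the form
\[
(\mathbb R^k\times Z,\,(0,z_0),\,G'),\qquad k\ge 1,
\]
with the $\mathbb R^k$-factor generated by the translational parts of the accumulating short generators.

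Finally I would close the induction by decomposing short generators in the limit into their $\mathbb R^k$-translational parts, bounded in number by $C_1(k,R)$ via one-dimensional lattice packing on $[-R,R]^k$, and a residual action on $Z$ that descends to short generators in a Ricci limit of lower effective dimension, to which the inductive hypothesis applies after regularization. Summing gives a finite bound at some $\tilde q_i\in B_1(\tilde p_i)$, contradicting the assumption. The main obstacle is the localize-and-split step: one must ensure that the chosen base point lies over $B_1(p_i)$ and that the accumulating short generators genuinely promote to an honest splitting direction rather than to elements of the stabilizer of $\tilde q$ in the limit. This likely requires an Abresch-Gromoll-type excess estimate on the Gromov product of two short generators, combined with a careful averaging over $B_1(p_i)$ to locate a suitable $\tilde q$; it is precisely this averaging that prevents upgrading the conclusion from \emph{some} $q\in B_1(p)$ to $q=p$ itself, which would settle the Milnor conjecture.
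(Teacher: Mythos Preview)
The paper does not prove this theorem at all: it is quoted from \cite{KW11} as background and motivation, and no argument is supplied here. So there is no ``paper's own proof'' to compare against.

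That said, your sketch has the right general shape---contradiction, equivariant Gromov--Hausdorff limit, exploitation of a positive-dimensional limit group---but the closing induction is where it breaks down. You propose to split short generators into an $\mathbb{R}^k$-translational part, counted by lattice packing, plus a ``residual action on $Z$'' to which an inductive hypothesis on ``lower effective dimension'' applies ``after regularization.'' None of this is justified: the limit group $G'$ need not split as a product compatible with $\mathbb{R}^k\times Z$, the short generators of $\Gamma_i$ do not canonically decompose into translational and $Z$-parts, and there is no clean notion of dimension on $Z$ (a possibly collapsed Ricci limit) that supports the induction you want. The actual Kapovitch--Wilking argument does proceed by contradiction and equivariant limits, but the induction is organized very differently: it goes through their generalized Margulis lemma and a delicate ``zooming in'' construction that replaces the sequence by a rescaled one whose limit group has strictly smaller dimension, rather than by decomposing generators in a fixed limit. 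Your remark that averaging over $B_1(p_i)$ is what forces the conclusion at \emph{some} $q$ rather than at $p$ is morally right, but the mechanism in \cite{KW11} is a measure-theoretic pigeonhole on the set of base points where the short-generator count is controlled, not an Abresch--Gromoll excess estimate.
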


For our purpose in this paper, we are confined to abelian fundamental groups. Thanks to Wilking's reduction \cite{Wi00}, to prove Conjecture \ref{milnor_conj} it suffices to consider abelian fundamental group.

Our main result in this paper bounds the number of short generators at $p$ if the unit ball $B_1(\tilde{p})$ in the universal cover $(\widetilde{M},\tilde{p})$ is not collapsed and $\pi_1(M,p)$-action on $(\widetilde{M},\tilde{p})$ satisfies a \textit{scaling non-vanishing property} at $\tilde{p}$, which links large and small scale geometry in term of isometries (see Definition \ref{main_def_nonvanish} below). We conjecture that when $B_1(\tilde{p})$ is not collapsed, the scaling nonvanishing condition is always fulfilled (see Conjecture \ref{main_conj_vol_nv} below); if true, then the scaling nonvanishing assumption in Theorems \ref{main_sg} and \ref{main_milnor} below can be dropped.

\begin{thm}\label{main_sg}
	Given $n,R,v>0$ and a positive function $\Phi$, there exists a constant $C(n,R,v,\Phi)$ such that the following holds.
	
	Let $(M,p)$ be a complete $n$-manifold with abelian fundamental group and
	$$\mathrm{Ric}\ge -(n-1),\quad \mathrm{vol}(B_1({\tilde{p}}))\ge v>0,$$
	where $(\widetilde{M},\tilde{p})$ is the Riemannian universal cover of $(M,p)$. If $\pi_1(M,p)$-action on $\widetilde{M}$ is scaling $\Phi$-nonvanishing at $\tilde{p}$, then $\#S(p,R)\le C(n,R,v,\Phi)$.
\end{thm}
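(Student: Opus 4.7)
The plan is to argue by contradiction. Suppose there is a sequence of complete $n$-manifolds $(M_i,p_i)$ with abelian fundamental groups, $\mathrm{Ric}_{M_i}\ge -(n-1)$, $\mathrm{vol}(B_1(\tilde p_i))\ge v$, and $\pi_1(M_i,p_i)$-actions on $\widetilde M_i$ that are scaling $\Phi$-nonvanishing at $\tilde p_i$, yet $\#S(p_i,R)\to\infty$. After passing to a subsequence, I would take the equivariant pointed Gromov--Hausdorff limit
$$(\widetilde M_i,\tilde p_i,\pi_1(M_i,p_i))\overset{GH}{\longrightarrow}(\widetilde X,\tilde p,G).$$
The uniform non-collapsing guarantees that $\widetilde X$ is a non-collapsed Ricci limit space, so by Cheeger--Colding (and Colding--Naber) the limit group $G$ is a Lie group acting isometrically on $\widetilde X$; being the equivariant limit of abelian groups, $G$ is abelian.

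The short generators at $\tilde p_i$ satisfy Gromov's defining inequality $d(\gamma_j^{(i)}\tilde p_i,\gamma_k^{(i)}\tilde p_i)\ge \ell_k^{(i)}$ whenever $\ell_j^{(i)}\le \ell_k^{(i)}$. Combined with Bishop--Gromov relative volume comparison and the lower bound $\mathrm{vol}(B_1(\tilde p_i))\ge v$, a standard packing argument on $B_R(\tilde p_i)$ yields, for every fixed $\varepsilon>0$, a uniform upper bound on the number of short generators of length at least $\varepsilon$. Hence $\#S(p_i,R)\to\infty$ forces arbitrarily many short generators to have length tending to $0$. I would then choose scales $r_i\to 0$ capturing these small generators, rescale the universal cover by $r_i^{-1}$, and pass to a further equivariant limit $(r_i^{-1}\widetilde M_i,\tilde p_i,\pi_1(M_i,p_i))\to(\widetilde Y,\tilde y,H)$. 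The scaling $\Phi$-nonvanishing property is precisely what prevents these generators from collapsing to the identity in the rescaled limit: each one survives as a nontrivial element of $H$ whose displacement is bounded below in terms of $\Phi$ applied to its original displacement on $B_1(\tilde p_i)$.

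The main obstacle, and the heart of the paper's machinery, is the dimension monotonicity for limit groups along rescaling sequences advertised in the abstract. The plan is to show that every time a new batch of short generators is revealed by passing to a smaller scale, the rescaled limit group's orbit through the base point gains strictly more dimension, because the new isometries cannot be absorbed into the orbit closure of the earlier layer (by the short generator inequality passed to the limit, together with scaling nonvanishing). Since the orbit dimension is capped by $n$ in a non-collapsed $n$-dimensional limit, only a bounded number of such layers can occur; coupled with the per-scale count from Bishop--Gromov this produces a uniform bound on $\#S(p_i,R)$, contradicting the assumption. The most delicate step will be verifying that abelianness and a quantified form of scaling nonvanishing are uniformly inherited by every rescaled limit, so that the monotonicity theorem genuinely applies throughout the iteration.
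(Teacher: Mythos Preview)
Your setup is correct through the packing step: by contradiction, passing to an equivariant limit, and observing that all but boundedly many short generators have length $\to 0$. But the core mechanism you describe is inverted. You say that ``every time a new batch of short generators is revealed by passing to a smaller scale, the rescaled limit group's orbit through the base point gains strictly more dimension,'' and then cap this at $n$. The paper's dimension monotonicity (Theorem~\ref{main_dim}) says exactly the opposite: under the scaling $\Phi$-nonvanishing hypothesis, if $(r_i\widetilde M_i,\tilde p_i,\Gamma_i)\to(\widetilde X',\tilde p',G')$ with $r_i\to\infty$, then $\dim G'\le\dim G$. The nonvanishing assumption is precisely what \emph{forbids} the dimension gain you want to exploit; without it dimension can indeed jump up (see Examples~\ref{ex_1}), but then you have no control. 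So your ascending-dimension argument cannot work.

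The paper's proof is a \emph{descending} induction, not on $\dim G$ alone but on an order built from the triple $(\dim_T,\dim_R,\#\pi_0)$ of the limit group (Definition~\ref{def_order}). The base case $G$ discrete is handled by Corollary~\ref{stable_nss}. For the inductive step one does not simply rescale the original sequence: one takes the subgroup $H_i\subset\Gamma_i$ generated by the first $m_i$ short generators, forms the intermediate cover $\overline M_i=\widetilde M_i/H_i$, and rescales by $r_i=\mathrm{diam}(K_i\bar p_i)^{-1}$ where $K_i=\Gamma_i/H_i$. After this rescaling the limit $H'$ of $H_i$ is a \emph{proper} subgroup of the limit $G'$ of $\Gamma_i$, and Corollary~\ref{dimension_cor} (a refinement of dimension monotonicity) forces $(\widetilde X',\tilde p',H')$ to have strictly smaller order than $(\widetilde X,\tilde p,G)$. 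This produces a new contradicting sequence with smaller limit-group order, and the induction terminates. The passage through intermediate covers and the use of the triple order, rather than dimension alone, are essential ingredients your outline is missing.
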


We expect Theorem \ref{main_sg} to be true for general fundamental groups without the abelian assumption (see Remark \ref{rm_abel_nil}). As explained above, Theorem \ref{main_sg} implies a partial result on the Milnor conjecture.

\begin{thm}\label{main_milnor}
	Let $(M,p)$ be an open $n$-manifold with $\mathrm{Ric}\ge 0$. Suppose that the Riemannian universal cover $\widetilde{M}$ has Euclidean volume growth. If there is a positive function $\Phi$ such that the $\pi_1(M,p)$-action on $R^{-1}\widetilde{M}$ is scaling $\Phi$-nonvanishing at $\tilde{p}$ for all $R\ge 1$, then $\pi_1(M)$ is finitely generated.
\end{thm}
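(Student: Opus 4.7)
The plan is to apply Theorem \ref{main_sg} at every scale and extract a uniform bound on the number of short generators at $p$.

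First, Wilking's reduction \cite{Wi00}, cited just before Theorem \ref{main_sg}, lets me assume $\pi_1(M,p)$ itself is abelian: one passes to an intermediate cover whose deck transformation group is the relevant abelian subgroup of $\pi_1(M)$. All hypotheses descend: the universal cover of the intermediate cover is still $\widetilde{M}$ with the same Euclidean volume growth, and scaling $\Phi$-nonvanishing, being a statement about the displacement of each individual isometry, is automatically inherited by any subgroup action with the same function $\Phi$.

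Next, for each $R\ge 1$, consider the rescaled manifold $M_R:=(R^{-1}M,p)$, whose Riemannian universal cover is $(R^{-1}\widetilde{M},\tilde{p})$. I verify the hypotheses of Theorem \ref{main_sg} at radius $1$ uniformly in $R$: after rescaling the Ricci curvature is still $\ge 0\ge -(n-1)$; by Bishop--Gromov monotonicity combined with the Euclidean volume growth assumption,
$$\mathrm{vol}_{R^{-1}\widetilde{M}}(B_1(\tilde{p}))=R^{-n}\mathrm{vol}_{\widetilde{M}}(B_R(\tilde{p}))\ge v>0$$
for some uniform constant $v$; and the $\pi_1$-action on $R^{-1}\widetilde{M}$ is scaling $\Phi$-nonvanishing at $\tilde{p}$ by the hypothesis of the theorem. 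Theorem \ref{main_sg} then yields $\#S_{M_R}(p,1)\le C(n,1,v,\Phi)=:C$, a constant independent of $R$.

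Finally, since short generators and their lengths scale covariantly with the metric, $S_{M_R}(p,1)$ and $S_M(p,R)$ coincide as subsets of $\pi_1(M,p)$, giving $\#S_M(p,R)\le C$ for every $R\ge 1$. The sets $S_M(p,R)$ form an increasing family exhausting the full set $S_M(p)$, so $\#S_M(p)\le C$; as $S_M(p)$ generates $\pi_1(M,p)$, the fundamental group is finitely generated. The real work of the paper lies in Theorem \ref{main_sg} itself; the argument above is a scaling-and-exhaustion maneuver, and the only point requiring care is the verification that Wilking's reduction preserves the precise form of the scaling nonvanishing hypothesis with the same $\Phi$, which follows from its pointwise nature on individual isometries.
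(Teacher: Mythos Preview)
Your proposal is correct and follows essentially the same approach as the paper: reduce to the abelian case via Wilking, then apply Theorem \ref{main_sg} to the rescaled manifolds $R^{-1}M$ using the Euclidean volume growth to secure a uniform $v$, obtaining a bound $\#S(p,R)\le C(n,1,v,\Phi)$ independent of $R$. The paper phrases the last step as a contradiction (assume $C+1$ short generators and rescale by the length of the $(C+1)$-th), while you do it directly by exhausting $S(p)$ as $\bigcup_R S(p,R)$; these are equivalent.
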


We introduce the scaling nonvanishing property. Let $D_{r,p}(f)$ be the displacement of an isometry $f$ of $M$ on $B_r(p)$, that is,
$D_{r,p}(f)=\sup_{q\in B_r(p)}d(f(q),q)$.

\begin{defn}\label{main_def_nonvanish}
	Let $(M,p)$ be a complete Riemannian manifold and $f$ be an isometry of $M$. Let $\Phi(\delta)$ a positive function. We say that $f$ is \textit{scaling $\Phi$-nonvanishing at $p$}, if $s^{-1}D_{s,p}(f)\ge\delta>0$ for some $s\in(0,1]$ implies $r^{-1}D_{r,p}(f)\ge \Phi(\delta)$ for all $r\in(0,s]$. We say that an isometric $G$-action on $M$ is scaling $\Phi$-nonvanishing at $p$, if any element $g\in G$ is scaling $\Phi$-nonvanishing at $p$.
\end{defn}

\begin{conj}\label{main_conj_vol_nv}
	Given $n$ and $v>0$, there is a positive function $\Phi(\delta,n,v)$ such that the following holds.
	
	Let $(M,p)$ be a complete $n$-manifold of
	$$\mathrm{Ric}\ge -(n-1),\quad \mathrm{vol}(B_1(p))\ge v>0.$$
	Then any isometry of $M$ is scaling $\Phi(\delta,n,v)$-nonvanishing at $p$.
\end{conj}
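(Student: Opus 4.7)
The plan is to argue by contradiction via a blow-up / equivariant Gromov--Hausdorff limit, reducing the conjecture to a unique-continuation statement for isometries on noncollapsed Ricci limit spaces. Suppose the conjecture fails. Then for some $\delta_0>0$ and $v>0$ there exist complete $n$-manifolds $(M_i,p_i)$ with $\mathrm{Ric}\ge-(n-1)$ and $\mathrm{vol}(B_1(p_i))\ge v$, isometries $f_i$ of $M_i$, and scales $r_i\in(0,s_i]\subseteq(0,1]$ satisfying $s_i^{-1}D_{s_i,p_i}(f_i)\ge\delta_0$ but $r_i^{-1}D_{r_i,p_i}(f_i)\to 0$. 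I would select an intermediate critical scale $t_i\in[r_i,s_i]$ at which $t^{-1}D_{t,p_i}(f_i)$ first crosses $\delta_0/2$ as $t$ decreases from $s_i$, and rescale $M_i$ by $t_i^{-1}$. Since $t_i\le 1$, the lower Ricci bound persists, Bishop--Gromov preserves a uniform volume lower bound on the new unit ball, and the rescaled isometry satisfies $D_{1,p_i}^{\mathrm{new}}(f_i)\approx\delta_0/2$ with $r^{-1}D_{r,p_i}^{\mathrm{new}}(f_i)\le\delta_0/2$ for $r<1$, while the ratio $r^{-1}D_{r,p_i}^{\mathrm{new}}(f_i)$ tends to $0$ at the scale $r=r_i/t_i$.

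By Gromov's precompactness applied to this rescaled sequence, I would pass to an equivariant pointed Gromov--Hausdorff limit $(t_i^{-1}M_i,p_i,\langle f_i\rangle)\to(Y,y,\langle f_\infty\rangle)$, where $Y$ is a noncollapsed Ricci limit space and $f_\infty$ an isometry of $Y$. By construction $D_{1,y}(f_\infty)\ge\delta_0/2>0$, so $f_\infty$ is nontrivial. If along a subsequence $r_i/t_i\to a>0$, one additionally gets that $f_\infty$ fixes $\overline{B_a(y)}$ pointwise; in the remaining case $r_i/t_i\to 0$, one instead obtains the scale-invariant conclusion $r^{-1}D_{r,y}(f_\infty)\to 0$ as $r\to 0$. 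In either case the limit isometry itself witnesses a failure of scaling nonvanishing at $y$ on a noncollapsed Ricci limit with $\mathrm{Ric}\ge-(n-1)$ (and with $\mathrm{Ric}\ge 0$ whenever $t_i\to 0$, which one can arrange by an additional diagonal rescaling).

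The proof then reduces to a rigidity statement: \emph{a nontrivial isometry $f_\infty$ of a noncollapsed Ricci limit $(Y,y)$ cannot satisfy $\liminf_{r\to 0} r^{-1}D_{r,y}(f_\infty)=0$}. On a smooth Riemannian manifold this is immediate from Myers--Steenrod and the exponential map: an isometry with $o(r)$ displacement at $y$ is forced to fix a neighborhood of $y$ and hence, by connectedness, the whole component. The natural plan on a noncollapsed Ricci limit is to pass to a tangent cone at $y$; by Cheeger--Colding one obtains a metric cone, and at $v$-almost-regular basepoints the tangent is isometric to $\mathbb R^n$. The displacement decay should force the induced isometry on the Euclidean tangent cone to fix the tip with trivial linear part, hence to be the identity on the tangent $\mathbb R^n$. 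The main obstacle, and the reason the statement is only conjectural, is the absence of a sufficiently effective unique-continuation / Myers--Steenrod-type theorem for isometries on noncollapsed Ricci limit spaces that would propagate triviality of $f_\infty$ on a tangent cone back to triviality on a neighborhood of $y$ in $Y$.
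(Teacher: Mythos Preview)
The statement you are attempting to prove is labeled a \emph{Conjecture} in the paper; the paper does not prove it. So there is no ``paper's own proof'' to compare against. You correctly recognize this in your final paragraph, where you identify the missing ingredient as a unique-continuation/Myers--Steenrod theorem for isometries on noncollapsed Ricci limit spaces.

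Your reduction is essentially the one the paper itself sketches. The paper shows (Proposition~\ref{rescale_identity} and Remark~\ref{rem_rescale_id}) that the conjecture is equivalent to the following: for any sequence $(M_i,p_i,f_i)$ with the stated curvature and volume bounds and any scales $r_i\le s_i\le 1$, if $f_i\to f$ on $s_i^{-1}M_i$ and $f_i\to\mathrm{id}$ on $r_i^{-1}M_i$, then $f=\mathrm{id}$. The paper then reduces further to the case where $f$ has finite order and both limit spaces are metric cones. Your critical-scale argument and equivariant limit are a variant of this same reduction. One refinement you omit: the paper observes that, by replacing $f_i$ by a suitable symmetric set $\{e,f_i^{\pm1},\dots,f_i^{\pm k_i}\}$ with $k_i\to\infty$ slowly, one may assume the limit isometry has finite order, which sharpens the target to ruling out a finite-order isometry that becomes trivial on a tangent cone.

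The paper does establish the conjecture in two special cases: for \emph{subgroups} rather than single isometries (Theorem~\ref{main_no_small_subgroup}, via the Cheeger--Colding--Naber argument at regular points), and for \emph{sectional} curvature bounded below (Corollary~\ref{nsas_sec}, via Toponogov comparison to control the scaled exponential map). Neither technique is known to close the gap for a single isometry under a Ricci bound, which is precisely the obstacle you name.
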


On a fixed complete manifold $(M,p)$ with a non-identity isometry $f$, one can always find a positive function $\Phi$, depending on $(M,p,f)$, such that $f$ is scaling $\Phi$-nonvanishing at $p$. Conjecture \ref{main_conj_vol_nv} seeks a uniform control for a class of manifolds. If $M$ has sectional curvature lower bound $\mathrm{sec}\ge -1$, then any isometry of $M$ is scaling $\Phi(\delta,n)$-nonvanishing at $p$ for all $p\in M$ regardless of the volume condition (see Corollary \ref{nsas_sec}). This relies on the Toponogov's comparison theorem. For Ricci curvature, if one drops the volume lower bound, then Conjecture \ref{main_conj_vol_nv} would fail. For instance, \cite{CC97} constructed a sequence of complete $n$-manifolds $(M_i,p_i)$ with Ricci lower bounds Gromov-Hausdorff converging to a horn $(Y,p)$,
where $Y$ has dimension $5$ but the tangent cone at $p$ is a half line. For such a sequence, one can find a sequence of isometries $f_i$ of $M_i$ fixing $p_i$ such that $D_{1,p_i}(f_i)=\delta>0$ but $r_i^{-1}D_{r_i,p_i}(f_i)\to 0$ for some $r_i\to 0$. We mention that due to relative volume comparison, to verify Conjecture \ref{main_conj_vol_nv}, one only need to check the case $s=1$ in Definition \ref{main_def_nonvanish} (also see Remark \ref{rem_rescale_id}).

Another supporting evidence of Conjecture \ref{main_conj_vol_nv} is that the displacement function of any isometric group action is scaling $\Phi(\delta,n,v)$-nonvanishing. Indeed, we can choose $\Phi$ as a constant function and this result also applies to non-collapsed Ricci limit spacecs.

\begin{thm}\label{main_no_small_subgroup}
	Given $n,v>0$, there exists a positive constant $\delta(n,v)$ such that for any non-collapsed Ricci limit space $(X,p)\in\mathcal{M}(n,-1,v)$
	and any nontrivial subgroup $H$ in $\mathrm{Isom}(X)$, $r^{-1}D_{r,p}(H)\ge \delta$ holds for all $r\in(0,1]$, where $D_{r,p}(H)=\sup_{h\in H} D_{r,p}(h)$.
\end{thm}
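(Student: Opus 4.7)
The plan is to combine rescaling, equivariant Gromov--Hausdorff compactness, and the Lie group structure of isometry groups of non-collapsed Ricci limit spaces due to Cheeger--Colding. First I would reduce to the case $r=1$. For any $r\in(0,1]$, rescaling the metric by $r^{-1}$ produces $(Y,p):=(r^{-1}X,p)$ with $\mathrm{Ric}_Y\ge-r^2(n-1)\ge-(n-1)$; the Bishop--Gromov relative volume comparison gives $\mathrm{vol}(B_1^Y(p))=r^{-n}\mathrm{vol}(B_r^X(p))\ge c(n)v=:v'$, so $(Y,p)\in\mathcal{M}(n,-1,v')$. Since $D_{1,p}^Y(H)=r^{-1}D_{r,p}^X(H)$, a bound at scale $r=1$ with constant $\delta(n,v')$ is equivalent to the claimed scaling-invariant bound.

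Next I would argue by contradiction at $r=1$. Suppose the conclusion fails: there is a sequence $(X_i,p_i)\in\mathcal{M}(n,-1,v)$ and nontrivial subgroups $H_i\subseteq\mathrm{Isom}(X_i)$ with $\varepsilon_i:=D_{1,p_i}(H_i)\to 0$. By Gromov's equivariant GH compactness, after extracting a subsequence we have $(X_i,p_i,\mathrm{Isom}(X_i),H_i)\to (X,p,G,H)$, where $X\in\mathcal{M}(n,-1,v)$ is non-collapsed and $G,H$ are closed subgroups of $\mathrm{Isom}(X)$. Any $h\in H$ is the equivariant GH-limit of some sequence $h_i\in H_i$, so $D_{1,p}(h)=\lim_i D_{1,p_i}(h_i)=0$ and $h$ fixes the open ball $B_1(p)$ pointwise. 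For a non-collapsed Ricci limit, path-connectedness of the regular set (Colding--Naber) together with unique continuation of isometries on the smooth approximations forces an isometry fixing a nonempty open set to be the identity, so $H=\{e\}$.

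Finally I would extract the contradiction using the Lie group structure. By Cheeger--Colding, $G\subseteq\mathrm{Isom}(X)$ is a Lie group, hence has no small subgroups: there exists $\eta>0$ such that the neighborhood $\{g\in G:D_{1,p}(g)<\eta\}$ of the identity contains no nontrivial subgroup. By the equivariant GH convergence of the ambient isometry groups, this property transfers to the sequence: for all $i$ sufficiently large, $\{g\in\mathrm{Isom}(X_i):D_{1,p_i}(g)<\eta/2\}$ also contains no nontrivial subgroup. But $\varepsilon_i\to 0$ forces $H_i\subseteq\{g:D_{1,p_i}(g)<\eta/2\}$ for large $i$, while $H_i$ is nontrivial by assumption. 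Contradiction.

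The main technical obstacle is the last step: rigorously propagating the ``no small subgroups'' constant of the limit Lie group $G$ back to the sequence $\mathrm{Isom}(X_i)$ with a uniform rate. One route is to realize the Cheeger--Colding Lie algebra of $G$ via limits of Killing-type vector fields coming from approximate isometries on the $X_i$'s, whose size and convergence are controlled by the non-collapsing bound $v$; a second route is to exploit that the closed subgroup $\overline{\langle h_i\rangle}$ generated by a nontrivial $h_i\in H_i$ is itself a Lie group, and analyze its structure (finite cyclic, $\mathbb{Z}$, or torus) to produce elements of definite displacement at scale $1$.
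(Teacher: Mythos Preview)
Your reduction to $r=1$ and the contradiction setup are correct and match the paper. The identification of the limit group as $H=\{e\}$ is also right; this is the content of the paper's Lemma~\ref{char_id}.

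The genuine gap is in your final step, and you have correctly flagged it as the main obstacle but not resolved it. The claim that the no-small-subgroup property of the limit Lie group $G=\mathrm{Isom}(X)$ ``transfers to the sequence'' does \emph{not} follow from equivariant Gromov--Hausdorff convergence alone. All you obtain is $H_i\overset{GH}\to\{e\}$; this does not force $H_i=\{e\}$ for large $i$, and without the non-collapsing bound it is simply false (take $X_i$ a round circle of radius $1/i$ with its full rotation group). Your proposed routes do not close the gap: route~1 is too vague to evaluate, and route~2 is circular, since knowing $\overline{\langle h_i\rangle}$ is abstractly a Lie group on $X_i$ gives no displacement bound uniform in $i$. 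In effect, the ``transfer'' you want \emph{is} the theorem.

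The paper's proof does not attempt any such transfer. Instead it works directly on each $X_i$ using the non-collapsing and Cheeger--Colding regularity. One fixes a regular point $y$ in the limit $X$, pulls it back to points $y_i\in X_i$ lying in effective regular sets $\mathcal{R}_{\epsilon_i,\delta_i}$ with $\epsilon_i\to 0$ (this uses Theorem~\ref{CC_regular_vol} and the volume lower bound), and observes that $\delta_i^{-1}D_{\delta_i,y_i}(H_i)\to 0$. Since $H_i$ is nontrivial, there is also a regular point $w_i$ and a scale $\theta_i$ where the normalized displacement exceeds $1/20$. An intermediate-value argument along a path in the regular set (as in the Colding--Naber connectedness argument) then produces $z_i\in\mathcal{R}_{\epsilon_i,\tau_i}$ and $r_i\le\tau_i$ with $r_i^{-1}D_{r_i,z_i}(H_i)=1/20$. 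Because $z_i$ is effectively regular at scale $r_i$, the rescaled balls $(r_i^{-1}B_{r_i}(z_i),z_i,H_i)$ converge to $(B_1^n(0),0,H_\infty)$ with $H_\infty\subseteq\mathrm{Isom}(\mathbb{R}^n)$ and $D_{1,0}(H_\infty)=1/20$; but $\mathrm{Isom}(\mathbb{R}^n)$ has no such nontrivial subgroup. The non-collapsing is used precisely to guarantee the Euclidean blow-up, which is where the quantitative no-small-subgroup input actually lives.
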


We refer Theorem \ref{main_no_small_subgroup} as a quantitative version of \textit{no small subgroup property}, in the sense that there is no nontrivial isometric group action with very small displacement on the unit ball.
Theorem \ref{main_no_small_subgroup} can be further extended from subgroups to \textit{almost subgroups}, whose orbits at every point $q\in B_1(p)$ behaves similarly to subgroups (see Theorem \ref{main_nsas_local}). For more discussions on Conjecture \ref{main_conj_vol_nv}, see Section 2.2. We also prove some applications of Theorem \ref{main_no_small_subgroup} to the structure of fundamental groups in Sections 2.3 and 2.4.

We introduce the main technical result in this paper. Consider an equivariant Gromov-Hausdorff convergence of complete $n$-manifolds $(M_i,p_i)$ of $\mathrm{Ric}\ge -(n-1)$ and its rescaling sequence:
\begin{center}
	$\begin{CD}
	(\widetilde{M}_i,\tilde{p}_i,\Gamma_i) @>GH>> (\widetilde{X},\tilde{p},G)\\
	@VV\pi_iV @VV\pi V\\
	(M_i,p_i) @>GH>> (X,p),
	\end{CD}\qquad \begin{CD}
	(r_i\widetilde{M}_i,\tilde{p}_i,\Gamma_i) @>GH>> (\widetilde{X}',\tilde{p}',G')\\
	@VV\pi_iV @VV\pi' V\\
	(r_iM_i,p_i) @>GH>> (X',p'),
	\end{CD}\quad (*)$
\end{center}
where $\Gamma_i=\pi_1(M_i,p_i)$ and $r_i\to\infty$. The limit group $G$ (resp. $G'$), as a closed subgroup of $\mathrm{Isom}(\widetilde{X})$ (resp. $\mathrm{Isom}(\widetilde{X}')$), is a Lie group \cite{CC00a,CN12}. It follows from Theorem \ref{main_no_small_subgroup} that with a lower bound on $B_1(\tilde{p}_i)$, if $G=\{e\}$, then $G'=\{e\}$. We prove the following connections between $G$ and $G'$.

\begin{thm}[Dimension monotonicity of symmetries]\label{main_dim}
	Let $(M_i,p_i)$ be a sequence of complete $n$-manifolds with abelian fundamental groups $\Gamma_i$ and
	$$\mathrm{Ric}_{M_i}\ge -(n-1),\quad \mathrm{vol}(B_1(\tilde{p}_i))\ge v>0.$$
	Consider the convergent sequence and any rescaling sequence as in ($*$).
	If there is a positive function $\Phi$ such that $\Gamma_i$-action on $\widetilde{M}$ is scaling $\Phi$-nonvanishing at $\tilde{p}$ for all $i$, then\\
	(1) $\dim(G')\le\dim(G),$\\
	(2) If $G'$ has a compact subgroup $K'$, then $G$ contains a subgroup $K$ fixing ${p}$ and $K$ is isomorphic to $K'$.
\end{thm}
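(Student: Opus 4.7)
The plan is to lift $G'$-structure at the microscopic scale $1/r_i$ back to $G$-structure at the macroscopic scale $1$, by iterating approximating elements in $\Gamma_i$ roughly $r_i$ times. Because each $\Gamma_i$ is abelian, this iteration is compatible with the group law and produces a Lie-algebra-level map. For (1), I will fix a basis $X_1,\dots,X_{m'}$ of $\mathrm{Lie}(G')$ and pick $\gamma_{j,i}\in\Gamma_i$ with $\gamma_{j,i}\to\exp_{G'}(X_j)$ in the rescaled equivariant convergence, so $D_{1/r_i,\tilde p_i}(\gamma_{j,i})\approx |X_j|/r_i$ in $\widetilde M_i$. Setting
\[\varphi_{j,i}(t):=\gamma_{j,i}^{\lfloor tr_i\rfloor},\]
a triangle-inequality estimate gives $D_{1,\tilde p_i}(\varphi_{j,i}(t))\lesssim |t|\,|X_j|$ at the unrescaled scale, so a subsequential limit $\varphi_j(t)\in G$ exists for each $t$. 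Abelianness yields $\varphi_{j,i}(s)\varphi_{j,i}(t)=\gamma_{j,i}^{\lfloor sr_i\rfloor+\lfloor tr_i\rfloor}$, which differs from $\varphi_{j,i}(s+t)$ by at most one step, so in the limit $\varphi_j$ is a 1-parameter subgroup $\varphi_j(t)=\exp_G(tL(X_j))$. Performing the analogous construction with $\gamma_{X+Y,i}:=\gamma_{X,i}\gamma_{Y,i}$ (again abelian) produces an $\mathbb{R}$-linear map $L:\mathrm{Lie}(G')\to\mathrm{Lie}(G)$, and (1) reduces to showing $L$ is injective.

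Injectivity is the crux. If $L(X)=0$ while $X\ne 0$, then for every fixed $t$ the iterate $\varphi_{X,i}(t)$ would tend to $e$ in $G$, yet by construction $r^{-1}D_{r,\tilde p_i}(\gamma_{X,i})\approx|X|$ at $r=1/r_i$, so scaling $\Phi$-nonvanishing (with $s=1/r_i$, $\delta=|X|$) forces $r^{-1}D_{r,\tilde p_i}(\gamma_{X,i})\ge\Phi(|X|)$ for all $r\le 1/r_i$. I plan to choose an iterate exponent $k_i$ so that $\gamma_{X,i}^{k_i}$ realizes a controlled normalized displacement at a fixed macroscopic scale in $\widetilde M_i$, and then invoke Theorem \ref{main_no_small_subgroup} applied to the cyclic subgroup $\langle\gamma_{X,i}^{k_i}\rangle$ to extract a nontrivial limit in $G$, contradicting $L(X)=0$. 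For (2), decompose the compact abelian group $K'=T^b\times F$ with $F$ finite: $L$ embeds $\mathrm{Lie}(T^b)$ into $\mathrm{Lie}(G)$ by (1), and since $G$ is abelian the exponential image is a closed torus $T^b\hookrightarrow G$ fixing $\tilde p$ (the approximating iterates have displacement vanishing at $\tilde p_i$ in $\widetilde M_i$). For $f\in F$ of order $d$ with $\gamma_{f,i}\to f$ in $G'$, one has $\gamma_{f,i}^d\to e$ in $G'$, so Theorem \ref{main_no_small_subgroup} together with a subsequence argument produces $\gamma_{f,i}\to g_f\in G$ with $g_f^d=e$; abelianness assembles these contributions into an injective homomorphism $K'\hookrightarrow G$ with the prescribed fixed-point property.

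The principal obstacle is injectivity of $L$: scaling $\Phi$-nonvanishing propagates normalized-displacement lower bounds \emph{downward} in scale, whereas my construction requires preserving nontriviality when passing \emph{upward} through an iterate from the microscopic scale $1/r_i$ to the macroscopic scale. The decisive technical point will be choosing $k_i$ so that $\gamma_{X,i}^{k_i}$ hits a controlled normalized displacement at scale $1$, and then combining this with Theorem \ref{main_no_small_subgroup} applied to $\langle\gamma_{X,i}^{k_i}\rangle$; matching the Lie-algebra independence on the $G'$ side with the combinatorics of iterate exponents on the $\Gamma_i$ side is the technical heart of the argument.
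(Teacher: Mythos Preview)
Your construction of a linear map $L:\mathrm{Lie}(G')\to\mathrm{Lie}(G)$ via iterated approximants is natural, and the homomorphism property does follow from abelianness as you say. But the injectivity argument has a genuine gap precisely at the point you flag, and your proposed fix does not close it.

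The issue is the case where $X$ generates a \emph{closed non-compact} one-parameter subgroup of $G'$ (think $G=\mathbb{R}$, $G'=\mathbb{R}^2$, which the introduction singles out as the hard case). Suppose $L(X)=0$, so $\gamma_{X,i}^{\lfloor tr_i\rfloor}\to e$ in $G$ for every $t$. You propose to pick $k_i$ so that $\gamma_{X,i}^{k_i}$ has controlled macroscopic displacement, then invoke Theorem~\ref{main_no_small_subgroup}. But Theorem~\ref{main_no_small_subgroup} applied to $\langle\gamma_{X,i}\rangle$ only says \emph{some} power $\gamma_{X,i}^{N_i}$ has $D_{1,\tilde p_i}(\gamma_{X,i}^{N_i})\ge\delta(n,v)/2$; there is no reason $N_i/r_i$ stays bounded. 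If $N_i/r_i\to\infty$, the element $\gamma_{X,i}^{N_i}$ lies outside the range of your parametrization $t\mapsto\gamma_{X,i}^{\lfloor tr_i\rfloor}$ and gives no contradiction to $L(X)=0$. Moreover, the symmetric set $A_i=\{\gamma_{X,i}^k:|k|\le r_i\}$ converges at scale $r_i$ to a \emph{segment} $\{\exp(tX):|t|\le 1\}$, for which $d_H(A_\infty p',A_\infty^2 p')/\mathrm{diam}(A_\infty p')$ is bounded away from $0$, so the no-small-almost-subgroup criterion (Corollary~\ref{rescale_identity_cor}) does not apply either. This is exactly why the paper does \emph{not} argue via a Lie-algebra map: it instead finds a critical intermediate scale $s_i$ (the infimum of scales at which the limit contains $\mathbb{R}^k\times\mathbb{Z}$) and uses a Gromov--Hausdorff distance gap (Lemma~\ref{dim_free_gap}) to rule out every possible limit at that scale. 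That gap lemma is the essential new ingredient, and nothing in your outline produces it.

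There is a secondary issue with well-definedness of $L$. Two choices $\gamma_{X,i},\gamma'_{X,i}\to\exp_{G'}(X)$ yield limits $\varphi_X,\varphi'_X$ differing by a one-parameter subgroup fixing $\tilde p$ (your triangle estimate only shows $\varphi_X(t)\varphi'_X(t)^{-1}\cdot\tilde p=\tilde p$, not that it equals $e$). So $L$ is really a map into $\mathrm{Lie}(G)/\mathrm{Lie}(\mathrm{Iso}(\tilde p,G))$, and injectivity of that map would give only $\dim(G')\le\dim(G)-\dim(\mathrm{Iso}(\tilde p,G))$, which is false in general. The paper handles the isotropy via a separate reduction (Corollary~\ref{dim_tangent}) and a triple induction on $(\dim_T,\dim_R,\#\pi_0)$; your sketch has no analogue. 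For part~(2), your outline is closer in spirit to the paper's Lemmas~\ref{dim_circle}--\ref{dim_finite}, but note that showing $g_f\ne e$ when $f\in F$ does not fix $\tilde p'$ requires the almost-subgroup version (Lemma~\ref{rescal_id_cpt}), not just Theorem~\ref{main_no_small_subgroup}, and showing the $b$ circles you obtain are independent in $G$ (Lemma~\ref{dim_torus}) needs its own argument.
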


Note that because $\dim(\widetilde{X})=\dim(\widetilde{X}')=n$, (1) is equivalent to a dimension monotonicity of spaces $\dim(X)\le\dim(X')$. We mention that volume assumption in Theorem \ref{main_dim} can be replaced by a no small almost subgroup condition on $B_1(\tilde{p})$, with which $\widetilde{X}$ and $\widetilde{X}'$ may be collapsed in general (see Theorem \ref{dimension'}); this also leads to a bound on the number of short generators and finite generation with a no small subgroup condition (see Theorems \ref{sg_nsas} and Theorem \ref{milnor_nsas}).

We indicate our approach to Theorem \ref{main_dim}. A crucial consequence of volume and the scaling nonvanishing property plays a key rule in proving Theorem \ref{main_dim}: if a subset $A$ of $\Gamma$ has orbit $A\tilde{p}$ similar to a group action orbit, then its displacement cannot be too small (compare with Theorem \ref{main_no_small_subgroup}). More precisely, if a sequence of $A_i\subseteq \Gamma_i$ with $A_i^{-1}=A_i$ satisfies
$$\dfrac{d_H(A_i\tilde{p}_i,A_i^2\tilde{p}_i)}{\mathrm{diam}(A_i\tilde{p}_i)}\to 0,$$
then $D_{1,\tilde{p}_i}(A_i)\not\to 0$, where $A^k=\{a^k|a\in A\}$ and $d_H$ is the Hausdorff distance on $\widetilde{M_i}$. Note that if the above ratio is small, then $A\tilde{p}$ is similar to $A^2\tilde{p}$ and we see the orbit is close to a group action orbit. We call this \textit{no small almost subgroup property} at $\tilde{p}$ (see Section 2.2 for more details).

For Theorem \ref{main_dim}, let us first consider an easy case:
$G=\mathbb{R}$ and $G'=\mathbb{R}\times S^1$. For simplicity, we also assume that $S^1$-action is free at some $\tilde{p}'$. Let $\gamma$ be the element of order $2$ in $S^1$ and $\gamma_i\in \Gamma_i$ such that
$$(r_i\widetilde{M}_i,\tilde{p}_i,\gamma_i)\overset{GH}\longrightarrow (\widetilde{X}',\tilde{p}',\gamma).$$
Put $A_i=\{e,\gamma_i^{\pm 1}\}$. Then with respect to the above sequence, $A_i\overset{GH}\to \langle\gamma\rangle$ and thus the scaling invariant
$$\dfrac{d_H(A_i\tilde{p}_i,A_i^2\tilde{p}_i)}{\mathrm{diam}(A_i\tilde{p}_i)}\to 0.$$
Note that before rescaling $D_{1,\tilde{p}_i}(A_i)\to 0$, a contradiction to no small almost subgroup property at $\tilde{p}_i$. Next we consider a typical situation: $G=\mathbb{R}$ and $G'=\mathbb{R}^2$.
The difficulty compared with the previous case is that,
there is no indication on how to choose a sequence of collapsed almost subgroups from $G'=\mathbb{R}^2$.
Our strategy is finding a suitable intermediate rescaling sequence,
from which we are able to pick up a sequence of small almost groups (see Section 3 for details). This method of choosing an intermediate rescaling sequence is also used in \cite{Pan17b}.

We also roughly illustrate the proof of Theorem \ref{main_sg} by assuming Theorem \ref{main_dim}. Suppose that there is a contradicting sequence:
\begin{center}
	$\begin{CD}
	(\widetilde{M}_i,\tilde{p}_i,\Gamma_i) @>GH>> (\widetilde{X},\tilde{p},G)\\
	@VV\pi_iV @VV\pi V\\
	(M_i,p_i) @>GH>> (X,p)
	\end{CD}$
\end{center}
satisfying the following conditions:\\
(1) $\mathrm{Ric}_{M_i}\ge -(n-1)$, $\mathrm{vol}(B_1(\tilde{p}_i))\ge v>0$;\\
(2) $\pi_1(M_i,p_i)$ is abelian, whose action is scaling $\Phi$-nonvanishing at $\tilde{p}_i$;\\
(3) $\#S(p_i,R)\to\infty$.\\
Roughly speaking, we derive a contradiction by induction on the dimension of $G$. Assume that $\dim(G)=0$, that is, $G$ is discrete.
Recall that there is a sequence of $\epsilon_i$-equivariant maps \cite{FY92}
$$\psi_i: \Gamma_i(R)\to G(R),\quad \Gamma_i(R)=\{\gamma\in\Gamma_i\ |\ d(\gamma\tilde{p}_i,\tilde{p}_i)\le R\}$$
for some $\epsilon_i\to 0$. By the discreteness of $G$ and Theorem \ref{main_no_small_subgroup},
it is not difficult to check that $\#\Gamma_i(R)$ is uniformly bounded (see Corollary \ref{stable_nss} for details), thus $\#S(p_i,R)$ is uniformly bounded, a contradiction to (3). Assume that there is no such contradicting sequence with $\dim(G)\le k$,
while there is one with $\dim(G)=k+1$. We shall obtain a contradiction by constructing a new contradicting sequence with $\dim(G)\le k$. For a sequence $m_i\to\infty$, let $\Gamma_{i,m_i}$ be the subgroup of $\Gamma_i$ generated by the first $m_i$ short generators at $p_i$. If for some $m_i\to\infty$,
$$(\widetilde{M}_i,\tilde{p}_i,\Gamma_{i,m_i})
\overset{GH}\longrightarrow(\widetilde{X},\tilde{p},H).$$
and $\dim(H)\le k$, then we are done. Without lose of generality, we assume that $\dim(H)=k+1$ for all $m_i\to\infty$. For some $m_i\to\infty$ with $|\beta_i|\to 0$, where $\beta_i=\gamma_{i,m_i+1}$ is the $(m_i+1)$-th short generator in $\Gamma_i$, we consider a sequence of intermediate coverings,
\begin{center}
	$\begin{CD}
	(\widetilde{M}_i,\tilde{p}_i,\langle\Gamma_{i,m_i},\beta_i\rangle)
	@>GH>> (\widetilde{X},\tilde{p},K)\\
	@VV\pi_iV @VV\pi V\\
	(\overline{M}_i=\widetilde{M}_i/\Gamma_{i,m_i},\bar{p}_i,\langle\overline{\beta_i}\rangle)
	@>GH>> (\overline{X},\bar{p},\Lambda).
	\end{CD}$
\end{center}
Because $d(\beta_i\tilde{p}_i,\tilde{p}_i)\to 0$
and $\dim(H)=\dim(K)$, one can show that $\Lambda$ is discrete and fixes $\bar{p}$. Put $r_i=\mathrm{diam}(\langle\overline{\beta_i}\rangle\bar{p}_i)\to 0$ and consider the rescaling sequences
\begin{center}
	$\begin{CD}
	(r_i^{-1}\widetilde{M}_i,\tilde{p}_i,\Gamma_{i,m_i},\langle\Gamma_{i,m_i},\beta_i\rangle)
	@>GH>> (\widetilde{X}',\tilde{p}',H',K')\\
	@VV\pi_iV @VV\pi V\\
	(r_i^{-1}\overline{M}_i,\bar{p}_i,\langle\overline{\beta_i}\rangle)
	@>GH>> (\overline{X}',\bar{p}',\Lambda').
	\end{CD}$
\end{center}
By Theorem \ref{main_dim}, $\dim(K')\le\dim(K)=k+1$. If $\dim(H')<\dim(K')$, then we reduce the dimension successfully. One can check that $(r_i^{-1}\overline{M}_i,\bar{p}_i)$ is a desired contradicting sequence. If $\dim(H')=\dim(K')$, then we apply Theorem \ref{main_dim}(2) and use an induction argument on the number of the connected components of the isotropy subgroup at $\tilde{p}'$ (see Section 4 for details).

\tableofcontents

\section{Preliminaries}

For convenience of readers, we provide some basic notions and properties that will be used in this paper.

Given $n$ and $v>0$,
let $\mathcal{M}(n,-1)$ be the set of all limit spaces of sequences of complete $n$-manifolds $(M_i,p_i)$ of
$$\mathrm{Ric}_{M_i}\ge -(n-1);$$
let $\mathcal{M}(n,-1,v)$ be the set of all limit spaces of sequences of $n$-manifolds $(M_i,p_i)$ with curvature condition above and
$$\mathrm{vol}(B_1(p_i))\ge v>0.$$

Let $(X,x)\in\mathcal{M}(n,-1)$ and given any sequence $r_i\to\infty$, passing to a subsequence if necessary,
$$(r_iX,x)\overset{GH}\longrightarrow(C_x X,o).$$
We call $(C_x X,o)$ a tangent cone at $x$.
In general, tangent cones at $x$ may not be unique; they may not have the same Hausdorff dimension \cite{CC97}. For non-collapsed Ricci limit spaces, the tangent cones must be metric cones \cite{CC96}.

\begin{thm}\cite{CC96}\label{CC_cone}
If $(X,x)\in\mathcal{M}(n,-1,v)$, then any tangent cone $(C_x X,o)$ is an $n$-dimensional metric cone $C(Z)$ with vertex $o$ and $\mathrm{diam}(Z)\le \pi$.
\end{thm}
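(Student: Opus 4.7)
The plan is to derive this classical statement from the Bishop--Gromov relative volume comparison together with the Cheeger--Colding ``volume cone implies metric cone'' machinery. First I would exploit $\mathrm{Ric}_{M_i}\ge -(n-1)$ and $\mathrm{vol}(B_1(p_i))\ge v$ to obtain, via Bishop--Gromov, a uniform estimate $\mathrm{vol}(B_r(p_i))\ge c(n,v)\,r^n$ for all $r\in(0,1]$. This non-collapsing bound passes to $(X,x)$ and, after rescaling by $r_i\to\infty$, persists on bounded rescaled radii, so the tangent cone $(C_xX,o)$ is non-collapsed; in particular its Hausdorff dimension is $n$.

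Second, I would use monotonicity of the Bishop--Gromov ratio on $(X,x)$: the ratio $\mathrm{vol}(B_r(x))/(\omega_n r^n)$ is non-increasing in $r$ (up to the small correction coming from $-(n-1)$, which disappears after rescaling) and converges as $r\to 0$ to a positive limit $\theta(x)$. Under the rescaling $r_i\to\infty$, the ambient Ricci lower bound rescales to $-(n-1)r_i^{-2}\to 0$, and the limiting ratios become independent of the scale: for every $a>0$ one has $\mathrm{vol}(B_a(o))=\omega_n\,\theta(x)\,a^n$ on $C_xX$. This is the \emph{exact volume cone condition at the vertex}, and $(C_xX,o)$ also inherits the nonnegative Ricci condition in the limit-space sense.

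Third, I would invoke the Cheeger--Colding almost-rigidity theorem: realizing $(C_xX,o)$ as a GH limit of smooth manifolds with $\mathrm{Ric}\ge -\epsilon_i\to 0$ on which the annular ratios $\mathrm{vol}(B_a)/a^n$ and $\mathrm{vol}(B_b)/b^n$ are arbitrarily close for any $0<a<b$, their quantitative theorem shows the annulus $B_b\setminus B_a$ on each approximant is Gromov--Hausdorff close to an annulus in a metric cone. Taking $i\to\infty$ identifies $(C_xX,o)$ with an honest metric cone $C(Z)$ having vertex $o$, where $Z$ is the ``unit sphere'' about $o$ with its induced length metric; non-collapsing then forces $\dim_H(Z)=n-1$. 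For the diameter bound $\mathrm{diam}(Z)\le\pi$, the cone computation transfers the generalized $\mathrm{Ric}\ge 0$ on $C(Z)$ to $\mathrm{Ric}_Z\ge n-2$ on the cross section in the limit sense, and then a Bonnet--Myers type estimate on smooth approximants to $Z$ (equivalently, a Toponogov-style argument at $o$ for angles between rays) yields $\mathrm{diam}(Z)\le\pi$.

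The main obstacle is passing from the exact volume-cone identity on the singular limit $(C_xX,o)$ to the exact metric-cone conclusion. One does not have direct access to a rigidity statement on the limit itself, so the argument must be run through the quantitative almost-rigidity on the smooth approximants with errors $\epsilon_i\to 0$ and then combined with a compactness/diagonal extraction to produce a genuine cone. This quantitative-to-exact passage is precisely the heart of \cite{CC96}, and it is what forces the proof to work on the approximating manifolds rather than directly on $C_xX$.
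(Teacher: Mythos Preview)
The paper does not prove this statement; it is quoted as a known result from \cite{CC96} and stated without proof in the Preliminaries section. There is therefore no ``paper's own proof'' to compare against.

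Your sketch is a reasonable outline of the original Cheeger--Colding argument: Bishop--Gromov gives the non-collapsing and the volume-cone condition on the tangent cone, and the almost volume cone implies almost metric cone theorem (applied on smooth approximants with $\epsilon_i\to 0$ and then passed to the limit) yields the metric cone structure; the diameter bound $\mathrm{diam}(Z)\le\pi$ follows from the induced lower curvature bound on the cross section. One minor point: your justification of $\mathrm{diam}(Z)\le\pi$ via ``$\mathrm{Ric}_Z\ge n-2$ on the cross section in the limit sense'' and a Bonnet--Myers argument is morally right but technically delicate, since $Z$ is not a smooth manifold; the cleaner route in \cite{CC96} is that the almost-rigidity statement already produces a cross section of diameter at most $\pi$ (this is built into the warped-product comparison used there), so the bound comes for free without invoking curvature on $Z$ directly.
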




Recall that by Bishop volume comparison, any metric ball $B_1(p)$ in a complete $n$-manifold $M$ of $\mathrm{Ric}\ge 0$ has volume at most $\mathrm{vol}(B_1^n(0))$, the volume of the unit ball in the $n$-dimensional Euclidean space. Moreover, $B_1(p)$ attains maximal volume if and only if $B_1(p)$ is isometric to $B_1^n(0)$. Cheeger and Colding proved a quantitative version of this volume rigidity result.

\begin{thm}\cite{CC96}\label{CC_regular_vol}
	There exists a positive function $\Phi(\delta|n)$ with $\lim\limits_{\delta\to 0^+}\Phi(\delta|n)=0$ such that the following holds.
	
	Let $(M,p)$ be a complete $n$-manifold with $\mathrm{Ric}\ge -(n-1)\delta$.\\
	(1) If $$d_{GH}(B_1(p),B_1^n(0))\le\delta,$$ then
	$$\mathrm{vol}(B_1(p))\ge (1-\Psi(\delta|n))\mathrm{vol}(B_1^n(0)).$$\\
	(2) If $$\mathrm{vol}(B_1(p))\ge (1-\delta)\mathrm{vol}(B_1^n(0)),$$ then $$d_{GH}(B_1(p),B_1^n(0))\le\Psi(\delta|n).$$
\end{thm}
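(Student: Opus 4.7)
The plan is to prove both parts simultaneously by contradiction combined with Gromov's precompactness theorem, using the Bishop volume rigidity (the equality case of volume comparison) as the base rigidity statement. Fix $n$ and suppose one of the two implications fails quantitatively. Then there exists a sequence of pointed $n$-manifolds $(M_i,p_i)$ with $\mathrm{Ric}_{M_i}\ge -(n-1)\delta_i$ and $\delta_i\to 0$ that witnesses the failure. Passing to a subsequence, Gromov precompactness gives a pointed Gromov-Hausdorff limit $(M_i,p_i)\overset{GH}\longrightarrow (X,p_\infty)$.

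For part (2), assume $\mathrm{vol}(B_1(p_i))\ge (1-\delta_i)\mathrm{vol}(B_1^n(0))$ while $d_{GH}(B_1(p_i),B_1^n(0))\ge \varepsilon_0>0$. The Bishop-Gromov ratio $r\mapsto \mathrm{vol}(B_r(p_i))/V_{n,-\delta_i}(r)$ is non-increasing, so near-saturation at $r=1$ propagates to all smaller scales: for every $r\in(0,1]$ one has $\mathrm{vol}(B_r(p_i))\ge (1-\eta_i)\mathrm{vol}(B_r^n(0))$ with $\eta_i\to 0$. In particular the limit $X$ is non-collapsed, and Colding's volume continuity yields $\mathcal{H}^n(B_r(p_\infty))=\omega_n r^n$ for all $r\in(0,1]$. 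By the Cheeger-Colding volume-cone implies metric-cone principle, $B_1(p_\infty)$ is a Euclidean metric cone at $p_\infty$; matching the Euclidean volume growth in top dimension forces this cone to be $B_1^n(0)$. Hence $d_{GH}(B_1(p_i),B_1^n(0))\to 0$, contradicting $\varepsilon_0$.

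For part (1), assume $d_{GH}(B_1(p_i),B_1^n(0))\to 0$ but $\mathrm{vol}(B_1(p_i))\le (1-\delta_0)\mathrm{vol}(B_1^n(0))$. Then $B_1(p_\infty)$ is isometric to $B_1^n(0)$; in particular the limit is non-collapsed of full Hausdorff dimension $n$. Colding's volume continuity theorem for non-collapsed Ricci limits then gives $\mathrm{vol}(B_1(p_i))\to \mathcal{H}^n(B_1(p_\infty))=\mathrm{vol}(B_1^n(0))$, contradicting the volume deficit $\delta_0$.

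The main obstacle is Colding's volume continuity itself, which is invoked in both parts; a self-contained argument would construct $\varepsilon$-Gromov-Hausdorff approximations $B_1(p_i)\to B_1^n(0)$ via harmonic coordinates produced by the almost-splitting theorem, and then use the segment inequality to push quantitative volume bounds across the approximation. Once volume continuity is in hand, the contradiction scheme above reduces everything to the Bishop rigidity and the cone characterization, both of which are already available in the Cheeger-Colding machinery.
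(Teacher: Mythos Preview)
The paper does not supply its own proof of this statement: it is quoted from \cite{CC96} (and \cite{Co97}) as a known background result in the Preliminaries, with no argument given. So there is nothing in the paper to compare against, and your task reduces to whether your sketch stands on its own.

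Your compactness-plus-rigidity scheme is the standard way to package these results, and for part~(2) it is essentially how the Cheeger--Colding argument is usually summarized: near-maximal volume at scale $1$ propagates down by Bishop--Gromov, the limit is a metric cone with Euclidean volume growth, hence $\mathbb{R}^n$. For part~(1), however, you should be aware of a genuine circularity risk. You invoke Colding's volume continuity to conclude $\mathrm{vol}(B_1(p_i))\to \mathcal{H}^n(B_1^n(0))$, but the hard direction of volume continuity (that GH-convergence of manifolds to an $n$-dimensional limit forces volume convergence) is precisely a generalization of part~(1). In your contradiction setup you have no a~priori lower volume bound on $B_1(p_i)$, so you cannot simply declare the sequence non-collapsed and quote volume continuity as a black box without begging the question. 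You flag this yourself in the last paragraph, correctly identifying that the actual content lives in constructing almost-Euclidean harmonic coordinates and running the segment inequality; that is indeed Colding's original argument, and it is what one must do rather than something one can route around. So your proposal is a correct outline provided one reads it as ``reduce everything to Colding's volume convergence, which is the real theorem here,'' but it is not an independent proof of part~(1).
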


We also recall Gromov's short generators \cite{Gro78}:
\begin{defn}\label{Gromov_sg}
Let $(\widetilde{M},\tilde{p})$ be the Riemannian universal cover of $(M,p)$. A subset $S(p)=\{\gamma_1,\gamma_2...,\}$ of $\pi_1(M,p)$ is called a set of \textit{short generators}, if
\begin{center}
	$d(\gamma_1\tilde{p},\tilde{p})\le d(\gamma\tilde{p},\tilde{p})$ for all $\gamma\in\pi_1(M,p)$,
\end{center}
and for each $k\ge 2$,
\begin{center}
	$d(\gamma_k\tilde{p},\tilde{p})\le d(\gamma\tilde{p},\tilde{p})$ for all $\gamma\in\pi_1(M,p)-\langle\gamma_1,...,\gamma_{k-1}\rangle$,
\end{center}
where $\langle\gamma_1,...,\gamma_{k-1}\rangle$ is the subgroup generated by $\gamma_1,...,\gamma_{k-1}$.
\end{defn}





\section{Curvature, volume, and isometric group actions}

In this section, we explore equivariant Gromov-Hausdorff convergence with lower bounds on Ricci curvature and volume. We first prove no small subgroup property (Theorem \ref{main_no_small_subgroup}) in Section 2.1. In Section 2.2, we prove an extension of the no small subgroup property (Proposition \ref{main_nsas_local}). Then we introduce the scaling non-vanishing condition and show the connections among these conditions. In Sections 2.3 and 2.4, we apply the no small subgroup property to obtain some structure results on fundamental groups.

\subsection{No small subgroup}

A classical result in Lie group theory says that a topological group is a Lie group if and only if it has no small subgroups: if a subgroup $H$ of a group $G$ is contained in a sufficiently small neighborhood of the identity element, then $H$ is trivial. \cite{CC00a,CN12} showed that for any $X\in \mathcal{M}(n,-1)$, its isometry group $\mathrm{Isom}(X)$ is a Lie group, by ruling out non-trivial small subgroups of $\mathrm{Isom}(X)$. More precisely, they showed that for $X\in\mathcal{M}(n,-1)$, if there is a sequence of subgroups $H_i$ of $\mathrm{Isom}(X)$ such that
$D_{R,x}(H_i)\to 0$
for all $R>0$ and $x\in X$, then $H_i=\{e\}$ for $i$ large, where
$$D_{R,x}(H)=\sup_{f\in H,q\in B_r(x)}d(fq,q).$$
In this subsection we prove Theorem \ref{main_no_small_subgroup}, a quantitative version of no small subgroup property for non-collapsing Ricci limit spaces. We start with a characterization of the identity map on Ricci limit spaces.

\begin{lem}\label{char_id}
	Let $(X,p)\in\mathcal{M}(n,-1)$ be a Ricci limit space. If $g\in\mathrm{Isom}(X)$ has trivial action on $B_s(p)$ for some $s>0$, then $g=e$.
\end{lem}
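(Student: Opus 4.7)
My plan is to show that $F:=\{x\in X:g(x)=x\}$ equals $X$. $F$ is closed since $g$ is continuous, and $B_s(p)\subseteq F$ by hypothesis. Since $(X,p)\in\mathcal{M}(n,-1)$ is a connected complete length space, it suffices to show $g(q)=q$ for every $q\in X$.

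Given $q\in X$, connect $p$ to $q$ by a minimal geodesic $\sigma:[0,L]\to X$ with $L=d(p,q)$. For every $t\in[0,\min(s,L))$ we have $\sigma(t)\in B_s(p)\subseteq F$, so $g(\sigma(t))=\sigma(t)$, and by continuity this extends to $t=\min(s,L)$; if $L<s$ we are already done, so assume $L\ge s$. Since $g$ is an isometry with $g(x)=x$ for all $x\in B_s(p)$, for each $t\in[0,L]$ and each $x\in B_s(p)$
\[
d(x,\sigma(t))=d(g(x),g(\sigma(t)))=d(x,g(\sigma(t))),
\]
so $\sigma(t)$ and $g(\sigma(t))$ are equidistant from every point of $B_s(p)$. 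Taking $x=\sigma(t_1)$ with $t_1\in[0,s]$ gives $d(\sigma(t_1),g(\sigma(t)))=t-t_1$, and combined with $d(p,g(\sigma(t)))=t$ this yields the triangle equality $d(p,\sigma(t_1))+d(\sigma(t_1),g(\sigma(t)))=d(p,g(\sigma(t)))$. Concatenating $\sigma|_{[0,s]}$ with a minimal geodesic from $\sigma(s)$ to $g(\sigma(t))$ therefore produces a minimal geodesic $\tau$ from $p$ to $g(\sigma(t))$ that agrees with $\sigma$ on $[0,s]$. The remaining goal is to conclude $\tau\equiv\sigma$ on $[0,t]$, which gives $g(q)=q$.

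The proof thus reduces to a geodesic non-branching statement in $\mathcal{M}(n,-1)$: two minimal geodesics issuing from a common point and sharing an initial arc of positive length coincide on their common domain. I expect this to be the main obstacle, since for an arbitrary length space non-branching can fail (for instance a tree with a branch point), so the proof must genuinely exploit the Ricci limit structure. I would carry it out by rescaling at the first time $t_0\ge s$ at which $\tau$ and $\sigma$ first separate: setting $r_i=1/d(\tau(t_0+1/i),\sigma(t_0+1/i))\to\infty$, pass to a tangent cone at $\sigma(t_0)$ via equivariant Gromov--Hausdorff convergence, and obtain two distinct minimal rays in the cone sharing a common initial ray, contradicting the metric-cone (or more generally the $k$-Euclidean factor) structure of tangent cones to Ricci limit spaces established by Cheeger--Colding and Colding--Naber.
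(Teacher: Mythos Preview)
Your reduction is sound: since $g$ is an isometry fixing $B_s(p)$, the curve $g\circ\sigma$ is a minimal geodesic from $p$ agreeing with $\sigma$ on $[0,s]$, so the statement does reduce to non-branching of minimal geodesics in $\mathcal{M}(n,-1)$. The problem is that your sketch of non-branching does not work. The lemma is stated for \emph{collapsed} Ricci limit spaces, and there tangent cones are not metric cones in general and need not have any Euclidean factor; the Cheeger--Colding cone theorem requires a volume lower bound, and the Colding--Naber $k$-regularity is an almost-everywhere statement, so you have no control at the putative branching point $\sigma(t_0)$. Even your rescaling $r_i=1/d(\tau(t_0+1/i),\sigma(t_0+1/i))$ does not produce a tangent cone in the usual sense, and nothing forces the limit to be a space in which branching is forbidden. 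Non-branching of Ricci limit spaces is in fact a hard theorem (Deng, 2020), proved well after the results you cite, and it is not a consequence of tangent-cone structure alone.

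The paper's argument avoids this issue entirely by working with the closed group $H=\overline{\langle g\rangle}$ rather than a single geodesic. One picks regular points $y\in B_{1/2}(p)$ (where $D_{\eta,y}(H)=0$) and $w$ (where the rescaled displacement is $\ge 1/20$), joins them by a geodesic lying in the effective regular set $\mathcal{R}^k_{\epsilon,\delta}$ via the Colding--Naber uniform Reifenberg property, and uses the intermediate value theorem on the displacement function along this geodesic to locate a scale and point where a blow-up gives a nontrivial group acting on $B_1^k(0)\subset\mathbb{R}^k$ with displacement exactly $1/20$---which is impossible for a subgroup of $\mathrm{Isom}(\mathbb{R}^k)$. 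The point is that regularity is imposed along the whole geodesic by the Colding--Naber Hölder continuity, not assumed at a single uncontrolled branching point; this is exactly what your argument is missing.
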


\begin{proof}
	Scaling the metric if necessary, we can assume that $s=1$. 
	
	The proof is a modification of the arguments of Theorem 4.5 in \cite{CC00a} and Theorem 1.14 in \cite{CN12}. Let $k$ be the dimension of $X$ in the Colding-Naber sense \cite{CN12} and $\mathcal{R}^k$ be the set of points of which any tangent cone is isometric to $\mathbb{R}^k$. Let $\mathcal{R}^k_{\epsilon,\delta}$ be the effective regular set defined as the set of all points $y\in X$ such that
	$$d_{GH}(B_r(y),B^k_r(0))\le \epsilon r$$
	for all $0<r<\delta$ \cite{CC97}.
	
    We recall the uniform Reifenberg property proved in \cite{CN12}: almost every $y\in\mathcal{R}^k$ and almost every $z\in\mathcal{R}^k$ have the property that for any $\epsilon>0$, there exist $\delta>0$ and a geodesic $\gamma_{yz}$ connecting $y$ and $z$ such that $\gamma_{yz}\subseteq \mathcal{R}^k_{\epsilon,\delta}$.
	
	Suppose that $g$ is not the identity element. Let $H$ be the closure of the subgroup generated by $g$, then clearly $H|_{B_1(x)}=\mathrm{id}$. Since $H\not=\{e\}$, for any $\epsilon>0$, there exist $\theta\in(0,\epsilon)$ and a $k$-regular point $w\in(\mathcal{R}_k)_{\epsilon,\theta}$ such that
	$$\theta^{-1}D_{\theta,w}(H)\ge 1/20.$$
	On the other hand, because $H$ acts trivially on $B_1(x)$, there are $\eta>0$ and a $k$-regular point $y\in B_{1/2}(x)\cap (\mathcal{R}_k)_{\epsilon,\eta}$ with
	$$\eta^{-1}D_{\eta,y}(H)=0.$$
	We further assume that the points $w$ and $y$ chosen above satisfy the uniform Reifenberg property, that is, there are $\lambda<\min\{\theta,\eta\}$ and such that $\gamma_{wy}$ lies in $\mathcal{R}^k_{\epsilon,\lambda}$.
	
	If $\lambda^{-1} D_{\lambda,w}(H)\le 1/20$, then by intermediate value theorem, we can find $r\in [\lambda,\eta]$ such that
	$$r^{-1} D_{r,w}(H)=1/20.$$
	
	If $\lambda^{-1} D_{\lambda,w}(H)> 1/20$, together with 
	$$\lambda^{-1}D_{\lambda,y}(H)=0<1/20,$$
	we can find $z$ along $\gamma_{wy}$ such that
	$$\lambda^{-1} D_{\lambda,z}(H)=1/20.$$
	
	Replace the arbitrary $\epsilon>0$ by a sequence $\epsilon_{i}\to 0$. Then we can find $\tau_{i}\ge r_{i}\to 0$, $z_{i}\in\mathcal{R}^k_{\epsilon_{i},\tau_{i}}$ such that $$D_{r_{i},z_{i}}(H)=r_i/20.$$ Consequently,
	$$(r_{i}^{-1}B_{r_{i}}(z_{i}),z_{i},H)\overset{GH}\longrightarrow(B_1^k(0),0,H_\infty)$$
	with $D_{1,0}(H_\infty)=1/20$. However, there is no such a subgroup $H_\infty$ of $\mathrm{Isom}(\mathbb{R}^k)$, a contradiction.
\end{proof}



Next we prove Theorem \ref{main_no_small_subgroup}.

\begin{proof}[Proof of Theorem \ref{main_no_small_subgroup}]
	We show that $D_{1,p}(H)\ge\delta(n,v)$. For the general result $D_{r,p}(H)\ge r\delta(n,v)$ for all $r\in(0,1]$, with a possibly different $\delta$, we can scale the metric by $r^{-1}$. By relative volume comparison on $(r^{-1}X,p)$ the unit ball has volume
	$\mathrm{vol}(r^{-1}B_r(p))\ge C(n)v$
	and thus $D_{1,p}(H)\ge\delta(n,C(n)v)$ on $(r^{-1}X,p)$ for all $r\in(0,1]$. Scaling the metric back to $(X,p)$, we have $D_{r,p}(H)\ge\delta(n,C(n)v)r$.
	
	Now suppose that the contrary holds, then there exists a sequence of spaces $(X_i,p_i)\in\mathcal{M}(n,-1,v)$ and nontrivial subgroups $H_i$ of $\mathrm{Isom}(X_i)$ with $$D_{1,p_i}(H_i)\to 0.$$ By Lemma \ref{char_id}, passing to a subsequence if necessary,
	$$(X_i,p_i,H_i)\overset{GH}\longrightarrow(X,p,\{e\}).$$
	
	We will find a subsequence $i(j)$, $\epsilon_{j(i)}\to 0$, $\tau_{i(j)}\ge r_{i(j)}>0$, $z_{i(j)}\in\mathcal{R}_{\epsilon_{i(j)},\tau_{i(j)}}\subseteq X_{i(j)}$ and $D_{r_{i(j)},z_{i(j)}}(H_{i(j)})=\dfrac{1}{20}r_{i(j)}$. Then
	$$(r_{i(j)}^{-1}B_{r_{i(j)}}(z_{i(j)}),z_{i(j)},H_{i(j)})\overset{GH}\longrightarrow(B_1^n(0),0,H_\infty)$$
	with $D_1(H_\infty)=1/20$; the desired contradiction follows.
	
	Fix a regular point $y\in B_1(p)\subset X$. For each $\epsilon>0$, there is $\delta>0$ such that $y\in\mathcal{R}_{\epsilon,\delta}$. Pick a sequence of regular points $y_i\in X_i$ converging to $y$. Put
	$$\eta_i=d_{GH}(\delta^{-1}B_\delta(y_i),\delta^{-1}B_\delta(y))\to 0.$$
	Because $y\in\mathcal{R}_{\epsilon,\delta}$,
	$$d_{GH}(\delta^{-1}B_\delta(y_i),B_1^n(0))\le\eta_i+\epsilon.$$
	By Theorem \ref{CC_regular_vol}, for all $0<s\le\delta$,
	$$d_{GH}(s^{-1}B_s(y_i),B_1^n(0))\le\Phi(\eta_i+\epsilon,\delta|n).$$
	In other words, $y_i\in\mathcal{R}_{\Phi_i,\delta}$. Also, because $H_i\to\{e\}$,
	$$\delta^{-1}D_{\delta,y_i}(H_i)\to 0.$$
	
	For each $\epsilon$, pick $i(\epsilon)$ large such that for all $i\ge i(\epsilon)$, we have
	$$\eta_i\le\delta \text{\ and\ } \delta^{-1}D_{\delta,y_i}(H_i)\le \dfrac{1}{20}.$$
	
	Now consider a sequence $\epsilon_j\to 0$, then $y\in\mathcal{R}_{\epsilon_j,\delta_j}$ for some $\delta_j\to 0$. There is a subsequence $i(j)$ such that\\
	1. $\eta_{i(j)}\le\delta_j$, thus $y_{i(j)}\in\mathcal{R}_{\Phi_{i(j)},\delta_j}$, where $\Phi_{i(j)}=\Phi(\delta_j+\epsilon_j|n);$\\
	2. $D_{\delta_j,y_{i(j)}}(H_{i(j)})\le \dfrac{1}{20}\delta_j$.
	
	On each $X_{i(j)}$, there is $\theta_{i(j)}>0$, $w_{i(j)}\in\mathcal{R}_{\Phi_{i(j)},\theta_{i(j)}}$ such that
	$$D_{\theta_{i(j)},w_{i(j)}}(H_{i(j)})\ge \dfrac{1}{20}\theta_{i(j)}.$$
	
	The remaining proof is essentially the same as Theorem 4.5 in [CC00a].
\end{proof}

Using Theorem \ref{main_no_small_subgroup}, we prove a corollary below on a convergent sequence with discrete limit group. For $r>0$ and an isometric $G$-action on a space $(X,p)$, we put $G(r)$ as all the elements of $G$ with displacement at $p$ being less than $r$:
$$G(r)=\{g\in G\ |\ d(gp,p)\le r\}.$$

\begin{cor}\label{stable_nss}
Let $(M_i,p_i)$ be a sequence of complete $n$-manifolds with $$\mathrm{Ric}_{M_i}\ge-(n-1),\quad\mathrm{vol}(B_1({{p}}_i))\ge v>0.$$
Suppose that there is an isometric $H_i$-action on $M_i$ for each $i$ and the following sequence converges:
$$(M_i,p_i,H_i)\overset{GH}\longrightarrow (X,p,H).$$
If $H$ is discrete, then 
$$\#H_i(1)\le \#H(2)<\infty$$
for all $i$ large.
\end{cor}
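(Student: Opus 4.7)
The plan is to prove $\#H(2)<\infty$ first, then obtain $\#H_i(1)\le\#H(2)$ by a pigeonhole plus Theorem \ref{main_no_small_subgroup} argument, with a final intermediate-value step to close the contradiction. Finiteness of $H(2)$ is immediate from the fact that $H$ is closed and discrete in $\mathrm{Isom}(X)$, which acts properly on the proper metric space $X$: the stabilizer $H_p$ is both compact and discrete, hence finite, and the orbit $Hp$ is locally finite, so $H\cdot p\cap \overline{B_2(p)}$ is finite and thus so is $H(2)$.

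For the main bound, I argue by contradiction: after passing to a subsequence, assume $\#H_i(1)>N:=\#H(2)$ for every $i$. Pick $N+1$ distinct elements $h_{i,0},\ldots,h_{i,N}\in H_i(1)$; the equivariant Gromov--Hausdorff convergence together with a diagonal extraction gives $h_{i,j}\to h_j\in H$ with $d(h_j p, p)\le 1<2$, so each $h_j\in H(2)$. By pigeonhole, $h_j=h_k$ for some $j\neq k$, producing a nontrivial element $g_i:=h_{i,j}^{-1}h_{i,k}\in H_i$ with $d(g_ip_i,p_i)\to 0$. Applying Theorem \ref{main_no_small_subgroup} to the nontrivial subgroup $\langle g_i\rangle\le\mathrm{Isom}(M_i)$ (since $M_i\in\mathcal{M}(n,-1,v)$) yields $D_{1,p_i}(\langle g_i\rangle)\ge\delta(n,v)$. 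Passing to a further subsequence, $(M_i,p_i,\langle g_i\rangle)\to(X,p,L)$ equivariantly with $L\le H$ closed, hence discrete, and the displacement lower bound will force $L\neq\{e\}$.

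To close the argument, pick $\ell\in L\setminus\{e\}$ with $d_0:=d(\ell p,p)>0$, realized as $\ell=\lim g_i^{k_i}$ (so $|k_i|\to\infty$ because $d(g_ip_i,p_i)\to 0$). The discrete-time function $j\mapsto d(g_i^j p_i,p_i)$ has increments bounded by $d(g_ip_i,p_i)\to 0$, starts at $0$, and reaches $\approx d_0$ at $j=k_i$; an intermediate-value argument then supplies, for every $r\in(0,\min(d_0,2))$, indices $j_i(r)$ with $d(g_i^{j_i(r)}p_i,p_i)\to r$. Taking subsequential limits $g_i^{j_i(r)}\to h_r$ yields distinct elements $h_r\in H(2)$ with $d(h_rp,p)=r$ for uncountably many $r$, contradicting $\#H(2)<\infty$. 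The step I expect to require the most care is justifying that the displacement lower bound on $\langle g_i\rangle$ transfers to the equivariant limit $L$ and forces $L$ to be nontrivial; this requires handling the possibility that the sup defining $D_{1,p_i}(\langle g_i\rangle)$ is realized only along powers $g_i^{k_i}$ whose orbits at $p_i$ escape to infinity, in which case the intermediate-value argument itself already produces a nontrivial limit element of $L$ at any prescribed distance from $p$.
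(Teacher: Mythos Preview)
Your approach is the paper's: both reduce via pigeonhole to a nontrivial $g_i\in H_i$ with $g_i\to e$ and then invoke Theorem~\ref{main_no_small_subgroup}. The paper compresses the contradiction into one line, asserting that discreteness of $H$ makes it ``clear'' that $\langle g_i\rangle\overset{GH}\longrightarrow\{e\}$, whence $D_{1,p_i}(\langle g_i\rangle)\to 0<\delta(n,v)$ and so $g_i=e$. Your intermediate-value argument is exactly what is needed to justify that assertion, so you are unpacking the step the paper leaves implicit.

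There is one genuine gap in your unpacking. You write ``pick $\ell\in L\setminus\{e\}$ with $d_0:=d(\ell p,p)>0$'', but nothing rules out the possibility that every nontrivial element of $L$ fixes $p$; in that case $d_0=0$ and your IVT at $p$ is vacuous. The ``escaping powers'' scenario you flag at the end is a different failure mode and does not cover this one. The fix is to run the IVT at a point where displacement is actually witnessed. From $D_{1,p_i}(\langle g_i\rangle)\ge\delta$ choose $q_i\in B_1(p_i)$ and $k_i$ with $d(g_i^{k_i}q_i,q_i)\ge\delta/2$. Since $g_i\to e$ equivariantly, $D_{1,p_i}(g_i)\to 0$, so the increments of $j\mapsto d(g_i^{j}q_i,q_i)$ tend to $0$; for each $r\in(0,\min(\delta/2,1))$ you get $j_i(r)$ with $d(g_i^{j_i(r)}q_i,q_i)\to r$, and then $d(g_i^{j_i(r)}p_i,p_i)\le 2d(q_i,p_i)+d(g_i^{j_i(r)}q_i,q_i)<3$. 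Subsequential limits yield pairwise distinct $\ell_r\in L(3)\subseteq H(3)$ with $d(\ell_r q,q)=r$, contradicting $\#H(3)<\infty$. Arguing this way also makes the separate step ``show $L\neq\{e\}$'' unnecessary.
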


\begin{proof}
	We first show that if a sequence $h_i\in H_i$ with $h_i\overset{GH}\to e$, then $h_i=e$ for all $i$ large. Indeed, because $H$ is a discrete group, it is clear that the group generated by $h_i$ also converges to $\{e\}$. On the other hand, every nontrivial subgroup of $H_i$ has displacement at least $\delta(n,v)$ on $B_1({p}_i)$. Therefore, the subgroup generated by $h_i$ must be trivial and thus $h_i=e$. 
	
	This implies that if two sequences $h_i\overset{GH}\to g$ and $h'_i\overset{GH}\to g$ with $g\in H(2)$, then $h_i=h'_i$ for all $i$ large. Hence
	$$\#H_i(1)\le \# H(2) <\infty$$
	for all $i$ large.
\end{proof}

\subsection{No small almost subgroup and scaling nonvanishing isometries}

We explore the relations among volume, no small almost subgroup property and scaling nonvanishing property in this section. We present two statements equivalent to Conjecture \ref{main_conj_vol_nv} in terms of Gromov-Hausdorff convergence (see Proposition \ref{rescale_identity} and Remark \ref{rem_rescale_id}). We also show that scaling nonvanishing property holds when sectional curvature has a lower bound (Corollary \ref{nsas_sec}).

We first extend the idea of \textit{no small subgroups} to certain subsets that are very close to being subgroups, which we call \textit{almost subgroups}.

\begin{defn}
	Let $G$ be a group and $A$ be a subset of $G$. We say that $A$ is a \textit{symmetric subset} of $G$ if $e\in A$ and $A^{-1}=A$, where $A^{-1}=\{a^{-1}|a\in A\}$.
\end{defn}

\begin{defn}\label{def_almost_subgp}
	Let $\eta>0$. Let $(M,p)$ be a complete $n$-manifold and $G$ be group acting isometrically on $M$. We say that a symmetric subset $A\not=\{e\}$ of $G$ is a $\eta$-subgroup at $p$, if $\mathrm{diam}(Ap)\in (0,\infty)$ and
	$$\dfrac{d_{H}(Ap,A^2p)}{\mathrm{diam}(Ap)}<\eta.$$
	We say that $A$ is a $\eta$-subgroup on $B_1(p)$, if $\mathrm{diam}(Aq)\in (0,\infty)$ and
	$$\dfrac{d_{H}(Aq,A^2q)}{\mathrm{diam}(Aq)}<\eta$$
	for all $q\in B_1(p)$.
\end{defn}

Note that in Definition \ref{def_almost_subgp}, if the ratio is $0$, then $Ap=A^2p$ and thus $A$-orbit at $p$ is a group action orbit. Therefore, this ratio describes how close a symmetric subset $A$ is to being a subgroup regarding its orbit at $p$. 

We introduce the notion of no small almost subgroup at a point, or on a metric ball.

\begin{defn}\label{main_def_nsas}
	Let $\epsilon,\eta,r>0$ and $(M,p)$ be an $n$-manifold. For a subgroup $G$ of $\mathrm{Isom}(M)$ acting on $M$,
	we say that $G$-action has \textit{no $\epsilon$-small $\eta$-subgroup} at $p$ (resp. on $B_1(p)$) with scale $r$, if any $\eta$-subgroup $A$ at $p$ (resp. on $B_1(p)$) satisfies
	$$r^{-1}B_{r,p}(A)\ge \epsilon.$$
\end{defn}

We show that Theorem \ref{main_no_small_subgroup} implies no $\epsilon$-small $\eta$-subgroup on $B_1(p)$, where $\epsilon$ and $\eta$ only depend on $n$ and $v$.

\begin{prop}\label{main_nsas_local}
	Given $n, v>0$, there exist positive constants $\epsilon(n,v)$ and $\eta(n,v)$ such that the following holds.
	
	Let $(M,p)$ be a complete $n$-manifold with
	$$\mathrm{Ric}\ge -(n-1), \quad \mathrm{vol}(B_1({p}))\ge v.$$ For any isometric $G$-action on $M$, $G$-action has no $\epsilon$-small $\eta$-subgroup on $B_1(p)$ with scale $r\in(0,1]$.
\end{prop}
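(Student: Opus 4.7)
The plan is to argue by contradiction and reduce to Theorem \ref{main_no_small_subgroup} through a two-step rescaling. Suppose no such $\eta$ and $\epsilon$ exist. Then I obtain sequences $(M_i,p_i)$ satisfying the hypotheses, scales $r_i\in(0,1]$, $\eta_i\to 0$, $\epsilon_i\to 0$, and symmetric subsets $A_i\neq\{e\}$ that are $\eta_i$-subgroups on $B_1(p_i)$ and satisfy $r_i^{-1} D_{r_i,p_i}(A_i)<\epsilon_i$. By rescaling each $M_i$ by $r_i^{-1}\ge 1$, I reduce to the case $r_i=1$: Bishop--Gromov gives a uniform non-collapsing constant $v'(n,v)>0$ on $B_1$ in the new metric, while $B_1^{\mathrm{new}}(p_i)\subset B_1^{\mathrm{old}}(p_i)$ makes the almost-subgroup condition persist. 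I may thus assume $(M_i,p_i)\in\mathcal{M}(n,-1,v')$ with $D_{1,p_i}(A_i)<\epsilon_i$, and writing $t_i:=\mathrm{diam}(A_ip_i)$ gives $0<t_i\le 2\epsilon_i\to 0$.

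The crux is a second rescaling at an intermediate scale. Let $\delta=\delta(n,v')>0$ denote the constant of Theorem \ref{main_no_small_subgroup}. The inclusion $A_ip_i\subset B_{t_i}(p_i)$ together with $\mathrm{diam}(A_ip_i)=t_i$ forces $D_{t_i,p_i}(A_i)\ge t_i/2$, so the ratio $s^{-1}D_{s,p_i}(A_i)$ is at least $1/2$ at $s=t_i$, whereas at $s=1$ it is less than $\epsilon_i$. Using monotonicity of $s\mapsto D_{s,p_i}(A_i)$, I select $s_i\in[t_i,1]$ so that $s_i^{-1}D_{s_i,p_i}(A_i)=\delta/10$. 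Rescaling by $s_i^{-1}\ge 1$ preserves $\mathcal{M}(n,-1,v')$ (with a possibly further adjusted constant absorbed into $v'$), yields $D_{1,p_i}(A_i)=\delta/10$ in the new metric, and keeps $A_i$ an $\eta_i$-subgroup on $B_{s_i^{-1}}(p_i)$; since $s_i\le 2\epsilon_i/\delta\to 0$, this ball exhausts any fixed $B_R(p_i)$ eventually.

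I then pass to an equivariant Gromov--Hausdorff subsequential limit $(M_i,p_i,A_i)\to(X,p,A_\infty)$ with $(X,p)\in\mathcal{M}(n,-1,v')$ and $A_\infty$ a closed symmetric subset of $\mathrm{Isom}(X)$. Because the rescaled displacement $\delta/10$ is realized in the limit, $A_\infty\ne\{e\}$. Since $\eta_i\to 0$ and the defining ratio is scale-invariant, $A_\infty$ satisfies $A_\infty q=A_\infty^2 q$ for every $q\in X$ (the degenerate case $\mathrm{diam}(A_\infty q)=0$ means $q$ is fixed by $A_\infty$, and the identity is automatic). Iterating gives $A_\infty^k q=A_\infty q$ for every $k\ge 1$, so the closed subgroup $H:=\overline{\langle A_\infty\rangle}$ satisfies $Hq=A_\infty q$ for all $q\in X$; consequently $D_{r,p}(H)=D_{r,p}(A_\infty)$ for every $r>0$.

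The final contradiction is immediate. The nontrivial subgroup $H\subset\mathrm{Isom}(X)$ must satisfy $D_{1,p}(H)\ge\delta$ by Theorem \ref{main_no_small_subgroup}, while lower semicontinuity of displacements under Gromov--Hausdorff convergence (applied on $B_{1-\tau}(p)$ and letting $\tau\to 0$) gives $D_{1,p}(A_\infty)\le\delta/10$, so $\delta\le D_{1,p}(H)=D_{1,p}(A_\infty)\le\delta/10$, a contradiction. The main technical obstacle I expect is propagating the scale-invariant almost-subgroup identity through the Gromov--Hausdorff limit so that $A_\infty q=A_\infty^2 q$ holds uniformly in $q\in X$ (handling the degenerate $\mathrm{diam}(A_\infty q)=0$ case), and promoting this orbit identity to $Hq=A_\infty q$ for the generated subgroup; once these are in place, the precise scale selection $D_{1,p_i}(A_i)=\delta/10$ converts the qualitative no-small-subgroup property into the desired quantitative contradiction.
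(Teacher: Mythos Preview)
Your proposal is correct and follows essentially the same strategy as the paper: reduce to scale $r=1$ by Bishop--Gromov, select a critical scale $s_i$ where $s_i^{-1}D_{s_i,p_i}(A_i)$ equals a fixed fraction of the no-small-subgroup constant $\delta$, pass to an equivariant limit $(X,p,A_\infty)$, and contradict Theorem~\ref{main_no_small_subgroup}. The only cosmetic difference is the direction of the final contradiction: the paper argues that $A_\infty$ cannot be a subgroup (displacement $\delta/2<\delta$) and then exhibits a point $q\in B_1(p')$ with $A_\infty^2q\neq A_\infty q$, whereas you first use the limiting ratio condition to get $A_\infty^2q=A_\infty q$ everywhere and then bound the displacement of the generated closed subgroup $H$; these are the same contradiction read in opposite order. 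One small slip: your bound $s_i\le 2\epsilon_i/\delta$ should be $s_i\le 10\epsilon_i/\delta$ (from $s_i\cdot\delta/10=D_{s_i,p_i}(A_i)\le D_{1,p_i}(A_i)<\epsilon_i$), but this does not affect the argument.
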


\begin{proof}
	If we can prove a lower bound for $D_{1,p}(A)$, where $A$ is any symmetric subset of $G$, then the estimate of $r^{-1}D_{r,p}(A)$ follows from relative volume comparison. We bound $D_{1,p}(A)$ by a contradicting argument. 
	
	Suppose that there is a sequence of complete $n$-manifolds $(M_i,p_i)$ with $$\mathrm{Ric}_{M_i}\ge-(n-1),\quad\mathrm{vol}(B_1({{p}}_i))\ge v,$$
	and a sequence of symmetric subsets $A_i\not=\{e\}$ of $G_i$ with $D_{1,p_i}(A_i)\to 0$ and
	$$\sup_{q\in B_1({p}_i)}\dfrac{d_{H}(A_iq,A_i^2q)}{\mathrm{diam}(A_iq)}\to 0.$$
	
	For simplicity, we write $D_{1,p_i}$ as $D_1$ since the base point is clear. Let $\delta=\delta(n,v)$ be the constant in Theorem \ref{main_no_small_subgroup}. For any positive integer $j$, we can choose $i(j)$ large with
	$$D_{1}(A_{i(j)})\le \delta/2.$$
	For each $i(j)$, because $\mathrm{diam}(A_{i(j)}p_{i(j)})>0$, we have
	$$r^{-1}D_{r}(A_{i(j)})\to\infty$$
    as $r\to 0$. By intermediate value theorem, there is $r(j)\in(0,1]$ such that $$r(j)^{-1}D_{r(j)}(A_{i(j)})=\delta/2.$$
	For simplicity, we just call $r(j)$ as $r_i$ and the subsequence $i(j)$ as $i$. It is clear that $r_i\to 0$ by Lemma \ref{char_id}. 
	
	After rescaling $r_i^{-1}\to\infty$,
	$$(r_i^{-1}{M}_i,{p}_i,A_i)\overset{GH}\longrightarrow({X}',{p}',A_\infty).$$
	$A_\infty$ satisfies $D_1(A_\infty)=\delta/2$. By Theorem \ref{main_no_small_subgroup}, $A_\infty$ is not a subgroup. Thus there is some point $q\in B_1({p}')$ such that $A_\infty^2q\not=A_\infty q$ (see Lemma \ref{char_id}).
	
	On the other hand, we know that
	$$\sup_{q\in B_{r_i}({p}_i)}\dfrac{d_{H}(A_iq,A_i^2q)}{\mathrm{diam}(A_iq)}\to 0.$$
	For a sequence $q_i$ converging to $q$,
	$$r_i^{-1}d_{H}(A_iq_i,A_i^2q_i)\le\epsilon_i\cdot r_i^{-1}D_{1}(A_i)=\epsilon_i\cdot\delta/2\to 0$$
	for some sequence $\epsilon_i\to 0$. Thus $A_\infty q=A_\infty^2q$, a contradiction.
\end{proof}

Next we show that whether a sequence $(M_i,p_i,G_i)$ has no small almost subgroup at $p_i$ is closely related to almost identity maps on different scales. Note that (2) below naturally leads to the scaling nonvanishing property (see Remark \ref{rem_rescale_id}).

\begin{prop}\label{rescale_identity}
	Let $(M_i,p_i)$ be a sequence of complete $n$-manifolds with $$\mathrm{Ric}_{M_i}\ge -(n-1), \quad \mathrm{vol}(B_1(p_i))\ge v>0.$$
	Let $G_i$ be a group acting isometrically on $M_i$ for each $i$. Suppose that one of the following statements holds:\\
	(1) For any sequence $f_i\in G_i$ and $r_i\le s_i\in(0,1]$ with
	$$(s^{-1}_iM_i,p_i,f_i)\overset{GH}\longrightarrow(X,p,\mathrm{id}),$$
	$$(r^{-1}_iM_i,p_i,f_i)\overset{GH}\longrightarrow(X',p',f'),$$
	if $f'$ fixes $p'$, then $f'=\mathrm{id}$, where $\mathrm{id}$ is the identity map.\\
	(2) For any sequence $f_i\in G_i$ and  $r_i\le s_i\in(0,1]$ with
	$$(s^{-1}_iM_i,p_i,f_i)\overset{GH}\longrightarrow(X,p,f),$$
	$$(r^{-1}_iM_i,p_i,f_i)\overset{GH}\longrightarrow(X',p',\mathrm{id}),$$
	then $f=\mathrm{id}$.\\
	Then there are $\epsilon,\eta>0$ such that for all $i$ and all $r\in(0,1]$, $G_i$-action has no $\epsilon$-small $\eta$-subgroup at $p_i$ with scale $r\in(0,1]$.
	
	Moreover, statements (1) and (2) are equivalent.
\end{prop}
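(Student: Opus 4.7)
The plan is to establish the chain $(1) \iff \mathrm{NSAS} \iff (2)$, where NSAS abbreviates the no-small-almost-subgroup conclusion of the proposition. This proves both the main implication and the ``moreover'' equivalence at once.

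I would first handle $(1) \Rightarrow \mathrm{NSAS}$ by contradiction. If NSAS fails, there exist symmetric subsets $A_i \subset G_i$ with $\eta_i, \epsilon_i \to 0$ and scales $r_i \in (0, 1]$ such that $A_i$ is an $\eta_i$-subgroup at $p_i$ and $r_i^{-1} D_{r_i, p_i}(A_i) < \epsilon_i$. Writing $t_i = \mathrm{diam}(A_i p_i)$, using $e \in A_i$ and the triangle inequality one obtains $t_i \le 2\epsilon_i r_i$, so $t_i$ and $t_i/r_i$ both tend to zero. Apply the intermediate value theorem to the continuous function $r \mapsto r^{-1} D_{r, p_i}(A_i)$ (which tends to $\infty$ as $r \to 0$ and is less than $\epsilon_i$ at $r_i$) to pick a canonical scale $r_i'$ with $(r_i')^{-1} D_{r_i', p_i}(A_i) = \delta_0$, a fixed small constant. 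The 2-Lipschitz property of the displacement function shows $r_i' \sim t_i$, so the rescaled orbit in $(r_i')^{-1} M_i$ has bounded diameter.

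Pass to a subsequential equivariant GH limit $(r_i'^{-1} M_i, p_i, A_i) \to (X, p, A_\infty)$, with $X$ a non-collapsed Ricci limit by relative volume comparison. In the limit, $A_\infty$ is symmetric with $e$, satisfies $D_{1, p}(A_\infty) = \delta_0$, and the almost-subgroup condition passes to the equality $A_\infty p = A_\infty^2 p$. Iterating gives $A_\infty^k p = A_\infty p$ for all $k$, so the closed subgroup $H = \overline{\langle A_\infty \rangle}$ satisfies $Hp = A_\infty p$ and decomposes as $H = A_\infty \cdot \mathrm{Stab}_p(H)$. Theorem \ref{main_no_small_subgroup} applied to $X$ yields $D_{1, p}(H) \ge \delta(n, v')$, with $\delta_0$ chosen smaller than $\delta(n, v')$. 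The resulting displacement gap forces $\mathrm{Stab}_p(H)$ to be nontrivial, and the concrete relations $c^{-1}ab \in \mathrm{Stab}_p(H) \cap A_\infty^3$ derived from $A_\infty^2 p = A_\infty p$ produce a nontrivial $h \in \mathrm{Stab}_p(H) \cap A_\infty^3$ (otherwise $A_\infty$ itself would be a subgroup equal to $H$, contradicting the gap).

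Lift $h$ to $h_i \in A_i^3$ with $h_i \to h$ at scale $r_i'$. A length-three product of elements each with displacement less than $\epsilon_i r_i$ on $B_{r_i}(p_i)$ has displacement at most $O(\epsilon_i r_i)$ on a slightly shrunk ball, and Lemma \ref{char_id} (triviality on a ball implies triviality on all of $X$) promotes this vanishing to $(r_i^{-1} M_i, p_i, h_i) \to (X'', p'', \mathrm{id})$ at the coarser scale $r_i$. Since $h$ fixes $p$ in $X$ but is nontrivial, hypothesis (1) with $s_i = r_i$ (coarser, identity limit) and finer scale $r_i'$ (limit $h$ fixing $p$) gives $h = \mathrm{id}$, the desired contradiction. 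The case $(2) \Rightarrow \mathrm{NSAS}$ runs in parallel with the roles of the two scales reversed: produce an element whose coarser-scale limit is nontrivial while its finer-scale limit is identity, and apply (2). For $\mathrm{NSAS} \Rightarrow (1), (2)$, form truncated cyclic symmetric sets $A_i = \{f_i^k : |k| \le N_i\}$ with $N_i \to \infty$ appropriately; verify these become $\eta$-subgroups at $p_i$ whenever the limit in (1) or (2) is nontrivial and that their normalized displacement at the relevant scale vanishes, contradicting NSAS. The hardest step is the displacement propagation in the lifting argument: the naive 2-Lipschitz bound for isometry displacement gives only bounded (not vanishing) displacement on larger balls, so one must combine the bounded product length with careful orbit-tracking that keeps intermediate iterates inside $B_{r_i}(p_i)$, together with Lemma \ref{char_id}'s promotion of local triviality to global triviality in the GH limit.
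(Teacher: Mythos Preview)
Your argument for $(1)\Rightarrow\mathrm{NSAS}$ is essentially the paper's: rescale to the critical scale where $D_1(A_\infty)=\delta_0$, use $A_\infty p=A_\infty^2 p$ together with Theorem~\ref{main_no_small_subgroup} to produce a nontrivial $h\in A_\infty^3$ fixing $p$, lift to $h_i\in A_i^3$, and contradict (1). The minor claim ``$r_i'\sim t_i$'' is stronger than what you actually need or prove (you only get $t_i/r_i'\le 2\delta_0$, and that suffices).

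There are, however, two genuine gaps in the rest of your chain $(1)\Leftrightarrow\mathrm{NSAS}\Leftrightarrow(2)$.

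\emph{The step $(2)\Rightarrow\mathrm{NSAS}$ is not ``parallel with roles reversed.''} The NSAS-failure construction always yields an element that is \emph{trivial at the coarser scale} (where $A_i\to\{e\}$) and \emph{nontrivial at the finer scale} (where $A_\infty$ lives). That is exactly the configuration that contradicts (1), but it does \emph{not} contradict (2), which requires the opposite pattern (nontrivial coarse limit, trivial fine limit). You give no mechanism for producing such an element from an NSAS failure. The paper does not attempt this; instead it proves $(2)\Rightarrow(1)$ directly by a clever intermediate-scale argument: assuming (1) fails with $f'$ of finite order $N$ fixing $p'$, one finds $\theta_i\in(r_i,s_i)$ with $\theta_i^{-1}D_{\theta_i,p_i}(f_i)=\delta/(2N)$, so the limit $f''$ there has order $\ge 2N$ by Theorem~\ref{main_no_small_subgroup}; then $f_i^N$ has nontrivial limit at scale $\theta_i$ but trivial limit at scale $r_i$, contradicting (2). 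The direction $(1)\Rightarrow(2)$ is genuinely analogous to this.

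\emph{The steps $\mathrm{NSAS}\Rightarrow(1),(2)$ are not justified with NSAS only at $p_i$.} In your sketch you form $A_i=\{f_i^k:|k|\le N_i\}$ and hope the almost-subgroup ratio $d_H(A_ip_i,A_i^2p_i)/\mathrm{diam}(A_ip_i)$ becomes small. But in (1) the limit $f'$ fixes $p'$, and in (2) the finer-scale identity limit forces $d(f_ip_i,p_i)=o(r_i)=o(s_i)$, so $f$ also fixes $p$; in both cases $A_ip_i$ and $A_i^2p_i$ collapse to $\{p\}$ at every relevant scale, and the ratio is an indeterminate $0/0$. You have no control over it. This is why the paper's converse (Remark~\ref{rem_rescale_id} and the remark after Corollary~\ref{rescale_identity_cor}) assumes NSAS at \emph{every} $q\in B_1(p_i)$, not just at $p_i$: one can then look at a point where the limit isometry moves, and the ratio becomes tractable. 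Your chain, as written, does not close.
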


Before presenting the proof of Proposition \ref{rescale_identity}, we make some remarks on its assumptions and connections to the scaling $\Phi$-nonvanishing property.

\begin{rem}
	In assumptions (1) and (2) of Proposition \ref{rescale_identity}, we assume that $r_i\le s_i$. The main interesting case is $r_i/s_i\to 0$. Take (1) for example, if $r_i/s_i$ subconverges to some $l\in(0,1)$, then
	$$(r_i^{-1}M_i,p_i,f_i)\overset{GH}\longrightarrow (l^{-1}X,p,f')$$
	with $f'|_{B_{l}(p)}=\mathrm{id}$. By Lemma \ref{char_id}, this means $f'=\mathrm{id}$. Thus (1) is always true if $r_i/s_i\not\to 0$. Similarly, (2) always holds if $r_i/s_i\not\to 0$.
\end{rem}

\begin{rem}\label{rem_rescale_id}
	With a standard contradicting argument, it is obvious to see the following. To verify Conjecture \ref{main_conj_vol_nv}, that is, the scaling $\Phi(\delta,n,v)$-nonvanishing property, it is equivalent to prove that (2) in Proposition \ref{rescale_identity} holds for any sequence $(M_i,p_i,f_i)$ with the curvature and volume condition. Therefore, due to Proposition \ref{rescale_identity}, scaling $\Phi$-nonvanishing property implies no $\epsilon$-small $\eta$-subgroup at $p$ for some positive constants $\epsilon(n,v,\Phi)$ and $\eta(n,v,\Phi)$. To sum up, we result in the Corollary below.
\end{rem}

\begin{cor}\label{rescale_identity_cor}
	Given $n,v>0$ and a positive function $\Phi(\delta)$, then there are positive constants $\epsilon(n,v,\Phi)$ and $\eta(n,v,\Phi)$ such that the following holds.
	
	Let $(M,p)$ be a complete $n$-manifolds with $$\mathrm{Ric}_{M}\ge -(n-1), \quad \mathrm{vol}(B_1(p))\ge v.$$
	For any isometric $G$-action on $M$, if $G$-action is scaling $\Phi$-nonvanishing at $p$, then $G$-action has no $\epsilon$-small $\eta$-subgroup at $p$ with scale $r\in(0,1]$.
\end{cor}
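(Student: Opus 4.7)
The plan is to deduce Corollary \ref{rescale_identity_cor} directly from Proposition \ref{rescale_identity}, following the bridge indicated in Remark \ref{rem_rescale_id}. I will argue by contradiction. If the Corollary fails, then for each integer $k$ one can produce a triple $(M_k, p_k, G_k)$ satisfying the curvature and volume bounds, with the $G_k$-action scaling $\Phi$-nonvanishing at $p_k$, yet admitting a $(1/k)$-small $(1/k)$-subgroup $A_k$ at $p_k$ at some scale $r_k \in (0,1]$. My goal is to verify hypothesis (2) of Proposition \ref{rescale_identity} for this sequence; the Proposition will then supply uniform positive $\epsilon, \eta$, and the contradiction follows as soon as $1/k < \min(\epsilon, \eta)$.

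To check (2), consider any $f_k \in G_k$ and scales $r_k \leq s_k \in (0,1]$ with
$(s_k^{-1} M_k, p_k, f_k) \overset{GH}\longrightarrow (X, p, f)$ and
$(r_k^{-1} M_k, p_k, f_k) \overset{GH}\longrightarrow (X', p', \mathrm{id})$,
and I must show $f = \mathrm{id}$. Suppose not. Since rescaling by $s_k^{-1} \geq 1$ preserves the Ricci lower bound, the limit $X$ lies in $\mathcal{M}(n,-1)$, so Lemma \ref{char_id} forbids $f$ from acting trivially on $B_1(p)$. Thus $\delta := D_{1,p}(f) > 0$, and equivariant convergence upgrades this to $s_k^{-1} D_{s_k, p_k}(f_k) \geq \delta/2$ for all large $k$. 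The scaling $\Phi$-nonvanishing property applied to $f_k$ at scale $s_k$ then forces $r^{-1} D_{r, p_k}(f_k) \geq \Phi(\delta/2)$ for every $r \in (0, s_k]$, so in particular $r_k^{-1} D_{r_k, p_k}(f_k) \geq \Phi(\delta/2) > 0$. On the other hand, the convergence $f_k \to \mathrm{id}$ in the rescaled space $r_k^{-1} M_k$ precisely says $r_k^{-1} D_{r_k, p_k}(f_k) \to 0$, the desired contradiction.

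The only delicate point is converting "$f \neq \mathrm{id}$" into a definite positive lower bound $D_{1,p}(f) \geq \delta$; this is exactly Lemma \ref{char_id}, which needs only that the limit space be a Ricci limit. The remaining steps are a straightforward unpacking of Definition \ref{main_def_nonvanish} together with the standard fact that equivariant GH convergence of isometries passes to displacement on any fixed ball. Once (2) is verified for the extracted sequence, Proposition \ref{rescale_identity} closes out the argument with constants $\epsilon, \eta$ depending only on the data $(n, v, \Phi)$ that parametrizes the sequence.
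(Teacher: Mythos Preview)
Your proposal is correct and follows essentially the same route as the paper. The paper's argument (given as Remark \ref{rem_rescale_id}) simply asserts that a ``standard contradicting argument'' shows the scaling $\Phi$-nonvanishing hypothesis is equivalent to condition (2) of Proposition \ref{rescale_identity}, and then invokes that Proposition; you have spelled out exactly this contradicting argument and the verification of (2), including the appeal to Lemma \ref{char_id} to extract a definite $\delta>0$. One minor notational point: you reuse the symbol $r_k$ both for the scale attached to the small almost subgroup $A_k$ in the contradiction sequence and for the scale appearing in the verification of (2); these are unrelated and it would be cleaner to rename one of them.
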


\begin{rem}	
	Both statements (1) and (2) in Proposition \ref{rescale_identity} would fail in general if one remove the lower volume bound. For (1), consider the sequence
	$$(S_{r_i}^2,p_i,f_i)\overset{GH}\longrightarrow (\text{point},p,\mathrm{id}),$$
	where $S_{r_i}^2$ is the round $2$-sphere of radius $r_i\to 0$ and $f_i$ is a rotation of angle $\pi$ around an axis through $p_i$ (with $s_i=1$). After rescaling $r_i^{-1}\to\infty$, we have
	$$(r_i^{-1}S_{r_i}^2,p_i,f_i)\overset{GH}\longrightarrow (S^2,p',f')$$
	with $f'$ fixing $p'$. For (2), the horn limit space \cite{CC97} we mentioned in the introduction is an example.
\end{rem}

\begin{rem}
	Conversely, for a sequence $(M_i,p_i,G_i)$ with $\mathrm{Ric}\ge -(n-1)$, if for each $i$, $G_i$-action has no $\epsilon$-small $\eta$-subgroup at $q$ with scale $r\in (0,1]$ for all $q\in B_1(p)$, then one can show that (1) and (2) in Proposition \ref{rescale_identity} holds for such a sequence.
\end{rem}

\begin{proof}[Proof of Proposition \ref{rescale_identity}]	
We first prove that (1) implies no $\epsilon$-small $\eta$-subgroup at $p_i$. We argue by contradiction. Suppose that each $G_i$ contains a symmetric subset $A_i$ with
$$\dfrac{d_{H}(A_i{p_i},A_i^2{p_i})}{\mathrm{diam}(A_i{p_i})}\to 0,$$
and $t_i^{-1}D_{t_i,p_i}(A_i)\to 0$ for some $t_i\in(0,1]$. In particular, we have convergence
$$(t_i^{-1}M_i,p_i,A_i)\overset{GH}\longrightarrow (X,p,\{e\}).$$
We choose a sequence $r_i\to 0$ as in the proof of Proposition \ref{main_nsas_local} so that for each $i$
$$r_i^{-1}D_{r_i}(A_i)=\delta/2,$$
where $\delta=\delta(n,v)$ is the constant in Theorem \ref{main_no_small_subgroup}.
By the method of choosing $r_i$, we can also assume that $$\theta^{-1}D_{\theta}(A_i)>\delta/2$$
for all $\theta<r_i$. In this way, we have $r_i\le t_i$.

We rescale the sequence by $r_i^{-1}$ as in the proof of Proposition \ref{main_nsas_local}:
$$(r_i^{-1}{M}_i,{p}_i,A_i)\overset{GH}\longrightarrow({X}',{p}',A_\infty).$$
so that $D_1(A_\infty)=\delta/2$, and thus $A_\infty$ is not a subgroup. 
At point ${p}'$, $A_\infty$-orbit satisfies
\begin{align*}
d_H(A_\infty{p}',A_\infty^2{p}')=&\lim\limits_{i\to\infty} d_H(A_i{p}_i,A^2_i{p}_i)\ \ \ (\text{on } r_i^{-1}{M}_i)\\
\le & \lim\limits_{i\to\infty} \epsilon_i\cdot \mathrm{diam}(A_i{p_i})\\
\le & \lim\limits_{i\to\infty} \epsilon_i\cdot \delta/2\to 0.
\end{align*}
This means that there is an non-identity element $a\in A_\infty^3$ fixing ${p}'$. Therefore, we have a sequence $a_i\in A_i^3$ such that
$$(t_i^{-1}{M}_i,{p}_i,a_i)\overset{GH}\longrightarrow(X,{p},\mathrm{id}),$$
$$(r_i^{-1}{M}_i,{p}_i,a_i)\overset{GH}\longrightarrow(X',{p}',a).$$
By assumptions we have $a=\mathrm{id}$, a contradiction.

\textit{Proof of (2)$\Rightarrow$(1)}. Suppose that there are $r_i\le s_i\in (0,1]$ and $f_i\in G_i$ such that
$$(s_i^{-1}M_i,p_i,f_i)\overset{GH}\longrightarrow(X,p,\mathrm{id}),$$
$$(r_i^{-1}M_i,p_i,f_i)\overset{GH}\longrightarrow(X',p',f')$$
where $f'$ fixes $p'$, but $f'\not=\mathrm{id}$. 

Without lose of generality, we assume that $f'$ has finite order. Actually, if $f'$ has infinite order, then $\overline{\langle f'\rangle}$ has a circle subgroup. We take $A_i=\{e,f_i^{\pm 1},...,f_i^{\pm k_i}\}$ such that $k_i\to\infty$ slowly and
$$(s_i^{-1}M_i,p_i,A_i)\overset{GH}\longrightarrow(X,p,\{e\}).$$
After rescaling,
$$(r_i^{-1}M_i,p_i,A_i)\overset{GH}\longrightarrow(X',p',A_\infty)$$
with $A_\infty$ containing $\overline{\langle f'\rangle}$. So there is $g_i\in A_i$ such that
$$(s_i^{-1}M_i,p_i,g_i)\overset{GH}\longrightarrow(X,p,\mathrm{id}),$$
$$(r_i^{-1}M_i,p_i,g_i)\overset{GH}\longrightarrow(X',p',g'),$$
where $g'$ fixes $p'$ and has finite order. 

We have assumed that $f'$ has finite order. Let $N<\infty$ be the order of $f'$. By Theorem \ref{main_no_small_subgroup}, on $(X',p')$ we have
$$D_1(f')\ge \delta/N.$$
By intermediate value theorem, there is an intermediate sequence $\theta_i\in (r_i,s_i)$ such that
$$\theta^{-1}_iD_{\theta_i,p_i}(f_i)=\delta/(2N).$$
Under $\theta^{-1}$, we see that
$$(\theta_i^{-1}M_i,p_i,f_i)\overset{GH}\longrightarrow(X'',p'',f''),$$
with $D_1(f'')=\delta/(2N).$ By Theorem \ref{main_no_small_subgroup}, $f''$ has order at least $2N$. Now we result in the following sequence:
$$(\theta^{-1}_iM_i,p_i,f_i^N)\overset{GH}\longrightarrow(X'',p'',(f'')^N\not=\mathrm{id});$$
$$(r^{-1}_iM_i,p_i,f_i^N)\overset{GH}\longrightarrow(X',p',(f')^N=\mathrm{id}).$$
This contradicts the assumption.

\textit{Proof of (1)$\Rightarrow$(2)}. The proof is very similar to the one of \textit{(2)$\Rightarrow$(1)}. If the statement is false, then one can find a contradiction to (1) in some intermediate rescaling sequence.
\end{proof}

Recall that to verify Conjecture \ref{main_conj_vol_nv}, it is enough to deal with the case $s=1$ in Definition \ref{main_def_nonvanish} due to relative volume comparison. As seen in Remark \ref{rescale_identity_cor}, it suffices to rule out a sequence $(M_i,p_i,f_i)$ with 
$$\mathrm{Ric}_{M_i}\ge -(n-1), \quad \mathrm{vol}(B_1(p_i))\ge v>0$$
and its rescaling sequence $(r^{-1}_i\to\infty)$ with:
$$(M_i,p_i,f_i)\overset{GH}\longrightarrow (X,p,f\not=\mathrm{id})$$
$$(r_i^{-1}M_i,p_i,f_i)\overset{GH}\longrightarrow (X',p',\mathrm{id}).$$
We can further reduce the above sequence to the following situation:

\textit{Without lose of generality, we can assume that $f$ has finite order and both $X$, $X'$ are metric cones.}

In fact, if $f$ has infinite order, then we consider a sequence of symmetric subsets $A_i=\{e,f_i^{\pm 1},...,f_i^{\pm k_i}\}$. We choose $k_i\to\infty$ slowly so that
$$(r^{-1}_iM_i,p_i,A_i)\overset{GH}\longrightarrow(X',p',\{e\}).$$
Since before rescaling, the limit of $f_i$ fixes $p$. Thus the limit of $A_i$ contains a circle subgroup fixing $p$. As a result, there is $g_i\in A_i$ such that
$$(M_i,p_i,g_i)\overset{GH}\longrightarrow(X,p,g),$$
$$(r_iM_i,p_i,g_i)\overset{GH}\longrightarrow(X',p',\mathrm{id}),$$
where $g$ fixes $p$ and has finite order.

Reduction to metric cones follows directly from the lemma below and a standard rescaling argument by passing to tangent cones (see Theorem \ref{CC_cone}). More precisely, under the conditions of Proposition \ref{rescale_identity}, we can find $s_i\to\infty$, $s'_i\to\infty$ with $s'_i/s_i\to \infty$ and
$$(s_iM_i,p_i)\overset{GH}\longrightarrow(C_pX,o)$$
$$(s'_iM_i,p_i)\overset{GH}\longrightarrow(C_{p'}X',o').$$

\begin{lem}
	Let $(Y,p)$ be an non-collapsing Ricci limit space and $f$ be any isometry of $Y$ fixing $p$. Suppose that $f$ has finite order $k$, then for any $r_i\to\infty$ and any convergent subsequence
	$$(r_iY,p,f)\overset{GH}\longrightarrow(C_pY,o,f_p),$$
	$f_p$ has order $k$.
\end{lem}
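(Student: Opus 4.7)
\textit{Proof proposal.} The plan is to argue by contradiction using Theorem~\ref{main_no_small_subgroup}. Equivariant Gromov--Hausdorff continuity applied to $f^k=\mathrm{id}$ immediately gives $f_p^k=\mathrm{id}$, so the content of the lemma is to rule out $f_p^j=\mathrm{id}$ for any $1\le j\le k-1$, i.e.\ to show the order of $f$ does not drop in the limit.

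First I would verify that the rescaled spaces $(r_iY,p)$ all lie uniformly in some class $\mathcal{M}(n,-1,v')$. Since $(Y,p)$ is a non-collapsed Ricci limit, $\mathrm{vol}(B_1(p))\ge v>0$, and relative volume comparison (valid on Ricci limit spaces) gives $\mathrm{vol}^Y(B_{1/r_i}(p))\ge c(n)\,v\,r_i^{-n}$ once $r_i\ge 1$, so after rescaling by $r_i$,
$$\mathrm{vol}^{r_iY}\bigl(B_1^{r_iY}(p)\bigr)=r_i^n\,\mathrm{vol}^Y\bigl(B_{1/r_i}(p)\bigr)\ge c(n)\,v=:v'.$$
The Ricci lower bound only improves under rescaling by $r_i\ge 1$, so $(r_iY,p)\in\mathcal{M}(n,-1,v')$ for every $i$.

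Next, suppose for contradiction that $f_p^j=\mathrm{id}$ for some $1\le j\le k-1$. The cyclic subgroup $H_i:=\langle f^j\rangle\le\mathrm{Isom}(r_iY)$ is nontrivial since $f$ has order exactly $k$, so applying Theorem~\ref{main_no_small_subgroup} on $(r_iY,p)\in\mathcal{M}(n,-1,v')$ yields
$$D_{1,p}^{r_iY}(H_i)\ge\delta(n,v')>0$$
uniformly in $i$. Hence for each $i$ some power $f^{j\ell_i}$ with $1\le\ell_i<k/\gcd(j,k)$ satisfies $D_{1,p}^{r_iY}(f^{j\ell_i})\ge\delta$; passing to a subsequence so that $\ell_i=\ell$ is constant, equivariant GH convergence of $f^{j\ell}$ gives $D_{1,o}^{C_pY}(f_p^{j\ell})\ge\delta>0$. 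But $f_p^j=\mathrm{id}$ implies $f_p^{j\ell}=\mathrm{id}$, a contradiction.

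The only delicate point is the first step: confirming that the rescaled spaces remain non-collapsed Ricci limits with a uniform lower volume bound, so that the constant $\delta$ from Theorem~\ref{main_no_small_subgroup} can be chosen independently of $i$. This is precisely what relative volume comparison supplies; the rest of the argument is a direct invocation of the no small subgroup theorem on the finite cyclic subgroup generated by the relevant proper power of $f$.
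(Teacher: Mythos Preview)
Your proposal is correct and follows essentially the same approach as the paper: both argue by contradiction, assuming $f_p$ has order strictly less than $k$, and then invoke Theorem~\ref{main_no_small_subgroup} on the nontrivial cyclic subgroup generated by the relevant power of $f$ to contradict its convergence to the identity. The paper's version is marginally more streamlined in two places: it applies Theorem~\ref{main_no_small_subgroup} directly on $Y$ for all scales $r\in(0,1]$ (so your separate volume verification for $(r_iY,p)$ is redundant---the theorem already builds this in), and it passes the whole finite group $\langle f^l\rangle$ to the limit at once rather than extracting a single element via a subsequence; but these are cosmetic differences, not substantive ones.
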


\begin{proof}
    Because $f$ has finite order, for any $r_i\to \infty$ and any  convergent subsequence, we have
	$$(r_iY,p,\langle f\rangle)\overset{GH}\longrightarrow(C_pY,o,\langle f_p\rangle).$$
	Since $f$ has order $k$, $f_p$ has order at most $k$. Suppose that $f_p$ has order $l<k$. This implies that $$(r_iY,p,f^l)\overset{GH}\longrightarrow(C_pY,o,e).$$
	Together with the fact that $\langle f\rangle$ is a discrete group, we see that
	$$(r_iY,p,\langle f^l \rangle )\overset{GH}\longrightarrow(C_pY,o,\{e\}).$$
    By Theorem \ref{main_no_small_subgroup}, $\langle f^l\rangle=e$, a contradiction.
\end{proof}

Next we show that the scaling $\Phi$-nonvanishing property holds when $\mathrm{sec}_{M_i}\ge -1$ (volume condition is not required in this situation). As pointed out before, it suffices to prove the lemma below on sequences.

\begin{lem}\label{rescale_identity_sec}
	Let $(M_i,p_i)$ be a sequence of $n$-manifolds with
	$\mathrm{sec}_{M_i}\ge-1$ and $f_i$ be a sequence of isometries of $M_i$. Suppose that $r_i^{-1}\to\infty$ with
	$$(M_i,p_i,f_i)\overset{GH}\longrightarrow(X,p,f),$$
	$$(r^{-1}_iM_i,p_i,f_i)\overset{GH}\longrightarrow(X',p',\mathrm{id}).$$
	Then $f=\mathrm{id}$.
\end{lem}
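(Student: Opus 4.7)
The plan is to argue by contradiction, invoking Toponogov's hinge comparison, which is available since $\sec_{M_i}\ge-1$. Suppose $f\ne\mathrm{id}$. Then by equivariant Gromov-Hausdorff convergence there exist $R,\delta>0$ and points $q_i\in\overline{B_R(p_i)}$ with $d(f_i(q_i),q_i)\ge\delta$ for all large $i$. Writing $\epsilon_i:=r_i^{-1}D_{r_i,p_i}(f_i)$, the rescaled convergence $(r_i^{-1}M_i,p_i,f_i)\to(X',p',\mathrm{id})$ yields $\epsilon_i\to 0$; in particular $d(p_i,f_i(p_i))\le\epsilon_i r_i$, and $d(\gamma(r_i),f_i(\gamma(r_i)))\le\epsilon_i r_i$ for every unit-speed geodesic $\gamma$ emanating from $p_i$.

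The core step is a propagation estimate $d(f_i(q),q)\le C(R)\epsilon_i$, uniformly over $q\in\overline{B_R(p_i)}$. At the small scale $r_i$, apply Toponogov hinge comparison, with model space the hyperbolic plane $\mathbb{H}^2$ of curvature $-1$, to the triangle $p_i,\gamma(r_i),f_i(\gamma(r_i))$: its two sides at $p_i$ have lengths $r_i$ and $r_i\pm O(\epsilon_i r_i)$, while the opposite side has length $\le\epsilon_i r_i$; as $r_i\to 0$ the comparison triangle is essentially Euclidean, so the angle $\alpha_i$ at $p_i$ between $\gamma'(0)$ and the direction from $p_i$ to $f_i(\gamma(r_i))$ satisfies $\alpha_i=O(\epsilon_i)$, uniformly in $\gamma$. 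At the large scale $L:=d(p_i,q)\le R$, extend the second geodesic (from $p_i$ to $f_i(\gamma(r_i))$) to length $L$; Toponogov hinge comparison in $\mathbb{H}^2$, with opening angle $\alpha_i$ and sides of length $L\le R$, bounds the terminal distance by $\sinh(R)\sin(\alpha_i)=O(R\epsilon_i)$. Bridging the geodesics $\gamma$ (from $p_i$) and $f_i\circ\gamma$ (from $f_i(p_i)$) via the auxiliary point $f_i(\gamma(r_i))$, together with $d(p_i,f_i(p_i))\le\epsilon_i r_i$, then yields $d(q,f_i(q))\le C(R)\epsilon_i$. Applied at $q=q_i$, this produces the contradiction $\delta\le d(f_i(q_i),q_i)\le C(R)\epsilon_i\to 0$.

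The main obstacle lies in the bridging step: the two geodesics $\gamma$ and $f_i\circ\gamma$ emanate from distinct base points $p_i$ and $f_i(p_i)$, so a single Toponogov hinge centered at one vertex does not directly yield the propagation estimate. Introducing the intermediate point $f_i(\gamma(r_i))$, which lies within $\epsilon_i r_i$ of both $\gamma(r_i)$ and $(f_i\circ\gamma)(r_i)$, allows the configuration to be rewritten as two hinges sharing the common vertex $p_i$. Toponogov applied at the two scales $r_i$ and $L$, together with the smallness of $d(p_i,f_i(p_i))$, then closes the argument. We note that the sectional curvature hypothesis is essential for the propagation: the horn example of \cite{CC97} shows the analogous statement fails under only a Ricci lower bound, even with non-collapsing volume absent.
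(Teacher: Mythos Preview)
Your approach---a direct quantitative propagation estimate via Toponogov---is different in spirit from the paper's, but as written it has a genuine gap.

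The first issue is the angle bound at the small scale. With $\sec\ge -1$, Toponogov's comparison gives a \emph{lower} bound on angles: for the triangle $p_i,\gamma(r_i),f_i(\gamma(r_i))$ one obtains $\alpha_i\ge\tilde\alpha_i$, where $\tilde\alpha_i\approx\epsilon_i$ is the comparison angle. It does not give $\alpha_i\le O(\epsilon_i)$. The correct tool here is Alexandrov monotonicity of \emph{comparison} angles (not the actual angle): if $\gamma$ and $\sigma$ are both minimal geodesics from $p_i$, then $\tilde\angle_{-1}(p_i;\gamma(L),\sigma(L'))\le\tilde\angle_{-1}(p_i;\gamma(r_i),\sigma(r_i'))$. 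This is what makes the radial rescaling map Lipschitz, and it is what the paper uses. But your Step~2 applies the hinge lemma with the actual angle $\alpha_i$, which you have not bounded from above.

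The second issue is exactly the bridging step you flag. Even switching to comparison-angle monotonicity, you need two minimal geodesics from the \emph{same} vertex $p_i$ of length $\approx L$. Your $\sigma$ is minimal only up to $f_i(\gamma(r_i))$, of length $\approx r_i$; extending it to length $L$ need not yield a minimal segment, so monotonicity and the hinge lemma do not apply to the extension. And you still have to show $\sigma(L)$ is close to $f_i(q)=f_i(\gamma(L))$, which is another instance of the same two-basepoint problem. One can try instead to take $\tau$ minimal from $p_i$ to $f_i(q)$ and show $\tau(r_i)$ is close to $\gamma(r_i)$---this follows from the excess estimate $d(p_i,\gamma(r_i))+d(\gamma(r_i),f_i(q))-d(p_i,f_i(q))\le 2\epsilon_i r_i$ together with a ``small excess implies close to geodesic'' statement---but that requires an additional argument you do not supply.

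The paper sidesteps both issues by passing to the limit. After reducing to the case where $X$ and $X'$ are metric cones, it considers the radial rescaling maps $r_i^{-1}\exp_{p_i}^{r_i}$ and $r_i^{-1}\exp_{f_i(p_i)}^{r_i}$, which are uniformly Lipschitz by Toponogov (this is the monotonicity direction that does work). Their limits $\alpha,\alpha':S(p')\to B_1(p)$ have the same domain because $f_i\to\mathrm{id}$ on the rescaled sequence; and $\alpha=\alpha'$ because otherwise one would see branching geodesics in the Alexandrov limit $X'$, which is impossible. The commutative diagram then forces $f=\mathrm{id}$. So the non-branching property in the limit replaces the quantitative bridging you are attempting.
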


For $0<r\le R$, we define the $(r,R)$-scale segment domain at $p$ as follows.
$$S^R_r(p)=\{\gamma|_{[0,r]}\ |\  \gamma \text{ is a unit speed minimal geodesic from $p$ of length at least $R$}\}.$$
Note that $S^R_r(p)$ is always a subset of $B_r(p)$, but it may not be equal to $B_r(p)$. We also define the $r$-scale exponential map at $p$ ($0<r<1$):
\begin{align*}
	\exp_p^r: &\  S^1_r(p)\ \ \to B_1(p)\\
	&\exp_p(v)\mapsto \exp_p(r^{-1}v).
\end{align*}

\begin{lem}\label{converge_cone}
	If $(M_i,p_i)\overset{GH}\rightarrow (X,p)$ and $(X,p)$ is a metric cone with vertex $p$, then $S^1_1(p_i)\overset{GH}\rightarrow B_1(p)$.
\end{lem}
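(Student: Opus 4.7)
The approach is to verify, under any sequence of GH-approximations realizing $(M_i, p_i) \to (X, p)$, that the Hausdorff distance between $S^1_1(p_i)$ and $B_1(p)$ tends to zero. One containment is immediate: $S^1_1(p_i) \subseteq \overline{B_1(p_i)}$, and since $\overline{B_1(p_i)} \to \overline{B_1(p)}$ under these approximations, any GH-limit of a sequence in $S^1_1(p_i)$ lies in $\overline{B_1(p)}$. The substance of the lemma is the reverse density direction: every $q \in B_1(p)$ is the GH-limit of some $q_i \in S^1_1(p_i)$.

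To prove density I would exploit the cone structure $X = C(Z)$ with vertex $p$. For $q = p$ take $q_i = p_i$; this is in $S^1_1(p_i)$ because $X$ being a nontrivial cone gives points at distance $>1$ from $p$, hence $\mathrm{diam}(M_i) > 1$ for $i$ large and $p_i$ lies on some minimal geodesic from $p_i$ of length $\geq 1$. For $q \neq p$, set $r = d(p,q) \in (0, 1]$; since $p$ is the cone vertex, $q$ lies on a radial ray $\gamma \colon [0, \infty) \to X$ with $\gamma(r) = q$. Fix a small $\epsilon > 0$, set $q^* = \gamma(1 + \epsilon)$, pick $q^*_i \in M_i$ with $q^*_i \to q^*$, and choose any unit-speed minimal geodesic $\sigma_i \colon [0, L_i] \to M_i$ from $p_i$ to $q^*_i$; here $L_i \to 1 + \epsilon$. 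For $i$ large, $L_i > 1$, and so $\sigma_i([0, 1]) \subseteq S^1_1(p_i)$; in particular $\sigma_i(r) \in S^1_1(p_i)$.

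It remains to check that $\sigma_i(r) \to q$. By Arzel\`a--Ascoli for $1$-Lipschitz curves under pointed GH-convergence, $\sigma_i$ subconverges uniformly to a $1$-Lipschitz curve $\sigma_\infty \colon [0, 1+\epsilon] \to X$ with $\sigma_\infty(0) = p$, $\sigma_\infty(1+\epsilon) = q^*$, and length at most $1 + \epsilon = d(p, q^*)$, so $\sigma_\infty$ is a minimal geodesic from the vertex $p$ to $q^*$. The key input is the uniqueness of minimizers from the cone vertex: a direct computation with the cone metric (using that any unit-speed curve from the vertex written in cone coordinates $t \mapsto (t, \zeta(t))$ has speed squared $1 + t^2|\dot\zeta|^2 \geq 1$, with equality iff $\zeta$ is constant) shows the only unit-speed minimizer from $p$ to $(s, z) \in C(Z)$ is the radial curve $t \mapsto (t, z)$. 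Therefore $\sigma_\infty = \gamma|_{[0, 1+\epsilon]}$, and in particular $\sigma_i(r) \to \gamma(r) = q$, which combined with $\sigma_i(r) \in S^1_1(p_i)$ gives density. The only nontrivial ingredients are the Arzel\`a--Ascoli compactness for geodesics under GH-convergence (standard) and the uniqueness of minimizers from a cone vertex (the short length estimate above), so no serious obstacle is expected; the cone hypothesis is used critically in the uniqueness step, without which the limit $\sigma_\infty(r)$ need not equal $q$.
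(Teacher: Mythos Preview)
Your proof is correct and follows essentially the same approach as the paper: extend the radial ray through $q$ past distance $1$ (the paper uses $\gamma(2)$ rather than $\gamma(1+\epsilon)$), pull back the endpoint to $M_i$, take minimal geodesics $\sigma_i$ from $p_i$, and use uniqueness of minimizers from the cone vertex to conclude that the limit geodesic is the radial ray, hence $\sigma_i(r)\to q$. Your write-up is in fact more explicit about the Arzel\`a--Ascoli step and the cone-vertex uniqueness than the paper's terse version.
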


\begin{proof}
	For any $z\in B_1(p)$ with $z\not=p$, put $d=d(z,p)$. Let $\gamma$ be the unique unit speed minimal geodesic from $p$ to $z$. Extend $\gamma$ to a ray starting at $p$ and put $q:=\gamma(2)$. Pick $q_i\in M_i$ with $q_i\to q$. For each $i$, let $\gamma_i$ be a unit speed minimal geodesic from $p_i$ to $q_i$. It is clear that the image of $\gamma_i|_{[0,1]}$ is in $S^1_1(p_i)$. $\gamma_i$ converges to a minimal geodesic from $p$ to $q$, which must be $\gamma|_{[0,2]}$. In particular, $\gamma_i(d)\to z$.
\end{proof}

\begin{proof}[Proof of Lemma \ref{rescale_identity_sec}]
	As discussed above on the reduction, we may assume that both $X$ and $X'$ are metric cones (Note that both $X$ and $X'$ are Alexandrov spaces, thus their tangent cones are always metric cones \cite{BGP92}).
	
	For each $i$, we consider the commutative diagram:
	\begin{center}
		$\begin{CD}
		r_i^{-1}S^1_{r_i}(p_i) @>f_i>> r_i^{-1}S^1_{r_i}(f_i(p_i))\\
		@VVr_i^{-1}\exp_{p_i}^{r_i}V @VVr_i^{-1}\exp_{f_i(p_i)}^{r_i} V\\
		B_1(p_i) @>f_i>> B_1(f_i(p_i))
		\end{CD}$
	\end{center}
	
	Let $S(p')\subseteq B_1(p')$ be the Gromov-Hausdorff limit of $r^{-1}_i S^1_{r_i}(p_i)$. Since
	$$(r^{-1}_iM_i,p_i,f_i)\overset{GH}\longrightarrow(X',p',\mathrm{id}),$$
	$S(p')$ is also the limit of $r_i^{-1} S^1_{r_i}(f_i(p_i))$. By Toponogov theorem, both $r_i^{-1}\exp_{p_i}^{r_i}$ and $r_i^{-1}\exp_{f_i(p_i)}^{r_i}$ are $L(n)$-Lipschitz maps. Passing to a subsequence, these two sequences of maps converge to $\alpha$ and $\alpha':S(p')\to B_1(p)$ as $i\to\infty$ respectively. By Lemma \ref{converge_cone}, $\alpha$ and $\alpha'$ are surjective. We claim that $\alpha=\alpha'$. In fact, if for some $q\in S(p')$, $\alpha(q)\not=\alpha'(q)$, then we can find minimal geodesics $\gamma_i$ and $\gamma_i'$ from $p_i$ such that
	$$(r_i^{-1}M_i,\gamma_i(r_id),\gamma'_i(r_id))\overset{GH}\longrightarrow(X,q,q)$$
	$$(M_i,\gamma_i(d),\gamma'_i(d))\overset{GH}\longrightarrow(X,\alpha(q),\alpha'(q)),$$
	where $d=d(p,q)$. By Toponogov theorem, we see a bifurcation of minimal geodesics at $q$, but we know this cannot happen in $X'$ \cite{BGP92}.
	
	Now we have a commutative diagram of limit spaces
	\begin{center}
		$\begin{CD}
		S(p') @>\mathrm{id}>> S(p')\\
		@VV\alpha V @VV\alpha V\\
		B_1(p) @>f>> B_1(p),
		\end{CD}$
	\end{center}
	where $f$ is an isometry and $\alpha$ is surjective. Therefore, $f=\mathrm{id}$.
\end{proof}

\begin{cor}\label{nsas_sec}
	Given $n$, there is a positive function $\Phi(\delta,n)$ such that for any complete $n$-manifold $(M,p)$ of
	$\mathrm{sec}\ge -1$, any isometry of $M$ is scaling $\Phi(\delta,n)$-nonvanishing at $p$.
\end{cor}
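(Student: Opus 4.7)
The plan is to derive Corollary \ref{nsas_sec} from Lemma \ref{rescale_identity_sec} by a contradiction-and-rescaling argument, in the same spirit as the sequence-to-uniform reductions carried out in the proof of Proposition \ref{rescale_identity}. Suppose the corollary fails; then for some $\delta>0$ I can extract a sequence of complete $n$-manifolds $(M_i,p_i)$ with $\mathrm{sec}_{M_i}\ge -1$, isometries $f_i\in\mathrm{Isom}(M_i)$, and scales $0<r_i\le s_i\le 1$ satisfying
$$s_i^{-1} D_{s_i,p_i}(f_i)\ge \delta,\qquad r_i^{-1} D_{r_i,p_i}(f_i)\to 0.$$

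I would then normalize at the coarser scale by replacing each $M_i$ with $\tilde{M}_i:=s_i^{-1}M_i$; since $s_i\le 1$ this rescaling preserves $\mathrm{sec}\ge -1$, and setting $t_i=r_i/s_i\in(0,1]$ the hypotheses read $D_{1,p_i}(f_i)\ge\delta$ on $\tilde{M}_i$ together with $t_i^{-1}D_{t_i,p_i}(f_i)\to 0$. By Gromov's compactness theorem and the standard extraction of equivariant limits, after passing to a subsequence we have
$$(\tilde{M}_i,p_i,f_i)\overset{GH}\longrightarrow (X,p,f),\qquad (t_i^{-1}\tilde{M}_i,p_i,f_i)\overset{GH}\longrightarrow (X',p',f').$$
The first limit has $D_{1,p}(f)\ge\delta>0$, so $f\neq\mathrm{id}$, while the second has $D_{1,p'}(f')=0$, which by Lemma \ref{char_id} forces $f'=\mathrm{id}$.

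Finally, I split on the behaviour of $t_i$. If $t_i\to 0$, Lemma \ref{rescale_identity_sec} applied to $\{\tilde{M}_i,p_i,f_i\}$ with rescaling factors $t_i^{-1}\to\infty$ yields $f=\mathrm{id}$, contradicting $f\neq\mathrm{id}$; if instead $t_i$ subconverges to some $t\in(0,1]$, then the two limit spaces differ only by the bounded rescaling factor $t^{-1}$ and the corresponding isometries are identified under this scaling, so $f'=\mathrm{id}$ still forces $f=\mathrm{id}$, again a contradiction. The genuine analytic obstacle --- propagating a trivialization from a finer scale back to a coarser scale using only a sectional curvature lower bound --- has already been absorbed into Lemma \ref{rescale_identity_sec} via Toponogov comparison and the scaled exponential map picture of Lemma \ref{converge_cone}; the role of the present argument is simply to upgrade that qualitative sequential statement into the uniform positive function $\Phi(\delta,n)$ demanded by Definition \ref{main_def_nonvanish}, and I anticipate no further obstacle here beyond the bookkeeping described above.
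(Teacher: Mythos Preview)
Your proposal is correct and follows essentially the same approach as the paper. The paper does not give an explicit proof of Corollary~\ref{nsas_sec}; it simply remarks (just before Lemma~\ref{rescale_identity_sec}) that, by the standard contradicting argument of Remark~\ref{rem_rescale_id}, the scaling $\Phi$-nonvanishing property is equivalent to the sequential statement of Lemma~\ref{rescale_identity_sec}, and your write-up is precisely that reduction spelled out in detail.
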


\subsection{Equivariant stability}

As an application of Theorem \ref{main_no_small_subgroup}, we prove the following stability result, which implies finiteness of fundamental groups in \cite{An90a}.

\begin{thm}\label{main_stable}
	Let $(M_i,p_i)$ be a sequence of closed $n$-manifolds with
	$$\mathrm{Ric}_{M_i}\ge -(n-1),\quad \mathrm{diam}(M)\le D,\quad \mathrm{vol}(B_1(p_i))\ge v>0$$
	If the following sequences converge in the Gromov-Hausdorff topology
	\begin{center}
		$\begin{CD}
		(\widetilde{M}_i,\tilde{p}_i,\Gamma_i) @>GH>> (\widetilde{X},\tilde{p},G)\\
		@VV\pi_iV @VV\pi V\\
		(M_i,{p}_i) @>GH>> (X,p),
		\end{CD}$
	\end{center}
	then $\Gamma_i$ is isomorphic to $G$ for all $i$ large.
\end{thm}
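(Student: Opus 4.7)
The plan is to follow the equivariant stability paradigm of Fukaya--Yamaguchi~\cite{FY92}, with Theorem~\ref{main_no_small_subgroup} (no small subgroup) replacing their Toponogov comparison input. First I would show $G$ is discrete. Since each $\pi_i$ is a local isometry, $\mathrm{vol}(B_1(\tilde{p}_i))\ge\mathrm{vol}(B_1(p_i))\ge v$, so $(\widetilde{X},\tilde{p})\in\mathcal{M}(n,-1,v)$ has Hausdorff dimension $n$; the hypothesis also gives $(X,p)\in\mathcal{M}(n,-1,v)$, hence $X$ has dimension $n$. Since $X=\widetilde{X}/G$, the quotient cannot drop dimension, so if $G$ had positive dimension its generic orbits would be positive-dimensional and would force $\dim X<n$; hence the Lie group $G$ must be $0$-dimensional, i.e.\ discrete. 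Moreover, the diameter bound $\mathrm{diam}(M_i)\le D$ forces $G\tilde{p}$ to be $D$-dense in $\widetilde{X}$, so by a standard midpoint-bisection $G$ is generated by $G(3D)$; likewise each $\Gamma_i$ is generated by $\Gamma_i(3D)$.

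Next fix $R=10D$. By Corollary~\ref{stable_nss} (applied after a suitable rescaling of the radius), $\#\Gamma_i(R)$ is uniformly bounded by $\#G(2R)$, and for all $i\ge i(R)$ the equivariant Gromov--Hausdorff approximation $h_i:\widetilde{M}_i\to\widetilde{X}$ induces a bijection
\[
\psi_i:\Gamma_i(R)\longrightarrow G(R),
\]
where $\psi_i(\gamma)$ is the unique $g\in G$ with $d(g\tilde{p},h_i(\gamma\tilde{p}_i))<\delta_0/4$ and $\delta_0:=\min\{d(g\tilde{p},\tilde{p}):g\in G\setminus\{e\}\}>0$ by properness of the discrete $G$-action. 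Existence of $g$ uses the equivariance of the convergence; uniqueness uses discreteness of $G$; injectivity uses Theorem~\ref{main_no_small_subgroup} applied to $\gamma\gamma'^{-1}$.

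Finally I would upgrade $\psi_i$ to a group isomorphism $\Psi_i:\Gamma_i\to G$. For generators $\gamma,\gamma'\in\Gamma_i(3D)$, both $\gamma\gamma'\in\Gamma_i(6D)\subseteq\Gamma_i(R)$ and $\psi_i(\gamma)\psi_i(\gamma')\in G(6D)\subseteq G(R)$, and the approximate equivariance of $h_i$ combined with discreteness of $G$ forces $\psi_i(\gamma\gamma')=\psi_i(\gamma)\psi_i(\gamma')$. Since $\Gamma_i(3D)$ generates $\Gamma_i$, this extends uniquely to a homomorphism $\Psi_i$; surjectivity follows since $G(3D)\subseteq\psi_i(\Gamma_i(3D))$ generates $G$, and injectivity follows from Theorem~\ref{main_no_small_subgroup} (a nontrivial element of $\ker\Psi_i$ generates a nontrivial subgroup of $\Gamma_i$ whose displacement on $B_1(\tilde{p}_i)$ tends to $0$, contradicting the uniform lower bound $\delta(n,v)$).

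The main obstacle is verifying that relations of arbitrarily large word-length in generators from $\Gamma_i(3D)$ are respected by $\Psi_i$, since the partial products along a length-$k$ relation can escape to distance $\sim 3Dk$ from $\tilde{p}_i$, far beyond the radius $R$ where $\psi_i$ was directly constructed. The remedy is to propagate $\psi_i$ step by step along these partial products, using the fact that $h_i$ is an arbitrarily faithful equivariant approximation on any fixed ball once $i$ is large enough, and to rule out the existence of any shortest failing relation by observing that it would again yield a nontrivial element of $\Gamma_i$ with vanishing displacement on $B_1(\tilde{p}_i)$, contradicting Theorem~\ref{main_no_small_subgroup}. I expect this bookkeeping to carry the main technical weight of the argument, with no new geometric input required beyond Theorem~\ref{main_no_small_subgroup}.
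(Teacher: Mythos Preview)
Your approach is correct in outline and rests on the same two inputs as the paper's proof---the equivariant convergence machinery of \cite{FY92} together with Theorem~\ref{main_no_small_subgroup}---but the paper's argument is much shorter because it invokes the structure theorem of \cite{FY92} as a black box rather than rebuilding it by hand. For discreteness of $G$, the paper uses Anderson's result \cite{An90a} that any subgroup of $\pi_1(M_i,p_i)$ generated by loops of length $\le L(n,v)$ has order $\le N(n,v)$; the limit of $\langle\Gamma_i(L)\rangle$ then contains $G_0$ yet has order $\le N$, forcing $G_0=\{e\}$. Your dimension-drop argument for discreteness is plausible but would require justification that a positive-dimensional isometric action strictly lowers Hausdorff dimension of the quotient in the Ricci-limit setting; the Anderson route avoids this. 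Once $G$ is discrete, the paper simply cites \cite{FY92}: there exist subgroups $H_i\le\Gamma_i$ with $H_i\overset{GH}\to G_0=\{e\}$ and $\Gamma_i/H_i\cong G/G_0=G$ for $i$ large. Since $H_i\to\{e\}$, Theorem~\ref{main_no_small_subgroup} (packaged as Lemma~\ref{no_small_ismoetry}) gives $H_i=\{e\}$, and the proof is finished in two lines.

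The obstacle you single out---checking that $\Psi_i$ respects relations of unbounded word length---is real, and is precisely what the \cite{FY92} structure theorem encapsulates. Your proposed resolution via a shortest-failing-relation argument is not complete as written: for a fixed $i$ the shortest failing relation could have length $k_i\to\infty$, so its partial products escape any ball on which $h_i$ is a controlled approximation, and it is not clear how you then produce a nontrivial element of $\Gamma_i$ whose displacement on $B_1(\tilde{p}_i)$ tends to $0$. Likewise your injectivity step (``a nontrivial element of $\ker\Psi_i$ has displacement tending to $0$'') needs an argument, since an element mapping to $e\in G$ need not itself be close to $e$ in $\Gamma_i$. The clean way around both issues, and the one the paper takes, is to quote \cite{FY92} for the subgroups $H_i$ and the isomorphism $\Gamma_i/H_i\cong G$, and then apply Theorem~\ref{main_no_small_subgroup} once to conclude $H_i=\{e\}$.
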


Recall that Theorem \ref{main_no_small_subgroup} implies that if $(M,p)$ satisfies
$$\mathrm{Ric}\ge -(n-1),\quad\mathrm{vol}(B_1(\tilde{p}))\ge v>0,$$
then any nontrivial subgroup $H$ of $\Gamma$ has $D_{1,\tilde{p}}(H)\ge \delta(n,v)$. Under a stronger volume condition $$\mathrm{vol}(B_1(p))\ge v>0,$$ we show that such a lower bound on displacement holds for any nontrivial covering transformation.

\begin{lem}\label{no_small_ismoetry}
	Given $n$ and $v>0$, there is a constant $\delta(n,v)>0$ such that for any $n$-manifold $(M,p)$ with
	$$\mathrm{Ric}\ge -(n-1),\quad \mathrm{vol}(B_1(p))\ge v$$
	and any nontrivial element $\gamma\in \pi_1(M,p)$, we have
	$D_{1,\tilde{p}}(\gamma)\ge \delta$.
\end{lem}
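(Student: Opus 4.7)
The plan is proof by contradiction: assume a sequence $(M_i,p_i)$ satisfies the hypotheses, together with nontrivial $\gamma_i\in\pi_1(M_i,p_i)$ such that $D_{1,\tilde{p}_i}(\gamma_i)\to 0$. First I would note that the downstairs volume bound lifts: every $q\in B_1(p_i)$ admits a lift in $B_1(\tilde{p}_i)$ and $\pi_i\colon\widetilde{M}_i\to M_i$ is a local isometry, so the area formula yields $\mathrm{vol}(B_1(\tilde{p}_i))\ge\mathrm{vol}(B_1(p_i))\ge v$. By Gromov compactness, after passing to a subsequence,
$$(\widetilde{M}_i,\tilde{p}_i,\gamma_i)\overset{GH}\longrightarrow(\widetilde{X},\tilde{p},\gamma_\infty),\qquad (\widetilde{X},\tilde{p})\in\mathcal{M}(n,-1,v).$$
Since $D_{1,\tilde{p}}(\gamma_\infty)=\lim D_{1,\tilde{p}_i}(\gamma_i)=0$, Lemma \ref{char_id} forces $\gamma_\infty=\mathrm{id}$, and equivariant convergence upgrades the hypothesis to $D_{R,\tilde{p}_i}(\gamma_i)\to 0$ for every fixed $R>0$.

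Write $\epsilon_i:=d(\gamma_i\tilde{p}_i,\tilde{p}_i)\le D_{1,\tilde{p}_i}(\gamma_i)\to 0$ and set $m_i:=\min(\lfloor 1/(2\epsilon_i)\rfloor,\mathrm{ord}(\gamma_i))$. By the triangle inequality $d(\gamma_i^k\tilde{p}_i,\tilde{p}_i)\le k\epsilon_i$ and freeness of the deck action, the points $\{\gamma_i^k\tilde{p}_i:0\le k<m_i\}$ are $m_i$ distinct elements of $B_{1/2}(\tilde{p}_i)$. The crucial step is a covering-versus-volume inequality: for any $q\in B_{1/2}(p_i)$, choose a lift $\tilde{q}_0\in B_{1/2}(\tilde{p}_i)$; then each translate $\gamma_i^k\tilde{q}_0$ (for $0\le k<m_i$) is a distinct lift of $q$ lying in $B_1(\tilde{p}_i)$, so the area formula gives
$$\mathrm{vol}(B_1(\tilde{p}_i))\ge m_i\cdot\mathrm{vol}(B_{1/2}(p_i)).$$
Bishop--Gromov bounds $\mathrm{vol}(B_1(\tilde{p}_i))$ from above by a function of $n$ alone, while relative volume comparison on $M_i$ yields $\mathrm{vol}(B_{1/2}(p_i))\ge c(n)v$. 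Combined, $m_i\le C(n,v)$; since $\lfloor 1/(2\epsilon_i)\rfloor\to\infty$, this forces $\mathrm{ord}(\gamma_i)\le C(n,v)$ for $i$ large.

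Thus along a subsequence $\langle\gamma_i\rangle$ is a nontrivial finite cyclic subgroup of $\mathrm{Isom}(\widetilde{M}_i)$ of bounded order $M\le C(n,v)$. Apply Theorem \ref{main_no_small_subgroup} (using $\widetilde{M}_i\in\mathcal{M}(n,-1,v)$) to produce $k_i\in\{1,\ldots,M-1\}$ with $D_{1,\tilde{p}_i}(\gamma_i^{k_i})\ge\delta(n,v)$. On the other hand, a telescoping estimate--using that $\gamma_i^j q\in B_{1+j\epsilon_i}(\tilde{p}_i)\subseteq B_2(\tilde{p}_i)$ for $j<M$ and $i$ large--gives
$$D_{1,\tilde{p}_i}(\gamma_i^{k_i})\le k_i\cdot D_{2,\tilde{p}_i}(\gamma_i)\le M\cdot D_{2,\tilde{p}_i}(\gamma_i)\to 0,$$
contradicting the lower bound $\delta(n,v)>0$.

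The main obstacle is the covering-volume inequality of the second paragraph: the conceptual point is that the downstairs volume hypothesis $\mathrm{vol}(B_1(p))\ge v$--strictly stronger than the upstairs bound $\mathrm{vol}(B_1(\tilde{p}))\ge v$ that already feeds Theorem \ref{main_no_small_subgroup}--is precisely what bounds the sheet number of $\pi_i$ above $B_{1/2}(p_i)$. A nontrivial deck transformation with arbitrarily small displacement at $\tilde{p}_i$ would pile up too many sheets and contradict the $M_i$-downstairs volume, which is exactly why this lemma is a genuine strengthening of Theorem \ref{main_no_small_subgroup} from the subgroup level to the element level.
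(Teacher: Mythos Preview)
Your proof is correct and follows the same two-step skeleton as the paper: first bound $\mathrm{ord}(\gamma_i)$ uniformly, then feed the finite subgroup $\langle\gamma_i\rangle$ into Theorem~\ref{main_no_small_subgroup}. The only difference is that where the paper invokes Anderson \cite{An90a} as a black box for the order bound, you reprove it explicitly via the covering inequality $\mathrm{vol}(B_1(\tilde{p}_i))\ge m_i\cdot\mathrm{vol}(B_{1/2}(p_i))$---this is exactly Anderson's argument, so your write-up is more self-contained but not a genuinely different route.
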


\begin{proof}
	We argue by contradiction. Suppose that we have the following convergent sequences
	\begin{center}
		$\begin{CD}
		(\widetilde{M}_i,\tilde{p}_i,\Gamma_i) @>GH>> (\widetilde{X},\tilde{p},G)\\
		@VV\pi_iV @VV\pi V\\
		(M_i,{p}_i) @>GH>> (X,p).
		\end{CD}$
	\end{center}
	with
	$$\mathrm{Ric}\ge -(n-1),\quad \mathrm{vol}(B_1(p))\ge v;$$
	and a sequence of nontrivial elements $\gamma_i\in \Gamma_i$ converging to the identity map, where $\Gamma_i=\pi_1(M_i,p_i)$.
	
	By \cite{An90a}, there are positive constants $L(n,v)$ and $N(n,v)$ such that for any subgroup in $\pi_1(M,p)$ generated by elements of length $\le L$, this subgroup has order $\le N$ (In \cite{An90a}, only closed manifolds with bounded diameter are considered, but its proof extends to open manifolds).  Since $\gamma_i\to \mathrm{id}$, for all $i$ large $\gamma_i$ has length $\le L$, thus has order $\le N$. Consequently, the sequence of subgroups $\langle \gamma_i\rangle$ also converges to $\{e\}$. By Theorem \ref{main_no_small_subgroup}, this implies that $\langle \gamma_i\rangle$, and thus $\gamma_i$, is identity for $i$ large.
\end{proof}

With Lemma \ref{no_small_ismoetry}, we prove Theorem \ref{main_stable}, the stability of $\pi_1$ under equivariant GH convergence for non-collapsing manifolds with bounded diameter.

\begin{proof}[Proof of Theorem \ref{main_stable}]	
	We first notice that $G$ is a discrete group (intuitively, otherwise $M_i$ would be collapsed). In fact, we consider $\langle\Gamma_i(L)\rangle$, the subgroup generated by loops of length $\le L$, where $L=L(n,v)$ is the constant mentioned in the proof of Lemma \ref{no_small_ismoetry}. We consider
	$$(\widetilde{M}_i,\tilde{p}_i,\Gamma_i(L))\overset{GH}\longrightarrow(\widetilde{X},\tilde{p},H).$$
	Since each $\Gamma_i(L)$ has order $\le N$, so does $H$. Note that $H$ contains $G_0$, thus $G_0=\{e\}$ and $G$ is discrete.
	
    By \cite{FY92}, there exists a sequence of subgroups $H_i$ of $\Gamma_i$ such that
    $$(\widetilde{M}_i,\tilde{p}_i,H_i)\overset{GH}\longrightarrow(\widetilde{X},\tilde{p},G_0)$$
    and $\Gamma_i/H_i$ is isomorphic to $G/G_0$ for all $i$ large. In our situation, $G_0=\{e\}$ and thus $H_i\overset{GH}\to\{e\}$. By Theorem \ref{no_small_ismoetry}, we see that $H_i=\{e\}$ for all $i$ large. Consequently, $\Gamma_i$ is isomorphic to $G$ for all $i$ large.
\end{proof}

\subsection{$C$-abelian of fundamental groups}

We prove two structure theorems below on fundamental groups of closed manifolds.

\begin{thm}\label{main_abel_1}
	Given $n,D,v>0$, there exists a constant $C(n,D,v)$ such that if a complete $n$-manifold $(M,p)$ with finite fundamental group satisfies
	$$\mathrm{Ric}\ge -(n-1), \quad \mathrm{diam}(M)\le D,\quad \mathrm{vol}(B_1(\tilde{p}))\ge v>0,$$
	then $\pi_1(M)$ contains an abelian subgroup of index $\le C(n,v)$. Moreover, this subgroup can be generated by at most $n$ elements.
\end{thm}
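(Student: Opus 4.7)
I would argue by contradiction via equivariant Gromov--Hausdorff convergence, reducing the structural claim to a Jordan-type theorem for finite isometry groups of a non-collapsed Ricci limit space.

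Suppose the conclusion fails: there is a sequence $(M_i,p_i)$ satisfying the hypotheses such that the minimum index of any abelian subgroup in $\Gamma_i:=\pi_1(M_i,p_i)$ tends to infinity. After passing to a subsequence, I extract an equivariant limit
\begin{equation*}
(\widetilde M_i,\tilde p_i,\Gamma_i)\overset{GH}\longrightarrow(\widetilde X,\tilde p,G),\qquad (\widetilde X,\tilde p)\in\mathcal M(n,-1,v).
\end{equation*}
By Theorem \ref{main_no_small_subgroup}, every nontrivial closed subgroup of $\mathrm{Isom}(\widetilde X)$ has displacement $\ge\delta(n,v)$ at $\tilde p$, so $G_0=\{e\}$, i.e., $G$ is discrete. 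Since $\mathrm{diam}(M_i)\le D$, the group $\Gamma_i$ is generated by $\Gamma_i(2D)$, and Corollary \ref{stable_nss} bounds $\#\Gamma_i(2D)\le\#G(4D+1)<\infty$. Combined with Anderson's order bound \cite{An90a} (transplanted to the universal-cover volume hypothesis using the diameter bound), this gives $|\Gamma_i|\le N(n,D,v)$, and Theorem \ref{main_stable} then yields $\Gamma_i\cong G$ for $i$ large. Hence $G$ is a finite group whose order is forced to be arbitrarily large along the contradicting sequence.

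Next I would linearize a bounded-index subgroup of $G$. By Cheeger--Colding the $n$-regular set $\mathcal R^n\subset\widetilde X$ is dense, with tangent cone $\mathbb R^n$ at every regular point. I would produce a regular point $q$ whose isotropy $G_q\le G$ has index $\le C(n)$ (either via a barycenter argument on a $G$-orbit of regular points using the Euclidean-like structure of $\mathcal R^n$, or by first applying the Kapovitch--Wilking nilpotency theorem \cite{KW11} to extract a nilpotent subgroup of bounded index and then using the non-collapsing hypothesis to promote nilpotent-to-abelian). The action of $G_q$ on $T_q\widetilde X=\mathbb R^n$ fixing the origin is faithful by Lemma \ref{char_id} applied at rescalings $r_i^{-1}\to\infty$, so $G_q\hookrightarrow O(n)$. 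Jordan's theorem then supplies an abelian subgroup $A\le G_q$ of index $\le J(n)$, and $[G:A]\le J(n)\cdot[G:G_q]\le C(n)\cdot J(n)$. Finally, finite abelian subgroups of $O(n)$ are generated by at most $n$ elements (the extremal case being $(\mathbb Z/2)^n$), giving the required generation bound and contradicting the failure assumption.

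\textbf{Main obstacle.} The hard step is producing the regular point $q$ with bounded-index isotropy: Ricci limit spaces lack the Alexandrov fixed-point property, so one cannot directly construct a $G$-fixed regular point by center-of-mass constructions. A workable approach is to use Cheeger--Naber quantitative regularity on the dense $n$-regular stratum to locate a $G$-invariant almost-flat region near a tangent cone of $\tilde p$; alternatively, one first runs the \cite{KW11} machinery to reduce to nilpotent-of-bounded-index, and then rules out nilpotent-but-nonabelian examples using that such groups arise only from almost-flat geometries, which are forbidden by the non-collapsing hypothesis $\mathrm{vol}(B_1(\tilde p))\ge v$.
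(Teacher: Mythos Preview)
Your argument breaks at the claim ``$G_0=\{e\}$, i.e., $G$ is discrete''. Theorem \ref{main_no_small_subgroup} only says that every nontrivial \emph{subgroup} of $\mathrm{Isom}(\widetilde X)$ has displacement at least $\delta(n,v)$ on the unit ball; it does not forbid $G$ from having a positive-dimensional identity component. A connected Lie group has no small subgroups, so a nontrivial $G_0$ is perfectly compatible with the no-small-subgroup property (think of a circle rotating a round sphere: every nontrivial subgroup has displacement of order $1$). In fact, under the hypotheses here the volume lower bound is on $B_1(\tilde p)$, not on $B_1(p)$, so $M_i$ itself may be arbitrarily collapsed (e.g.\ lens spaces $S^3/\mathbb{Z}_i$), $|\Gamma_i|$ may diverge, and the limit group $G$ is typically a torus of positive dimension. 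For the same reason you cannot invoke Theorem \ref{main_stable}, whose hypothesis is $\mathrm{vol}(B_1(p_i))\ge v$; nor can Anderson's short-loop order bound be ``transplanted'' to a universal-cover volume assumption, since the covering degree is precisely the quantity you are trying to control.

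The paper's proof proceeds differently: after passing to a $C(n)$-nilpotent subgroup via \cite{KW11}, the Diameter Ratio Theorem of \cite{KW11} bounds $\mathrm{diam}(\widetilde M_i)$, so $\widetilde X$ and $G$ are compact and $G_0$ is a torus $T$. One then takes subgroups $H_i\to T$ with $[\Gamma_i:H_i]=[G:T]$, and shows $H_i$ is abelian by proving $[H_i,H_i]\overset{GH}\to\{e\}$ (using Stroud's lemma on nilpotent commutator length together with the abelianness of $T$) and then applying Corollary \ref{stable_nss}. Your Jordan-type linearization strategy, even granting the regular-point issue you flag, would only apply once you know $G$ is finite---which it generally is not.
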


\begin{thm}\label{main_abel_2}
	Given $n,v>0$, there exists a constant $C(n,v)$ such that if a complete $n$-manifold $(M,p)$ satisfies
	$$\mathrm{Ric}\ge 0,\quad \mathrm{diam}(M)=1,\quad \mathrm{vol}(B_1(\tilde{p}))\ge v>0,$$
	then $\pi_1(M)$ contains an abelian subgroup of index $\le C(n,v)$. Moreover, this subgroup can be generated by at most $n$ elements.
\end{thm}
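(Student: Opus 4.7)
The plan is to argue by contradiction through equivariant Gromov-Hausdorff convergence and the no-small-subgroup property. Suppose the conclusion fails for some $n,v$; then there is a sequence $(M_i, p_i)$ satisfying the hypotheses such that every abelian subgroup of $\pi_1(M_i, p_i)$ either has index exceeding $i$ or requires more than $n$ generators. Passing to a convergent subsequence, I obtain
$$(\widetilde{M}_i, \tilde{p}_i, \Gamma_i) \overset{GH}\longrightarrow (\widetilde{X}, \tilde{p}, G), \qquad (M_i, p_i) \overset{GH}\longrightarrow (X, p),$$
where $X = \widetilde{X}/G$ has diameter $1$. The lower bound on $\mathrm{vol}(B_1(\tilde{p}_i))$ makes $\widetilde{X}$ non-collapsed near $\tilde{p}$, so $G \subseteq \mathrm{Isom}(\widetilde{X})$ is a Lie group by Cheeger-Colding-Naber.

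Next I would identify the structure of $(\widetilde{X}, G)$. Because $\widetilde{X}$ has non-negative Ricci in the Cheeger-Colding sense and admits a cocompact isometric action, the splitting theorem for Ricci limits yields an isometric decomposition $\widetilde{X} = \mathbb{R}^k \times C$ with $C$ compact and line-free. A finite-index subgroup of $G$ preserves this splitting, so we may view $G \subseteq \mathrm{Isom}(\mathbb{R}^k) \times \mathrm{Isom}(C)$. I would then show that $G_0$ is abelian: its projection to $\mathrm{Isom}(\mathbb{R}^k)$ lies in translations (any compact rotational factor fixing the origin is killed after a further finite-index reduction via Theorem \ref{main_no_small_subgroup} at $\tilde{p}$), and its projection to $\mathrm{Isom}(C)$ is trivial, obtained by rescaling any non-trivial $1$-parameter subgroup of $G_0|_C$ at a regular point of $C$ to produce arbitrarily small non-trivial isometries of a tangent cone, contradicting Theorem \ref{main_no_small_subgroup}. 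Consequently, $G_0 \cong \mathbb{R}^k$ with $k \le n$.

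I would then transfer this back to $\Gamma_i$ using the Fukaya-Yamaguchi equivariant approximation: for large $i$ there exist normal subgroups $H_i \trianglelefteq \Gamma_i$ with $(\widetilde{M}_i, \tilde{p}_i, H_i) \overset{GH}\longrightarrow (\widetilde{X}, \tilde{p}, G_0)$ and $\Gamma_i / H_i \cong G/G_0$. Since $G_0 \cong \mathbb{R}^k$, Theorem \ref{main_no_small_subgroup} applied inside $H_i$ together with the Euclidean structure of the limit forces $H_i$ to be virtually free abelian of rank $k$, generated by at most $k$ short elements modulo bounded index. The quotient $\Gamma_i/H_i$ acts on the compact factor $C$, and Theorem \ref{main_abel_1} then produces an abelian subgroup of $\Gamma_i/H_i$ of bounded index generated by at most $n-k$ elements. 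Combining and lifting gives an abelian subgroup of $\pi_1(M_i, p_i)$ of uniformly bounded index generated by at most $n$ elements, contradicting our assumption.

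The main obstacle is proving that $G_0$ acts trivially on the compact factor $C$. The non-collapsing hypothesis is made only at $\tilde{p}$, so one must carefully leverage cocompactness of $G$ together with the splitting $\widetilde{X} = \mathbb{R}^k \times C$ to locate a point on the $C$-factor where non-collapsing propagates and Theorem \ref{main_no_small_subgroup} can be applied in the tangent cone. A secondary technical point is verifying that Theorem \ref{main_abel_1} applies to the action of $\Gamma_i/H_i$, which requires realizing the relevant quotient as a manifold with bounded diameter while preserving the non-collapsing at a lifted basepoint.
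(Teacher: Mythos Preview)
The central gap is the claim that $G_0$ projects trivially to $\mathrm{Isom}(C)$. Your rescaling argument does not produce a contradiction: a nontrivial one-parameter subgroup of $G_0|_C$ is necessarily a circle (since $\mathrm{Isom}(C)$ is compact), and under blow-up at a regular point it limits to an orthogonal $S^1$-action on $\mathbb{R}^n$, whose displacement on the unit ball is bounded \emph{below} by a definite constant. Theorem \ref{main_no_small_subgroup} only rules out subgroups with small displacement; a circle action is not small in this sense. In fact $G_0|_C$ can perfectly well be a nontrivial torus, so the conclusion $G_0\cong\mathbb{R}^k$ is false in general, and the subsequent Fukaya--Yamaguchi step (extracting $H_i$ that is virtually $\mathbb{Z}^k$) collapses.

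The paper's route avoids this obstacle. It first invokes the generalized Margulis lemma of Kapovitch--Wilking to reduce to nilpotent $\Gamma_i$ with cyclic chain of length $\le n$; without this reduction you have no a priori control on $G_0$. It then applies Cheeger--Gromoll splitting at the level of each $\widetilde{M}_i$ (not the limit), writing $\widetilde{M}_i=N_i\times\mathbb{R}^k$ with $N_i$ compact, and proves separately (Lemma \ref{diameter_ratio_2}) that $\mathrm{diam}(N_i)\le C(n)$. One then projects $\Gamma_i$ to $\mathrm{Isom}(N_i)$ and to $\mathrm{Isom}(\mathbb{R}^k)$. For the compact factor the argument of Theorem \ref{main_abel_1} applies verbatim: the limit of $\overline{q_i(\Gamma_i)}$ has torus identity component (connected compact nilpotent), so by Lemma \ref{commutator_length} every commutator in a bounded-index subgroup converges to $e$, and Corollary \ref{stable_nss} forces the commutator subgroup to be trivial for large $i$. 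The Euclidean projection is handled by Bieberbach. The role of Theorem \ref{main_no_small_subgroup} is thus to kill \emph{commutators} that limit to the identity, not to kill a torus component of $G_0$; that is the mechanism you are missing.
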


Theorems \ref{main_abel_1} and \ref{main_abel_2} generalize Theorems D and E in \cite{MRW08}, where the curvature conditions are on sectional curvature. Given Theorem 8 in \cite{KW11} and Theorem 4.1 \cite{CC00a}, actually their proof \cite{MRW08} extends to the Ricci case. Here we give an alternative approach by applying Theorem \ref{main_no_small_subgroup} and Kapovitch-Wilking's work \cite{KW11}.

Theorems \ref{main_abel_1} and \ref{main_abel_2} partially verify the following conjectures respectively.

\begin{conj}
	Given $n$ and $D$, there exists a constant $C(n,D)$ such that the following holds. Let $M$ be an $n$-manifold with finite fundamental group and
	$$\mathrm{Ric}\ge -(n-1),\quad\mathrm{diam}(M)\le D,$$
	then $\pi_1(M)$ contains an abelian subgroup of index $\le C(n,D)$. Moreover, this subgroup can be generated by at most $n$ elements.
\end{conj}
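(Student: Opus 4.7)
The plan is to extend the proof of Theorem \ref{main_abel_1} by replacing the lower volume bound $\mathrm{vol}(B_1(\tilde{p}))\ge v$ with a pure diameter argument, leaning on the full generalized Margulis lemma of \cite{KW11} in place of the non-collapsed no-small-subgroup property (Theorem \ref{main_no_small_subgroup}).

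First, I would apply the generalized Margulis lemma of \cite{KW11} to extract a nilpotent subgroup $N_0\le\pi_1(M)$ of rank $\le n$ and index $\le C_1(n,D)$; the diameter bound enters through Gromov's short-generator construction, which reduces bounding the index in all of $\pi_1(M)$ to controlling it among loops of length bounded in terms of $D$, after which the Margulis lemma on the universal cover supplies the nilpotent structure. The remaining task is to refine $N_0$ to an abelian subgroup of bounded index and $\le n$ generators.

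For this refinement I would argue by contradiction and pass to an equivariant limit
\begin{center}
$\begin{CD}
(\widetilde{M}_i,\tilde{p}_i,\Gamma_i) @>GH>> (\widetilde{X},\tilde{p},G)\\
@VV\pi_iV @VV\pi V\\
(M_i,p_i) @>GH>> (X,p),
\end{CD}$
\end{center}
in which $\Gamma_i=\pi_1(M_i)$ is finite, so $G$ is compact and by \cite{KW11} its identity component $G^0$ is a nilpotent Lie group. If one can show $G^0$ is in fact abelian of rank $\le n$, then the Fukaya--Yamaguchi equivariant approximation \cite{FY92} supplies subgroups $H_i\le\Gamma_i$ converging to $G^0$ with uniformly bounded index, and combining $H_i$ with $N_0$ produces the desired abelian subgroup of $\pi_1(M)$.

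The genuine obstacle is showing $G^0$ is abelian in the fully collapsed regime. Under the additional volume hypothesis of Theorem \ref{main_abel_1}, the lower volume bound forces $\widetilde{X}$ to be $n$-dimensional, and a tangent-cone splitting at a regular point of $\tilde{p}$ combined with Theorem \ref{CC_cone} flattens $G^0$ into a torus. With no volume control, $\widetilde{X}$ may collapse to arbitrarily low dimension and the nilpotent Lie group $G^0$ need not be abelian, so a new ingredient is required, perhaps a Ricci analog of the Cheeger--Fukaya--Gromov nilpotent-Killing structure, or a refined use of the singular fibration of \cite{KW11} to argue that commutators in $G^0$ must vanish after passage to a bounded-index subgroup. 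This step, rather than the Margulis-lemma step, is where I expect the true difficulty to lie and is presumably why the statement is posed as a conjecture.
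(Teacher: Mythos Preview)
This statement is posed as a \emph{conjecture} in the paper, so there is no proof to compare against; you correctly acknowledge this at the end. However, you mislocate the obstruction.

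Your claimed obstacle---that $G^0$ might fail to be abelian in the collapsed regime---is not one. The Diameter Ratio Theorem of \cite{KW11} (which requires no volume lower bound) gives $\mathrm{diam}(\widetilde{M}_i)\le\widetilde{D}(n,D)$, so $\widetilde{X}$ and hence $G$ are compact. Since the $\Gamma_i$ may be taken nilpotent of bounded step, $G$ is nilpotent, and a compact connected nilpotent Lie group is automatically a torus. Thus $G^0$ is abelian for free, exactly as in the paper's proof of Theorem~\ref{main_abel_1}; no tangent-cone splitting is used there either.

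The genuine gap is the step you pass over: from $H_i\overset{GH}\to G^0$ with $G^0$ a torus you cannot conclude that $H_i$ is abelian, nor does ``combining $H_i$ with $N_0$'' help, since $N_0$ is only nilpotent. In the proof of Theorem~\ref{main_abel_1} this is precisely where the volume hypothesis enters: one shows $[H_i,H_i]\overset{GH}\to\{e\}$ via Lemma~\ref{commutator_length}, and then invokes Corollary~\ref{stable_nss} (hence Theorem~\ref{main_no_small_subgroup}) to conclude $[H_i,H_i]=\{e\}$ for large $i$. That no-small-subgroup step explicitly requires $\mathrm{vol}(B_1(\tilde{p}_i))\ge v$. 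Without non-collapsing on the universal cover, nontrivial commutator subgroups can converge to the identity in the limit, and nothing in your outline prevents this. This, not the abelianness of $G^0$, is the missing ingredient.
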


\begin{conj}[Fukaya-Yamaguchi]
	Given $n$, there exists a constant $C(n)$ such that for any $n$-manifold with nonnegative Ricci curvature, its fundamental group $\pi_1(M)$ contains an abelian subgroup of index $\le C(n)$. Moreover, this subgroup can be generated by at most $n$ elements.
\end{conj}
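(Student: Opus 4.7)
The plan is to argue by contradiction via equivariant rescaling, using Theorem \ref{main_abel_2} as the non-collapsed base case and the no-small-subgroup technology of this paper to pass structure from limits back to the approximating manifolds. Suppose no such $C(n)$ exists. Then one obtains a sequence of complete $n$-manifolds $(M_i,p_i)$ with $\mathrm{Ric}_{M_i}\ge 0$ such that $\pi_1(M_i,p_i)$ contains no abelian subgroup of index $\le i$ generated by at most $n$ elements. After normalizing—rescaling by inverse diameter in the closed case, and passing to a rescaling sequence $r_i^{-1}\widetilde{M}_i$ that "sees" the fundamental group in the open case (as in Theorem \ref{main_milnor})—one extracts an equivariant limit
\begin{equation*}
(\widetilde{M}_i,\tilde p_i,\Gamma_i)\overset{GH}{\longrightarrow}(\widetilde{X},\tilde p,G),\qquad (M_i,p_i)\overset{GH}{\longrightarrow}(X,p),
\end{equation*}
where $G$ is a Lie group by \cite{CC00a,CN12}.

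I would then split according to whether the sequence is volume non-collapsed at $\tilde p_i$. If $\mathrm{vol}(B_1(\tilde p_i))\ge v>0$ holds along a subsequence, Theorem \ref{main_abel_2} (in the closed, bounded-diameter normalization) or a suitable analogue derived from Theorem \ref{main_sg} together with Wilking's reduction to the abelian case \cite{Wi00} supplies the required abelian subgroup of bounded index and rank $\le n$, contradicting the choice of $(M_i,p_i)$. The stability Theorem \ref{main_stable} and Corollary \ref{stable_nss} would be used to transfer algebraic information (index, number of generators) from the limit $G$ back to $\Gamma_i$ for $i$ large; the rank bound $\le n$ should ultimately come from the dimension bound on the orbit $G\tilde p\subseteq\widetilde X$, using that $\dim\widetilde X\le n$.

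In the collapsed case the dimension-monotonicity Theorem \ref{main_dim} becomes the main engine. I would argue by induction on $\dim G$. When $\dim G=0$, Corollary \ref{stable_nss} forces $\#\Gamma_i(R)$ to be uniformly bounded, so $\Gamma_i$ is generated by boundedly many elements, and combined with Wilking's reduction plus a structural analysis of discrete abelian limit groups, one extracts the abelian finite-index subgroup directly. When $\dim G\ge 1$, one uses Theorem \ref{main_dim} on a carefully chosen intermediate rescaling (exactly the "intermediate scale" construction sketched for Theorem \ref{main_sg}) to reduce either $\dim G$ or the number of components of the isotropy at $\tilde p$, driving the induction down to a situation handled by the previous paragraph.

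The hard part—indeed the essential gap that keeps the statement a conjecture—is that the machinery of this paper requires the scaling $\Phi$-nonvanishing hypothesis at $\tilde p_i$ uniformly along the sequence, and without volume lower bounds on $B_1(\tilde p_i)$ there is no known way to verify it: this is precisely the content of the open Conjecture \ref{main_conj_vol_nv}. Consequently the above strategy is conditional on establishing scaling nonvanishing in the (possibly collapsed) setting of $\mathrm{Ric}\ge 0$ universal covers, and in addition one must rule out pathological collapsing limits $\widetilde X$ whose isometry group is not virtually abelian—something that for sectional curvature follows from Toponogov (Corollary \ref{nsas_sec}) but for Ricci requires new input on the structure of non-collapsed and collapsed $\mathrm{Ric}\ge 0$ limit spaces beyond \cite{CC96,CC00a,KW11}.
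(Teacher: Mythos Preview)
The statement you are addressing is a \emph{conjecture}, not a theorem: the paper does not prove it and offers no proof to compare against. The paper's contribution toward it is the partial result Theorem~\ref{main_abel_2}, which establishes the conclusion under the additional hypothesis $\mathrm{vol}(B_1(\tilde p))\ge v>0$; your proposal essentially rediscovers that this is as far as the paper's methods go.

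That said, your diagnosis of the obstruction is too narrow. You locate the gap solely in verifying the scaling $\Phi$-nonvanishing property (Conjecture~\ref{main_conj_vol_nv}) without a volume lower bound. But Conjecture~\ref{main_conj_vol_nv} itself \emph{assumes} the volume lower bound, and more to the point every tool you invoke in the collapsed branch---Theorem~\ref{main_no_small_subgroup}, Corollary~\ref{stable_nss}, Theorem~\ref{main_dim}, Theorem~\ref{main_stable}---requires $\mathrm{vol}(B_1(\tilde p_i))\ge v>0$ as a standing hypothesis. So even granting scaling nonvanishing in a collapsed setting, you would have no quantitative no-small-subgroup statement to feed into the rescaling argument; the entire induction engine is unavailable, not just its ignition. (Theorem~\ref{dimension'} trades the volume hypothesis for a no-small-almost-subgroup assumption on a full ball, which is strictly stronger than scaling nonvanishing at a point and equally unverified in the collapsed regime.)

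There is also a structural issue you pass over: for open manifolds the conjecture, as stated, forces $\pi_1(M)$ to be virtually generated by at most $n$ elements, hence finitely generated---so it subsumes the Milnor conjecture. Your ``open case'' paragraph treats this as a rescaling technicality, but it is in fact an independent open problem that your strategy would need to resolve along the way.
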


We make use of the following result on nilpotent groups.


\begin{lem}\label{commutator_length}
	\cite{St66} Let $\Gamma$ be a nilpotent group generated by $n$ elements $x_1,...,x_n$. Then every element in $[\Gamma,\Gamma]$ is a product of $n$ commutators $[x_1,g_1],...,[x_n,g_n]$ for suitable $g_i\in G$ $(i=1,...,n)$.
\end{lem}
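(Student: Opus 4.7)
The plan is to prove this by induction on the nilpotency class $c$ of $\Gamma$. The base case $c=1$ is vacuous: $[\Gamma,\Gamma]=\{e\}$, and the identity is $\prod_{i=1}^n[x_i,e]$.

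For the inductive step, let $\Gamma_k$ denote the lower central series, so $\Gamma_{c+1}=\{e\}$ while $\Gamma_c$ is a nontrivial central subgroup. The quotient $\bar\Gamma=\Gamma/\Gamma_c$ is generated by $\bar x_1,\ldots,\bar x_n$ and has nilpotency class $c-1$, so by induction any $w\in[\Gamma,\Gamma]$ satisfies $\bar w=\prod_{i=1}^n[\bar x_i,\bar h_i]$ for some $h_i\in\Gamma$. Lifting gives $w=[x_1,h_1]\cdots[x_n,h_n]\cdot z$ for some $z\in\Gamma_c$, and the problem reduces to absorbing the central correction $z$ into the commutator factors.

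The technical heart of the argument, which I expect to be the main obstacle, is rewriting $z$ in the form $\prod_{i=1}^n[x_i,u_i]$ with $u_i\in\Gamma_{c-1}$. This relies on the standard bi-multiplicativity of the commutator modulo higher terms of the lower central series: for $a,a'\in\Gamma$ and $b\in\Gamma_{c-1}$ one has $[aa',b]=[a,b][a',b]$ and $[a,bb']=[a,b][a,b']$, because the error terms lie in $[\Gamma_c,\Gamma]\subseteq\Gamma_{c+1}=\{e\}$. Since $\Gamma_c=[\Gamma,\Gamma_{c-1}]$ is generated as an abelian group by commutators $[a,b]$ with $a\in\Gamma$, $b\in\Gamma_{c-1}$, and since each such $a$ is a word in $x_1^{\pm 1},\ldots,x_n^{\pm 1}$, bi-multiplicativity lets me first expand $[a,b]$ as a product of $[x_i^{\pm 1},b]$ and then collect all contributions with the same first slot, producing the desired expression $z=\prod_{i=1}^n[x_i,u_i]$.

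To finish, observe that each $[x_i,u_i]\in\Gamma_c$ is central, so the identity $[x_i,ab]=[x_i,a]\cdot a[x_i,b]a^{-1}$ collapses to $[x_i,h_iu_i]=[x_i,h_i][x_i,u_i]$, and centrality of the $[x_i,u_i]$ also allows me to shuffle them through the product so that
\[w=[x_1,h_1]\cdots[x_n,h_n]\cdot[x_1,u_1]\cdots[x_n,u_n]=[x_1,h_1u_1]\cdots[x_n,h_nu_n].\]
Setting $g_i=h_iu_i$ closes the induction. Everything outside the bi-multiplicativity/collecting step is either the induction hypothesis or a one-line commutator identity, so the real work sits in the careful bookkeeping of that central step.
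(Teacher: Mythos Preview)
The paper does not give its own proof of this lemma; it is quoted from Stroud's thesis \cite{St66} and used as a black box in the proof of Theorem~\ref{main_abel_1}. So there is no in-paper argument to compare against.

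Your proposal is the standard proof and is correct. The induction on nilpotency class, the reduction to absorbing a central correction $z\in\Gamma_c$, and the use of bi-multiplicativity of the commutator pairing $\Gamma/\Gamma_2\times\Gamma_{c-1}/\Gamma_c\to\Gamma_c$ to write $z=\prod_i[x_i,u_i]$ with $u_i\in\Gamma_{c-1}$ are exactly the expected steps. One minor remark: the commutator identity you quote, $[x_i,ab]=[x_i,a]\cdot a[x_i,b]a^{-1}$, holds for the convention $[a,b]=aba^{-1}b^{-1}$; with the other common convention $[a,b]=a^{-1}b^{-1}ab$ one instead has $[x_i,ab]=[x_i,b][x_i,a]^b$. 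Either way, once $[x_i,u_i]\in\Gamma_c$ is central and conjugation of $[x_i,h_i]\in\Gamma_2$ by $u_i\in\Gamma_{c-1}$ is trivial modulo $\Gamma_{c+1}=\{e\}$, the merge $[x_i,h_i][x_i,u_i]=[x_i,h_iu_i]$ goes through, so this does not affect the argument.
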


\begin{proof}[Proof of Theorem \ref{main_abel_1}]
	Suppose that the statement does not hold, then we have a contradicting sequence
	\begin{center}
		$\begin{CD}
		(\widetilde{M}_i,\tilde{p}_i,\Gamma_i) @>GH>> (\widetilde{X},\tilde{p},G)\\
		@VV\pi_iV @VV\pi V\\
		(M_i,p_i) @>GH>> (X,p)
		\end{CD}$
	\end{center}
	with finite fundamental groups and
	$$\mathrm{Ric}_{M_i}\ge -(n-1),\quad\mathrm{diam}(M_i)=D, \quad \mathrm{vol}(B_1(\tilde{p}_i))\ge v>0,$$
	but any abelian subgroup in $\pi_1(M_i)$ has index larger than $i$. By \cite{KW11}, $\Gamma_i$ is $C(n)$-nilpotent with a cyclic chain of length $\le n$. Thus without lose of generality, we may assume that $\Gamma_i$ is nilpotent with a cyclic chain of length $\le n$ for all $i$, and thus $G$ is a nilpotent Lie group.
	
	By Diameter Ratio Theorem \cite{KW11}, $\mathrm{diam}(\widetilde{M}_i)$ has an upper bound $\widetilde{D}(n,D)$. Thus the limit space $\widetilde{X}$ and its limit group $G$ are compact. $G_0$, as a connected compact nilpotent Lie group, must be a torus. We call this torus $T$. Since $G$ is compact, there is a sequence of subgroups $H_i$ converging to $T$ such that 
	$$[\Gamma_i:H_i]=[G:T]<\infty.$$
	We complete the proof once we show that $H_i$ is abelian and can be generated by at most $n$-elements.
	
	Since $\Gamma_i$ is nilpotent with a cyclic chain of length $\le n$, $H_i$ can be generated by at most $n$-elements. To show that $\Gamma_i$ is abelian, we consider $[H_i,H_i]$, the subgroup of $H_i$ generated by all commutators. We claim that $[H_i,H_i]\overset{GH}\longrightarrow e$, then by Corollary \ref{stable_nss}, $[H_i,H_i]=e$ and thus $H_i$ is abelian. Indeed, for any sequence $\gamma_i$ in $[H_i,H_i]$, by lemma \ref{commutator_length} it can be written as $\prod_{j=1}^n [x_{i,j},h_{i,j}]$, where $\{x_{i,j}\}_{j=1}^n$ are generators of $H_i$ and $h_{i,j}\in H_i$. Since the limit group $T$ is compact, passing to a subsequence if necessary, we may assume that $x_{i,j}\to x_j\in T$ and $h_{i,j}\to h_j\in T$. Because $T$ is abelian, $[x_{i,j},h_{i,j}]\to [x_j,h_j]=e$ and thus $\gamma_i\to e$.
\end{proof}

Next we consider closed manifolds with nonnegative Ricci curvature.

\begin{lem}\label{diameter_ratio_2}
	Given $n$, there exists a constant $C(n)$ such that the following holds. Let $M$ be a closed $n$-Riemannian manifold with
	$$\mathrm{Ric}\ge 0,\quad\mathrm{diam}(M)=1.$$
	Then $\widetilde{M}$ splits isometrically as $N\times \mathbb{R}^k$ with $\mathrm{diam}(N)\le C(n)$.
\end{lem}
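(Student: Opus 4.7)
The plan is to reduce the diameter bound for the non-Euclidean factor $N$ to Kapovitch--Wilking's Diameter Ratio Theorem applied to a natural orbifold quotient that arises from the Cheeger--Gromoll splitting.

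For a closed $M^n$ with $\mathrm{Ric}\ge 0$, the Cheeger--Gromoll splitting theorem gives an isometric decomposition $\widetilde{M}=N\times \mathbb{R}^k$ with $N$ compact and simply connected, together with a short exact sequence $1\to \Gamma'\to \pi_1(M,p)\to F\to 1$ in which $\Gamma'\cong \mathbb{Z}^k$ acts trivially on $N$ and as a cocompact lattice of translations on $\mathbb{R}^k$, while $F$ is a finite group. Because the de Rham decomposition of $\widetilde{M}$ is unique, $\pi_1(M,p)$ preserves the splitting, and so there is an induced homomorphism $\alpha:\pi_1(M,p)\to \mathrm{Isom}(N)$ which factors through $F$. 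The first-coordinate projection $\widetilde{M}\to N$ is $\pi_1(M,p)$-equivariant with respect to $\alpha$, hence descends to a $1$-Lipschitz surjection $M\twoheadrightarrow N/\alpha(F)$. Consequently $\mathrm{diam}(N/\alpha(F))\le \mathrm{diam}(M)=1$, and $\mathcal{O}:=N/\alpha(F)$ is a compact Riemannian orbifold with $\mathrm{Ric}_\mathcal{O}\ge 0$, $\mathrm{diam}(\mathcal{O})\le 1$, finite orbifold fundamental group $\alpha(F)$, and orbifold universal cover the simply connected manifold $N$. Applying the Diameter Ratio Theorem of \cite{KW11} in the orbifold setting then yields $\mathrm{diam}(N)\le C(n,1)=C(n)$, as desired.

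The main obstacle is confirming that the Diameter Ratio Theorem transfers from the manifold setting of \cite{KW11} to Riemannian orbifolds with finite orbifold fundamental group. The proof in \cite{KW11} rests on two ingredients: Gromov's bound on the number of short generators of $\pi_1$, and ruling out a diameter blow-up of the universal cover by combining equivariant Gromov--Hausdorff compactness with Colding--Naber regularity. Both ingredients extend cleanly to Riemannian orbifolds, which are locally smooth quotients of Riemannian manifolds by finite group actions, with ``short orbifold generators'' playing the role of short generators. Alternatively, one can avoid orbifolds by arguing directly via equivariant Gromov--Hausdorff convergence in the spirit of this paper: if the conclusion fails, after rescaling by $\mathrm{diam}(N_i)^{-1}$ one extracts a limit $(N_i, q_i, \alpha_i(F_i))\to (Z, q_\infty, H)$ in which $Z$ is a compact Ricci limit of diameter $1$ with $\mathrm{Ric}\ge 0$ acted upon transitively by the closed subgroup $H\subseteq \mathrm{Isom}(Z)$, and a contradiction is extracted using the maximality of the Euclidean factor $\mathbb{R}^k$ in the original splittings together with the no-small-subgroup property (Theorem~\ref{main_no_small_subgroup}) at an appropriate scale.
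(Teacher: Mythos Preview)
Your main argument has a genuine gap at the very first step: the short exact sequence you invoke does not exist in general. The Cheeger--Gromoll structure theorem guarantees that $\pi_1(M)$ preserves the splitting $N\times\mathbb{R}^k$ and that its projection to $\mathrm{Isom}(\mathbb{R}^k)$ is crystallographic, hence virtually $\mathbb{Z}^k$; but it does \emph{not} say that the finite-index $\mathbb{Z}^k$ acts trivially on $N$. For a concrete counterexample, take $N=S^2$, $k=1$, and let the generator of $\pi_1(M)\cong\mathbb{Z}$ act on $S^2\times\mathbb{R}$ by $(x,t)\mapsto(R_{2\pi\theta}x,\,t+1)$ with $\theta$ irrational. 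Here $\alpha(\pi_1(M))\subset\mathrm{Isom}(S^2)$ is infinite cyclic, so no finite-index subgroup acts trivially on $N$, the map $\alpha$ does not factor through a finite quotient, and your orbifold $N/\alpha(F)$ is not defined. Even if you replace $\alpha(\pi_1(M))$ by its closure, you lose the covering-space structure needed for the Diameter Ratio Theorem.

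Your alternative sketch at the end is closer in spirit to what the paper actually does, but it invokes the wrong mechanism (Theorem~\ref{main_no_small_subgroup} plays no role here) and is too vague to count as a proof. The paper's argument proceeds by contradiction: assume $\mathrm{diam}(N_i)\to\infty$, use the generalized Margulis Lemma of \cite{KW11} to reduce to nilpotent $\Gamma_i$, rescale by $r_i^{-1}=\mathrm{diam}(N_i)^{-1}$ so that $(r_i^{-1}M_i,p_i)$ collapses to a point, and pass to a limit $(Y\times\mathbb{R}^k,\tilde p,G)$ with $G$ nilpotent acting transitively. The subgroup $K\subset G$ acting trivially on the $\mathbb{R}^k$-factor then acts transitively and effectively on $Y$; since $K_0$ is a connected compact nilpotent Lie group it must be a torus, which forces $Y$ itself to be a torus. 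But $Y$ is a Gromov--Hausdorff limit of the simply connected compact manifolds $r_i^{-1}N_i$ and is a topological manifold, so $Y$ is simply connected---a contradiction. The essential input you are missing is the Margulis Lemma, which supplies the nilpotency needed to identify $Y$.
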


\begin{proof}
	By Cheeger-Gromoll splitting theorem \cite{CG72a}, we know that $\widetilde{M}$ splits isometrically as $N\times \mathbb{R}^k$, where $N$ is compact and simply connected. Suppose that we have a contradicting sequence: $M_i$ with
	$$\mathrm{Ric}_{M_i}\ge 0,\quad\mathrm{diam}(M_i)=1,$$
	but $N_i$, the compact factor of $\widetilde{M_i}$, has diameter $\to\infty$. By generalized Margulis Lemma \cite{KW11}, it is easy to see that $\Gamma_i=\pi_1(M_i,p_i)$ is $C(n)$-nilpotent. Hence without lose of generality, we may assume that $\Gamma_i$ itself is nilpotent.
	
	Put $r_i=\mathrm{diam}(N_i)\to\infty$ and consider the rescaling sequence
	\begin{center}
		$\begin{CD}
		(r_i^{-1}N_i\times\mathbb{R}^k,\tilde{p}_i,\Gamma_i) @>GH>> (Y\times\mathbb{R}^k,\tilde{p},G)\\
		@VV\pi_iV @VV\pi V\\
		(r_i^{-1}M_i,p_i) @>GH>> point
		\end{CD}$
	\end{center}
	where $G$ is a nilpotent Lie group acting transitively on the limit space $Y\times\mathbb{R}^k$. Let $K$ be the subgroup of $G$ acting trivially on $\mathbb{R}^k$-factor. Then $K$ acts effectively and transitively on $Y$. In particular, $Y$ is a compact topological manifold homeomorphic to $K/\mathrm{Iso}$, where $\mathrm{Iso}$ is the isotropy subgroup of $K$. Note that $K_0$ is connected, compact, and nilpotent; thus $K_0$ is a torus, which acts transitively and effectively on $Y$. With these facts, it is easy to verify that $Y$ itself is also a torus.
	
	On the other hand, we have $r_i^{-1}N_i\overset{GH}\longrightarrow Y$. Since each $N_i$ is simply connected and $Y$ is a compact topological manifold, $Y$ must be simply connected as well. We end in a contradiction.
\end{proof}

\begin{rem}
	We point out that in \cite{MRW08} the proof of Theorem D, the diameter bound $\mathrm{diam}(N)\le C(n)$ is asserted by an incorrect inequality.
\end{rem}

\begin{proof}[Proof of Theorem \ref{main_abel_2}]
	We argue by contradiction. Suppose the contrary, then we have a contradicting sequence $M_i$ with
	$$\mathrm{Ric}_{M_i}\ge 0,\quad\mathrm{diam}(M_i)=1,\quad\mathrm{vol}(B_1(\tilde{p_i}))\ge v>0,$$
	but any abelian subgroup of $\pi_1(M_i)$ has index $>i$. By generalized Margulis Lemma \cite{KW11}, we may assume that for each $i$, $\pi_1(M_i)$ is nilpotent with a cyclic chain of length at most $n$.
	
	By Lemma \ref{diameter_ratio_2}, $\widetilde{M_i}$ splits as $N_i\times \mathbb{R}^{k_i}$ isometrically with $\mathrm{diam}(N_i)\le C(n)$. Since $k_i\le n$ for all $i$, passing to a subsequence, we may assume $k_i=k$ for all $i$. Passing to a subsequence again, we obtain the following convergent sequences.
	\begin{center}
		$\begin{CD}
		(N_i\times \mathbb{R}^{k},\tilde{p}_i) @>GH>> (N\times\mathbb{R}^{k},\tilde{p})\\
		@VVV @VVV\\
		(M_i,p_i) @>GH>> (X,p),
		\end{CD}$
	\end{center}
	where $N$ is compact. From the assumption that $\mathrm{vol}(B_1(\tilde{p_i}))\ge v>0$, it is obvious that $\mathrm{vol}(N_i)\ge v_0>0$ for some $v_0$.
	
	Let $p_i:\mathrm{Isom}(N_i\times\mathbb{R}^k)\to\mathrm{Isom}(\mathbb{R}^k)$ and $q_i:\mathrm{Isom}(N_i\times\mathbb{R}^k)\to\mathrm{Isom}(N_i)$
	be the natural projection maps. Consider $\overline{q_i(\Gamma_i)}$ acting on $N_i$ and the corresponding convergent sequence
	$$(N_i,\overline{q_i(\Gamma_i)})\overset{GH}\longrightarrow (N,G).$$
	$N$ is compact and thus $G$ is also compact. Then by a similar argument in the proof of Theorem \ref{main_abel_1}, we can show that $\overline{q_i(\Gamma_i)}$, and thus $q_i(\Gamma_i)$, is $C_1$-abelian, where $C_1$ is a constant independent of $i$. Also, ${p}_i(\Gamma_i)$ acts co-compactly on $\mathbb{R}^k$, thus by Bieberbach theorem, ${p}_i(\Gamma_i)$ is $C_2(n)$-abelian.
	
	Finally, we treat $\Gamma_i$ as a subgroup of ${q_i(\Gamma_i)}\times {p}_i(\Gamma_i)$. It is easy to check that $\Gamma_i$ contains an abelian subgroup of index $\le C_1C_2$. Moreover, this subgroup can be generated by at most $n$-elements because $\Gamma_i$ is nilpotent with a cyclic chain of length at most $n$.
\end{proof}

\section{Dimension monotonicity of symmetries}

We prove our main technical result, dimension monotonicity of symmetries. For a space $(Y,q,H)$, we always assume that $(Y,q)\in\mathcal{M}(n,-1,v)$ and $H$ is a closed abelian subgroup of $\mathrm{Isom}(Y)$; in particular, $H$-action is always effective. We state the dimension monotonicity of symmetries as follows.

\begin{thm}\label{dimension}
	Let $(M_i,p_i)$ be a sequence of complete $n$-manifolds with $$\mathrm{Ric}_{M_i}\ge -(n-1),\quad \mathrm{vol}(B_1(p_i))\ge v>0;$$
	let $\Gamma_i$ be a closed abelian subgroup of $\mathrm{Isom}(M_i)$ for each $i$. Suppose that there is a positive function $\Phi$ such that $\Gamma_i$-action is scaling $\Phi$-nonvanishing at $p_i$ for all $i$.\\
	If the following two sequences converge $(r_i\to\infty)$:
	$$({M}_i,{p}_i,\Gamma_i)\overset{GH}\longrightarrow ({X},{p},G),$$
	$$(r_i{M}_i,{p}_i,\Gamma_i)\overset{GH}\longrightarrow ({X}',{p}',G'),$$
	then the following holds:\\
	(1) $\dim(G')\le\dim(G)$;\\
	(2) If $G'$ has a compact subgroup $K'$, then $G$ contains a subgroup $K$ fixing ${p}$ and $K$ is isomorphic to $K'$.
\end{thm}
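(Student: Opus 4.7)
The plan is to argue by contradiction via the no small almost subgroup property (Proposition~\ref{main_nsas_local}), which is available here by Corollary~\ref{rescale_identity_cor} since $\Gamma_i$ is scaling $\Phi$-nonvanishing at $p_i$. The pivotal observation is that the almost-subgroup ratio $d_H(Ap, A^2p)/\mathrm{diam}(Ap)$ is \emph{scale-invariant}. Hence, if a sequence of symmetric subsets $A_i \subseteq \Gamma_i$ converges in the rescaled picture $(r_i M_i, p_i, A_i) \overset{GH}\to (X', p', H)$ to a nontrivial \emph{compact} subgroup $H$ of $G'$, then eventually each $A_i$ is an $\eta$-subgroup at $p_i$, so by no small almost subgroup $D_{1, p_i}(A_i) \ge \epsilon > 0$ in the original metric. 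Thus it suffices to produce, out of the extra symmetry in $G'$, such $A_i$ that simultaneously satisfy $D_{1, p_i}(A_i) \to 0$ in the original metric.

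For Part~(1), suppose for contradiction $\dim(G') > \dim(G)$. I would first handle the easy case where $G'$ contains an element $\gamma'$ of finite order (or with compact closure $\overline{\langle\gamma'\rangle}$) that does not descend to a nontrivial element of $G$: pick lifts $\gamma_i \in \Gamma_i$ with $(r_i M_i, p_i, \gamma_i) \overset{GH}\to (X', p', \gamma')$ but $\gamma_i \overset{GH}\to e$ in $(M_i, p_i)$. Take $A_i = \{e, \gamma_i^{\pm 1}, \dots, \gamma_i^{\pm N_i}\}$ with $N_i \to \infty$ slowly enough so that $A_i$ limits to $\overline{\langle\gamma'\rangle}$ in the rescaled metric. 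The ratio then vanishes in the limit while $D_{1, p_i}(A_i) \to 0$ in the original, contradicting no small almost subgroup.

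The harder case is when the ``new'' part of $G'$ is purely Euclidean (the prototype being $G = \mathbb{R}$, $G' = \mathbb{R}^2$), where no such torsion is directly visible. I would then insert an \emph{intermediate rescaling} $s_i$ with $1 \le s_i \le r_i$ as follows. Choose $\gamma_i \in \Gamma_i$ whose rescaled limit generates a new $\mathbb{R}$-direction in $G'$, so that $d(\gamma_i p_i, p_i) \to 0$ while $r_i d(\gamma_i p_i, p_i)$ stays bounded away from $0$ and $\infty$. Calibrate $s_i$ together with integers $k_i \to \infty$ so that $s_i^{-1} M_i$ rescales the cyclic orbit $\langle \gamma_i \rangle p_i$ to finite nonzero diameter with long return times; passing to a subsequence, the closure of the image of $\langle \gamma_i \rangle$ in the $s_i^{-1} M_i$ limit contains a new compact (typically circle) factor not already present in $G$. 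The easy case then applies at scale $s_i$ (replacing $r_i$ by $s_i$ throughout), producing an $A_i$ with $D_{1, p_i}(A_i) \to 0$ whose rescaled limit is compact; this is the same intermediate rescaling idea already used in \cite{Pan17b}. The main obstacle is precisely the choice of $s_i$: one must verify that if the limit dimension strictly increases from scale $1$ to scale $r_i$ without new compact symmetry, then at some intermediate scale a compact factor \emph{must} appear, since a continuous family of abelian Lie groups of the same dimension cannot interpolate between $G$ and $G'$ while matching their Euclidean ranks.

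For Part~(2), I would construct the required $K \le G$ by lifting elements of $K'$. For each $\gamma' \in K'$ choose $\gamma'_i \in \Gamma_i$ with $(r_i M_i, p_i, \gamma'_i) \overset{GH}\to (X', p', \gamma')$. Compactness of $K'$ bounds the rescaled displacement uniformly, so $d(\gamma'_i p_i, p_i) = O(r_i^{-1}) \to 0$ in the original metric; after a diagonal subsequence, $\gamma'_i$ converges in $(M_i, p_i)$ to some $\Psi(\gamma') \in G$ that fixes $p$. Naturality of equivariant Gromov-Hausdorff limits plus abelianness makes $\Psi$ a well-defined homomorphism on $K'$. The crux is injectivity: if $\Psi(\gamma') = e$ for some $\gamma' \ne e$, then $\gamma'_i \overset{GH}\to e$ in $(M_i, p_i)$, and $A_i = \{e, (\gamma'_i)^{\pm 1}, \dots, (\gamma'_i)^{\pm N_i}\}$ with $N_i$ chosen so that $A_i \to \overline{\langle \gamma' \rangle}$ (compact, since $K'$ is compact) in the rescaled picture gives the same contradiction to no small almost subgroup. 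Setting $K = \Psi(K')$ and using that $\Psi$ is a continuous injective homomorphism between Lie groups with $K'$ compact, $K$ is a closed subgroup of $G$ fixing $p$ and isomorphic to $K'$.
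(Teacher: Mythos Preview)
Your approach to Part~(2) and to the ``easy case'' of Part~(1) is essentially the paper's approach: lift elements of $K'$ to $\Gamma_i$, pass to limits in the unrescaled sequence, and use the no small almost subgroup property (together with Proposition~\ref{non_van_prop}(1),(2)) to get well-definedness and injectivity of the resulting map $K'\to G$. The paper organizes this as separate lemmas for tori and finite groups (Lemmas~\ref{dim_torus}, \ref{dim_finite}), but the content is the same.

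The genuine gap is in your ``hard case'' for Part~(1). You assert that if $\dim(G')>\dim(G)$ with the new part purely Euclidean, then at some intermediate scale $s_i$ the limit group must acquire a \emph{compact} (circle) factor, after which the easy case applies. This is exactly the step the paper singles out as the obstacle, and it is \emph{not} resolved by producing compact symmetry. In the paper's analysis (see the discussion around Example~\ref{ex_2} and Steps~3--4 in the proof of Proposition~\ref{dim_free}(1)), the intermediate limit $H_\infty$ can perfectly well be $\mathbb{R}^k$ or $\mathbb{R}^k\times\mathbb{Z}$ with no new torus factor at all; an $\mathbb{R}$-action can be arbitrarily GH-close to an $\mathbb{R}^2$-action without any circle appearing. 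Your calibration of $s_i$ so that ``the cyclic orbit has finite nonzero diameter'' does not make sense when $\gamma_i$ limits to a free $\mathbb{R}$-direction, since the orbit diameter is infinite at every scale.

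What actually closes the argument is different. The paper first shows (Lemma~\ref{dim_free_untwisted}) that any limit $\mathbb{R}^k$-action arising this way has no one-parameter $\eta$-subgroup at the base point, and then proves a quantitative gap (Lemma~\ref{dim_free_gap}): such an $\mathbb{R}^k$-action is at equivariant GH-distance at least $\delta(n,\eta)$ from any action containing $\mathbb{R}^k\times\mathbb{Z}$ with small generator. One then takes $s_i$ near the infimum of the set $S_i$ of scales at which $(s M_i,p_i,\Gamma_i)$ is $\delta/3$-close to some $(Y,q,H)$ with $H\supseteq\mathbb{R}^k\times\mathbb{Z}$. The gap lemma forces $s_i\to\infty$; one shows $r_i/s_i\to\infty$; and then the intermediate limit $H_\infty$ is analyzed case by case. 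The final contradiction comes not from a compact factor but from showing $H_\infty\supseteq\mathbb{R}^k\times\mathbb{Z}$, so that $s_i/2\in S_i$ for large $i$, contradicting $s_i\approx\inf S_i$. For general (non-free) $G$ this is embedded in a triple induction on $(\dim_T(G),\dim_R(G),\#\pi_0(G))$, with Lemma~\ref{dim_ind_untwisted} replacing Lemma~\ref{dim_free_untwisted}. None of this machinery is visible in your sketch, and the specific claim you rely on---that compact symmetry must emerge at an intermediate scale---is false as stated.
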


\begin{rem}\label{rm_abel_nil}
	We expect that Theorem \ref{dimension} holds for nilpotent group actions with controlled nilpotency length, which is enough to remove the abelian assumption in Theorem \ref{main_sg}. Generalizing Theorem \ref{dimension} to the nilpotent case requires much more work.
\end{rem}

For convenience, we reformulate Proposition \ref{rescale_identity} and Corollary \ref{rescale_identity_cor} here.

\begin{prop}\label{non_van_prop}
	Let $(M_i,p_i,\Gamma_i)$ be a sequence with the assumptions in Theorem \ref{dimension}. Then the following holds:\\
	(1) For any sequence $f_i\in \Gamma_i$ and $r_i\ge s_i\ge 1$ with
	$$(s_iM_i,p_i,f_i)\overset{GH}\longrightarrow (Y,p,\mathrm{id}),$$
	$$(r_iM_i,p_i,f_i)\overset{GH}\longrightarrow (Y',p',f'),$$
	if $f'$ fixes $p'$, then $f'=\mathrm{id}$.\\
	(2) For any sequence $f_i\in \Gamma_i$ and $r_i\ge s_i\ge 1$ with
	$$(s_iM_i,p_i,f_i)\overset{GH}\longrightarrow (Y,p,f),$$
	$$(r_iM_i,p_i,f_i)\overset{GH}\longrightarrow (Y',p',\mathrm{id}),$$
	then $f=\mathrm{id}$.\\
	(3) There are positive constants $\epsilon(n,v,\Phi)$ and $\eta(n,v,\Phi)$ such that $\Gamma_i$-action has no $\epsilon$-small $\eta$-subgroup at $p_i$ with scale $r\in(0,1]$ for each $i$.
\end{prop}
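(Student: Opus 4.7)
The plan is to derive Proposition \ref{non_van_prop} directly from Proposition \ref{rescale_identity}, Remark \ref{rem_rescale_id}, and Corollary \ref{rescale_identity_cor}. The proposition is a convenient restatement of these earlier results, rewritten under the substitution $s_i \mapsto s_i^{-1}$, $r_i \mapsto r_i^{-1}$. Indeed, the condition $r_i \le s_i \in (0,1]$ of Proposition \ref{rescale_identity} becomes exactly $r_i \ge s_i \ge 1$ here, and the rescaled pointed spaces $(s_i^{-1} M_i, p_i, f_i)$ and $(r_i^{-1} M_i, p_i, f_i)$ of Proposition \ref{rescale_identity} match $(s_i M_i, p_i, f_i)$ and $(r_i M_i, p_i, f_i)$ of the present statement. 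So once we verify that the hypotheses of Proposition \ref{rescale_identity} are met in our setting, statements (1) and (2) transfer verbatim.

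Next I would establish that one of these hypotheses, say (2) of Proposition \ref{rescale_identity}, is forced by the standing assumption of Theorem \ref{dimension} that the $\Gamma_i$-action is scaling $\Phi$-nonvanishing at $p_i$ uniformly in $i$. This is exactly the content of Remark \ref{rem_rescale_id}: the scaling $\Phi$-nonvanishing property can be detected sequentially, and it is equivalent via a standard contradicting/diagonal argument to the statement that any sequence $(f_i)$ whose rescaled limit at a larger scale is the identity must already have identity limit at the smaller scale. Once (2) of Proposition \ref{rescale_identity} is in hand, the equivalence (1)$\Leftrightarrow$(2) from Proposition \ref{rescale_identity} gives us (1) of Proposition \ref{non_van_prop} as well.

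Finally, part (3) is immediate from Corollary \ref{rescale_identity_cor}: since the $\Gamma_i$-action is scaling $\Phi$-nonvanishing at $p_i$ and the sequence satisfies $\mathrm{Ric}_{M_i} \ge -(n-1)$ and $\mathrm{vol}(B_1(p_i)) \ge v > 0$, we obtain constants $\epsilon(n,v,\Phi)$ and $\eta(n,v,\Phi)$ such that each $\Gamma_i$-action has no $\epsilon$-small $\eta$-subgroup at $p_i$ with scale $r \in (0,1]$; the fact that the constants may be taken uniformly in $i$ is built into Corollary \ref{rescale_identity_cor} since they depend only on $n, v, \Phi$ and not on the particular manifold.

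There is no real obstacle here since this proposition is bookkeeping: its role is to package in the notation of Theorem \ref{dimension} (rescaling up by factors $r_i, s_i \to \infty$) what Proposition \ref{rescale_identity} and Corollary \ref{rescale_identity_cor} already give in the notation of rescaling down by factors $s_i^{-1} \in (0,1]$. The only point requiring a line of verification is the direction-of-scaling substitution, after which all three conclusions are immediate quotations of earlier work.
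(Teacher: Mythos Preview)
Your proposal is correct and matches the paper's approach exactly: the paper explicitly introduces Proposition \ref{non_van_prop} with the sentence ``For convenience, we reformulate Proposition \ref{rescale_identity} and Corollary \ref{rescale_identity_cor} here,'' and provides no separate proof. Your identification of the substitution $s_i\mapsto s_i^{-1}$, $r_i\mapsto r_i^{-1}$ and the roles of Remark \ref{rem_rescale_id} and Corollary \ref{rescale_identity_cor} is precisely the intended justification.
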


We will later see that the no small almost subgroup property is the key criterion for dimension monotonicity of symmetries. One can even replace volume and scaling nonvanishing assumption by a no small almost subgroup assumption around $p_i$:

\begin{thm}\label{dimension'}
	Let $(M_i,p_i)$ be a sequence of complete $n$-manifolds with $$\mathrm{Ric}_{M_i}\ge -(n-1)$$
	and let $\Gamma_i$ be a closed abelian subgroup of $\mathrm{Isom}(M_i)$ for each $i$. Suppose that there are $\epsilon,\eta>0$ such that $\Gamma_i$-action has no $\epsilon$-small $\eta$-subgroup at $q$ with scale $r\in(0,1]$ for all $q\in B_1(p)$ and for all $i$. If the following two sequences converge $(r_i\to\infty)$:
	$$({M}_i,{p}_i,\Gamma_i)\overset{GH}\longrightarrow ({X},{p},G),$$
	$$(r_i{M}_i,{p}_i,\Gamma_i)\overset{GH}\longrightarrow ({X}',{p}',G'),$$
	then the following holds:\\
	(1) $\dim(G')\le\dim(G)$;\\
	(2) If $G'$ has a compact subgroup $K'$, then $G$ contains a subgroup $K$ fixing ${p}$ and $K$ is isomorphic to $K'$.
\end{thm}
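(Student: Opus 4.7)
The plan is to mimic the proof of Theorem \ref{dimension}, substituting the no small almost subgroup hypothesis directly for the volume and scaling nonvanishing conditions. In Theorem \ref{dimension}, those conditions enter only through Proposition \ref{non_van_prop}; here the no $\epsilon$-small $\eta$-subgroup property at every $q \in B_1(p)$ is assumed outright, and by the remark following Proposition \ref{rescale_identity} this hypothesis also supplies items (1) and (2) of Proposition \ref{non_van_prop}. Thus every tool used to prove Theorem \ref{dimension} remains available, and the argument transfers with only these inputs replaced.

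For part (1), I argue by contradiction and assume $\dim G' > \dim G$. The goal is to construct, for each $i$, a symmetric subset $A_i \subseteq \Gamma_i$ and a point $q_i \in B_1(p_i)$ witnessing a violation of the no small almost subgroup property at some scale $r \in (0,1]$: that is, $r^{-1} D_{r, q_i}(A_i) \to 0$ while the scale-invariant ratio $d_H(A_i q_i, A_i^2 q_i)/\mathrm{diam}(A_i q_i) \to 0$. In the easy case, for instance $G = \mathbb{R}$ and $G' = \mathbb{R} \times S^1$ where $G'$ contains a torsion element $\gamma$, one chooses $\gamma_i \in \Gamma_i$ limiting to $\gamma$ under the rescaled convergence and sets $A_i = \{ e, \gamma_i^{\pm 1}, \ldots, \gamma_i^{\pm k_i} \}$. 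Scale invariance of the ratio forces it to zero in the rescaled metric, while $A_i$ collapses to the identity in the original metric. In the typical case (e.g.\ $G = \mathbb{R}$, $G' = \mathbb{R}^2$) no such discrete subgroup is visible; one inserts an intermediate rescaling at scale $s_i$ with $1 \le s_i \le r_i$, chosen so that the intermediate limit group has strictly smaller dimension than $G'$, and then applies the easy-case construction inside the intermediate limit. This intermediate rescaling method is the key technical device, identical to the one used in Theorem \ref{dimension}.

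For part (2), given a compact $K' \subseteq G'$, I lift $K'$ to subgroups $K_i \subseteq \Gamma_i$ via equivariant Gromov--Hausdorff approximation. Since $K'$ is compact and $\Gamma_i$ is abelian, the $K_i$ have uniformly bounded orbit diameter at $p_i$ in the scale $r_i$ metric, hence vanishing orbit diameter in the original scale. After passing to a further subsequence, $K_i$ converges to a closed subgroup $K \subseteq G$ that fixes $p$. Injectivity of the limit, hence $K \cong K'$, follows from Proposition \ref{non_van_prop}(1) in the form given by the remark after Proposition \ref{rescale_identity}, which rules out any nontrivial element of $K'$ collapsing under passage from the rescaled metric back to the original scale.

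The main obstacle is the typical case of part (1): choosing the intermediate rescaling so that the extra direction in $G'$ produces an almost subgroup detectable at a scale $r \in (0,1]$ in the original metric, and simultaneously locating a base point $q_i \in B_1(p_i)$ at which the required ratio bound holds uniformly. The abelian hypothesis on $\Gamma_i$ is essential here, since it permits a commuting one-parameter decomposition of $G'$ so that extra factors can be peeled off through a hierarchy of intermediate scales; this is precisely the obstruction identified in Remark \ref{rm_abel_nil} to extending beyond the abelian setting.
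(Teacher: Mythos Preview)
Your proposal is correct and matches the paper's own treatment, which simply declares that ``the proof of Theorem \ref{dimension'} is a mild modification of the proof of Theorem \ref{dimension}'' and gives no further detail. You have correctly identified the crux: volume and scaling nonvanishing enter the proof of Theorem \ref{dimension} only through Proposition \ref{non_van_prop}, and the remark following Proposition \ref{rescale_identity} states that the hypothesis of no $\epsilon$-small $\eta$-subgroup at every $q\in B_1(p)$ recovers items (1) and (2) of that proposition, while item (3) is assumed directly. The paper's remark after Lemma \ref{dim_free_tangent} confirms that no non-collapsing results (metric cone structure, etc.) are used in the critical rescaling argument, so the proof transfers to possibly collapsed limits.

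One small imprecision in your sketch of part (2): you speak of lifting the compact group $K'$ to \emph{subgroups} $K_i\subseteq\Gamma_i$. Since $\Gamma_i$ may be discrete (e.g.\ $\mathbb{Z}^k$) while $K'$ may contain a torus, no such subgroup lift exists. The paper's actual mechanism (Lemmas \ref{dim_circle}, \ref{dim_torus}, \ref{dim_finite}) instead approximates each circle factor of $K'$ by symmetric \emph{subsets} $A_i=\{e,\gamma_i^{\pm1},\ldots,\gamma_i^{\pm k_i}\}$ and shows their limits in $G$ before rescaling generate independent circles fixing $p$; finite factors are handled by lifting generators and verifying relations. Your invocation of Proposition \ref{non_van_prop}(1) for injectivity is the correct key step, so the gap is purely terminological.
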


The proof of Theorem \ref{dimension'} is a mild modification of the proof of Theorem \ref{dimension}. For our purpose, we only focus on Theorem \ref{dimension} in this paper. To illustrate the rule of no small almost subgroup in the proof of Theorem \ref{dimension}, we consider the following examples.

\begin{exmps}\label{ex_1}
	Let $M_i=\mathbb{R}\times (S^3, \frac{1}{i}d_0)$, where $d_0$ is the standard metric on $S^3$, and $p_i$ be a point in $M_i$. $S^3$ admits a circle group $S^1$ acting freely and isometrically on $S^3$. For a number $\theta\in S^1=[0,2\pi]/\sim$, we denote $R(\theta)$ as the corresponding isometry on $S^3$. We define two isometries of $M_i$ by
	\begin{align*}
		\alpha_i(x,y)=&(x+i^{-2},R(2\pi/i)y);\\
		\beta_i(x,y)=&(x+i^{-3},R(2\pi/i)y).
	\end{align*}
	As $i\to\infty$, both $\langle\alpha_i\rangle$-action and $\langle\beta_i\rangle$-action converges to standard $\mathbb{R}$-translations in the limit space $\mathbb{R}$, because $S^3$-factor disappears in the limit. Now we rescale this sequence by $r_i=i$. Then $r_iM_i=\mathbb{R}\times (S^3,g_0)$, on which $\alpha_i$ and $\beta_i$ acts as
	\begin{align*}
		\alpha_i(x,y)=&(x+i^{-1},R(2\pi/i)y);\\
		\beta_i(x,y)=&(x+i^{-2},R(2\pi/i)y).
	\end{align*}
	It is clear that
	$$(r_iM_i,p_i,\langle\alpha_i\rangle,\langle\beta_i\rangle)\overset{GH}\longrightarrow(\mathbb{R}\times S^3,p',\mathbb{R},\mathbb{R}\times S^1).$$
	The limit group of $\langle\alpha_i\rangle$ is $\mathbb{R}$ acting as
	$$t\cdot(x,y)=(x+t,R(2\pi t)y), \ t\in\mathbb{R},$$
	while the limit group of $\langle\beta_i\rangle$ has an extra dimension. This extra dimension comes from a sequence of collapsed almost subgroups in $\langle\beta_i\rangle$. More precisely, if we put $B_i=\{e,\beta_i^{\pm 1},...,\beta_i^{\pm (i-1)}\}$, then on $(M_i,p_i)$ we have $D_{1,p_i}(B_i)\to 0$ and
	$$\dfrac{d_H(B_i{p_i},B_i^2{p_i})}{\mathrm{diam}(B_i{p_i})}\to 0.$$
	On $(M_i,p_i,\langle\alpha_i\rangle)$, there is no such small almost subgroup. We can take the same symmetric subsets $A_i=\{e,\alpha_i^{\pm 1},...,\alpha_i^{\pm (i-1)}\}$. Although $d_H(A_i{p_i},A_i^2{p_i})\to 0$ and $D_{1,p_i}(A_i)\to 0$, the above ratio is away from $0$ for all $i$
	$$\dfrac{d_H(A_i{p_i},A_i^2{p_i})}{\mathrm{diam}(A_i{p_i})}\ge 1/2\pi.$$
\end{exmps}

The proof of Theorem \ref{dimension} is technical and involved. We have illustrated on how to rule out $G=\mathbb{R}$ with $G'=\mathbb{R}\times S^1$ in the introduction. Here we give some indications on how to rule out $G=\mathbb{R}$ with $G'=\mathbb{R}^2$. Suppose that $G'$-action is standard translation for simplicity. One may consider a parameter $s$ changing the scale from $1$ to $r_i$ as $1+s(r_i-1)$, $s\in[0,1]$. In this way, one may imagine that there is a path, consisting of intermediate rescaling limits, and varying from $\mathbb{R}$-action to $\mathbb{R}^2$-translation. Then we can find an intermediate rescaling sequence $s_i\to\infty$ with $r_i/s_i\to\infty$ and
$$(s_i{M}_i,{p}_i,\Gamma_i)\overset{GH}\longrightarrow(Y,q,H),$$
where $H$-action is very close to $\mathbb{R}^2$-translation in the equivariant Gromov-Hausdorff topology but $H\not=\mathbb{R}^2$. If $H=\mathbb{R}\times\mathbb{Z}$, then we can apply a scaling trick to rule it out (see proof of Proposition \ref{dim_free}(1) for details). If $H=\mathbb{R}\times S^1$, then we result in the case that we know cannot happen. The situation that needs some additional arguments is $H=\mathbb{R}$, whose action is very close to $\mathbb{R}^2$-translation. We take a closer look at such an $\mathbb{R}$-action.
\begin{exmp}\label{ex_2}
	Consider $M_i=\mathbb{R}\times (S^1, i\cdot d_0)$ and $\mathbb{R}$ acting on $M_i$ by
	$$t(x,y)=(x+t/i,R(2\pi t)y),\ \  t\in\mathbb{R}.$$
	Then $$d_{GH}((M_i,p_i,\mathbb{R}),(\mathbb{R}^2,0,\mathbb{R}^2))\le 2/i,$$
	where $d_{GH}$ means the pointed equivariant Gromov-Hausdorff distance.
\end{exmp}
Note that in this particular example, $\mathbb{R}$-action on $M_i$ has almost subgroups. For $A=[-1,1]\subseteq\mathbb{R}$, we have
$$\dfrac{d_H(A{p_i},A^2{p_i})}{\mathrm{diam}(A{p_i})}\le 1/(2\pi i^2).$$
A key observation is that such phenomenon also happens in the general case: if a $\mathbb{R}$-action is very close to some $\mathbb{R}^2$-action, then it must contain an almost subgroup (see Lemma \ref{dim_free_gap}). This observation is the key to rule out such an intermediate rescaling sequence.

We start with some definitions.

\begin{defn}\label{1-para_sym_subset}
	Let $G$ be a Lie group. We say that a symmetric subset $A$ of $G$ is one-parameter, if $A$ has one of the following forms:\\
	\upperRomannumeral{1}. $A=\{e,g^{\pm 1},...,g^{\pm k}\}$ for some $g\in G$ and $k\in\mathbb{Z}^+$;\\
	\upperRomannumeral{2}. $A=\{\exp(tv)\ |\ t\in [-1,1]\}$ for some $v\in\mathfrak{g}$, the Lie algebra of $G$.
\end{defn}

\begin{defn}\label{untwisted_action}
	Let $\eta>0$ and $(Y,q,G)$ be a space. We say that $G$-action has no $\eta$-subgroup of one-parameter at $p\in Y$, if for any one-parameter symmetric subset $A\subseteq G$ with $\mathrm{diam}(Ap)\in(0,\infty)$, we have
	$$\dfrac{d_H(A{p},A^2{p})}{\mathrm{diam}(A{p})}\ge\eta.$$
\end{defn}

\begin{lem}\label{1-para_sym_subset_form}
	Let $(Y,q,\mathbb{R})$ be a space. Then the followings are equivalent:\\
	(1) $\mathbb{R}$ contains a one-parameter symmetric subset $A$ of form \upperRomannumeral{1} with
	$$\dfrac{d_H(A{q},A^2{q})}{\mathrm{diam}(A{q})}<\eta.$$
	(2) $\mathbb{R}$ contains a one-parameter symmetric subset $B$ of form \upperRomannumeral{2} with
	$$\dfrac{d_H(B{q},B^2{q})}{\mathrm{diam}(B{q})}<\eta.$$
\end{lem}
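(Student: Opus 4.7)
The plan is to prove the equivalence by mutual approximation, exploiting the uniform continuity of the orbit map $\phi\colon\mathbb{R}\to Y$, $t\mapsto tq$, on every compact interval (which follows from $\mathbb{R}$ acting on $Y$ by isometries). Note that for $G=\mathbb{R}$ every form I subset is an arithmetic progression $\{0,\pm t_0,\ldots,\pm kt_0\}\subset\mathbb{R}$ and every form II subset is a closed interval $[-v,v]\subset\mathbb{R}$.

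For $(2)\Rightarrow(1)$, I would discretize $B=[-v,v]$ by $A_m=\{iv/m:|i|\le m\}$, each of form I with generator $g=v/m$ and $k=m$. Since $d_H(A_m,B)=v/(2m)\to 0$ in $\mathbb{R}$, uniform continuity of $\phi$ on $[-2v,2v]$ transfers this to $d_H(A_m q,Bq)\to 0$ and $d_H(A_m^2q,B^2q)\to 0$ in $Y$. Hence $\mathrm{ratio}(A_m)\to\mathrm{ratio}(B)<\eta$, and any sufficiently fine $A_m$ satisfies (1).

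For $(1)\Rightarrow(2)$, given $A=\{0,\pm t_0,\ldots,\pm kt_0\}$ of form I with $\mathrm{ratio}(A)=\eta_0<\eta$, I would take $B=[-kt_0,kt_0]$ as the natural form II candidate (with $v=kt_0$), which contains $A$ so that $Aq$ is a $(t_0/2)$-net in $Bq$ and $A^2 q$ is a $t_0$-net in $B^2 q$. Setting $f(r)=d(rq,q)$, the Hausdorff estimates $d_H(Aq,Bq),\,d_H(A^2q,B^2q)\le\sup_{|r|\le t_0}f(r)$ combined with the triangle inequality applied to $d_H(Bq,B^2q)\le d_H(Bq,Aq)+d_H(Aq,A^2q)+d_H(A^2q,B^2q)$ yield
\[
\mathrm{ratio}(B)\le \eta_0+\frac{2\sup_{|r|\le t_0}f(r)}{\mathrm{diam}(Aq)},
\]
which is strictly less than $\eta$ whenever the error term is smaller than $\eta-\eta_0$.

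The main obstacle is that the error term is not automatically small for the given $A$; for instance, if $Aq$ happens to be a genuine subgroup-orbit arising from periodicity of the $\mathbb{R}$-action at $q$, then $\eta_0$ can be zero while $\mathrm{ratio}(B)$ is strictly positive. I would resolve this by a case analysis on the orbit closure $Z=\overline{\mathbb{R}\cdot q}\subseteq Y$: when $Z$ is compact (the action at $q$ factors through a circle), choose $v$ large enough that $B_vq$ covers $Z$, so that $B_vq=Z=B_v^2q$ and $\mathrm{ratio}(B_v)=0<\eta$; when $Z$ is unbounded, the orbit map $\phi$ is proper, the growth of $f$ forces the error term in the above estimate to be small relative to $\mathrm{diam}(Aq)$, and the strict inequality $\eta_0<\eta$ propagates from $A$ to $B$. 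Combining the two cases establishes $(1)\Rightarrow(2)$ and completes the proof.
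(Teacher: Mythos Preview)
Your argument for $(2)\Rightarrow(1)$ is correct and matches the paper's: discretize the interval and pass to the limit.

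Your argument for $(1)\Rightarrow(2)$ has a genuine gap. The triangle-inequality bound
\[
\mathrm{ratio}(B)\le \eta_0+\frac{2\sup_{|r|\le t_0}f(r)}{\mathrm{diam}(Aq)}
\]
is far too crude, and your case analysis does not rescue it. In the unbounded-orbit case you assert that properness of $\phi$ ``forces the error term to be small relative to $\mathrm{diam}(Aq)$'', but this is never argued and is in fact false. Take $Y=S^1\times\mathbb{R}$ with the $\mathbb{R}$-action $t\cdot(\theta,z)=(\theta+t,z+\epsilon t)$, $q=(0,0)$, $g=\pi$, $k=1$. The orbit is an unbounded helix and $\phi$ is proper, yet $\sup_{|r|\le\pi}f(r)\approx\pi$ and $\mathrm{diam}(Aq)\approx\pi$, so your error term is $\approx 2$ regardless of how small $\eta$ is. (The set $B=[-\pi,\pi]$ does satisfy $\mathrm{ratio}(B)<\eta$ here, but your estimate cannot see it.) The point is that $Aq$ and $Bq$ can be Hausdorff-far apart on the scale of $\mathrm{diam}(Aq)$; you cannot pass from $A$ to $B$ by a perturbation argument.

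The paper avoids this entirely by exploiting the isometric action rather than approximating. From the hypothesis one extracts a single relation $d(g^{2k}q,g^{n}q)<\eta\cdot\mathrm{diam}(Aq)$ for some $|n|\le k$. Since every $\exp(-sv)$ is an isometry, applying it gives
\[
d\bigl(\exp((2k-s)v)q,\exp((n-s)v)q\bigr)<\eta\cdot\mathrm{diam}(Aq)\le\eta\cdot\mathrm{diam}(Bq)
\]
for every real $s\in[0,k]$, and $n-s\in[-k,k]$ when $n\ge 0$. This directly shows every point of $B^2q$ is within $\eta\cdot\mathrm{diam}(Bq)$ of $Bq$, with no error term at all. (When $n<0$ one first enlarges $A$ to $A'=\{g^j:|j|\le 2k-n-1\}$, using $d(g^{2k-n}q,q)<\eta\cdot\mathrm{diam}(Aq)$, and reduces to the previous case.) The key idea you are missing is that one well-placed near-return of the discrete orbit, translated continuously by the isometries $\exp(-sv)$, already controls the entire continuous orbit.
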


\begin{proof}
	Suppose that $\mathbb{R}$ contains a one-parameter symmetric subset $A$ of form \upperRomannumeral{1} with
	$$\dfrac{d_H(A{q},A^2{q})}{\mathrm{diam}(A{q})}<\eta.$$
	We write $A$ as $\{e,g^{\pm 1},...,g^{\pm k}\}$. For $g^{2k}\in A^2$, there is $g^n\in A$ with
	$$d(g^{2k}q,g^n q)<\eta\cdot\mathrm{diam}(Aq).$$
	
	\noindent\textit{Case 1:} $n\ge0$.
	
	Since $g\in \mathbb{R}$, $g=\exp(v)$ for some $v\in\mathfrak{g}=\mathbb{R}$. Consider $$B=\{\exp(tv)\ |\ t\in[-k,k]\}.$$
	For any $s\in[0,k]$,
	$$d(\exp((2k-s)v)q,\exp((n-s)v)q)<\eta\cdot\mathrm{diam}(Aq)\le\eta\cdot\mathrm{diam}(Bq)$$
	with $\exp((n-s)v)\in B$. Thus $B$ is a one-parameter symmetric subset of form \upperRomannumeral{2} with
	$$\dfrac{d_H(B{q},B^2{q})}{\mathrm{diam}(B{q})}<\eta.$$
	
	\noindent\textit{Case 2:} $n<0$.
	
	In this case, we have $2k-n>2k$ and
	$$d(g^{2k-n}q,q)=d(g^{2k}q,g^nq)<\eta\cdot\mathrm{diam}(Aq).$$
	Now $A':=\{e,g^{\pm 1},...,g^{\pm (2k-n-1)}\}$ satisfies
	$$\dfrac{d_H(A'{q},(A')^2{q})}{\mathrm{diam}(A'{q})}<\eta$$
	and the condition in Case 1. By the same method as in Case 1, we are able to construct a desired subset $B$.
	
	Conversely, if we have $B=\{\ \exp(tv)\ |\ t\in [-1,1]\}$ with
	$$\dfrac{d_H(B{q},B^2{q})}{\mathrm{diam}(B{q})}<\eta.$$
	For each positive integer $k$, define $B_k=\{\exp(\pm \frac{j}{k}v)\ |\ j=0,\pm 1,...,\pm k \}$. It is clear that $B_k q$ converges to $Bq$ in the Hausdorff sense. Thus for $k$ sufficiently large, $A:=B_k$ is a one-parameter symmetric subset of form \upperRomannumeral{1} with the desired property.
\end{proof}

\subsection{Free action}

We deal with a special case of dimension monotonicity in this section: $G$-action is free at ${p}$.

\begin{prop}\label{dim_free}
	Under the assumptions of Theorem \ref{dimension}, if in addition that $G$ action is free at ${p}$, then\\
	(1) $\dim(G')\le\dim(G)$,\\
	(2) $G'$ has no nontrivial compact subgroups.
\end{prop}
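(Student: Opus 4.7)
The plan is to reduce both (1) and (2) to violations of the no-$\varepsilon$-small-$\eta$-subgroup property furnished by Proposition \ref{non_van_prop}(3). Part (2) uses a short cyclic-subgroup construction; part (1) requires the intermediate-rescaling strategy sketched in the introduction and illustrated by Examples \ref{ex_1} and \ref{ex_2}.

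For (2), suppose $G'$ contains a nontrivial compact subgroup $K'$. Since $\Gamma_i$ is abelian, both $G'$ and $K'$ are abelian Lie groups, and a nontrivial compact abelian Lie group contains a nontrivial element of some finite order $k\ge 2$. Choose such an element $\gamma'\in K'$ and a lift $\gamma_i\in\Gamma_i$ with $(r_iM_i,p_i,\gamma_i)\overset{GH}\to(X',p',\gamma')$. Because $r_i\to\infty$, $d(\gamma_ip_i,p_i)\to 0$ in $M_i$, so any subsequential equivariant limit of $\gamma_i$ in $(M_i,p_i,\Gamma_i)\to(X,p,G)$ fixes $p$; freeness of $G$ at $p$ forces this limit to be $e$, whence $\gamma_i^j\to e$ in $(M_i,p_i,\Gamma_i)$ for every fixed $j$ and $D_{1,p_i}(A_i)\to 0$ for $A_i=\{\gamma_i^j:|j|\le k-1\}$. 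In the rescaled convergence $A_i$ tends to the finite subgroup $\langle\gamma'\rangle$, so the scaling-invariant ratio $d_H(A_ip_i,A_i^2p_i)/\mathrm{diam}(A_ip_i)$ tends to $0$. Thus $A_i$ is an arbitrarily small $\eta$-subgroup at $p_i$ at scale $r=1$, contradicting Proposition \ref{non_van_prop}(3).

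For (1), argue by contradiction assuming $\dim(G')>\dim(G)$. By (2), $G'_0\cong\mathbb{R}^{\dim(G')}$. Reducing via intermediate rescalings together with semicontinuity of the limit-group dimension under equivariant GH convergence, we further reduce to $\dim(G')=\dim(G)+1=:d+1$. The heart of the argument is to select an intermediate scaling sequence $s_i\to\infty$ with $s_i\le r_i$ and $r_i/s_i\to\infty$ such that, along a subsequence, $(s_iM_i,p_i,\Gamma_i)\to(Y,q,H)$ with $\dim(H)=d$ but with $H$-action equivariantly $\varepsilon_i$-close to an $\mathbb{R}^{d+1}$-action, namely the limit $G'$-action read off at the intermediate scale. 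Concretely, $s_i$ is the ``last'' scale at which the limit dimension has not yet jumped, produced by a pigeonhole on the scale parameter $t\in[1,r_i]$. The next step, to be established as a separate lemma (Lemma \ref{dim_free_gap}), is the quantitative assertion that any abelian Lie group action of dimension $d$ sufficiently close to an $\mathbb{R}^{d+1}$-action in pointed equivariant GH must contain a one-parameter symmetric subset of form \upperRomannumeral{2} with almost-subgroup ratio $<\eta$ at some point of $Y$; Example \ref{ex_2} is the model. Via Lemma \ref{1-para_sym_subset_form} pass to a one-parameter symmetric subset of form \upperRomannumeral{1}, and approximate its generator by elements of $\Gamma_i$ to obtain symmetric subsets $A_i\subseteq\Gamma_i$. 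Unrescaling from scale $s_i$ back to $(M_i,p_i)$ exhibits $A_i$ as an arbitrarily small $\eta$-subgroup at $p_i$ at some scale $r\in(0,1]$, again contradicting Proposition \ref{non_van_prop}(3).

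The chief difficulty is the construction and analysis of the intermediate rescaling together with the quantitative gap Lemma \ref{dim_free_gap}, which together make precise the heuristic that the transition from $\dim(G)$ to $\dim(G')>\dim(G)$ must pass through scales where the limit group is dimensionally deficient yet equivariantly close to a higher-dimensional abelian action, forcing a collapsing almost subgroup. The freeness hypothesis is used in (2) to force $\gamma_i\to e$ in the original limit and in (1) to ensure that the extra direction appearing in $G'$ is realized by a genuine one-parameter subgroup detectable in $\Gamma_i$ rather than being absorbed by a stabilizer.
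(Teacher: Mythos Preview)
Your argument for (2) is essentially the paper's proof. One detail you skipped: to conclude the ratio $d_H(A_ip_i,A_i^2p_i)/\mathrm{diam}(A_ip_i)\to 0$ you need $\mathrm{diam}(\langle\gamma'\rangle p')>0$, i.e.\ that $\gamma'$ does not fix $p'$. This is exactly where Proposition~\ref{non_van_prop}(1) enters: since $\gamma_i\to\mathrm{id}$ at scale $1$ and $\gamma_i\to\gamma'\neq e$ at scale $r_i$, the limit $\gamma'$ cannot fix $p'$.

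For (1), the intuition is right but the argument as written has structural gaps. First, the reduction ``$\dim(G')=\dim(G)+1$'' via ``semicontinuity of the limit-group dimension'' is not justified and is dangerously close to assuming what you want to prove. The paper avoids this by inducting on $\dim(G)$ rather than on the gap $\dim(G')-\dim(G)$. Second, and more seriously, your selection of the intermediate scale $s_i$ as ``the last scale at which the dimension has not yet jumped'' does not control what the intermediate limit $H$ looks like: even with $\dim(H)=k$ one could have $H=\mathbb{R}^k\times\mathbb{Z}$ or some other disconnected group, and then your appeal to Lemma~\ref{dim_free_gap} does not produce an almost subgroup in $H$. The paper's fix is to define the critical scales not via proximity to an $\mathbb{R}^{k+1}$-action but via proximity to spaces whose limit group contains $\mathbb{R}^k\times\mathbb{Z}$ as a closed subgroup with small generator displacement. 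This buys two things: (i) if the intermediate limit $H_\infty$ itself contains $\mathbb{R}^k\times\mathbb{Z}$, one scales $s_i$ down by a constant and contradicts minimality (Step~4); (ii) if $H_\infty=\mathbb{R}^k$, Lemma~\ref{dim_free_untwisted} guarantees this $\mathbb{R}^k$-action has no $\eta$-subgroup of one-parameter, and then Lemma~\ref{dim_free_gap} yields the contradiction directly. The remaining case $\dim(H_\infty)_0<k$ is dispatched by the induction hypothesis. You also omit the preliminary reduction (Lemma~\ref{dim_free_tangent}) passing to a tangent cone so that $G$ itself is $\mathbb{R}^k$ with no $\eta$-subgroups, which is needed for Step~1 ($s_i\to\infty$).
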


It is direct to prove (2) in Proposition \ref{dim_free}:

\begin{proof}[Proof of Proposition \ref{dim_free}(2)]
	Suppose that $G'$ has a nontrivial compact subgroup $K$. Without lose of generality, we may assume that $K$ is a finite group of prime order $k$. Let $\gamma$ be a generator of $K$. We choose a sequence of elements $\gamma_i\in\Gamma_i$ converging to $\gamma$, and consider the symmetric subset $A_i=\{e,\gamma_i^{\pm 1},...,\gamma_i^{\pm (k-1)}\}$. Clearly,
	$$(r_i{M}_i,{p}_i,A_i)\overset{GH}\longrightarrow({X}',{p}',K).$$
	Before rescaling $r_i$, since $\mathrm{diam}(A_i{p}_i)\to 0$ and $G$-action is free at ${p}$, we conclude that $A_i\to \{e\}$. By Proposition \ref{non_van_prop}(1), $\gamma$ cannot fix ${p}'$. With respect to the metric $r_i{M}_i$, we see
	$$\mathrm{diam}(A_i{p}_i)\to\mathrm{diam}({K{p}'})\ge d(p',\gamma p')>0.$$ Also $d_H(A_i{p}_i,A_i^2{p}_i)\to 0$ because $A_i$ converges to a subgroup $K$. This gives
	$$\dfrac{d_H(A_i{p}_i,A_i^2{p}_i)}{\mathrm{diam}(A_i{p}_i)}\to 0.$$ However, $D_{1,{p}_i}(A_i)<\epsilon$ for $i$ large. A contradiction to Proposition \ref{non_van_prop}(3).
\end{proof}

\begin{cor}\label{dim_free_no_isotropy}
	Under the assumptions of Proposition \ref{dim_free}, $G'$-action is free.
\end{cor}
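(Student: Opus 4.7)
The plan is to mimic the proof of Proposition~\ref{dim_free}(2), but applied at a basepoint that realizes the putative isotropy. Suppose for contradiction that $G'$ is not free; then there exist $q' \in X'$ and $g' \in G' \setminus \{e\}$ with $g'q' = q'$. Note that the case $q' = p'$ is \emph{not} directly covered by Proposition~\ref{dim_free}(2), since that statement only rules out nontrivial compact subgroups of $G'$, whereas the isotropy at $p'$ could be (for example) a copy of $\mathbb{R}$ or $\mathbb{Z}$.

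First, I would \emph{move the basepoint}. Choose $q_i \in M_i$ converging to $q'$ under $(r_iM_i,p_i)\overset{GH}\longrightarrow(X',p')$. Since
\[
d(q_i,p_i)_{M_i} = r_i^{-1}\,d(q_i,p_i)_{r_iM_i} \longrightarrow 0,
\]
the same sequence satisfies $q_i\to p$ in $(M_i,p_i)\overset{GH}\longrightarrow(X,p)$. Relabelling the basepoint preserves both limits,
\[
(M_i,q_i,\Gamma_i)\overset{GH}\longrightarrow(X,p,G),\qquad (r_iM_i,q_i,\Gamma_i)\overset{GH}\longrightarrow(X',q',G'),
\]
and $G$ continues to act freely at its basepoint $p$.

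Second, I would \emph{transfer the no-small-almost-subgroup property from $p_i$ to $q_i$}. Because $d(q_i,p_i)_{M_i}\to 0$, relative volume comparison produces $v'=v'(n,v)>0$ with $\mathrm{vol}(B_1(q))\ge v'$ uniformly for $q$ in a fixed neighborhood of $q_i$ and all $i$ large. Proposition~\ref{main_nsas_local} applied at each such $q$ then yields uniform constants $\epsilon,\eta>0$ (depending only on $n,v$) so that $\Gamma_i$-action has no $\epsilon$-small $\eta$-subgroup at every point of $B_1(q_i)$. By the remark following Corollary~\ref{rescale_identity_cor}, this upgrades to the analogue of Proposition~\ref{non_van_prop}(1) at the new basepoint $q_i$: if $(M_i,q_i,f_i)\overset{GH}\longrightarrow(Y,q,\mathrm{id})$ and $(r_iM_i,q_i,f_i)\overset{GH}\longrightarrow(Y',q',f')$ with $f'$ fixing $q'$, then $f'=\mathrm{id}$. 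Crucially, this step uses \emph{only} the volume bound, never the scaling $\Phi$-nonvanishing hypothesis.

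Finally, I would extract the contradiction. Pick $g_i\in\Gamma_i$ approximating $g'$ in the rescaled equivariant sense; after a subsequence, $g_i\to g\in G$ in $(M_i,q_i,\Gamma_i)$. From $d(g_iq_i,q_i)_{r_iM_i}\to d(g'q',q')=0$ I obtain $d(g_iq_i,q_i)_{M_i}\to 0$, so $g$ fixes $p$, and freeness of $G$ at $p$ forces $g=e$. Thus
\[
(M_i,q_i,g_i)\overset{GH}\longrightarrow(X,p,\mathrm{id}),\qquad (r_iM_i,q_i,g_i)\overset{GH}\longrightarrow(X',q',g'),
\]
with $g'\ne\mathrm{id}$ fixing $q'$, contradicting the statement transferred to $q_i$ in the previous step.

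The main obstacle will be the second step. The scaling $\Phi$-nonvanishing assumption is pinned at $p_i$, so Proposition~\ref{non_van_prop}(1) does not transfer to $q_i$ by fiat. The argument is rescued by the volume lower bound, which propagates to $q_i$'s neighborhood via relative volume comparison and then feeds Proposition~\ref{main_nsas_local} to produce a no-small-almost-subgroup property strong enough that the remark following Corollary~\ref{rescale_identity_cor} applies — yielding the base-point-robust version of Proposition~\ref{non_van_prop}(1) needed in Step~3.
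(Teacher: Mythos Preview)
Your proposal works much harder than necessary because it rests on a false worry. You write that ``the isotropy at $p'$ could be (for example) a copy of $\mathbb{R}$ or $\mathbb{Z}$,'' but this cannot happen: for an isometric action on a proper metric space, the isotropy subgroup at any point is automatically \emph{compact}. Indeed, the stabilizer of $q'$ in $\mathrm{Isom}(X')$ is closed, and any sequence in it is equicontinuous with bounded orbit at $q'$, so Arzel\`a--Ascoli gives compactness; the isotropy of $G'$ at $q'$ is a closed subgroup of this compact group.

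Once you see this, the paper's proof is a single sentence: if $G'$ were not free, it would have a nontrivial isotropy subgroup, which is compact, contradicting Proposition~\ref{dim_free}(2) directly. There is no need to move the basepoint, transfer the no-small-almost-subgroup property, or invoke the unproven converse remark after Corollary~\ref{rescale_identity_cor}. Your three-step argument may be salvageable (the volume bound does propagate to $q_i$, and Proposition~\ref{main_nsas_local} can be applied there), but it is built to circumvent an obstacle that does not exist, and it leans on a remark whose proof the paper omits.
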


\begin{proof}
	Otherwise $G'$ would have a nontrivial isotropy subgroup, which is compact.
\end{proof}

\begin{lem}\label{dim_free_untwisted}
	Under the assumptions of Proposition \ref{dim_free}, $G'$-action has no $\eta$-subgroup of one-parameter at $p'$, where $\eta$ is the constant in Proposition \ref{non_van_prop}(3).
\end{lem}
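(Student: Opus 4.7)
The plan is to argue by contradiction, transferring a hypothetical one-parameter $\eta$-subgroup in $G'$ back to a genuine $\eta$-subgroup on the unrescaled sequence, where Proposition \ref{non_van_prop}(3) will rule it out.

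Suppose $G'$ contains a one-parameter symmetric subset $A'$ with $\mathrm{diam}(A'p')\in(0,\infty)$ and $d_H(A'p',(A')^2p')/\mathrm{diam}(A'p')<\eta$. As in the proof of Lemma \ref{1-para_sym_subset_form}, a sufficiently fine discretization of a form \upperRomannumeral{2} set preserves the ratio inequality, so I may assume $A'=\{e,g^{\pm 1},\ldots,g^{\pm k}\}$ is of form \upperRomannumeral{1}. Lifting through the rescaled convergence $(r_iM_i,p_i,\Gamma_i)\to(X',p',G')$, pick $g_i\in\Gamma_i$ with $g_i\to g$ and set $A_i=\{e,g_i^{\pm 1},\ldots,g_i^{\pm k}\}$. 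Because the ratio $d_H(A_ip_i,A_i^2p_i)/\mathrm{diam}(A_ip_i)$ is scale invariant and continuous under equivariant convergence, and because $g\neq e$ forces $A_i\neq\{e\}$ and $\mathrm{diam}(A_ip_i)\in(0,\infty)$ for $i$ large, the subset $A_i$ is a genuine $\eta$-subgroup at $p_i$ on the original $M_i$. Proposition \ref{non_van_prop}(3) applied with scale $r=1$ then yields $D_{1,p_i}^{M_i}(A_i)\ge\epsilon$.

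To reach the contradiction, I would show $D_{1,p_i}^{M_i}(A_i)\to 0$. Since $r_i\to\infty$ while $\mathrm{diam}(A'p')<\infty$ in $X'$, we have $d_{M_i}(g_i^jp_i,p_i)=r_i^{-1}d_{r_iM_i}(g_i^jp_i,p_i)\to 0$ for each $|j|\le k$. Hence any subsequential limit $\bar g\in G$ of $g_i$ under the unrescaled convergence fixes $p$, and the free-action hypothesis of Proposition \ref{dim_free} forces $\bar g=e$. A standard compactness argument in equivariant Gromov--Hausdorff convergence (together with Lemma \ref{char_id} to rule out nontrivial limits that act trivially on $B_1(p)$) then upgrades pointwise convergence $g_i^j\to e$ to $D_{1,p_i}^{M_i}(g_i^j)\to 0$; taking the maximum over the fixed cardinality $2k+1$ of $A_i$ gives $D_{1,p_i}^{M_i}(A_i)\to 0$, contradicting the lower bound $\epsilon$ above.

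The main subtlety, and the part most worth checking carefully, is that $A_i$ must simultaneously be recognized as a bona fide $\eta$-subgroup at $p_i$ on the unrescaled space \emph{and} have displacement there tending to zero, two pieces of information that naturally live on different scales. The scale invariance of the ratio defining an $\eta$-subgroup, combined with the finite fixed cardinality of $A_i$, is exactly what lets each piece be verified at the scale where it is visible. The free-action hypothesis on $G$ at $p$ is what severs the isotropy obstruction and drives $g_i\to e$ in the unrescaled limit.
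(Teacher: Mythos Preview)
Your argument is correct and follows essentially the same route as the paper's proof: lift the hypothetical $\eta$-subgroup $A'\subseteq G'$ to symmetric subsets $A_i\subseteq\Gamma_i$, use scale invariance of the ratio to see that $A_i$ is an $\eta$-subgroup at $p_i$ on the unrescaled $M_i$, and use the free-action hypothesis at $p$ to force $A_i\to\{e\}$ and hence $D_{1,p_i}(A_i)\to 0$, contradicting Proposition~\ref{non_van_prop}(3). Your explicit reduction to form~\upperRomannumeral{1} and the subsequential argument for $g_i\to e$ spell out steps that the paper leaves to the phrase ``by a similar argument we used in Proposition~\ref{dim_free}(2)''; the invocation of Lemma~\ref{char_id} is harmless but unnecessary, since freeness of $G$ at $p$ already forces any limit $\bar g$ fixing $p$ to be the identity.
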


\begin{proof}
	Suppose that $G'$ has an $\eta$-subgroup of one-parameter at $p'$, that is, a symmetric subset $A$ of $G'$ with $\mathrm{diam}(Ap)\in(0,\infty)$ and $$\dfrac{d_H(Ap',A^2p')}{\mathrm{diam}(Ap')}<\eta.$$
	Pick a sequence of symmetric subsets $A_i\subseteq\Gamma_i$ such that
	$$(r_i{M}_i,p_i,A_i)\overset{GH}\longrightarrow({X}',p',A).$$
	By a similar argument we used in Proposition \ref{dim_free}(2), before rescaling $r_i$, we have $D_{1,p_i}(A_i)\to 0$ but $\dfrac{d_H(A_i p_i,A_i^2 p_i)}{\mathrm{diam}(A_i p_i)}<\eta$ for $i$ large. A contradiction.
\end{proof}

\begin{lem}\label{dim_free_gap_lem}
	Let $(Y,q,G)$ be a space and $g$ be an element in $G$. Suppose that $\langle g \rangle$-action is free at $q$ and has no $\eta$-subgroup of one-parameter at $q$. If $d(q,gq)\ge r$ and $d(q,g^Nq)\le R$ for some $N$, then\\
	(1) $d(q,g^jq)\ge\eta r$ for all j. In particular, $\langle g\rangle q$ is $\eta r$-disjoint;\\
	(2) $d(q,g^j q)\le \eta^{-1}R$ for all $-N<j<N$;\\
	(3) there is a constant $C=C(n,\eta,r,R)$ such that $N\le C$.
\end{lem}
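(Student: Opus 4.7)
The plan is to exhibit, against each would-be failure of (1) or (2), an explicit one-parameter symmetric subset of $\langle g\rangle$ of type \upperRomannumeral{1} whose orbit at $q$ would violate the no $\eta$-subgroup hypothesis; we may harmlessly assume $\eta\le 1$. For (1), if $d(q,g^{j_0}q)<\eta r$ for some $j_0\ne 0$, then necessarily $|j_0|\ge 2$ since $d(q,gq)\ge r\ge\eta r$; replacing $g$ by $g^{-1}$ if needed, I would assume $j_0\ge 2$ and set $A=\{e,g^{\pm 1},\ldots,g^{\pm(j_0-1)}\}$. Each $g^m\in A^2\setminus A$ has $j_0\le|m|\le 2(j_0-1)$, and the element $g^{m-\mathrm{sgn}(m)j_0}\in A$ satisfies $d(g^m q,g^{m-\mathrm{sgn}(m)j_0}q)=d(q,g^{j_0}q)<\eta r$. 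Since freeness of $\langle g\rangle$-action at $q$ yields $\mathrm{diam}(Aq)\ge d(q,gq)\ge r>0$, this gives $d_H(Aq,A^2q)/\mathrm{diam}(Aq)<\eta$, contradicting the no $\eta$-subgroup of one-parameter property.

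For (2), I would take $A=\{e,g^{\pm 1},\ldots,g^{\pm(N-1)}\}$. Any $g^m\in A^2\setminus A$ has $N\le|m|\le 2(N-1)$, so $g^{m-\mathrm{sgn}(m)N}\in A$ and $d(g^m q,g^{m-\mathrm{sgn}(m)N}q)=d(q,g^N q)\le R$, which gives $d_H(Aq,A^2q)\le R$. The no $\eta$-subgroup hypothesis (applicable since $A$ is of form \upperRomannumeral{1} and $\mathrm{diam}(Aq)\ge r>0$ by (1)) then forces $\mathrm{diam}(Aq)\le\eta^{-1}d_H(Aq,A^2q)\le\eta^{-1}R$, whence $d(q,g^j q)\le\mathrm{diam}(Aq)\le\eta^{-1}R$ for all $|j|\le N-1$.

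Combining (1) and (2), the $2N-1$ orbit points $\{g^j q:|j|\le N-1\}$ are pairwise $\eta r$-separated and contained in $B_{\eta^{-1}R}(q)\subset Y\in\mathcal{M}(n,-1,v)$, so a standard Bishop--Gromov packing argument (disjoint balls $B_{\eta r/2}(g^j q)\subset B_{\eta^{-1}R+\eta r/2}(q)$ and relative volume comparison) yields $2N-1\le C(n,\eta,r,R)$, completing (3). The only subtlety is bookkeeping: at each step $\mathrm{diam}(Aq)$ must be shown positive (from freeness of the action at $q$) and finite (automatic, since $A$ is finite), so that the no $\eta$-subgroup hypothesis legitimately applies. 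I do not foresee a serious obstacle beyond this routine combinatorial use of the hypothesis and the packing estimate.
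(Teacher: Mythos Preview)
Your proof is correct and follows essentially the same approach as the paper: for (1) and (2) you build a type-\upperRomannumeral{1} symmetric subset $A$ and use the shift-by-$j_0$ (resp.\ shift-by-$N$) trick to bound $d_H(Aq,A^2q)$, then invoke the no-$\eta$-subgroup hypothesis; for (3) you combine (1) and (2) with Bishop--Gromov packing. The only cosmetic difference is that the paper takes $A$ up to $g^{\pm j}$ (resp.\ $g^{\pm N}$) rather than $g^{\pm(j_0-1)}$ (resp.\ $g^{\pm(N-1)}$), which makes the verification of $d_H$ marginally shorter but is otherwise equivalent.
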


\begin{proof}
	(1) If $d(q,g^jq)<\eta r$ for some $j$, we consider $A=\{e,g^{\pm 1},...,g^{\pm j}\}$. Then $\mathrm{diam}(Aq)\ge d(q,gq)\ge r$. Thus
	$$\dfrac{d_H(Aq,A^2q)}{\mathrm{diam}(Aq)}<\dfrac{\eta r}{r}=\eta.$$ A contradiction.
	
	(2) This time we put $A=\{e,g^{\pm 1},...,g^{\pm N}\}$. Then
	$$\mathrm{diam}(Aq)\le \eta^{-1} d_H(Aq,A^2q)\le\eta^{-1} d(q,g^N q)=\eta^{-1}R.$$
	
	(3) This follows from (1),(2), relative volume comparison (of a renormalized limit measure), and a standard packing argument.
\end{proof}

\begin{rem}\label{dim_free_rem_1}
	To prove Lemma \ref{dim_free_gap_lem}(3) only, the assumptions in Lemma \ref{dim_free_gap_lem} can be weakened. Instead of assuming that $\langle g\rangle$-action has no $\eta$-subgroup of one-parameter at $q$, we can assume the following condition:
	
	\textit{For every nontrivial symmetric subset $B$ of $A=\{e,g^{\pm 1},...,g^{\pm N}\}$, we have
	$$\dfrac{d_H(Bq,B^2q)}{\mathrm{diam}(Bq)}\ge\eta.$$} Under this condition, we can show that the points $\{q,g^{ 1}q,...,g^{N}q\}$ are $\eta r$-disjoint by the similar method. The remaining proof is the same.
\end{rem}

\begin{rem}\label{dim_free_rem_1'}
	If $Y\in \mathcal{M}(n,-1)$ is a limit space of a sequence of manifolds $M_i$ with $\mathrm{Ric}_{M_i}\ge-(n-1)\epsilon_i\to 0$, then the constant $C$ in Lemma \ref{dim_free_gap_lem} only depends on $n$, $\eta$, and $R/r$. This follows from the relative volume comparison when Ricci lower bound goes to zero.
\end{rem}

We prove a key lemma for Proposition \ref{dim_free}(1), which states that there exists an equivariant Gromov-Hausdorff distance gap between any $\mathbb{R}^k$-actions with no almost subgroups and any $(\mathbb{R}^k\times\mathbb{Z})$-actions.

\begin{lem}\label{dim_free_gap}
	There exists a constant $\delta(n,\eta)>0$ such that the following holds.
	
	Let $(Y,q,G)$ be a space such that $G=\mathbb{R}^k$ and $G$-action has no $\eta$-subgroup of one-parameter at $q$. Let $(Y',q',G')$ be another space with\\
	\textit{(C1)} $G'$ contains $\mathbb{R}^k\times\mathbb{Z}$ as a closed subgroup,\\
	\textit{(C2)} the extra $\mathbb{Z}$ subgroup has generator whose displacement at $q'$ is less than $1$.
	
	Then $$d_{GH}((Y,q,G),(Y',q',G'))>\delta(n,\eta).$$
\end{lem}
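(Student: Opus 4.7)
The plan is to argue by contradiction. If no such $\delta(n,\eta)$ existed, there would be a sequence $(Y_i',q_i',G_i')$ satisfying (C1) and (C2) with $(Y_i',q_i',G_i')\overset{GH}\longrightarrow(Y,q,G)$; from this one aims to manufacture a one-parameter symmetric subset of $G$ with Hausdorff-to-diameter ratio $<\eta$ at $q$, contradicting the hypothesis on $(Y,q,G)$.

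Let $\gamma_i$ denote the generator of the extra $\mathbb{Z}$-subgroup (with $d(q_i',\gamma_iq_i')<1$) and let $L_i\cong\mathbb{R}^k$ be the $\mathbb{R}^k$-factor of $H_i=\mathbb{R}^k\times\mathbb{Z}\subseteq G_i'$. Pass to subsequences so that $\gamma_i\to\bar\gamma\in G$, $(Y_i',q_i',L_i)\overset{GH}\longrightarrow(Y,q,K)$ with $K\subseteq G$ a closed connected Lie subgroup (hence $K\cong\mathbb{R}^l$ with $l\le k$), and $(Y_i',q_i',H_i)\overset{GH}\longrightarrow(Y,q,H)$ with $H$ a closed subgroup of $G$ containing both $K$ and $\bar\gamma$.

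The main case is $\bar\gamma\in K$ (which includes $K=G$). Then one can pick $v_i=(u_i,0)\in L_i$ with $v_iq_i'\to\bar\gamma q$ and set $\beta_i=\gamma_iv_i^{-1}=(-u_i,1)\in\mathbb{R}^k\times\mathbb{Z}$; this has infinite order in $H_i$ yet converges to the identity in the equivariant GH sense, since both $\gamma_i$ and $v_i$ converge to $\bar\gamma$. Applying Theorem \ref{main_no_small_subgroup} to the cyclic group $\langle\beta_i\rangle$ yields $D_{1,q_i'}(\langle\beta_i\rangle)\ge\delta_0(n,v)>0$, so there is a smallest positive integer $n_i$ with $D_{1,q_i'}(\beta_i^{n_i})\ge\delta_0/2$, and necessarily $n_i\to\infty$. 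The commutativity identity $v_i^{n_i}=\gamma_i^{n_i}\beta_i^{-n_i}$ then forces the orbit of the form-II one-parameter symmetric subset $A_i=\{tv_i:t\in[-n_i,n_i]\}\subseteq L_i$ to nearly fold back onto itself over the interval $[n_i,2n_i]$ via the near-identity element $\beta_i^{n_i}$. After rescaling $(Y_i',q_i')$ so that $\mathrm{diam}(A_iq_i')$ is of order $1$ and passing to a subsequential equivariant GH limit, the $A_i$ converge to a nontrivial one-parameter symmetric subset of $K\subseteq G$ whose ratio $d_H/\mathrm{diam}$ is less than $\eta$, yielding the contradiction.

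The case $\bar\gamma\notin K$ (necessarily $K\subsetneq G$) is handled analogously by exploiting the collapse of $L_i$ in a direction transverse to $K$, together with $\gamma_i$, to construct a one-parameter almost subgroup of $L_i$ whose equivariant GH limit in $K$ (or $G$) is nontrivial of form II. The hardest step in either case is the precise bookkeeping of scales: one must simultaneously arrange that the rescaling factor used to normalize $\mathrm{diam}(A_iq_i')$ produces a nondegenerate limit set in $G$ while the Hausdorff distance between the orbits of $A_i$ and $A_i^2$ shrinks relative to the diameter. This requires careful coordination of the index $n_i$ (tied to the no-small-subgroup constant $\delta_0$) with the geometry of $\bar\gamma$'s action, and ruling out degenerate sub-cases (such as when $\bar\gamma$ nearly fixes $q$) using Lemma \ref{dim_free_gap_lem} and Remark \ref{dim_free_rem_1}.
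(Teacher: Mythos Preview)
Your contradiction setup does not match the statement. The constant $\delta$ must depend only on $n$ and $\eta$, not on the particular space $(Y,q,G)$. Negating the lemma gives sequences $(Y_i,q_i,G_i)$ (each with $G_i=\mathbb{R}^k$ and no $\eta$-subgroup at $q_i$) and $(Y_i',q_i',G_i')$ satisfying (C1)(C2) with $d_{GH}\to 0$; you are not entitled to a fixed target $(Y,q,G)$ satisfying the hypotheses. Even after passing to a common limit, you would have to \emph{prove} that the limit group is exactly $\mathbb{R}^k$ and still has no $\eta$-subgroup at the limit point, which you do not address.

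The core folding argument also breaks. You define $n_i$ as the smallest integer with $D_{1,q_i'}(\beta_i^{n_i})\ge\delta_0/2$; then $\beta_i^{n_i}$ is \emph{not} near the identity, so calling it ``the near-identity element $\beta_i^{n_i}$'' is a contradiction in terms. Writing $(t v_i)q_i'=\gamma_i^{n_i}\beta_i^{-n_i}((t-n_i)v_i)q_i'$ for $t\in[n_i,2n_i]$, the factor $\gamma_i^{n_i}$ is completely uncontrolled (since $\gamma_i\to\bar\gamma\neq e$ and $n_i\to\infty$), so there is no reason $A_i^2q_i'$ should be close to $A_iq_i'$. Finally, once you rescale so that $\mathrm{diam}(A_iq_i')$ is of order $1$, the limit space is no longer $(Y,q,G)$, so producing an $\eta$-subgroup there does not contradict anything.

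The paper's proof is entirely different and avoids all limiting arguments. It works directly on a single pair $(Y,q,G)$, $(Y',q',G')$: one builds an adapted basis $e_1,\dots,e_k$ of $G=\mathbb{R}^k$ with $d(e_jq,q)=1/n$ at successive quotients, so that the lattice $L=\langle e_1,\dots,e_k\rangle$ has $Lq$ $1$-dense in $Gq$ and any integer combination $\sum\beta_je_j$ hitting a ball of radius $12$ has $\max|\beta_j|\le C_0(n,\eta)$ by Lemma~\ref{dim_free_gap_lem}. Transfer the $e_j$ to $e_j'\in G'$ across a $\delta$-map with $\delta$ so small that $nC_0\delta\le 1/100$. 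Conditions (C1)(C2) furnish $w'\in G'$ with $d(w'q',L'q')\in(8,10)$; its partner $w\in G$ is within $1$ of some $v=\sum\beta_je_j\in L$, the coefficient bound then forces $v'=\sum\beta_je_j'$ to be $1/100$-close to $v$ and hence to $w'$, contradicting $d(w'q',L'q')>6$. This gives an explicit $\delta(n,\eta)$ with no compactness or subsequence extraction.
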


\begin{proof}
	Recall that we always assume that $(Y,q)\in\mathcal{M}(n,-1)$, so it is clear that $k\le n$. We first select a basis of $\mathbb{R}^k$ as follows. Fix any element $v_1\not=e$ in $\mathbb{R}^k$. There is $t_1>0$ such that $d(t_1v_1q,q)=1/n$ and $d(tv_1q,q)<1/n$ for all $t\in (0,t_1)$. Put $e_1=t_1v_1$ as the first element in the basis. Consider the quotient space $(Y/\mathbb{R}e_1, \bar{q}, \mathbb{R}^{k-1})$. Select an element $\bar{e}_2\in\mathbb{R}^{k-1}$ such that $d(\bar{e}_2\bar{q},\bar{q})=1/n$ and $d(t\bar{e}_2\bar{q},\bar{q})<1/n$ for all $t\in(0,1)$. $\bar{e}_2$ corresponds to a coset in $\mathbb{R}^k$. In this coset, choose $e_2$ such that $d(e_2q,q)=d(\bar{e}_2\bar{q},\bar{q})$. By our choice of $e_2$, it is easy to see that $d(te_2q,q)=d(t\bar{e}_2\bar{q},\bar{q})$ for all $t\in(0,1)$. Continue this process until we obtain a basis $\{e_1,...,e_k\}$ in $\mathbb{R}^k$.
	
	We claim that the basis we choose has the following property: for $z=\sum_{j=1}^k\alpha_j e_j$ with $|\alpha_j|\le 1$ for all $j$ and $|\alpha_m|=1$ for some $m$, we have $d(zq,q)\ge r(n,\eta)$, where $r(n,\eta)>0$ is a small constant. In fact, first notice that by our choice of $e_m$, $d((\sum_{j=1}^m\alpha_je_j)q,q)\ge d(e_j q,q)=1/n.$ If $d(\alpha_{m+1} e_{m+1}q,q)<1/2n$, then clearly $d((\sum_{j=1}^{m+1}\alpha_je_j)q,q)\ge 1/2n$. If $d(\alpha_{m+1} e_{m+1}q,q)\ge 1/2n$, by Lemma \ref{dim_free_gap_lem},
	$$|\alpha_{m+1}|\ge \frac{1}{2C(n,\eta,1/2n,1/n)}=:r_1(n,\eta).$$
	Consequently, $d((\sum_{j=1}^{m+1}\alpha_je_j)q,q)\ge r_1(n,\eta).$ Iterate this process at most $k-m-1(<n)$ times, we result in the desired estimate $d(zq,q)\ge r(n,\eta)$.
	
	We set $\delta=1/100$ now and will further modify it later. Let $L=\langle e_1,..,e_k\rangle$ be the lattice generated by $e_1,...,e_k$. Notice that $Lq$ is $1$-dense in the orbit $Gq$. Let $e'_j\in G'$ be an element $\delta$-close to $e_j$ $(j=1,...,k)$. Let $L':=\langle e'_1,...,e'_k\rangle$ be the subgroup of $G'$ generated by these elements. Notice that conditions \textit{(C1)(C2)} guarantee that there is
	$w'\in G'$ such that $d(w'q',q')=d(w'q',L'q')\in (8,10)$. Let $w\in G=\mathbb{R}^k$ be the element $\delta$-close to $w'$. Since $Lq$ is $1$-dense in $Gq$, there is $v\in L$ such that $d(v,w)<1$. We write $v=\sum_{j=1}^k \beta_je_j$ $(\beta_j\in\mathbb{Z})$. Put $M:=\max_j(|\beta_j|)$ and $z=\frac{1}{M}v$. Then $z=\sum_{j=1}^k\alpha_je_j$ with $|\alpha_j|\le 1$ for all $j$ and $|\alpha_m|=1$ for some $m$. By our choice of $\{e_1,...,e_k\}$, we have $d(zq,q)\ge r(n,\eta)$. Also, $d(Mzq,z)\le 12$. Apply Lemma \ref{dim_free_gap_lem}, we conclude that $M\le C_0(n,\eta)$. Consequently, if we set $\delta$ with $nC_0(n,\eta)\delta\le 1/100$, then $v':=\sum_{j=1}^k \beta_je'_j$ is $1/100$-close to $v$. This leads to a contradiction because $d(v'q',L'q')>6$.
\end{proof}

\begin{rem}\label{dim_free_rem_2}
	Inspecting the proof above, we see that only property (3) in Lemma \ref{dim_free_gap_lem} is applied. Hence we may replace the condition that $\mathbb{R}^k$-action has no $\eta$-subgroup of one-parameter at $q$ by the following one:
	
	\textit{There exists a function $C(r,R)>0$ such that for all $z\in \mathbb{R}^k$ with $d(zq,q)\ge r$ and $d(Nzq,q)\le R$, we have $N\le C(r,R)$. }
	
	Correspondingly, the equivariant Gromov-Hausdorff distance gap $\delta$ will depend on $n$ and the function $C$.
\end{rem}

\begin{lem}\label{dim_free_tangent}
	Under the assumption of Proposition \ref{dim_free}, for any $s_j\to\infty$, passing to a subsequence if necessary we consider a tangent cone at ${p}$:
	$$(s_j{X},{p},G)\overset{GH}\longrightarrow(C_{p}{X},v,G_{p}).$$
	Then $G_{p}=\mathbb{R}^{\dim(G)}$.
\end{lem}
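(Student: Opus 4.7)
The plan is to identify $G_p$ with $\mathbb{R}^d$, where $d=\dim G$, through three steps: bound $\dim G_p$ from above, embed $\mathbb{R}^d$ into $G_p$, and rule out disconnected components. For the upper bound I use a diagonal extraction: since $(X,p,G)$ is itself a limit of manifolds $(M_i,p_i,\Gamma_i)$ satisfying the hypotheses of Theorem \ref{dimension}, I can choose $i(j)\to\infty$ so that $(s_jM_{i(j)},p_{i(j)},\Gamma_{i(j)})\overset{GH}\longrightarrow(C_pX,v,G_p)$ while still $(M_{i(j)},p_{i(j)},\Gamma_{i(j)})\overset{GH}\longrightarrow (X,p,G)$. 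The scaling $\Phi$-nonvanishing property is scale-invariant, and the volume lower bound on $B_1$ in the rescaled metric follows from relative volume comparison for $s_j\ge 1$, so the rescaled sequence still fits Theorem \ref{dimension}. Applying Proposition \ref{dim_free} to this setup yields $\dim G_p \le d$ with $G_p$ having no nontrivial compact subgroup, and Corollary \ref{dim_free_no_isotropy} gives that the $G_p$-action is free at $v$.

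For the lower bound I use the Lie structure of $G$. Let $G_0$ denote the identity component of $G$; since $G$ is an abelian Lie group of dimension $d$, a neighborhood of $e$ in $G_0$ is diffeomorphic to a neighborhood of $0\in\mathrm{Lie}(G_0)\cong\mathbb{R}^d$ via $\exp$. Because the $G$-action is free at $p$, this neighborhood embeds injectively onto a neighborhood of $p$ in $G_0 p$, making $G_0 p$ a $d$-dimensional topological submanifold near $p$. For each $w\in\mathbb{R}^d$ the elements $\exp(w/s_j)\in G_0$ have uniformly bounded displacement in $s_j X$ at $p$, and passing to a subsequence produces a continuous homomorphism $\psi:\mathbb{R}^d\to G_p$ whose orbit at $v$ is exactly the pointed GH tangent of $G_0 p$ at $p$. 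Since this tangent is $d$-dimensional and $G_p$ acts freely at $v$, $\psi$ is injective with $d$-dimensional image, so $\dim G_p\ge d$, hence $\dim G_p=d$.

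To conclude $G_p=\mathbb{R}^d$, I observe that an abelian Lie group of dimension $d$ with no nontrivial compact subgroup has identity component isomorphic to $\mathbb{R}^d$, so $(G_p)_0=\mathbb{R}^d$. Equivariant GH convergence identifies $G_pv$ with the tangent at $p$ of the orbit $Gp$; since the other connected components of $Gp$ sit at positive distance from $p$, they are pushed to infinity under rescaling by $s_j\to\infty$, and only $G_0 p$ contributes in the limit. Thus $G_p v=(G_p)_0 v=\mathbb{R}^d$, and free action of $G_p$ at $v$ turns the orbit map $G_p\to G_pv$ into a bijection, forcing $G_p=(G_p)_0=\mathbb{R}^d$.

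The main obstacle I foresee is the second step: showing that the ``rescaled-exp'' map $\psi$ yields a genuinely $d$-dimensional image requires that no direction $w\in\mathrm{Lie}(G_0)$ collapses under the rescaling, even though the orbit metric on $G_0$ is only a left-invariant pseudo-metric a priori. Scale-invariance of the ratio $r^{-1}D_{r,p}$ together with Proposition \ref{non_van_prop} should prevent any such collapse, but the precise bookkeeping of the Lie-group exponential against the metric eqGH convergence is the delicate part.
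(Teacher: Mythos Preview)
Your argument has a circularity problem. You invoke Proposition~\ref{dim_free}(1) to obtain $\dim G_p\le d$, but Lemma~\ref{dim_free_tangent} is proved \emph{before} Proposition~\ref{dim_free}(1) in the paper and is used as the opening reduction in the inductive proof of the latter. You may freely use Proposition~\ref{dim_free}(2) and Corollary~\ref{dim_free_no_isotropy}, which are established earlier, but the dimension bound is not available to you here.

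More seriously, your third step does not address the real obstruction. You argue that extra components of $G_p$ can only come from non-identity components of $G$, which get pushed to infinity. But even when $G=G_0=\mathbb{R}^k$, the orbit $G_0p$ can return close to $p$ at parameter values far from $e$: a sequence $g_j\in G_0$ with $g_j\to\infty$ in $G$ but $s_j\,d(g_jp,p)$ bounded produces a limit element of $G_p$ that need not lie in the image of your rescaled-exponential $\psi$. Your identification of $G_pv$ with ``the tangent of $Gp$ at $p$'' implicitly assumes no such recurrence, but the pointed GH limit of a $k$-dimensional orbit is not automatically $k$-dimensional; this is precisely the mechanism by which $G_p$ could acquire a spurious $\mathbb{R}^k\times\mathbb{Z}$, and ruling it out is the entire content of the lemma. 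The paper handles this via the gap estimate of Lemma~\ref{dim_free_gap} (in the form of Remark~\ref{dim_free_rem_2}): one proves a uniform bound $N\le C(r,R)$ whenever $d(zp,p)\ge\tau r$ and $d(Nzp,p)\le\tau R$ for $z\in\mathbb{R}^k$ and all $\tau\in(0,1]$, and it is in establishing this for small $\tau$ that the no-small-almost-subgroup property (Proposition~\ref{non_van_prop}(3)) finally enters. Your proposal never invokes that property, so the scaling-nonvanishing hypothesis does no work in your argument---a sign that the core difficulty has been bypassed rather than resolved.
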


\begin{proof}
	We prove the case $G=\mathbb{R}^k$. For the general case, we consider pseudo-action instead and the proof is similar. We know that $G_{p}$ has no nontrivial compact subgroups from Proposition \ref{dim_free}(2). It is also clear that $G_{p}$ contains $\mathbb{R}^k$. As a result, if $G_{p}$ is not $\mathbb{R}^k$, it must contain $\mathbb{R}^k\times \mathbb{Z}$ as a closed subgroup. To prove that $G_{p}=\mathbb{R}^k$, it is enough to show the following: There is $\delta_0>0$, which depends on $({X},{p},G)$, such that for any $s\ge 1$ and for any space $(Y',q',G')$ with\\
	\textit{(C1)} $G'$ contains $\mathbb{R}^k\times\mathbb{Z}$ as a closed subgroup,\\
	\textit{(C2)} the extra $\mathbb{Z}$ subgroup has generator whose displacement at $q'$ is less than $1$,\\
	then
	$$d_{GH}((s{X},{p},G),(Y',q',G'))\ge \delta_0.$$
	By Remark \ref{dim_free_rem_2}, it suffices to prove the following claim.
	
	\noindent\textbf{Claim:} There exists a positive function $C(r,R)$ such that for any $\tau\in(0,1]$ and any $z\in\mathbb{R}^k$ with $d(z{p},{p})\ge \tau r$ and $d(Nz{p},{p})\le \tau R$, we have $N\le C(r,R)$.
	
	For $r>0$, we define
	$$A(r)=\{v\in\mathbb{R}^k \ |\ d(v{p},{p})=r,\  d(tv{p},{p})\le r \text{ for all } 0<t<1 \},$$
	It is clear that $A(r)$ is compact. For $R\ge r$, we define a function on $A(r)$:
	\begin{align*}
		F_{r,R}:A(r)&\to\ \ \mathbb{R}^+\\
		v\ &\mapsto \sup\{t>0\ |\ d(tv{p},{p})= R\}.
	\end{align*}
	Since $\mathbb{R}^k$ is a closed subgroup, $F_{r,R}(v)$ exists and is finite for each $v\in A(r)$. Though $F_{r,R}$ may not be continuous in general, we can check that it is always upper semi-continuous. In fact, given $v_j\in A(r)$ with $v_j\to v$, we put $t_j=F_{r,R}(v_j)$ for simplicity. Then
		$d(t_jv_j{p},{p})=R$ and $d(tv_j{p},{p})>R$ for all $t>t_j$.
	It is clear that $\limsup\limits_{j\to\infty} t_j<\infty$. Since $d(tv{p},{p})\ge R$ for all $t>\limsup\limits_{j\to\infty} t_j$, we conclude that $\limsup\limits_{j\to\infty} t_j\le F_{r,R}(v)$. Let $M_{r,R}<\infty$ be the maximum of $F_{r,R}$ on $A(r)$. If we have $z\in\mathbb{R}^k$ with $d(z{p},{p})\ge r$ and $d(Nz{p},{p})\le R$, then $N\le M_{r,R}$. Let $\tau_0>0$ be a very small number that will be determined later. By our construction of $F_{r,R}$, we see that
	$$M_{\tau r,\tau R}\le M_{\tau_0 r,R}$$
	for all $\tau\in[\tau_0,1]$. This shows that claim holds for $\tau\in[\tau_0,1]$ with positive function $C(r,R)=M_{\tau_0r,R}$. It remains to prove that claim also holds when $\tau\in(0,\tau_0]$ for sufficiently small $\tau_0$.
	
	For $\rho>0$, we further define
	\begin{align*}
		\Omega(\rho)=\{tv\ |\ & t\in [0,1],\ v\in\mathbb{R}^k \text{ with } d(v{p},{p})=\rho\\
		&\text{and } d(sv{p},{p})>\rho \text{ for all } s>1 \}.
	\end{align*}
	Observe that $D_1(\Omega(\rho))\to 0$ as $\rho\to 0$. Thus there is $\tau_0>0$ small such that $$D_1(\Omega(\tau))<\epsilon$$ 
	for all $\tau\le\tau_0$, where $\epsilon=\epsilon(n,v,\Phi)>0$ is the constant in Proposition \ref{non_van_prop}(3). By Proposition \ref{non_van_prop}(3), for any symmetric subset $B\not=\{e\}$ of $\Omega(\tau_0)$, we have
	$$\dfrac{d_H(B{p},B^2{p})}{\mathrm{diam}(B{p})}\ge\eta.$$
	By Remarks \ref{dim_free_rem_1} and \ref{dim_free_rem_1'}, there is some constant $C_0(n,\eta,R/r)$ such that the claim holds for $\tau\in(0,\tau_0]$. Put $C(r,R)=\max\{C_0(n,\eta,R/r),M_{\tau_0 r,R}\}$ and we finish the proof of the claim.
\end{proof}

\begin{rem}
    In Lemma \ref{dim_free_tangent}, we have
    $$(s_j{X},{p},G)\overset{GH}\longrightarrow(C_{p}{X},v,G_{p})$$
    with $G_{p}=\mathbb{R}^{\dim(G)}$. By Lemma \ref{dim_free_untwisted}, we conclude that $G_p$-action has no $\eta$-subgroups at $p$. Recall that when $(X,p)\in\mathcal{M}(n,-1,v)$, $C_pX$ is a metric cone (Theorem \ref{CC_cone}). If one further take this into account, it can be shown that $G_p$ acts as translations in the Euclidean factor of $C_pX$. Since we never used the metric cone structure or any other non-collapsing results in the proof of Lemma \ref{dim_free_tangent}, this lemma can also be applied to the collapsed limit spaces (cf. Theorem \ref{dimension'}). 
\end{rem}

Now we prove Proposition \ref{dim_free}(1) by induction on $\dim(G)$.

\begin{proof}[Proof of Proposition \ref{dim_free}(1)]
	We first show that statement holds when $\dim(G)=0$. In this case, we claim that $G'=\{e\}$. In fact, suppose that $G'$ has an nontrivial element $g'$, then we pick $\gamma_i\in\Gamma_i$ converging to $g'$. Because $G$-action is free at ${p}$, before rescaling $\gamma_i\to e\in G$. By the proof of Corollary \ref{stable_nss}, $\gamma_i=e$ for $i$ large. Hence $\gamma_i$ cannot converge to $g'\not= e$ after rescaling.
	
	Assuming that the statement also holds for $\dim(G)=1,...,k-1$, we verify the case $\dim(G)=k$.
	
	We make the following reductions: by Lemma \ref{dim_free_tangent} and a standard diagonal argument, we assume that $$(t_i{M}_i,{p}_i,\Gamma_i)\overset{GH}\longrightarrow(C_{p}{X},v,G_{p})$$ for some $t_i\to\infty$ with $r_i/t_i\to \infty$, where $C_p{X}$ is a tangent cone at ${p}$ and $G_p=\mathbb{R}^k$. By Lemma \ref{dim_free_untwisted}, $G_{p}$-action has no $\eta$-subgroup of one-parameter at $v$. Now we replace $$(M_i,{p}_i,\Gamma_i)\overset{GH}\longrightarrow({X},{p},G)$$ by $$(t_i{M}_i,{p}_i,\Gamma_i)\overset{GH}\longrightarrow(C_{p}{X},v,\mathbb{R}^{k})$$
	and continue the proof.
	
	We know that $G'_0=\mathbb{R}^{l}$ because it is abelian and has no nontrivial compact subgroup. We show that $l\le k$. Suppose that the contrary holds, that is, $G'$ contains $\mathbb{R}^{k+1}$ as a closed subgroup. Then $G'$ would contain $\mathbb{R}^k\times\mathbb{Z}$ as a closed subgroup. Scaling the sequence $r_i$ down by a constant, we may assume that for the extra $\mathbb{Z}$ subgroup, its generator has displacement at ${p}'$ less than $1$.
	
	Put $\delta(n,\eta)>0$ as the constant in Lemma \ref{dim_free_gap}. For each $i$, consider the following set of scales
	\begin{align*}
		S_i:=\{\  1\le s\le r_i\ |\ & d_{GH}((s{M}_i,{p}_i,\Gamma_i),(Y,q,H))\le \delta/3 \text{ for some space } (Y,q,H)\\
		&  \text{ satisfying the following conditions}\\
		& \textit{(C1)}\ H \text{ contains $\mathbb{R}^k\times\mathbb{Z}$ as a closed subgroup},\\
		& \textit{(C2)}\ \text{this extra $\mathbb{Z}$ subgroup of $H$ has generator whose }\\
		&\ \ \ \ \ \text{\ \ displacement at $q$ is less than $1$.}\}
	\end{align*}
	$S_i$ is nonempty for $i$ sufficiently large because $r_i\in S_i$. We pick $s_i\in S_i$ with $$\inf{S_i}\le s_i\le\inf{S_i}+1/i.$$
	
	\textbf{Step 1:} $s_i\to\infty$.
	
	Otherwise, passing to a subsequence if necessary, $s_i\to s<\infty$. Then
	$$(s_i{M}_i,{p}_i,\Gamma_i)\overset{GH}\longrightarrow(s{X},{p},\mathbb{R}^k).$$
	Since $s_i\in S_i$, for each $i$, there is $(Y_i,q_i,H_i)$ with \textit{(C1)(C2)} and
	$$d_{GH}((s_i{M}_i,{p}_i,\Gamma_i),(Y_i,q_i,H_i))\le \delta/3.$$
	Hence for $i$ large,
	$$d_{GH}((Y_i,q_i,H_i),(s{X},{p},\mathbb{R}^k))\le \delta/2.$$
	This would contradict Lemma \ref{dim_free_gap} because $\mathbb{R}^k$-action on $sX$ has no $\eta$-subgroup of one-parameter at $p$.
	
	\textbf{Step 2:} $r_i/s_i\to\infty$.
	
	If $r_i/s_i\le C$ for some $C\ge 1$. Then consider $$(\dfrac{r_i}{2C}{M}_i,{p}_i,\Gamma_i)\overset{GH}\longrightarrow(\dfrac{1}{2C}{X}',{p}',G').$$
	Note that this limit space satisfies \textit{(C1)(C2)} as well. Thus $r_i/2C\in S_i$ for $i$ large, which contradicts $r_i/\inf(S_i)\le C$.
	
	Next we consider the convergence
	$$(s_i{M}_i,{p}_i,\Gamma_i)\overset{GH}\longrightarrow(Y_\infty,q_\infty,H_\infty)$$
	after passing to a subsequence if necessary.
	
	\textbf{Step 3:} $H_\infty$ contains $\mathbb{R}^{k}$ as a proper closed subgroup.
	
	By Proposition \ref{dim_free}(2), $H_\infty$ does not contain any nontrivial compact subgroups and thus $(H_\infty)_0=\mathbb{R}^{m}$. If $m<k$, we consider
	$$(s_i{M}_i,{p}_i,\Gamma_i)\overset{GH}\longrightarrow(Y_\infty,q_\infty,H_\infty)$$
	and its rescaling sequence ($r_i/s_i\to\infty$)
	$$(r_i{M}_i,{p}_i,\Gamma_i)\overset{GH}\longrightarrow({X}',{p}',G')$$
	with $G'$ containing $\mathbb{R}^{k+1}$ ($k>m$). This contradicts the induction assumptions. It remains to rule out the case $H_\infty=\mathbb{R}^k$ to finish Step 3. By Lemma \ref{dim_free_untwisted}, $H_\infty$-action has no $\eta$-subgroup of one-parameter at $q_\infty$. Together with the fact that $s_i\in S_i$, \textit{(C2)} and Lemma \ref{dim_free_gap}, we can rule out this case.
	
	\textbf{Step 4:}
	We claim that $H_\infty$ contains $\mathbb{R}^k\times\mathbb{Z}$ as a closed subgroup. If this claim holds, we draw a contradiction as follows. Let $h$ be the generator of this extra $\mathbb{Z}$ subgroup. Put $l=d(hq_\infty,q_\infty)>0$. If $l\le 1$, then we choose $t_i=s_i/2\to\infty$. Then $$(t_i{M}_i,{p}_i,\Gamma_i)\overset{GH}\longrightarrow(\dfrac{1}{2}Y_\infty,q_\infty,H_\infty).$$ Hence $t_i\in S_i$ for $i$ sufficiently large. But $t_i<\inf(S_i)$, which is a contradiction. If $l>1$, then we put $t_i=s_i/2l$ and we will result in a similar contradiction.
	
	It remains to verify the claim that $H_\infty$ contains $\mathbb{R}^k\times\mathbb{Z}$ as a proper closed subgroup. From Step 3, we know that $H_\infty$ contains $\mathbb{R}^k$. If $\dim(H_\infty)>k$, since $H_\infty$ is abelian and has no nontrivial compact subgroups, then $H_\infty$ contains $\mathbb{R}^{k+1}$ and the claim follows. If $\dim(H_\infty)=k$, then $H_\infty$ contains $\mathbb{R}^k\times\mathbb{Z}$ by Proposition \ref{dim_free}(2).
\end{proof}

In the proof above, we start with $G'$ containing $\mathbb{R}^{k+1}$ as a closed subgroup and then choose a closed $\mathbb{R}^k\times \mathbb{Z}$ subgroup of $G'$. Through the proof, this closed $\mathbb{R}^k\times \mathbb{Z}$ subgroup ends in a contradiction. This gives the following proposition.

\begin{prop}\label{dim_free_Z}
	Under the assumptions of Proposition \ref{dim_free}, if in addition $\dim(G')=\dim(G)$, then $G'$ is connected.
\end{prop}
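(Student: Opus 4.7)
The plan is to reduce to the $\mathbb{R}^k \times \mathbb{Z}$ situation already disposed of in the proof of Proposition \ref{dim_free}(1). Suppose for contradiction that $G'$ is disconnected. The goal is to extract a closed subgroup of $G'$ isomorphic to $\mathbb{R}^k \times \mathbb{Z}$, where $k = \dim G = \dim G'$, and then invoke the scaling-gap argument from (1) verbatim.

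To produce such a subgroup, first note that Proposition \ref{dim_free}(2) applied to the rescaled limit $(X', p', G')$ says $G'$ has no nontrivial compact subgroup. Since $G'$ is abelian, its identity component $G'_0$ is an abelian connected Lie group without compact subgroups, and therefore $G'_0 \cong \mathbb{R}^k$ with $k = \dim G' = \dim G$. The component group $G'/G'_0$ is then a nontrivial discrete abelian group, and I claim it is torsion free: if $\bar h \in G'/G'_0$ had finite order $m$, pick a lift $h \in G'$, write $h^m = v \in \mathbb{R}^k$, and use divisibility of $\mathbb{R}^k$ to choose $w \in \mathbb{R}^k$ with $w^m = v$; then $hw^{-1}$ has order dividing $m$ and generates a nontrivial finite cyclic, hence compact, subgroup of $G'$, contradicting (2). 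Therefore $G'/G'_0$ is torsion free and nontrivial, and selecting any element $\bar h$ of infinite order, a lift $h \in G'$ satisfies $\langle h \rangle \cap G'_0 = \{e\}$, so the closed subgroup generated by $G'_0$ and $h$ is isomorphic to $\mathbb{R}^k \times \mathbb{Z}$.

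With such a closed $\mathbb{R}^k \times \mathbb{Z} \le G'$ in hand, I replay the four-step argument from the proof of Proposition \ref{dim_free}(1). After rescaling the sequence $r_i$ by a bounded constant, I may assume the generator of the $\mathbb{Z}$ factor has displacement less than $1$ at $p'$. Defining the scale sets $S_i$ exactly as in (1), picking $s_i \in S_i$ with $s_i \le \inf S_i + 1/i$, the four steps ($s_i \to \infty$ from Lemma \ref{dim_free_gap}; $r_i/s_i \to \infty$ from minimality of $s_i$; the intermediate limit group $H_\infty$ contains $\mathbb{R}^k$ properly by the induction hypothesis of (1) together with Lemma \ref{dim_free_gap}; and $H_\infty$ contains a closed $\mathbb{R}^k \times \mathbb{Z}$ by the same reasoning I used above for $G'$) go through unchanged, yielding the contradiction $t_i \in S_i$ with $t_i < \inf S_i$.

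The only conceptual obstacle is the extraction of the $\mathbb{Z}$ factor, which requires ruling out torsion in $G'/G'_0$; this is where Proposition \ref{dim_free}(2) is essential. Once that is in place, the scaling-gap machinery of (1) does not actually use the strict inequality $\dim G' > \dim G$ — it only uses the presence of a closed $\mathbb{R}^k \times \mathbb{Z}$ subgroup of the rescaled limit group, so reusing the argument is routine rather than a new technical input.
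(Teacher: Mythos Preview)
Your proposal is correct and follows essentially the same approach as the paper: suppose $G'$ is disconnected, use abelianness together with Proposition~\ref{dim_free}(2) to extract a closed $\mathbb{R}^k\times\mathbb{Z}$ subgroup, and then observe that the four-step scaling-gap argument in the proof of Proposition~\ref{dim_free}(1) only used the existence of such a subgroup, not the strict inequality $\dim G'>\dim G$. The paper's own proof is just a terse version of yours (it asserts the existence of the $\mathbb{R}^k\times\mathbb{Z}$ subgroup without spelling out the torsion-free lifting argument you give), and the sentence preceding the proposition in the paper makes exactly your observation about what the proof of (1) really uses.
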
 

\begin{proof}
	Let $k=\dim(G)=\dim(G')$. Suppose that $G'$ is not connected. Since $G'$ is abelian and does not have any nontrivial compact subgroups, $G'$ must contain $\mathbb{R}^k\times \mathbb{Z}$ as a closed subgroup. This cannot happen as we have seen in the proof of Proposition \ref{dim_free}(1).  
\end{proof}

\begin{rem}
    The proof of Proposition \ref{dim_free}(1) is a prototype for the proof of the general case. Here we choose a critical rescaling sequence with limit $(Y_\infty,q_\infty,H_\infty)$, then make use of Proposition \ref{dim_free}(2), Lemma \ref{dim_free_untwisted}, and Lemma \ref{dim_free_gap} to rule out every possibility of $(Y_\infty,q_\infty,H_\infty)$. When dealing with general $G$-action, we will first extend Proposition \ref{dim_free}(2) and Lemma \ref{dim_free_untwisted} (see Proposition \ref{dim_cpt} and Lemma \ref{dim_ind_untwisted}), then apply a similar argument as the proof of Proposition \ref{dim_free}(1). This method of critical rescaling is also used in \cite{Pan17b}.
\end{rem}

\subsection{Compact subgroups of $G'$}

We look into the compact subgroups of $G'$ and prove Theorem \ref{dimension}(2) in this section. By Proposition \ref{dim_free}(2), we know that if $G'$ has nontrivial compact subgroups, then $G$-action must have nontrivial isotropy subgroups at $p$. We restate Theorem \ref{dimension}(2) here for convenience:

\begin{prop}\label{dim_cpt}
	Suppose that $G'$ has a compact subgroup $K'$. Then $G$ contains a subgroup $K$ fixing ${p}$ and $K$ is isomorphic to $K'$.
\end{prop}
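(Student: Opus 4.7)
The plan is to construct an injective group homomorphism $\varphi\colon K' \hookrightarrow G$ whose image fixes $p$, so that $K := \varphi(K')$ is the desired subgroup.

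\textbf{Construction of $\varphi$.} For each $k' \in K'$, I pick a lift $\gamma_i \in \Gamma_i$ with $(r_i M_i, p_i, \gamma_i) \overset{GH}\longrightarrow (X', p', k')$. Compactness of $K'$ forces $d_{r_iM_i}(\gamma_i p_i, p_i)$ to be uniformly bounded, so $d_{M_i}(\gamma_i p_i, p_i) \to 0$, and a subsequence of $\gamma_i$ converges in $(M_i, p_i, \Gamma_i) \to (X, p, G)$ to some $\varphi(k') \in G$ which fixes $p$. If $\gamma'_i \to k'$ is another lift, then $\gamma_i (\gamma'_i)^{-1} \to e$ in the rescaled sequence, so Proposition~\ref{non_van_prop}(2) forces $\gamma_i(\gamma'_i)^{-1} \to e$ in the unscaled sequence as well, and the two candidate limits agree. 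The same reasoning (applied to $\gamma_i \gamma'_i$) shows $\varphi$ is a group homomorphism.

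\textbf{Injectivity.} Suppose $\varphi(k') = e$ with $k' \neq e$. If $k'\cdot p' = p'$, then Proposition~\ref{non_van_prop}(1) applied to $\gamma_i$ (smaller-scale limit $e$, bigger-scale limit $k'$ fixing $p'$) immediately forces $k' = e$, a contradiction. Otherwise $k'\cdot p' \neq p'$, and I use an almost-subgroup argument. Choose $k_i \to \infty$ with $k_i/r_i \to 0$ so that the symmetric subset
\[
  A_i := \{e, \gamma_i^{\pm 1}, \ldots, \gamma_i^{\pm k_i}\} \subset \Gamma_i
\]
equivariantly approximates the compact abelian subgroup $\overline{\langle k'\rangle} \subseteq K'$ in the rescaled sequence (trivial for $k'$ of finite order; for infinite order, $\overline{\langle k'\rangle}$ is a subtorus and powers of $k'$ become dense in it, so any slow $k_i \to \infty$ works). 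Because $\overline{\langle k'\rangle}$ is a genuine subgroup and $\overline{\langle k'\rangle}\cdot p'$ has positive diameter, the scale-invariant ratio $d_H(A_i p_i, A_i^2 p_i)/\mathrm{diam}(A_i p_i) \to 0$. Meanwhile, at scale $1$ in $M_i$,
\[
  D_{1,p_i}(A_i) \leq k_i \cdot d_{M_i}(\gamma_i p_i, p_i) \lesssim k_i/r_i \to 0,
\]
so $A_i$ becomes an $\epsilon$-small $\eta$-subgroup at $p_i$ of scale $1$ for every $\epsilon,\eta > 0$ and all large $i$, contradicting Proposition~\ref{non_van_prop}(3).

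\textbf{Conclusion and main obstacle.} By injectivity of $\varphi$, $K := \varphi(K')$ is a subgroup of $G$ isomorphic to $K'$, and $K$ fixes $p$ by construction. The principal obstacle is the case $k'\cdot p' \neq p'$, where Proposition~\ref{non_van_prop}(1) cannot be applied directly; the almost-subgroup trick circumvents this by approximating the entire compact closure $\overline{\langle k'\rangle}$ rather than just $k'$, producing symmetric subsets whose orbits look like a genuine group orbit at the rescaled scale $r_i$ while collapsing to $p_i$ at scale $1$ in $M_i$ -- exactly the configuration that the scaling $\Phi$-nonvanishing hypothesis (via Proposition~\ref{non_van_prop}(3)) is designed to rule out.
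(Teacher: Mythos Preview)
Your proof is correct and takes a genuinely more direct route than the paper's.  The paper first decomposes the compact abelian group as $K'=\mathbb{T}^l\times F$ and then handles the two factors separately via Lemmas~\ref{dim_circle}--\ref{dim_finite}: the finite part (Lemma~\ref{dim_finite}) is essentially your construction restricted to a generating set, but the torus part (Lemmas~\ref{dim_circle}, \ref{dim_torus}) requires an induction on $l$ to show that the lifted circle factors generate an honest $l$-torus inside $G$.  Your single element-by-element construction of an injective homomorphism $\varphi\colon K'\to G$ sidesteps this torus induction entirely; the underlying ingredients (Proposition~\ref{non_van_prop}(1)(2)(3) and the almost-subgroup trick, which in the paper is packaged as Lemma~\ref{rescal_id_cpt}) are the same.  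What the paper's approach buys is that the resulting $K\subset G$ is visibly a \emph{closed} torus-times-finite subgroup, whereas your $\varphi(K')$ is a priori only an abstract copy; since the statement and its later uses (e.g.\ counting order-$2$ elements in Lemma~\ref{dim_fix_tangent}) only need abstract isomorphism, this does not matter here.

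One minor sloppiness worth fixing: the inequality $D_{1,p_i}(A_i)\le k_i\cdot d_{M_i}(\gamma_i p_i,p_i)$ is not correct as written, because $D_{1,p_i}$ is the maximal displacement over all of $B_1(p_i)$, not just at $p_i$.  The repair is immediate: you are already in the injectivity step, assuming $\varphi(k')=e$, i.e.\ $\gamma_i\to e$ at scale $1$, so $D_{R,p_i}(\gamma_i)\to 0$ for every fixed $R$; now choose $k_i\to\infty$ slowly enough (relative to this rate, not merely $k_i/r_i\to 0$) so that $A_i\to\{e\}$ at scale $1$.  This is precisely how the paper argues in Lemma~\ref{rescal_id_cpt}.
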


Since $K$ is abelian and compact, it is enough to show that $K/K_0$ is isomorphic to $K'/K'_0$ and $K_0=K'_0=\mathbb{T}^l$ for some $l$.

\begin{rem}
	In fact, $K_0\simeq K'_0$ and $\#K/K_0\ge\#K'/K'_0$ are sufficient for applications.
\end{rem}

\begin{lem}\label{rescal_id_cpt}
	Suppose that  $f_i\in\Gamma_i$ and $({M}_i,{p}_i,f_i)\overset{GH}\longrightarrow({X},{p},\mathrm{id})$ and. Let $r_i\to\infty$ be a rescaling sequence. After passing to a subsequence, we have $(r_i{M}_i,{p}_i,f_i)\overset{GH}\longrightarrow({X}',{p}',f)$. If $\overline{\langle f\rangle}$ is a compact group, then $f=e$.
\end{lem}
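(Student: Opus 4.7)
The plan is to argue by contradiction, assuming $f\neq e$, and split into two cases according to whether $f$ fixes $p'$.

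In the first case, if $f$ fixes $p'$, then Proposition~\ref{non_van_prop}(1) applies directly with $s_i=1$ and the rescaling sequence $r_i\to\infty$ (which satisfies $r_i\ge s_i=1$ eventually): we have $(s_iM_i,p_i,f_i)\overset{GH}\to(X,p,\mathrm{id})$, $(r_iM_i,p_i,f_i)\overset{GH}\to(X',p',f)$, and $f$ fixes $p'$, so the proposition forces $f=\mathrm{id}$, a contradiction.

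In the second case, $f$ does not fix $p'$, so $\mathrm{diam}(\overline{\langle f\rangle}p')>0$. Here the main tool will be Proposition~\ref{non_van_prop}(3), which supplies universal constants $\epsilon,\eta>0$ such that $\Gamma_i$-action admits no $\epsilon$-small $\eta$-subgroup at $p_i$ with scale $r\in(0,1]$. The plan is to construct from $f_i$ a sequence of symmetric subsets $A_i\subseteq\Gamma_i$ that violates this property. Since $\overline{\langle f\rangle}$ is compact and $\{f^j:j\in\mathbb{Z}\}$ is dense in it, for any $\delta>0$ the finite set $B_N:=\{f^j:|j|\le N\}$ becomes $\delta$-dense in $\overline{\langle f\rangle}$ for $N$ large; consequently both $B_Np'$ and $B_N^2 p'=B_{2N}p'$ Hausdorff-approximate $\overline{\langle f\rangle}p'$ while $\mathrm{diam}(B_Np')\to\mathrm{diam}(\overline{\langle f\rangle}p')>0$. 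One may therefore fix a single integer $N_0$ with
\[
\dfrac{d_H(B_{N_0}p',B_{2N_0}p')}{\mathrm{diam}(B_{N_0}p')}<\eta/2.
\]

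With this $N_0$ fixed, set $A_i:=\{e,f_i^{\pm 1},\dots,f_i^{\pm N_0}\}$. At scale $r_i$, $(r_iM_i,p_i,A_i)\overset{GH}\to(X',p',B_{N_0})$ and similarly $A_i^2\to B_{2N_0}$, so the scale-invariant ratio $d_H(A_ip_i,A_i^2p_i)/\mathrm{diam}(A_ip_i)$ converges to $<\eta/2$, and $\mathrm{diam}(A_ip_i)$ is positive for large $i$ because $f$ does not fix $p'$. Thus $A_i$ is an $\eta$-subgroup at $p_i$ for $i$ large. On the other hand, because $N_0$ is fixed and $(M_i,p_i,f_i)\overset{GH}\to(X,p,\mathrm{id})$ gives $D_{1,p_i}(f_i)\to 0$, the estimate $D_{1,p_i}(A_i)\le N_0\cdot D_{1,p_i}(f_i)\to 0$ shows $D_{1,p_i}(A_i)<\epsilon$ eventually, contradicting Proposition~\ref{non_van_prop}(3).

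There is no serious obstacle here; the proof hinges on two observations that work in tandem: the scale-invariance of the almost-subgroup ratio lets us verify the $\eta$-subgroup condition in the rescaled limit where $A_i$ approximates the compact group $\overline{\langle f\rangle}$, while the fact that $N_0$ can be chosen independently of $i$ lets the displacement at the original scale collapse to zero. The compactness of $\overline{\langle f\rangle}$ is used in exactly one place, namely to select a finite $N_0$ for which $B_{N_0}$ is almost a subgroup on $p'$.
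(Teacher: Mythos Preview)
Your proof is correct and follows essentially the same approach as the paper: handle the case where $f$ fixes $p'$ via Proposition~\ref{non_van_prop}(1), and otherwise construct a small almost-subgroup contradicting Proposition~\ref{non_van_prop}(3). The only minor difference is that you use a \emph{fixed} $N_0$ (so $A_i\to\{e\}$ trivially and $A_i\to B_{N_0}$ after rescaling, with $B_{N_0}$ an $\eta$-subgroup by compactness), whereas the paper lets $k_i\to\infty$ slowly (so $A_i\to\{e\}$ by construction and the rescaled limit contains the full group $\overline{\langle f\rangle}$); both variants work, and yours is arguably a slight simplification since it avoids the ``slowly growing'' diagonal argument.
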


\begin{proof}
	Suppose that $f\not=e$. Since $$({M}_i,{p}_i,f_i)\overset{GH}\longrightarrow({X},{p},\mathrm{id}),$$ there is a sequence $k_i\to\infty$ slowly such that $A_i:=\{e,f_i^{\pm 1},...,f_i^{\pm k_i}\}\overset{GH}\to\{e\}$. But after rescaling $r_i$, the limit of $A_i$ contains a compact subgroup $\overline{\langle f\rangle}$. By the same argument in the proof of Proposition \ref{dim_free}(2), we end in a contradiction to Proposition \ref{non_van_prop}(3).
\end{proof}

\begin{lem}\label{dim_circle}
	Let $\mathcal{S}$ be a circle subgroup in $G'_0$, then there is a sequence of symmetric subsets $A_i\subseteq\Gamma_i$ such that $$(r_i{M}_i,{p}_i,A_i)\overset{GH}\longrightarrow({X}',{p}',\mathcal{S})$$
	and before rescaling $$({M}_i,{p}_i,A_i)\overset{GH}\longrightarrow({X},{p},A_\infty)$$ with $A_\infty$ fixing ${p}$ and containing a circle group.
\end{lem}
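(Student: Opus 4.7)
The plan is to construct $A_i$ as a symmetric arithmetic progression of powers of a single well-chosen element $\gamma_i\in\Gamma_i$, and to use Lemma \ref{rescal_id_cpt} as the central tool for controlling the unrescaled limit.

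First I would pick a topological generator $g'\in\mathcal{S}$, i.e.\ an element of infinite order with $\overline{\langle g'\rangle}=\mathcal{S}$ (possible since $\mathcal{S}\cong S^1$, so any ``irrational rotation'' works). Then lift $g'$ to a sequence $\gamma_i\in\Gamma_i$ with $\gamma_i\to g'$ in the rescaled equivariant Gromov--Hausdorff convergence. Since the rescaled displacement $d_{r_iM_i}(\gamma_ip_i,p_i)$ is bounded (converging to $d(g'p',p')$) while $r_i\to\infty$, we have $d(\gamma_ip_i,p_i)\to 0$ in the unrescaled metric. After passing to a subsequence $\gamma_i\to g$ in the unrescaled convergence for some $g\in G_p$, where $G_p$ denotes the isotropy of $G$ at $p$ (a compact abelian Lie group, since $X$ is proper).

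The crux is to show that $g$ has infinite order in $G_p$, and this is where I expect the main obstacle to lie. If $g=e$, then Lemma \ref{rescal_id_cpt} applied to $f_i=\gamma_i$ forces $g'=e$ (since $\overline{\langle g'\rangle}=\mathcal{S}$ is compact), contradicting the choice of $g'$. If instead $g$ has finite order $N>1$, then $\gamma_i^N\to g^N=e$ in the unrescaled limit while $\gamma_i^N\to(g')^N$ in the rescaled limit; because $(g')^N$ still generates a dense subgroup of $\mathcal{S}$, the closure $\overline{\langle(g')^N\rangle}=\mathcal{S}$ is compact, and Lemma \ref{rescal_id_cpt} applied to $f_i=\gamma_i^N$ forces $(g')^N=e$, again a contradiction. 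This dichotomy is the real content of the lemma and the only substantive step.

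Once $g$ has infinite order in the compact abelian Lie group $G_p$, its closure $\overline{\langle g\rangle}$ is isomorphic to $T^k\times F$ with $k\ge1$, and therefore contains a circle subgroup. To finish, I would set $A_i=\{e,\gamma_i^{\pm1},\ldots,\gamma_i^{\pm k_i}\}$, where $k_i\to\infty$ is chosen by a standard diagonal argument so that $\{(g')^k:|k|\le k_i\}$ is $\varepsilon_i$-dense in $\mathcal{S}$ and $\{g^k:|k|\le k_i\}$ is $\varepsilon_i$-dense in $\overline{\langle g\rangle}$ for some $\varepsilon_i\to0$. Then
\[
(r_iM_i,p_i,A_i)\overset{GH}\longrightarrow(X',p',\mathcal{S}),\qquad (M_i,p_i,A_i)\overset{GH}\longrightarrow(X,p,\overline{\langle g\rangle}),
\]
and $A_\infty:=\overline{\langle g\rangle}\subseteq G_p$ fixes $p$ and contains a circle subgroup, as required.
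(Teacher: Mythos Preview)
Your proposal is correct and follows essentially the same approach as the paper's proof: pick a topological generator $g'$ of $\mathcal{S}$, lift to $\gamma_i\in\Gamma_i$, and use Lemma~\ref{rescal_id_cpt} twice (once to rule out $g=e$, once applied to $\gamma_i^N$ to rule out finite order $N$) to conclude that the unrescaled limit $g$ has infinite order in the compact isotropy group, hence $\overline{\langle g\rangle}$ contains a circle. The only cosmetic difference is ordering: the paper defines $A_i=\{e,\gamma_i^{\pm1},\ldots,\gamma_i^{\pm k_i}\}$ first (with $k_i\to\infty$ slowly to ensure the rescaled limit is $\mathcal{S}$) and then observes that the unrescaled limit $A_\infty$ \emph{contains} $\overline{\langle g\rangle}$, whereas you postpone the choice of $k_i$ to the end and claim $A_\infty=\overline{\langle g\rangle}$ exactly; both are fine for the stated conclusion.
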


\begin{proof}
	Select an element $\gamma'\in\mathcal{S}$ with $\overline{\langle\gamma'\rangle}=\mathcal{S}$ and a sequence $\gamma_i\in\Gamma_i$ with $$(r_i{M}_i,{p}_i,\gamma_i)\overset{GH}\longrightarrow({X}',{p}',\gamma').$$ Put $A_i:=\{e,\gamma_i^{\pm 1},...,\gamma_i^{\pm k_i}\}$, where $k_i\to\infty$ slowly such that $$(r_i{M}_i,{p}_i,A_i)\overset{GH}\longrightarrow({X}',{p}',\mathcal{S}).$$
	
	Before rescaling $r_i$, let $A_\infty$ be the limit of $A_i$ and $\gamma$ be the limit of $\gamma_i$. By Lemma \ref{rescal_id_cpt}, $\gamma\not=e$. Moreover, $A_\infty$ fixes ${p}$ because after rescaling $\mathrm{diam}(\mathcal{S}{p}')<\infty$. We claim that $\gamma$ has infinite order. In fact, suppose that $\gamma$ has finite order. Let $N$ be the order of $\langle\gamma\rangle$, then $$({M}_i,{p}_i,\gamma^N_i)\overset{GH}\longrightarrow({X},{p},\mathrm{id}).$$ But after rescaling $r_i$, we have $$(r_i{M}_i,{p}_i,\gamma^N_i)\overset{GH}\longrightarrow({X}',{p}',(\gamma')^N).$$ Since $(\gamma')^N\not=e$, by Lemma \ref{rescal_id_cpt} we result in a contradiction.
	
	Since $\gamma$ has infinite order and $\overline{\langle\gamma\rangle}$ is contained in the isotropy subgroup at ${p}$, we know that $\overline{\langle\gamma\rangle}$ is compact and thus contains a circle $S^1$. It is clear that $A_\infty$ contains $\overline{\langle\gamma\rangle}$. We complete the proof.
\end{proof}

\begin{lem}\label{dim_torus}
	Let $\mathbb{T}^l$ be a torus subgroup of $G'$. Then $G$ also contains $\mathbb{T}^l$, whose action fixes ${p}$.
\end{lem}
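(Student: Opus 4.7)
The plan is induction on $l$, strengthening the statement to keep track of the approximating symmetric subsets: for every torus $\mathbb{T}^l\subseteq G'$ there exist symmetric subsets $A_i\subseteq\Gamma_i$ with $(r_iM_i,p_i,A_i)\overset{GH}\longrightarrow(X',p',\mathbb{T}^l)$ and $(M_i,p_i,A_i)\overset{GH}\longrightarrow(X,p,A_\infty)$, where $A_\infty$ fixes $p$ and contains a torus isomorphic to $\mathbb{T}^l$ in $G$. The base case $l=1$ is exactly Lemma \ref{dim_circle}.

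For the inductive step, decompose $\mathbb{T}^l=\mathcal{S}\cdot\mathbb{T}^{l-1}$, where $\mathcal{S}$ is a circle meeting $\mathbb{T}^{l-1}$ only in a finite subgroup $F$. The inductive hypothesis applied to $\mathbb{T}^{l-1}$ gives $B_i\subseteq\Gamma_i$ whose pre-rescaling limit $B_\infty$ contains a torus $T\cong\mathbb{T}^{l-1}$ fixing $p$ in $G$. Lemma \ref{dim_circle} applied to $\mathcal{S}$ produces $A_i'=\{e,\gamma_i^{\pm 1},\ldots,\gamma_i^{\pm k_i}\}$ with $\gamma_i\to\gamma'$ after rescaling (where $\gamma'$ generates $\mathcal{S}$ densely) and $\gamma_i\to\gamma$ before rescaling, where $\gamma$ has infinite order and $C:=\overline{\langle\gamma\rangle}_0$ is a circle in $G$. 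Setting $A_i:=A_i'\cdot B_i$, which is valid because $\Gamma_i$ is abelian, one sees that $A_i$ converges to $\mathcal{S}\cdot\mathbb{T}^{l-1}=\mathbb{T}^l$ after rescaling and to a set containing $C\cdot T$ before rescaling. If $C\not\subseteq T$, then $C\cdot T$ is a compact connected abelian subgroup of $G$ of dimension $l$, hence isomorphic to $\mathbb{T}^l$, and the inductive step is complete.

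The main obstacle I anticipate is ruling out the degenerate case $C\subseteq T$, which I plan to handle via the no-small-almost-subgroup property. Assume $C\subseteq T$. Some power $\gamma^N$ lies in $T\subseteq B_\infty$, so one can pick $\delta_i\in B_i$ with $\delta_i\to\gamma^N$ before rescaling; after extracting a subsequence, $\delta_i$ also converges to some $\delta'\in\mathbb{T}^{l-1}$ after rescaling. Form $\epsilon_i:=\gamma_i^N\delta_i^{-1}$, which converges to $e$ before rescaling and to $(\gamma')^N(\delta')^{-1}$ after rescaling. Since $\gamma'$ generates $\mathcal{S}$ densely, $(\gamma')^N$ is not contained in the finite intersection $\mathcal{S}\cap\mathbb{T}^{l-1}=F$, so $(\gamma')^N(\delta')^{-1}$ is a nontrivial element of $\mathbb{T}^l$.

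Finally, form $E_i:=\{\epsilon_i^k:|k|\le m_i\}$ with $m_i\to\infty$ chosen by a diagonal procedure so that $E_i$ converges after rescaling to $\overline{\langle(\gamma')^N(\delta')^{-1}\rangle}$, a genuine closed subgroup of $\mathbb{T}^l$, while $E_i\to\{e\}$ before rescaling. Convergence to an actual subgroup forces the scale-invariant ratio $d_H(E_ip_i,E_i^2p_i)/\mathrm{diam}(E_ip_i)\to 0$, so $E_i$ is eventually an $\eta$-subgroup at $p_i$; meanwhile $D_{1,p_i}(E_i)\to 0$ from the pre-rescaling collapse. This contradicts the no $\epsilon$-small $\eta$-subgroup property established in Proposition \ref{non_van_prop}(3) and completes the induction.
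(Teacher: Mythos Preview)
Your argument is essentially the paper's own proof: induct on $l$, apply Lemma~\ref{dim_circle} to the extra circle factor, and if the resulting circle $C$ falls into the lower torus $T$, manufacture a sequence converging to the identity before rescaling but to a nontrivial element of a compact group after rescaling, contradicting the no-small-almost-subgroup property. The paper organizes the induction slightly differently (it picks $l$ independent circles at once and inducts on how many of the limit sets $A_{\infty,j}$ are needed to generate a torus of the right dimension), but the contradiction step is identical in spirit---your $\epsilon_i=\gamma_i^N\delta_i^{-1}$ plays the role of the paper's $z_i=\beta_i^{-1}\gamma_{i,l+1}$.

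There is one small gap in your final paragraph. Your ratio argument for $E_i$ requires $\mathrm{diam}(E_i p_i)$ to stay bounded away from zero after rescaling, i.e.\ that $\zeta:=(\gamma')^N(\delta')^{-1}$ does \emph{not} fix $p'$. Nothing in the hypotheses rules out $\zeta\cdot p'=p'$ (the torus $\mathbb{T}^l\subseteq G'$ is not assumed to act freely), and in that case the quotient $d_H(E_ip_i,E_i^2p_i)/\mathrm{diam}(E_ip_i)$ is of the form $0/0$ and the contradiction via Proposition~\ref{non_van_prop}(3) does not fire. The fix is immediate: if $\zeta$ fixes $p'$, apply Proposition~\ref{non_van_prop}(1) directly to $\epsilon_i$ (with $s_i=1$, $r_i=r_i$) to conclude $\zeta=\mathrm{id}$, a contradiction. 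Equivalently, you can simply cite Lemma~\ref{rescal_id_cpt}, which packages exactly this dichotomy and is what the paper invokes at the corresponding point.
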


\begin{proof}
	Let $\mathcal{S}_j$ $(j=1,...,l)$ be the $j$-th circle factor in $\mathbb{T}^l$. For each $j$, by the proof of lemma \ref{dim_circle}, we can choose symmetric subsets $A_{i,j}\subseteq\Gamma_i$ with the following properties:\\
	(1) $(r_i{M}_i,{p}_i,A_{i,j})\overset{GH}\longrightarrow({X}',{p}',\mathcal{S}_j)$;\\
	(2) $A_{i,j}$ is generated a single element $\gamma_{i,j}$: $A_{i,j}=\{e,\gamma_{i,j}^{\pm 1},...,\gamma_{i,j}^{\pm k_{i,j}}\}$;\\
	(3) $({M}_i,{p}_i,A_{i,j})\overset{GH}\longrightarrow({X},{p},A_{\infty,j})$ with $A_{\infty,j}$ fixing ${p}$ and containing a circle $S^1$.
	
	We claim that the set $\cup_{j=1}^l A_{\infty,j}$ generates a torus of dimension at least $l$. We argue this by induction on $j$. By property (3), the claim holds for $l=1$. Assuming it holds for $l$, we consider the case $l+1$. By induction assumption, $\langle\cup_{j=1}^l A_{\infty,j}\rangle$ contains a torus $T$ of dimension $l$. 
	Suppose that $A_{\infty,l+1}\subseteq T$. Recall that $A_{i,j+1}$ is generated by $\gamma_{i,j+1}$ with property (2) for each $j$. Let $\gamma_{l+1}$ be the limit of $\gamma_{i,l+1}$:
	$$(M_i,p_i,\gamma_{i,l+1})\overset{GH}\longrightarrow(X,p,\gamma_{l+1}).$$
	Since $\gamma_{l+1}\in A_{\infty,l+1}\subseteq T$ and $T$ can be generated by $\cup_{j=1}^l A_{\infty,j}$, there exists a sequence $\beta_i=\prod_{j=1}^l \gamma_{i,j}^{p_{i,j}}$ such that $|p_{i,j}|\le k_{i,j}$ and $$({M}_i,{p}_i,\beta_i)\overset{GH}\longrightarrow({X},{p},\gamma_{l+1}).$$ After rescaling $r_i$, $$(r_i{M}_i,{p}_i,\beta_i)\overset{GH}\longrightarrow({X}',{p}',\beta').$$ By our choice of $\beta_i$, its limit $\beta'\not=e$ is outside $\mathcal{S}_{l+1}$. Now consider the sequence $z_i=\beta_i^{-1}\gamma_{i,l+1}$. Before rescaling $z_i\overset{GH}\to e$, while after rescaling $r_i$, $$z_i\overset{GH}\to z'=(\beta')^{-1}\gamma'_{l+1}\not= e.$$ However, $\overline{\langle z'\rangle}$ is a compact group, which is a contradiction to Lemma \ref{rescal_id_cpt}.
\end{proof}

For finite subgroups of $G'$, there is a similar property.

\begin{lem}\label{dim_finite}
	Let $F'$ be a finite group of $G'$, then $G$ contains a subgroup isomorphic to $F'$, whose action fixes ${p}$.
\end{lem}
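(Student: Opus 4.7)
\smallskip
\noindent\textbf{Proposal.} My plan is to mimic the strategy of Lemma \ref{dim_torus}, but the argument is considerably simpler because $F'$ is discrete: we will lift every element of $F'$ to a carefully chosen sequence in $\Gamma_i$, take limits in both the original and rescaled pictures, and define the desired embedding $\phi:F'\hookrightarrow G$ by sending an element of $F'$ to its limit in the \emph{unrescaled} sequence. The two main inputs will be Proposition \ref{non_van_prop}(2), which forces group relations in $F'$ to persist in the unrescaled limit, and Lemma \ref{rescal_id_cpt}, which handles injectivity.

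\smallskip
\noindent\textbf{Construction of the lift.} For each $\gamma'\in F'$, choose any sequence $\gamma_i\in\Gamma_i$ with $(r_iM_i,p_i,\gamma_i)\xrightarrow{GH}(X',p',\gamma')$. Since $r_i\to\infty$ and $d_{X'}(\gamma'p',p')<\infty$, the displacement $d_{M_i}(\gamma_i p_i,p_i)=O(1/r_i)\to 0$, so after passing to a subsequence $(M_i,p_i,\gamma_i)\xrightarrow{GH}(X,p,\gamma)$ for some $\gamma\in G$ fixing $p$. I claim $\gamma$ is independent of the choice of lift: if $\tilde\gamma_i$ is another such sequence with unrescaled limit $\tilde\gamma$, then $(r_iM_i,p_i,\gamma_i^{-1}\tilde\gamma_i)\xrightarrow{GH}(X',p',e)$ while $(M_i,p_i,\gamma_i^{-1}\tilde\gamma_i)\xrightarrow{GH}(X,p,\gamma^{-1}\tilde\gamma)$. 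Proposition \ref{non_van_prop}(2) (with $s_i=1$, and the $r_i$ of the present excerpt) then forces $\gamma^{-1}\tilde\gamma=e$. So the assignment $\phi:\gamma'\mapsto\gamma$ is well-defined.

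\smallskip
\noindent\textbf{Homomorphism and injectivity.} For $\gamma',\delta'\in F'$ with lifts $\gamma_i,\delta_i$, the product $\gamma_i\delta_i$ is a lift of $\gamma'\delta'$, and by continuity of multiplication in the Lie group $G$ it converges (on the unrescaled side) to $\phi(\gamma')\phi(\delta')$. Hence by well-definedness $\phi(\gamma'\delta')=\phi(\gamma')\phi(\delta')$, so $\phi$ is a group homomorphism, and its image consists of isometries fixing $p$. For injectivity, suppose $\phi(\gamma')=e$, i.e.\ we have lifts $\gamma_i$ of $\gamma'$ with $(M_i,p_i,\gamma_i)\xrightarrow{GH}(X,p,e)$. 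Since $\gamma'\in F'$ has finite order, $\overline{\langle\gamma'\rangle}$ is finite (hence compact), so Lemma \ref{rescal_id_cpt} gives $\gamma'=e$. Thus $\phi$ is an injective homomorphism $F'\hookrightarrow G$ with image fixing $p$, which is the desired conclusion.

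\smallskip
\noindent\textbf{Expected difficulty.} The only delicate point is checking that Proposition \ref{non_van_prop}(2) applies with the constant scale $s_i\equiv 1$ as intermediate scale against the given $r_i\to\infty$; once that is verified the rest is routine Lie-group bookkeeping. No separate analysis of the relations in $F'$ is needed, because the single identity ``$\gamma^{-1}\tilde\gamma=e$'' already rules out collisions of lifts; the abelian hypothesis on $\Gamma_i$ (and hence on $G'$) enters only through the general framework of Theorem \ref{dimension}.
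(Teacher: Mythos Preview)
Your proposal is correct and follows essentially the same approach as the paper: both lift elements of $F'$ to sequences in $\Gamma_i$, take limits in the unrescaled picture to produce elements of $G$ fixing $p$, and then use Proposition~\ref{non_van_prop}(2) and Lemma~\ref{rescal_id_cpt} in the two directions to show the resulting map is a well-defined isomorphism onto its image. The only cosmetic difference is that the paper phrases the argument via a presentation of $F'$ (showing a word $w$ is trivial in $G$ iff the corresponding word $w'$ is trivial in $F'$), whereas you define $\phi$ element-by-element and check well-definedness, the homomorphism property, and injectivity separately; the underlying inputs are identical.
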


\begin{proof}
	Let $g'_1,...,g'_k$ be a set of generators of $F'$. We present $F'$ as
	$$\langle g'_1,...,g'_k|R_1,...,R_l\rangle,$$
	where $R_1,...,R_l=e$ are relations among these generators. For each generator $g'_j$, there is sequence $\gamma_{i,j}\in\Gamma_i$ such that
	$$(r_i{M}_i,{p}_i,\gamma_{i,j})\overset{GH}\longrightarrow({X}',{p}',g'_j).$$
    Before rescaling, passing to a subsequence if necessary, we have $$({M}_i,{p}_i,\gamma_{i,j})\overset{GH}\longrightarrow({X},{p},g_j).$$
    In this way, we obtain $k$ elements $g_1,...,g_k$ in $G$. Let $F$ be the subgroup generated by these $k$ elements. It is clear that $F$-action fixes ${p}$. We show that $F$ is isomorphic to $F'$.

    Let $w$ be a word consisting of $g_1,...,g_k$. Correspondingly, we have words $w_i\in\Gamma_i$ and $w'\in G'$ of the same form. Clearly,
    $$({M}_i,{p}_i,w_i)\overset{GH}\longrightarrow({X},{p},w);$$
    $$(r_i{M}_i,{p}_i,w_i)\overset{GH}\longrightarrow({X},{p},w').$$
    Recall that $w'$ generates a finite group. Thus by Lemma \ref{rescal_id_cpt} and Proposition \ref{non_van_prop}(2), $w=e$ if and only if $w'=e$. This shows that $F$ and $F'$ has the same presentation.
\end{proof}

We prove Proposition \ref{dim_cpt}.

\begin{proof}[Proof of Proposition \ref{dim_cpt}]
  Since $K'$ is compact and abelian, $K'$ admits splitting
  $$K'=\mathbb{T}^l\times F,$$
  where $F=K'/K'_0$ is a finite group. By Lemmas \ref{dim_torus} and \ref{dim_finite}, $G$ contains $\mathbb{T}^l$ and $F$, whose actions fixes $p$. Also by the same argument in the proof of Lemma \ref{dim_torus}, it is clear that $F\cap \mathbb{T}^l=\{e\}$ in $G$. Thus $G$ contains a compact subgroup $\mathbb{T}^l\times F$ fixing $p$.
\end{proof}

We finish this section by results on passing isotropy group to any tangent cone. For a $G$-action on a space $(X,p)$, we denote $\mathrm{Iso}({p},G)$ as the isotropy subgroup of $G$ at $p$.

\begin{lem}\label{dim_fix_tangent}
	For $({M}_i,{p}_i,\Gamma_i)\overset{GH}\longrightarrow({X},{p},G)$ and $s_j\to\infty$, passing to a subsequence if necessary we consider a tangent cone at ${p}$:
	$$(s_j{X},{p},G)\overset{GH}\longrightarrow(C_p{X},v,G_{p}).$$
	If $G$ is a compact group fixing ${p}$ with $G_0=\mathbb{T}^l$, then $(G_{p})_0=\mathbb{T}^l$ and $$\#\pi_0(G_{p})\le \#\pi_0(G),$$
	where $\#\pi_0$ means the number of connected components.
\end{lem}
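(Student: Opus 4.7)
The plan is to identify $G_p$ with a quotient of $G$ and then show that this quotient is faithful on $G_0$. Because $G$ is compact and fixes $p$, the actions of $G$ on $(s_jX, p)$ are equicontinuous on every bounded ball, and Arzel\`a--Ascoli passes them to a well-defined isometric limit action, yielding a natural Lie group homomorphism $\rho : G \to \mathrm{Isom}(C_pX)$ fixing $v$. Each $g \in G$ maps to $\rho(g) \in G_p$ via the constant sequence, while any $h \in G_p$ is the equivariant limit of some sequence $g_j \in G$; extracting a subsequence converging to $g \in G$ by compactness of $G$ forces $h = \rho(g)$. Hence $G_p = \rho(G) \cong G/\ker\rho$.

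The crucial step is to show $\ker\rho \cap G_0 = \{e\}$. Suppose some $g \in G_0 \setminus \{e\}$ satisfies $\rho(g) = \mathrm{id}$, i.e.\ $(s_j X, p, g) \overset{GH}\to (C_pX, v, \mathrm{id})$. Pick $g_i \in \Gamma_i$ with $(M_i, p_i, g_i) \overset{GH}\to (X, p, g)$. A standard diagonal extraction yields a subsequence $i(j) \to \infty$ such that simultaneously
\begin{align*}
(M_{i(j)}, p_{i(j)}, g_{i(j)}) &\overset{GH}\longrightarrow (X, p, g), \\
(s_j M_{i(j)}, p_{i(j)}, g_{i(j)}) &\overset{GH}\longrightarrow (C_pX, v, \mathrm{id}).
\end{align*}
Applying Proposition \ref{non_van_prop}(2) to this pair of sequences (with scales $1 \le s_j \to \infty$) forces $g = \mathrm{id}$, a contradiction.

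With $\ker\rho \cap G_0 = \{e\}$ established, $\ker\rho$ is a finite subgroup of $G$ meeting $G_0$ trivially, and $\rho|_{G_0}$ is an isomorphism onto its image, so $(G_p)_0 = \rho(G_0) = \mathbb{T}^l$. Moreover
\[
\#\pi_0(G_p) \;=\; \#\pi_0(G/\ker\rho) \;=\; \frac{\#\pi_0(G)}{|\ker\rho|} \;\le\; \#\pi_0(G).
\]
The only delicate point in this plan is the identification $G_p = \rho(G)$, which relies crucially on the compactness of $G$ to upgrade subsequential convergence in $G$ to equivariant convergence on $C_pX$; once this is granted, the scaling $\Phi$-nonvanishing property, fed through Proposition \ref{non_van_prop}(2), cleanly delivers the injectivity of $\rho|_{G_0}$ and the lemma follows.
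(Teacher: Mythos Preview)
Your approach is genuinely different from the paper's and, once patched, actually proves more. The paper does not construct a homomorphism $G\to G_p$. Instead it goes the other way: via Proposition~\ref{dim_cpt} (applied through a diagonal argument on $(s_jM_{i(j)},p_{i(j)},\Gamma_{i(j)})$) it embeds $G_p$ into $G$, then runs a combinatorial argument counting the $2^l-1$ elements of order~$2$ in $\mathbb{T}^l$ and uses Theorem~\ref{main_no_small_subgroup} to force two distinct order-$2$ sequences to collide, which is impossible. Your route via Proposition~\ref{non_van_prop}(2) is cleaner and immediately yields $G_p\cong G$ (which the paper only records as a remark), since your injectivity argument never uses $g\in G_0$.

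There is, however, a real gap in your surjectivity step. You write that if $g_j\to g$ in $G$ then ``compactness forces $h=\rho(g)$,'' but compactness of $G$ alone does not give this: convergence $g_j\to g$ in the Lie group says $D_{1,p}(g_jg^{-1})\to 0$ on $X$, yet after blowing up by $s_j$ the relevant displacement is $s_j\cdot D_{1/s_j,p}(g_jg^{-1})$, which need not vanish a priori. The fix is to invoke the same nonvanishing machinery once more: lift $g_jg^{-1}$ to $\gamma_j\in\Gamma_{i(j)}$ via a diagonal argument so that at scale~$1$ one has $\gamma_j\to e$ while at scale $s_j$ one has $\gamma_j\to h\rho(g)^{-1}$; since $G_p$ fixes $v$, Proposition~\ref{non_van_prop}(1) forces $h\rho(g)^{-1}=\mathrm{id}$. (Alternatively, cite the Fukaya--Yamaguchi structure theory for equivariant limits of a fixed compact group, which produces the surjective homomorphism directly.) The well-definedness of $\rho$ itself has the same flavour and is handled by passing to a further subsequence over a countable dense subset of $G$; the statement allows this. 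With these two points addressed, your argument is complete and arguably more transparent than the paper's order-$2$ counting.
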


\begin{proof}
	It is clear that $G_{p}$ fixes $v$. We first prove the case $G=\mathbb{T}^l$. By Proposition \ref{dim_cpt}, we know that $G$ contains a subgroup isomorphic to $G_{p}$. Since $G=\mathbb{T}^l$, $G_{p}$ must contain a subgroup of $\mathbb{T}^l$. We show that $(G_{p})_0=\mathbb{T}^l$, which implies that $G_{p}=\mathbb{T}^l$ with the help of Proposition \ref{dim_cpt}. Suppose that $(G_{p})_0=\mathbb{T}^m$ with $m<l$. Notice that $G=\mathbb{T}^l$ contains exactly $2^l-1$ many non-identity elements of order $2$. From the sequence $\{(s_j{X},{p},G)\}_j$, we obtain $2^l-1$ different sequences of elements with order $2$ in $G$. It is clear that, passing to a subsequence if necessary, their limits are contained in $(G_{p})_0$ and have order $2$. On the other hand, $(G_{p})_0=\mathbb{T}^m$ has $2^m-1$ many non-identity elements of order $2$. Thus there must be two sequences $\{\alpha_{1,j}\}$, $\{\alpha_{2,j}\}$ such that
	$$\alpha_{k,j}\not=e,\ \alpha_{k,j}^2=e \  (k=1,2) ,\ \alpha_{1,j}\not=\alpha_{2,j}$$
	but their limits are the same. Then $\beta_j=\alpha_{1,j}\alpha_{2,j}\not=e$ would converge to $e$. On the other hand, $\beta_j$ has order $2$; thus by Theorem \ref{main_no_small_subgroup}, $D_{1,p}(\beta_j)\ge \delta(n,v)>0$ on $sX$ for all $s\ge 1$, a contradiction.
	
	For the general case, $G$ may have multiple components, that is, $G=\mathbb{T}^l\times F$, where $F$ is a finite group. Apply the same argument above, we see that $(G_{p})_0=\mathbb{T}^l$. Now the result follows from Proposition \ref{dim_cpt}. 	
\end{proof}

\begin{rem}
	In Lemma \ref{dim_fix_tangent}, in fact one can show that $G_p$ is isomorphic to $G$. The current statement is sufficient for our purposes.
\end{rem}

\begin{cor}\label{dim_tangent}
	For $({M}_i,{p}_i,\Gamma_i)\overset{GH}\longrightarrow({X},{p},G)$ with $G_0=\mathbb{R}^k\times\mathbb{T}^l$, and $s_j\to\infty$, passing to a subsequence if necessary, we consider a tangent cone at ${p}$:
	$$(s_j{X},{p},G)\overset{GH}\longrightarrow(C_{p}X,v,G_p).$$
	Then $G_{p}=\mathbb{R}^k\times K$, where $K$-action fixes $v$, $K_0=\mathbb{T}^l$ and $$\#\pi_0(K)\le\#\pi_0(\mathrm{Iso}({p},G)).$$
\end{cor}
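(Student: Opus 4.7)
The plan is to reduce Corollary \ref{dim_tangent} to Lemma \ref{dim_fix_tangent} by isolating the fixing part of $G_0$ from the translating part. Let $H := \mathrm{Iso}(p,G)$ be the isotropy subgroup at $p$. Since $G$ acts by isometries on the proper metric space $X$, $H$ is compact, and its identity component $H_0 \subseteq G_0$ is a compact connected abelian Lie group (hence a torus) contained in the maximal compact factor $\mathbb{T}^l$ of $G_0 \cong \mathbb{R}^k\times\mathbb{T}^l$. I read the decomposition $G_0=\mathbb{R}^k\times\mathbb{T}^l$ in the statement in the way that matches the action at $p$, namely $H_0=\mathbb{T}^l$ (equivalently, $\dim(G_0\cdot p)=k$).

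For the fixing part I would apply Lemma \ref{dim_fix_tangent} directly to $H$. Passing to a common subsequence, $(s_jX,p,H)\overset{GH}{\longrightarrow}(C_pX,v,K)$ with $K$ fixing $v$, $K_0=H_0=\mathbb{T}^l$, and $\#\pi_0(K)\le\#\pi_0(H)=\#\pi_0(\mathrm{Iso}(p,G))$. This already produces the compact factor $K$ that appears in the desired splitting.

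For the translating part I would adapt the proof of Lemma \ref{dim_free_tangent}. Choose one-parameter subgroups $\{\exp(tv_j)\}_{j=1}^k\subset G_0$ transverse to $H_0$ whose orbits through $p$ are nondegenerate. After rescaling by $s_j\to\infty$ and reparametrising the parameters, each such subgroup converges to an $\mathbb{R}$-action of translations on $C_pX$, giving $k$ commuting $\mathbb{R}$-factors in $(G_p)_0$ transverse to $K$. The no-$\eta$-subgroup property of one-parameter subsets (Lemma \ref{dim_free_untwisted}) together with the equivariant gap Lemma \ref{dim_free_gap} prevents these $\mathbb{R}$-factors from wrapping up into an $\mathbb{R}^k\times\mathbb{Z}$ closed subgroup or acquiring spurious compact factors in the limit. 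Combining the two steps, $(G_p)_0=\mathbb{R}^k\times K_0=\mathbb{R}^k\times\mathbb{T}^l$, and $G_p=\mathbb{R}^k\times K$ with $K$ the full subgroup of $G_p$ fixing $v$.

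The main obstacle is the translating step: Lemma \ref{dim_free_tangent} is stated for the case when $G$ acts freely at $p$, whereas here $G$ has nontrivial isotropy $H_0=\mathbb{T}^l$. One must work on the orbit $G_0\cdot p\cong \mathbb{R}^k$ and argue that the induced action of $G_0/H_0\cong\mathbb{R}^k$ on this orbit fits the free-action framework at $p$. Once this reduction is carried out, the scaling $\Phi$-nonvanishing hypothesis (via Proposition \ref{non_van_prop}(3)) supplies the no-small-almost-subgroup control needed to rule out a $\mathbb{Z}$-factor appearing in the tangent-cone limit of the translating $\mathbb{R}^k$, exactly as in the proof of Proposition \ref{dim_free}(1); the component bound $\#\pi_0(K)\le\#\pi_0(\mathrm{Iso}(p,G))$ is then inherited from Lemma \ref{dim_fix_tangent}.
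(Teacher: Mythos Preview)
Your proposal is correct and follows essentially the same route as the paper: set $K$ to be the limit of $\mathrm{Iso}(p,G)$ and invoke Lemma~\ref{dim_fix_tangent} for its properties, then handle the $\mathbb{R}^k$-factor via (the argument of) Lemma~\ref{dim_free_tangent}. The paper's proof is terser---it cites the two lemmas and then closes with the algebraic check $K\cap\mathbb{R}^k=\{e\}$ and $\mathbb{R}^k\cdot K=G_p$ to obtain the splitting---while you are more explicit about the obstacle that Lemma~\ref{dim_free_tangent} is stated under a free-action hypothesis; your reduction to the orbit $G_0\cdot p\cong\mathbb{R}^k$ is exactly the right way to see that its proof goes through unchanged, since that argument only uses distances $d(z\cdot p,p)$ for $z\in\mathbb{R}^k$ and the $\mathbb{T}^l$-factor fixes $p$.
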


\begin{proof}
	We put $K$ as the limit of $\mathrm{Iso}({p},G)$ with respect to the sequence
	$$(s_j{X},{p},G)\overset{GH}\longrightarrow(C_{p}{X},v,G_{p}).$$
	With Lemmas \ref{dim_free_tangent} and \ref{dim_fix_tangent}, it remains to check that $G_{p}$ has the splitting $\mathbb{R}^k\times K$. In fact, note that $K\cap\mathbb{R}^k=e$ and $\mathbb{R}^k\cdot K=G_{p}$. Hence the splitting follows.
\end{proof}

\begin{rem}\label{split}
	For a space $(Y,q,H)$, because $H$ is abelian, as long as the orbit $H\cdot q$ is homeomorphic to $\mathbb{R}^k$, we always have the splitting $H=\mathbb{R}^k\times\mathrm{Iso}(q,H)$.
\end{rem}

\subsection{General $G$-action and a triple induction}

We complete the proof of Theorem \ref{dimension} in this section. We make some reductions at first. By Lemma \ref{dim_tangent}, a standard rescaling and diagonal argument, we may pass to a tangent cone of ${X}$ at ${p}$ and assume that $G=\mathbb{R}^k\times\mathrm{Iso}({p},G)$. We will always assume this reduction when proving Theorem \ref{dimension}(1).

For a space $(X,p,G)$ with $G=\mathbb{R}^k\times \mathrm{Iso}(p,G)$, we define $\dim_R(G)=k$ and $\dim_T(G)=\dim(\mathrm{Iso}(p,G))$ as the dimension of $\mathbb{R}$-factors and torus factors in $G$ respectively. We will prove Theorem \ref{dimension} by a triple induction argument on $\dim_T(G)$, $\dim_R(G)$ and $\#\pi_0(G)$. Due to the reduction we made, $\#\pi_0(G)$ equals to the number of connected components of $\mathrm{Iso}(p,G)$. Also note that the case $\dim_T(G)=0$ with $\#G/G_0=1$ is proved as Proposition \ref{dim_free}(1); and the case $\dim_R(G)=\dim_T(G)=0$ follows from Corollary \ref{stable_nss}. When we say such a $G$ in the induction assumptions, we always mean that such a limit group is possible to exist as the limit of $({M}_i^n,{p}_i,\Gamma_i)$ (for example, $\dim_R(G)$ is always no greater than $n$).

When proving each induction, we will also show an extra proposition regarding the extremal case:
\begin{prop}\label{dim_Z}
Under the assumptions of Theorem \ref{dimension}, suppose that\\
(1) $G=\mathbb{R}^k\times K$, where $K=\mathrm{Iso}(p,G)$ (this is the reduction we used);\\
(2) $K'=\mathrm{Iso}(p',G')$ has the same dimension as $K$ and $\#\pi_0(K/K_0)=\#\pi_0(K'/K'_0)$;\\
(3) $G'$ contains $\mathbb{R}^l$ as a closed subgroup.\\
Then $G'=\mathbb{R}^k\times K'$.
\end{prop}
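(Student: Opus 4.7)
The plan is to pin down the full structure of $G'$ in three stages: first identify $G'_0$ as $\mathbb{R}^k\times K'_0$, then rule out that $G'/G'_0$ has more components than $K'/K'_0$ (the ``extra compact'' obstruction), and finally rule out that $G'/G'_0$ contains a $\mathbb{Z}$ not detected by $K'$ (the ``extra $\mathbb{Z}$'' obstruction). Since $G'$ is abelian Lie and $K'_0$ is a torus sitting inside $G'_0$, the abelian Lie structure gives $G'_0=\mathbb{R}^m\times K'_0$ for some $m$; by the induction hypothesis of Theorem \ref{dimension}(1), together with assumption (2), we have $m\le k$, while assumption (3) together with $K'_0\subseteq G'_0$ forces $m\ge k$. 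Hence $G'_0=\mathbb{R}^k\times K'_0$, and it suffices to prove $G'=G'_0\cdot K'$.

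For the extra compact obstruction, I would suppose there is $g'\in G'$ whose image in $G'/(G'_0\cdot K')$ has finite order $n>1$. Writing $(g')^n=v\cdot t$ with $v\in\mathbb{R}^k$ and $t\in K'_0$, I would use divisibility of $\mathbb{R}^k$ to choose $w\in\mathbb{R}^k$ with $nw=v$ and replace $g'$ by $g''=g'w^{-1}$, so that $(g'')^n\in K'_0$ and $g''\notin G'_0\cdot K'$. Then $\overline{\langle g''\rangle}$ is compact abelian, and the subgroup $H'=K'\cdot\overline{\langle g''\rangle}$ is a compact subgroup of $G'$ with $\dim H'=\dim K'$ but $\#\pi_0(H')>\#\pi_0(K')$. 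Applying Proposition \ref{dim_cpt} yields a subgroup $H\subseteq G$ isomorphic to $H'$ and fixing $p$; thus $H\subseteq K=\mathrm{Iso}(p,G)$, forcing $\#\pi_0(K)\ge\#\pi_0(H')>\#\pi_0(K')$, contradicting assumption (2).

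For the extra $\mathbb{Z}$ obstruction, I would suppose instead that $G'/(G'_0\cdot K')$ has an element of infinite order; then $G'$ contains $\mathbb{R}^k\times K'\times\mathbb{Z}$ as a closed subgroup, where the generator of $\mathbb{Z}$ may be taken with displacement at $p'$ arbitrarily small after rescaling by a constant. The plan here is to imitate the critical rescaling argument of Proposition \ref{dim_free}(1): define a set $S_i$ of intermediate scales $s\in[1,r_i]$ for which $(s M_i,p_i,\Gamma_i)$ is equivariantly $\delta/3$-close to some $(Y,q,H)$ satisfying (C1) $H\supseteq\mathbb{R}^k\times K'\times\mathbb{Z}$, (C2) the generator of the $\mathbb{Z}$ has small displacement. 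Choose $s_i$ near $\inf S_i$, pass to a convergent subsequence $(s_i M_i,p_i,\Gamma_i)\to(Y_\infty,q_\infty,H_\infty)$, and examine $H_\infty$. Using the reductions of Section 3.1 (Lemma \ref{dim_free_tangent} together with the tangent-cone passage of Corollary \ref{dim_tangent}), quotienting by $K'_0$ and restricting to the orbit of the $\mathbb{R}^k\times\mathbb{Z}$ part reduces the setup to exactly the free-action gap situation governed by Lemma \ref{dim_free_gap}, with the no $\eta$-subgroup of one-parameter property supplied by Lemma \ref{dim_free_untwisted} applied to the quotient. This yields $s_i\to\infty$ and $r_i/s_i\to\infty$, and then forces $H_\infty$ to contain yet another $\mathbb{Z}$, producing a scale below $\inf S_i$ still inside $S_i$, which is the desired contradiction.

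The main obstacle I expect is the last step: transferring the critical scale argument from Proposition \ref{dim_free}(1), which is set in the free-action case with no isotropy and no torus factor, to the present setting where one must track the isotropy $K'$ and the torus part $K'_0$ simultaneously. The cleanest route seems to be to quotient out by $K'_0$ at the space level (forming $\widetilde{X}'/K'_0$) and work on the quotient, where $\mathbb{R}^k\times\mathbb{Z}$ acts essentially freely on the orbit through $p'$; one then needs to verify that the no small almost subgroup property of Proposition \ref{non_van_prop}(3), which is the input for both Lemma \ref{dim_free_untwisted} and Lemma \ref{dim_free_gap}, survives passage to the quotient under the assumption (2) that the torus parts of isotropy match. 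Once this is set up the remaining steps mirror Proposition \ref{dim_free}(1) and Proposition \ref{dim_free_Z} line by line.
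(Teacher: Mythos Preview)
Your three-stage plan matches the paper's architecture: identify $G'_0$, kill extra compact pieces via Proposition \ref{dim_cpt}, then kill an extra $\mathbb{Z}$ via a critical rescaling. The first two stages are fine and are essentially what the paper does (note that in the paper Proposition \ref{dim_Z} is not proved after Theorem \ref{dimension}(1) but \emph{simultaneously} with it inside the triple induction, so ``the induction hypothesis of Theorem \ref{dimension}(1)'' is exactly right, but you should be aware that this is the framing).

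The gap is in your third stage, and you have correctly located it yourself. Your plan is to quotient by $K'_0$ and then invoke Lemma \ref{dim_free_untwisted} on the quotient to get the no-$\eta$-subgroup input for Lemma \ref{dim_free_gap}. But Lemma \ref{dim_free_untwisted} is a statement about a convergent sequence $(M_i,p_i,\Gamma_i)$ satisfying Proposition \ref{non_van_prop}(3); to apply it ``on the quotient'' you would need a sequence $(M_i/H_i,\bar p_i,\Gamma_i/H_i)$ with $H_i\le\Gamma_i$ a genuine subgroup whose limit is $K'_0$ and such that the induced action is free at $\bar p$. There is in general no such subgroup: $K'_0$ is a torus that arises only in the limit, and the subsets $A_i\subset\Gamma_i$ approximating it are symmetric subsets, not subgroups, so no quotient sequence exists to which Proposition \ref{non_van_prop}(3) or Lemma \ref{dim_free_untwisted} could be transferred. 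Your remark that one must ``verify that the no small almost subgroup property \ldots\ survives passage to the quotient'' is precisely the point, and it does not survive in any obvious way.

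The paper's route avoids quotienting altogether. It proves directly that, under assumption (2) (the torus parts of $\mathrm{Iso}(p,G)$ and $\mathrm{Iso}(p',G')$ have the same dimension $l$), the $\mathbb{R}^k$-subgroup of $G'$ has no $\eta$-subgroup of one-parameter at $p'$: this is Lemma \ref{dim_ind_untwisted}. The proof is not a reduction to the free case but an argument by contradiction: if the $\mathbb{R}^k$-part had an $\eta$-subgroup, one manufactures (via Lemma \ref{rescal_id_twisted} and a pigeonhole on $\pi_0(\mathrm{Iso}(p,G))$) an $(l+1)$-st independent circle inside $\mathrm{Iso}(p,G)$, contradicting $\dim_T(G)=l$. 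With Lemma \ref{dim_ind_untwisted} in hand, the paper restates Lemma \ref{dim_free_gap} as Lemma \ref{dim_gap} (same proof, since only the $\mathbb{R}^k$-orbit at the basepoint is used), and then your $S_i$/critical-rescaling argument goes through exactly as in Proposition \ref{dim_free}(1). So the missing ingredient in your proposal is Lemma \ref{dim_ind_untwisted}; once you have it, your Step 3 works without any quotienting, and you should also drop $K'$ from your condition (C1) and require only $H\supseteq\mathbb{R}^k\times\mathbb{Z}$, as the paper does.
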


Proposition \ref{dim_Z} generalizes Proposition \ref{dim_free_Z}, which is the case $G=\mathbb{R}^k$ with $G'$ containing $\mathbb{R}^k$. Later, Proposition \ref{dim_Z} will be used together with Theorem \ref{dimension} (as Corollaries \ref{dimension_order} and \ref{dimension_cor}) to bound the number of short generators.

We state the triple induction:

\ 

\noindent\textit{Induction on $\# \pi_0(G)$:} Under the reductions, suppose that Theorem \ref{dimension}(1) and Proposition \ref{dim_Z} hold when\\
(1) $G_0=\mathbb{R}^k\times \mathbb{T}^l$ with $\#\pi_0(G)\le m$, or\\
(2) $\dim_T(G)=l$ with $\dim_R(G)<k$, or\\
(3) $\dim_T(G)<l$.\\
Then it holds for $G_0=\mathbb{R}^k\times \mathbb{T}^l$ with $\#\pi_0(G)=m+1$.

\

\noindent\textit{Induction on $\dim_R(G)$:} Under the reductions, suppose that Theorem \ref{dimension}(1) and Proposition \ref{dim_Z} hold when\\
(1) $\dim_T(G)=l$ with $\dim_R(G)\le k$, or\\
(2) $\dim_T(G)<l$.\\
Then it holds for $G=\mathbb{R}^{k+1}\times \mathbb{T}^l$.

\

\noindent\textit{Induction on $\dim_T(G)$:} Under the reductions, suppose that Theorem \ref{dimension}(1) and Proposition \ref{dim_Z} hold for $\dim_T(G)\le l$, then it holds for $G=\mathbb{T}^{l+1}$.

\

Applying these three inductions above repeatedly, we will eventually cover every possible $G$. More precisely, we start with base case $\dim_R(G)=\dim_T(G)=0$ (see proof of Corollary \ref{stable_nss}). Together with Proposition \ref{dim_free}(1), induction on $\dim_R(G)$ and on $\# G/G_0$, we conclude that Theorem \ref{dimension} holds for any $G=\mathbb{R}^k\times F$, where $F$ is a finite group fixing $p$. Then by induction on $\dim_T(G)$, we know it also holds for $G=S^1$. After that, apply inductions on $\dim_R(G)$ and on $\#\pi_0(G)$ again, and we cover the case $G=\mathbb{R}^k\times \mathrm{Iso}(p,G)$ with $\mathrm{Iso}(p,G)_0=S^1$. We continue this process and finish the proof of Theorem \ref{dimension}(1).

All these three induction arguments are similar to the proof of Proposition \ref{dim_free}(1): choose a critical rescaling sequence and rule out every possibility in the corresponding limit. To illustrate this strategy, we consider the case $G=\mathbb{R}\times S^1$ as an example. By Proposition \ref{dim_torus}, we know that $G'$ has no torus of dimension $>1$. We need to rule out the case like $G'=\mathbb{R}^3$. This $G'$ contains $\mathbb{R}^2\times \mathbb{Z}$ as a closed subgroup. For $\delta>0$ small, we consider
\begin{align*}
	S_i:=\{\  1\le s\le r_i\ |\ & d_{GH}((s{M}_i,{p}_i,\Gamma_i),(Y,q,H))\le \delta \text{ for some space } (Y,q,H)\\
	&  \text{ with $H$-action satisfying the following conditions}\\
	& \textit{(C1)}\ H \text{ contains $\mathbb{R}^2\times\mathbb{Z}$ as a closed subgroup },\\
	& \textit{(C2)}\ \text{This $\mathbb{Z}$ subgroup has generator whose displacement}\\
	&\ \ \ \ \ \text{at $q$ is less than $1$.}
\end{align*}

Pick $s_i\in S_i$ with $\inf(S_i)\le s_i \le \inf{S_i}+1/i$. Assume $s_i\to\infty$ and we consider $$(s_i{M}_i,{p}_i,\Gamma_i)\overset{GH}\longrightarrow(Y_\infty,q_\infty,H_\infty).$$ Like step 4 in the proof of Proposition \ref{dim_free}(1), if $H_\infty$ contains $\mathbb{R}^2\times\mathbb{Z}$ as a closed subgroup, then we will obtain a contradiction by scaling $s_i$ down by a constant. One can also apply induction assumptions to rule out the cases like $H_\infty=\mathbb{R}\times F$ or $H_\infty=S^1$. If $H_\infty=\mathbb{R}\times S^1$ but $S^1$-action is free at $q_\infty$, then we can apply the result in free case. The last case we want to eliminate is that $H_\infty=\mathbb{R}\times S^1$ with $S^1$-action fixing $q_\infty$.

Here comes a distinction between general case and free case in Section 3.1: for general limit $G$-action, rescaling limit group $H_\infty$-action may have $\eta$-subgroups at ${p}'$. The observation is that, if $H_\infty$ contains a torus of the same dimension as $\dim_T(G)$ and this torus fixes $p'$, then actions of $\mathbb{R}^k$ subgroups in $G'$ should have no $\eta$-subgroups of one-parameter at ${p}'$ (see Lemma \ref{dim_ind_untwisted} below for the precise statement). With this in hand, then together with an equivariant $GH$-distance gap between $(Y_\infty,q_\infty,H_\infty)$ and the spaces we used to define $S_i$ (see Lemma \ref{dim_gap}), we can rule out the case $H_\infty=\mathbb{R}\times S^1$ when $\delta$ is sufficiently small.

Following this idea, we prove the lemma below.
\begin{lem}\label{dim_ind_untwisted}
	Suppose that $\mathrm{Iso}(p,G)$ has identity component $\mathbb{T}^l$. Further suppose that $\mathrm{Iso}(p',G')$ contains a torus of dimension $l$, that is, $G'_0=\mathbb{R}^{k}\times \mathbb{T}^l$ with $\mathbb{T}^l$ fixing ${p}'$ (Recall that torus factor in $G'_0$ cannot have dimension $>l$ by Lemma \ref{dim_torus}). Then $\mathbb{R}^{k}$-action on $X'$ has no $\eta$-subgroup of one-parameter at ${p}'$.
\end{lem}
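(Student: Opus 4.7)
The plan is a proof by contradiction. Suppose the $\mathbb{R}^k$-action on $X'$ admits a one-parameter $\eta$-subgroup $A'$ at $p'$, where $\eta$ is the constant from Proposition \ref{non_van_prop}(3). I will construct symmetric subsets $\tilde A_i\subseteq\Gamma_i$ which on the unrescaled $(M_i,p_i)$ form an $\eta$-subgroup at $p_i$ with $D_{1,p_i}(\tilde A_i)_{M_i}\to 0$, contradicting Proposition \ref{non_van_prop}(3). By Lemma \ref{1-para_sym_subset_form} applied to the $1$-dimensional subgroup of $\mathbb{R}^k$ containing $A'$, I may assume $A'$ is of Form II, i.e. $A'=\{\exp(tv)\ |\ t\in[-1,1]\}$ for some $v$ in the Lie algebra of the $\mathbb{R}^k$-factor of $G'_0$.

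Fix $N_i\to\infty$ (slowly) and choose $a_i\in\Gamma_i$ approximating $\exp(v/N_i)\in G'$ on $(r_iM_i,p_i)$, so that $A_i:=\{a_i^j:|j|\le N_i\}$ satisfies $(r_iM_i,p_i,A_i)\overset{GH}\longrightarrow(X',p',A')$. Since $\mathrm{diam}(A'p')<\infty$, we have $\mathrm{diam}(A_ip_i)_{M_i}\to 0$; in particular every subsequential $(M_i,p_i)$-limit of $a_i^j$ lies in $\mathrm{Iso}(p,G)$. An Arzel\`a--Ascoli/diagonal argument then produces a continuous path $\beta:[-1,1]\to\mathrm{Iso}(p,G)$ with $\beta(0)=e$ and $\beta(t)=\lim_i a_i^{\lfloor tN_i\rfloor}$. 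Because $\mathrm{Iso}(p,G)_0=\mathbb{T}^l$, the path must stay in $\mathbb{T}^l$.

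Next I exploit the common torus. By Lemmas \ref{dim_torus} and \ref{dim_fix_tangent}, the $\mathbb{T}^l\subseteq G$ fixing $p$ and the $\mathbb{T}^l\subseteq G'$ fixing $p'$ are realized simultaneously as the limit of a single sequence of subgroups $\langle\gamma_{i,1},\dots,\gamma_{i,l}\rangle\subseteq\Gamma_i$. For each $j$ select $\alpha_{i,j}\in\langle\gamma_{i,1},\dots,\gamma_{i,l}\rangle$ whose $(M_i,p_i)$-action converges to $\beta(j/N_i)$; because the corresponding limit in $G'$ fixes $p'$, we automatically get $d_{r_iM_i}(\alpha_{i,j}p_i,p_i)\to 0$. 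Set $\tilde a_{i,j}:=a_i^j\alpha_{i,j}^{-1}$ and $\tilde A_i:=\{\tilde a_{i,j}:|j|\le N_i\}$. Then on $(M_i,p_i)$ the $M_i$-limits of $a_i^j$ and $\alpha_{i,j}$ coincide, so $\tilde A_i$ converges to $\{e\}$ as an action set and $D_{1,p_i}(\tilde A_i)_{M_i}\to 0$; on $(r_iM_i,p_i)$ the corrections fix $p_i$ in the limit, so $\tilde A_ip_i\to A'p'$ and $\tilde A_i^2p_i\to (A')^2p'$. By scale invariance of the ratio $d_H/\mathrm{diam}$, the sets $\tilde A_i$ are $\eta$-subgroups at $p_i$ on $M_i$ for $i$ large, and the displacement vanishes, yielding the desired contradiction.

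The hardest part will be the simultaneous construction of the correction family $\alpha_{i,j}$: one needs it to converge to $\beta(j/N_i)$ on $(M_i,p_i)$ uniformly in $j\in\{-N_i,\ldots,N_i\}$ (which requires that the torus-approximating subgroup $\langle\gamma_{i,1},\dots,\gamma_{i,l}\rangle$ is dense enough in $\mathbb{T}^l$ compared to the rate $1/N_i$), and simultaneously $d_{r_iM_i}(\alpha_{i,j}p_i,p_i)\to 0$ uniformly in $j$ (which uses crucially that $\mathbb{T}^l\subseteq G'$ fixes $p'$, forcing the rescaled orbit at $p_i$ to decay at rate $o(1/r_i)$ and allowing accumulation under products via isometric triangle inequalities). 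The continuity of $\beta$ is also a delicate point: it is precisely here that the Form-II reduction is essential, since in Form I the discrete limits could a priori land in different components of $\mathrm{Iso}(p,G)$ rather than in the identity component $\mathbb{T}^l$.
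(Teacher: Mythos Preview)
Your core idea---correcting the $\mathbb{R}^k$-approximating elements by torus elements so as to manufacture a small almost-subgroup on the unrescaled space---is the same as the paper's, but the two steps you yourself flag as delicate are genuine gaps, and the paper is organized precisely to bypass them.

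The continuity of $\beta$ cannot be obtained from an Arzel\`a--Ascoli argument as you suggest: that would require equicontinuity of $t\mapsto a_i^{\lfloor tN_i\rfloor}$ as \emph{isometries} on $(M_i,p_i)$, and knowing only that the orbit $A_ip_i$ collapses does not give this; nothing prevents the limits from landing in non-identity components of $\mathrm{Iso}(p,G)$. The Form-II reduction does not resolve this. The paper instead fixes a single large integer $m_0$, chooses $f_i\to\tfrac{1}{m_0}g$ on the rescaled side with limit $f\in\mathrm{Iso}(p,G)$ on the unrescaled side, and uses pigeonhole on the finitely many powers $f,f^2,\dots,f^{\#F+1}$ (where $F=\pi_0(\mathrm{Iso}(p,G))$) to force some $f^N$ into $\mathbb{T}^l$---no continuity is invoked. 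Your second gap, the uniform construction of the correction family $\{\alpha_{i,j}\}_{|j|\le N_i}$, is also avoided: the paper corrects only the single element $f^N$ by one $\beta_i\in\prod_j A_{i,j}$ with $\beta_i\to f^N$ before rescaling (whence automatically $\beta_i\to\beta'\in\mathbb{T}^l$ fixing $p'$ after rescaling). Then $z_i=\beta_i^{-1}f_i^N$ satisfies $z_i\to e$ before and $z_i\to z'\neq e$ after rescaling, and since $\beta'$ fixes $p'$ the orbit of $\{e,z',\dots,(z')^{k_N}\}$ at $p'$ coincides with the discretized set $\mathcal{T}_Np'$, which was arranged (via the choice of $m_0$) to still be an $\eta$-subgroup. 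Lemma~\ref{rescal_id_twisted} then yields the contradiction for powers of this \emph{single} element---no uniform family of corrections is needed.
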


\begin{rem}
	In Lemma \ref{dim_ind_untwisted}, $G'$ contains infinitely many subgroups isomorphic to $\mathbb{R}^k$, but their orbits at $p'$ are exactly the same because $\mathbb{T}^l$ fixes $p'$. Thus the condition that $\mathbb{R}^k$-action has no $\eta$-subgroup of one-parameter at $p'$ has no ambiguity.
\end{rem}

One may regard Lemma \ref{dim_ind_untwisted} as a generalization of Lemma \ref{dim_free_untwisted}, where $\mathrm{Iso}({p},G)$ is trivial (also compare with the proof of Lemma \ref{dim_torus}).

\begin{lem}\label{rescal_id_twisted}
	Let $\eta$ be the constant Proposition \ref{non_van_prop}(3) and let $f_i\in\Gamma_i$. Suppose that the following sequences converge ($r_i\to\infty$) $$({M}_i,{p}_i,f_i)\overset{GH}\longrightarrow({X},{p},\mathrm{id})$$ $$(r_i{M}_i,{p}_i,f_i)\overset{GH}\longrightarrow({X}',{p}',f\not= \mathrm{id}).$$ Then the following can NOT happen: for some integer $k$, $A_\infty=\{e,f^{\pm 1},...,f^{\pm k}\}$ satisfies
	$$\dfrac{d_H(A_\infty p',A_\infty^2 p')}{\mathrm{diam}(A_\infty p')}<\eta.$$
\end{lem}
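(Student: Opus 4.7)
The plan is to argue by contradiction, turning the hypothesized small ratio for $A_\infty$ at $p'$ into a small almost subgroup at $p_i$ on the \emph{original} (unrescaled) scale, which is forbidden by the no small almost subgroup property (Proposition \ref{non_van_prop}(3)).

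Suppose that some $k$ yields $A_\infty=\{e,f^{\pm1},\dots,f^{\pm k}\}$ with
$$\frac{d_H(A_\infty p',A_\infty^2 p')}{\mathrm{diam}(A_\infty p')}<\eta.$$
Define the corresponding symmetric subsets $A_i=\{e,f_i^{\pm1},\dots,f_i^{\pm k}\}\subseteq\Gamma_i$. Since for each fixed $j$ the sequence $f_i^j$ converges, as an element of $\Gamma_i$ acting on $r_iM_i$, to $f^j$, we obtain the equivariant convergence
$$(r_iM_i,p_i,A_i)\overset{GH}\longrightarrow(X',p',A_\infty).$$
In particular, $\mathrm{diam}(A_ip_i)$ (measured on $r_iM_i$) converges to $\mathrm{diam}(A_\infty p')$, which is positive because the assumed ratio is well-defined; and $d_H(A_ip_i,A_i^2p_i)/\mathrm{diam}(A_ip_i)$ converges to the same ratio for $A_\infty$, hence is $<\eta$ for all large $i$. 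Because this ratio is scale-invariant, the same bound holds when the distances are measured on $M_i$ itself, and also $\mathrm{diam}(A_ip_i)\in(0,\infty)$ on $M_i$. Moreover $A_i\neq\{e\}$ for large $i$, since $f\neq\mathrm{id}$ forces $f_i\neq e$ for $i$ large.

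Thus for all large $i$, $A_i$ is a genuine $\eta$-subgroup at $p_i$ in the sense of Definition \ref{def_almost_subgp}. By Proposition \ref{non_van_prop}(3) applied at scale $r=1$, we obtain
$$D_{1,p_i}(A_i)\ge\epsilon>0$$
on $M_i$, with $\epsilon=\epsilon(n,v,\Phi)$. On the other hand, the assumption $(M_i,p_i,f_i)\overset{GH}\longrightarrow(X,p,\mathrm{id})$ forces $D_{1,p_i}(f_i^j)\to 0$ on $M_i$ for every fixed $j$, and since $A_i$ has only $2k+1$ elements this gives $D_{1,p_i}(A_i)\to 0$ on $M_i$. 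This contradicts the lower bound $D_{1,p_i}(A_i)\ge\epsilon$, completing the proof.

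The main (and essentially only) point to be careful about is verifying that $A_i$ really qualifies as a $\eta$-subgroup at $p_i$ for large $i$: nontriviality of $A_i$ and positivity of $\mathrm{diam}(A_ip_i)$ both follow from $f\neq\mathrm{id}$ together with positivity of $\mathrm{diam}(A_\infty p')$ (itself guaranteed by well-definedness of the hypothesized ratio), and the ratio bound transfers from the rescaled limit to $M_i$ by scale invariance. Once these points are checked, the contradiction with Proposition \ref{non_van_prop}(3) is immediate.
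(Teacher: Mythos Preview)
Your proof is correct and follows essentially the same approach as the paper: both assume the ratio bound for $A_\infty$, define $A_i=\{e,f_i^{\pm1},\dots,f_i^{\pm k}\}$, observe that $A_i\overset{GH}\to\{e\}$ on $(M_i,p_i)$ while $A_i\overset{GH}\to A_\infty$ on $(r_iM_i,p_i)$, and conclude by contradiction with Proposition \ref{non_van_prop}(3). Your version simply spells out the verifications (scale invariance of the ratio, positivity of $\mathrm{diam}(A_ip_i)$, $D_{1,p_i}(A_i)\to 0$) that the paper leaves implicit.
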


\begin{proof}
	Suppose that there is $A_\infty=\{e,f^{\pm 1},...,f^{\pm k}\}$ of
	$$\dfrac{d_H(A_\infty p',A_\infty^2 p')}{\mathrm{diam}(A_\infty p')}<\eta.$$
	Put $A_i=\{e,f_i^{\pm 1},...,f_i^{\pm k}\}$, then $$({M}_i,{p}_i,A_i)\overset{GH}\longrightarrow({X},{p},\{e\})$$
	and  $$(r_i{M}_i,{p}_i,A_i)\overset{GH}\longrightarrow({X}',{p}',A_\infty).$$ Clearly this contradicts Proposition \ref{non_van_prop}(3).
\end{proof}

\begin{proof}[Proof of Lemma \ref{dim_ind_untwisted}]
	Suppose that $\mathbb{R}^{k}$-action has an $\eta$-subgroup of one-parameter at ${p}'$. We show that $\mathrm{Iso}(p,G)$ contains $\mathbb{T}^{l+1}$, which contradicts the assumption.
	
	We follow the proof of Lemma \ref{dim_torus}. For each circle factor $\mathcal{S}_j$ in $G'$ $(j=1,...,k)$, we can pick $A_{i,j}=\{e,\gamma_{i,j}^{\pm 1},...,\gamma_{i,j}^{\pm k_{i,j}}\}\subset \Gamma_i$ with properties (1)-(3) as in the proof of Lemma \ref{dim_torus}. We also know that $\{A_{\infty,j}\}_{j=1}^l$ contains $l$ independent circles.
	
	Since $\mathbb{R}^{k}$-action has an $\eta$-subgroup of one-parameter at ${p}'$, it contains some one-parameter symmetric subset $\mathcal{T}$ such that
	$$\dfrac{d_H(\mathcal{T}{p}',\mathcal{T}^2{p}')}{\mathrm{diam}(\mathcal{T}{p}')}<\eta.$$
	By Lemma \ref{1-para_sym_subset_form}, we can assume that $\mathcal{T}$ has form \upperRomannumeral{2}. We write $\mathcal{T}$ as $\{tg\ |\ t\in[-1,1]\}$. Put $F:=\pi_0(\mathrm{Iso}({p},G))$, which is a finite group. We choose a large integer $m_0$ such that $\frac{1}{m_0}g$ satisfies the following property: for any integer $N=1,...,\# F+1$, $\mathcal{T}_{N}:=\{e,\frac{N}{m_0}g,\frac{2N}{m_0}g,...,\frac{k_N N}{m_0}g\}$ satisfies
	$$\dfrac{d_H(\mathcal{T}_{N}{p}',(\mathcal{T}_{N})^2{p}')}{\mathrm{diam}(\mathcal{T}_{N}{p}')}<\eta,$$
	where $k_N$ is the largest integer with $k_N N\le m_0$.
	
	Choose $f_i\in \Gamma_i$ with
	$$(r_i{M}_i,{p}_i,f_i)\overset{GH}\longrightarrow({X}',{p}',\frac{1}{m_0}g).$$
	Let $f$ be a limit of $f_i$ before rescaling. It is clear that $f\in\mathrm{Iso}(p,G)$. By Lemma \ref{rescal_id_twisted}, the know that $f^N\not= e$ for all $N=1,...,\#F+1$.
	
	\noindent\textbf{Claim :} For all $N=1,...,\#F+1$, $f^N$ is outside $\mathbb{T}^l$. 
	
	By the proof of Lemma \ref{dim_torus}, we have $\cup_{j=1}^l A_{\infty,j}$ generates $\mathbb{T}^l\subseteq \mathrm{Iso}(p,G)$. Suppose that $f^N\in\mathbb{T}^l$. Then there is $\beta_i=\prod_{j=1}^l \gamma_{i,j}^{p_{i,j}}$ with $|p_{i,j}|\le k_{i,j}$ such that $$({M}_i,{p}_i,\beta_i)\overset{GH}\longrightarrow({X},{p},f^N).$$ After rescaling $r_i$, $$(r_i{M}_i,{p}_i,\beta_i)\overset{GH}\longrightarrow({X}',{p}',\beta')$$ with $\beta'\in\mathbb{T}^{l}\subseteq \mathrm{Iso}(p',G')$. We consider the sequence $z_i=\beta_i^{-1}f_i^N$. It is clear that $z_i\overset{GH}\to e$, while after rescaling $r_i\to\infty$, $z_i\overset{GH}\to z'\not=e$ because $\beta'\in\mathbb{T}^{l}$ and $\frac{N}{m_0}g$ is in some closed $\mathbb{R}$ subgroup. Put $C=\{e,z^{\pm 1},...,z^{\pm k_N}\}$. Since $\mathbb{T}^{l}$-action fixes ${p}'$, the orbit $C{p}'$ is identically the same as $\mathcal{T}_{N}{p}'$. Apply Lemma \ref{rescal_id_twisted} and we obtain the desired contradiction. This proves the claim.
	
	Since all these $f^N$ $(N=1,...,\#F+1)$ lie in $\mathrm{Iso}({p},G)$, which consists of exactly $\#F$ connected components, there must be some $N$ such that $f^N$ lies inside the identity component $\mathbb{T}^l$, a contradiction to the claim we just showed.
\end{proof}

Besides Lemma \ref{dim_ind_untwisted}, another ingredient to prove the general case is an equivariant Gromov-Hausdorff gap like Lemma \ref{dim_free_gap}. Actually here we only need to modify the statement of Lemma \ref{dim_free_gap}, because we only used the properties of $G$-orbit at $q$ in the proof of Lemma \ref{dim_free_gap}.

\begin{lem}\label{dim_gap}
	There exists a constant $\delta(n,\eta)>0$ such that the following holds.
	
	Let $(Y,q,G)$ be a space with $G=\mathbb{R}^k\times \mathrm{Iso}(q,G)$. Suppose that $\mathbb{R}^k$-action on $Y$ has no $\eta$-subgroup of one-parameter at $q$. Let $(Y',q',G')$ be another space with\\
    \textit{(C1)} $G'$ contains $\mathbb{R}^k\times\mathbb{Z}$ as a closed subgroup,\\
    \textit{(C2)} this extra $\mathbb{Z}$ subgroup has generator whose displacement at $q'$ is less than $1$.
	
	Then $$d_{GH}((Y,q,G),(Y',q',G'))>\delta(n,\eta).$$
\end{lem}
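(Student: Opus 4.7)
The plan is to follow the proof of Lemma~\ref{dim_free_gap} essentially verbatim, exploiting the fact that the isotropy factor in $G = \mathbb{R}^k \times \mathrm{Iso}(q,G)$ contributes nothing to the orbit at $q$. Since $\mathrm{Iso}(q,G)$ fixes $q$, the orbit $Gq$ coincides with the orbit of the $\mathbb{R}^k$ factor at $q$, so every displacement computation at $q$ reduces to the pure $\mathbb{R}^k$ case. In particular, the hypothesis that the $\mathbb{R}^k$-action has no $\eta$-subgroup of one-parameter at $q$ allows us to invoke Lemma~\ref{dim_free_gap_lem} directly on elements of $\mathbb{R}^k$.

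Concretely, first I would construct a basis $\{e_1,\dots,e_k\}$ of $\mathbb{R}^k \subseteq G$ by the same inductive quotient procedure as in Lemma~\ref{dim_free_gap}: pick $e_1$ minimal with $d(e_1 q, q) = 1/n$, pass to $Y/\mathbb{R} e_1$ to pick $e_2$, and so on. Exactly as before, the no-$\eta$-subgroup assumption combined with Lemma~\ref{dim_free_gap_lem} gives a constant $r(n,\eta)>0$ such that $d(zq,q)\ge r(n,\eta)$ whenever $z=\sum_j \alpha_j e_j$ with $|\alpha_j|\le 1$ and $|\alpha_m|=1$ for some $m$. This step is identical to the free case because it only uses the orbit structure at $q$.

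Next, assuming for contradiction that $d_{GH}((Y,q,G),(Y',q',G'))\le\delta$ for a small $\delta=\delta(n,\eta)$ to be fixed, I would use the equivariant approximation to produce elements $e'_j \in G'$ which are $\delta$-close to $e_j$ at $q,q'$, and generate the lattice $L' = \langle e'_1,\dots,e'_k\rangle$. Conditions \textit{(C1)(C2)} guarantee an element $w' \in G'$ coming from the extra $\mathbb{Z}$-factor (iterated if necessary) with $d(w'q', L'q') = d(w'q',q') \in (8,10)$; $L'q'$ is a $1$-net in $\mathbb{R}^k q'$ up to $\delta$-error, so $w'$ must lie a definite distance away from $L'q'$. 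Pulling $w'$ back to $w \in \mathbb{R}^k \subseteq G$ that is $\delta$-close to $w'$, and then approximating $w$ by an integer combination $v = \sum \beta_j e_j \in L$ with $d(vq,wq)<1$, the packing/lattice estimate $M := \max_j|\beta_j|\le C_0(n,\eta)$ follows from applying Lemma~\ref{dim_free_gap_lem}(3) to $z = M^{-1}v$, using $d(zq,q)\ge r(n,\eta)$ and $d(Mzq,q)\le 12$. Choosing $\delta$ with $nC_0(n,\eta)\delta < 1/100$, the corresponding $v' = \sum\beta_j e'_j \in L'$ is $1/100$-close to $v$ at $q'$, whence $d(v'q', w'q') < 2$, contradicting $d(w'q', L'q')>6$.

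The main (and only) subtlety will be justifying that the isotropy component $\mathrm{Iso}(q,G)$ does not interfere with the $\delta$-approximation step when lifting $w' \in G'$ back to an element of the $\mathbb{R}^k$-factor: one needs to know that near $q$, the equivariant GH approximation can be chosen so that the $\mathrm{Iso}(q,G)$-part is absorbed (since it moves $q$ by zero). This is automatic from the standard definition of pointed equivariant GH convergence, because all distance comparisons are evaluated at $q$ and $q'$, and the isotropy factor acts trivially there; hence lifts from $G'$ to $G$ are only constrained up to the isotropy kernel, which is harmless for the estimates above. With this remark in place, the proof proceeds exactly as in Lemma~\ref{dim_free_gap}, yielding the same $\delta(n,\eta)$.
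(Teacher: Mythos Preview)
Your proposal is correct and matches the paper's approach exactly. The paper does not give a separate proof of Lemma~\ref{dim_gap}; it simply remarks that ``we only need to modify the statement of Lemma~\ref{dim_free_gap}, because we only used the properties of $G$-orbit at $q$ in the proof of Lemma~\ref{dim_free_gap},'' which is precisely the observation you make and carry out in detail---since $\mathrm{Iso}(q,G)$ fixes $q$, the orbit $Gq$ equals $\mathbb{R}^k q$ and every step of the original argument goes through unchanged.
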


With all these preparations, we start the triple induction described in the beginning of this section. We begin with the easiest one among these three: induction on $\dim_T(G)$. Actually for this one, we do not even need the preparations above.

\begin{proof}[\textit{Proof of Induction on $\dim_T(G)$.}]
	Under the reductions, assuming that the Theorem \ref{dimension}(1) and Proposition \ref{dim_Z} hold when $\dim_T(G)\le l$, we need to verify the case $G=\mathbb{T}^{l+1}$ with $G$ fixing ${p}$. Our goal is the following:\\
	(a) rule out $\dim(G')>l+1$;\\
    (b) if $\mathrm{Iso}(p',G')=\mathbb{T}^{l+1}$, then $G'=\mathbb{T}^{l+1}$.\\ 
	We argue by contradiction, suppose that for some $r_i\to\infty$ and some convergent subsequence
	$$(r_i{M}_i,{p}_i,\Gamma_i)\overset{GH}\longrightarrow({X}',{p}',G'),$$
	we have\\
	(a) $\dim(G')>l+1$, or\\
	(b) $\mathrm{Iso}(p',G')=\mathbb{T}^{l+1}$ is a proper subgroup of $G'$.
	
	For (a), by Lemma \ref{dim_torus}, we know that $G'$ cannot contain a torus of dimension $>l+1$. As a result, if $\dim(G')>l+1$, then $G'$ contains a closed $\mathbb{R}$ subgroup, and thus contains a closed $\mathbb{Z}$ subgroup.
	
	For (b), from Proposition \ref{dim_cpt} we see that any element of $G'$ outside $\mathbb{T}^{l+1}$ has infinite order. We also conclude that $G'$ contains a closed $\mathbb{Z}$ subgroup.
	
	Rescaling $r_i$ down by a constant if necessary, we assume that this $\mathbb{Z}$ subgroup has generator whose displacement at $p'$ is less than $1$. For $\delta=1/10$, we consider the following set of scales for each $i$,
	\begin{align*}
		S_i:=\{\  1\le s\le r_i\ |\ & d_{GH}((s{M}_i,{p}_i,\Gamma_i),(Y,q,H))\le \delta/3 \text{ for some space } (Y,q,H)\\
		&  \text{ satisfying the following conditions}\\
		& \textit{(C1)}\ \text{$H$ contains $\mathbb{Z}$ as a closed subgroup,}\\
		& \textit{(C2)}\ \text{this $\mathbb{Z}$ subgroup has generator whose displacement}\\
		& \ \ \ \ \ \text{\ \ at $q$ is less than $1$.}\}
	\end{align*}
    (see Remark \ref{critical_scales} for explanations on the definition of $S_i$)

    Since $G'$ contains a closed $\mathbb{Z}$ subgroup, we conclude that $r_i\in S_i$ for $i$ large. Pick $s_i\in S_i$ with $\inf(S_i)\le s_i\le \inf(S
	_i)+1/i$.
	
	We show that $s_i\to\infty$. In fact, suppose that $s_i$ subconverges to $s<\infty$, then after passing to a subsequence, we have
	$$(s_i{M}_i,{p}_i,\Gamma_i)\overset{GH}\longrightarrow(s{X},{p},G).$$
	Since $s_i\in S_i$, each $(s_i{M}_i,{p}_i,\Gamma_i)$ is $\delta/3$-close to some space $(Y_i,q_i,H_i)$ with conditions \textit{(C1)(C2)}. $G$ fixes ${p}$ while $H_i$ contains some element $h_i$ moving $q_i$ with displacement less than $1$. Furthermore, by condition \textit{(C1)} the orbit $\langle h_i\rangle q_i$ has infinite diameter. Obviously,  $(Y_i,q_i,H_i)$ cannot be $\delta$ close to $(s{X},{p},G)$. A contradiction.
	
	As Step $2$ in the proof of Proposition \ref{dim_free}, we follow the same argument and conclude that $r_i/s_i\to\infty$.
	
	Now consider the convergent sequence
	$$(s_i{M}_i,{p}_i,\Gamma_i)\overset{GH}\longrightarrow(Y_\infty,q_\infty,H_\infty).$$
	and we make the following observations:\\
	1. If $\mathrm{Iso}(q_\infty,H_\infty)$ has dimension $<l+1$, then we would obtain a contradiction to the induction assumptions by passing to the tangent cone at $q_\infty$ and applying the fact that $r_i/s_i\to\infty$.\\
	2. If $\dim(H_\infty)>l+1$, then $H_\infty$ contains a closed $\mathbb{R}$ subgroup due to Proposition \ref{dim_cpt}. We follow the method used in Step 4 of the proof of Proposition \ref{dim_free} to draw a contradiction. More precisely, we can rescale $s_i$ down by a constant but this smaller rescaling still belongs to $S_i$ for $i$ large, and this leads to a contradiction to our choice of $s_i$.\\
	3. If $H_\infty=\mathbb{T}^l$ fixing $q_\infty$, then we also end in a contradiction. This is because each $(s_i{M}_i,{p}_i,\Gamma_i)$ is $\delta/3$ close to some $(Y_i,q_i,H_i)$, where $H_i$ has some element $h_i$ moving $q_i$ with displacement less than $1$ and $\mathrm{diam}(\langle h_i\rangle q_i)=\infty$. This cannot happen for $\delta=1/10$.
	
	Therefore, the only possible situation left is that, $H_\infty$ contains $(H_\infty)_0=\mathbb{T}^{l+1}$ as a proper subgroup with $\mathbb{T}^{l+1}$-action fixing $q_\infty$. By Proposition \ref{dim_cpt}, $H_\infty$ does not contain any element of finite order outside $(H_\infty)_0$. Thus $H_\infty$ contain a closed $\mathbb{Z}$ subgroup, then we can rule out this case as we did in observation 2 above.
	
	We have ruled out every possibility of $(Y_\infty,q_\infty,H_\infty)$. This completes the proof.
\end{proof}

\begin{rem}\label{critical_scales}
	When defining $S_i$ in the proof above, we only require that $(Y,q,H)$ contains some $\mathbb{Z}$ subgroup moving $q$ (but not too far). So logically, if $G'=\mathbb{R}$, which may happen, then such $S_i$ is still nonempty and we can still pick $s_i$ close to $\inf(S_i)$. However, in this case, we will not find any contradiction. Inspecting the proof above, we used the hypothesis that $G'$ has something extra compared with $G$ to rule out every possibility of $(Y_\infty,q_\infty.H_\infty)$ (for example, in observation 1, we applied the induction assumption).
\end{rem}

Next we prove induction on $\dim_R(G)$.

\begin{proof}[\textit{Proof of Induction on $\dim_R(G)$.}]
	Under the reductions, assuming that Theorem \ref{dimension} and Proposition \ref{dim_Z} hold when\\
	(1) $\dim_T(G)=l$ with $\dim_R(G)\le k$, or\\
	(2) $\dim_T(G)<l$,\\
	we need to show that when $G=\mathbb{R}^{k+1}\times\mathbb{T}^l$ with $\mathbb{T}^l$ fixing $p$, for any rescaling sequence $r_i\to\infty$ and any convergent subsequence
	$$(r_i{M}_i,{p}_i,\Gamma_i)\overset{GH}\longrightarrow({X}',{p}',G'),$$
	we have the following:\\
	(a) $\dim(G')\le (k+1)+l$;\\
	(b) if $G'$ contains $\mathbb{R}^{k+1}\times \mathbb{T}^l$ with $\mathbb{T}^l$ fixing $p'$, then $G'=\mathbb{R}^{k+1}\times \mathbb{T}^l$.
		
	We argue by contradiction. Suppose that there is a rescaling sequence $r_i\to\infty$ such that the corresponding limit group $G'$ has\\
	(a) dimension $>(k+1)+l$, or\\
	(b) $\mathbb{R}^{k+1}\times \mathbb{T}^l$ being a proper subgroup of $G'$, where $\mathbb{T}^l$ fixing $p'$.
	
	For (a), by Proposition \ref{dim_cpt}, we know that $G'$ has no torus factor of dimension $>l$, thus it must contain $\mathbb{R}^{k+2}$ as a closed subgroup. In particular, $G'$ contains a closed subgroup $\mathbb{R}^{k+1}\times\mathbb{Z}$.
	
	For (b), again by Proposition \ref{dim_cpt}, we see that any element of $G'$ outside $\mathbb{R}^{k+1}\times \mathbb{T}^l$ must has infinite order. Thus $G'$ also has a closed subgroup as $\mathbb{R}^{k+1}\times\mathbb{Z}$.
	
	Rescaling $r_i$ down by a constant if necessary, we assume that the extra $\mathbb{Z}$ subgroup has generator whose displacement at $p'$ is less than $1$. Let $\delta=\delta(n,\eta)>0$ be the constant in Lemma \ref{dim_gap}. We consider
	\begin{align*}
		S_i:=\{\  1\le s\le r_i\ |\ & d_{GH}((s{M}_i,{p}_i,\Gamma_i),(Y,q,H))\le \delta/3 \text{ for some space } (Y,q,H)\\
		&  \text{ satisfying the following conditions}\\
		& \textit{(C1)}\ H \text{ contains $\mathbb{R}^{k+1}\times\mathbb{Z}$ as a closed subgroup},\\
		& \textit{(C2)}\ \text{this extra $\mathbb{Z}$ subgroup of $H$ has generator whose }\\
		&\ \ \ \ \text{\ \ \ displacement at $q$ is less than $1$.}\}
	\end{align*}
	We know that $r_i\in S_i$ for $i$ large. Pick $s_i\in S_i$ such that $\inf(S_i)\le s_i\le\inf(S_i)+1/i$.
	
	We show that $s_i\to\infty$. Suppose that $s_i$ sub-converges to $s<\infty$, then
	$$(s_i{M}_i,{p}_i,\Gamma_i)\overset{GH}\longrightarrow(s{X},{p},G).$$
	For $i$ large, since $s_i\in S_i$, there is some space $(Y_i,q_i,H_i)$ with conditions \textit{(C1)(C2)} above and
	$$d_{GH}((s{X},{p},G),(Y_i,q_i,H_i))\le\delta/2.$$
	Recall that by the reductions at the beginning of this section and Lemma \ref{dim_ind_untwisted}, $\mathbb{R}^{k+1}$-action has no $\eta$-subgroup of one-parameter at ${p}$ ($\mathbb{R}^{k+1}\subseteq G$). We apply Lemma \ref{dim_gap} and obtain the desired contradiction.
	
	Following the same proof as Step 2 in Proposition \ref{dim_free}, we derive that $r_i/s_i\to\infty$.
	
	We consider
	$$(s_i{M}_i,{p}_i,\Gamma_i)\overset{GH}\longrightarrow(Y_\infty,q_\infty,H_\infty).$$
	If $\dim(H_\infty)>(k+1)+l$, then $H_\infty$ contains $\mathbb{R}^{k+1}\times\mathbb{Z}$. Following Step 4 in the proof of Proposition \ref{dim_free}, we will get a contradiction by rescaling down $s_i$ by a constant. Thus we must have $\dim(H_\infty)\le (k+1)+l$. If $\dim(H_\infty)<(k+1)+l$, or $\dim(H_\infty)=(k+1)+l$ but $\mathrm{Iso}(q_\infty,H_\infty)$ has dimension $<l$, then we consider
	$$(s_i{M}_i,{p}_i,\Gamma_i)\overset{GH}\longrightarrow(Y_\infty,q_\infty,H_\infty)$$
	and its rescaling sequence ($r_i/s_i\to\infty$)
	$$(r_i{M}_i,{p}_i,\Gamma_i)\overset{GH}\longrightarrow({X}',{p}',G').$$
	Apply the induction assumptions, we rule out such cases. 
	
	The only remaining case is $(H_\infty)_0=\mathbb{R}^{k+1}\times \mathbb{T}^l $ with $\mathbb{T}^l$-action fixing $q_\infty$. By Lemma \ref{dim_ind_untwisted}, $\mathbb{R}^{k+1}$-action has no $\eta$-subgroup of one-parameter at $q_\infty$. If $H_\infty$ is connected, we apply Lemma \ref{dim_gap} once again and end in a contradiction. If $H_\infty$ has finitely many components, then the contradiction arises from Proposition \ref{dim_cpt}. If $H_\infty$ has infinitely many components, then again by Proposition \ref{dim_cpt}, $H_\infty$ contains $\mathbb{R}^{k+1}\times\mathbb{Z}$ as a closed subgroup, which would contradict our choice of $s_i$.
\end{proof}

We finish the proof of Theorem \ref{dimension}(1) by verifying the last induction on $\#\pi_0(G)$.

\begin{proof}[\textit{Proof of Induction on $\#\pi_0(G)$.}]
	Under the reductions, assume that Theorem \ref{dimension}(1) and Proposition \ref{dim_Z} hold when\\
	(1) $G_0=\mathbb{R}^k\times \mathbb{T}^l$ with $\#G/G_0\le m$, or\\
	(2) $\dim_T(G)=l$ with $\dim_R(G)<k$, or\\
	(3) $\dim_T(G)<l$.\\
	We need to verify the case $G_0=\mathbb{R}^k\times \mathbb{T}^l$ with $\#\pi_0(G)=m+1$. By reductions, we assume that $G=\mathbb{R}^k\times\mathrm{Iso}({p},G)$.
	
	We argue by contradiction. Suppose that for some $r_i\to\infty$,
	$$(r_i{M}_i,{p}_i,\Gamma_i)\overset{GH}\longrightarrow({X}',{p},G')$$
	one of the following happens:\\
	(a) $\dim(G')>k+l$; or\\
	(b) $G'$ contains $\mathbb{R}^k\times K'$ as a proper subgroup, where $K'=\mathrm{Iso}(p',G')$ has dimension $l$ and number of components as $m+1$.
	
	For (a), $G'$ contains $\mathbb{R}^{k+1}$ as a closed subgroup by Lemma \ref{dim_torus}. Thus it contains $\mathbb{R}^k\times\mathbb{Z}$ as a closed subgroup.
	
	For (b), by Proposition \ref{dim_cpt}, any element of $G'$ outside $\mathbb{R}^k\times K'$ has infinite order. Hence $G'$ contains a closed subgroup $\mathbb{R}^k\times\mathbb{Z}$ as well.
	
	As we did before, we can further assume that the extra $\mathbb{Z}$ subgroup in $G'$ has generator whose displacement at $p'$ is less than $1$. Let $\delta(n,\eta)>0$ be the constant in Lemma \ref{dim_gap}. We consider
	\begin{align*}
		S_i:=\{\  1\le s\le r_i\ |\ & d_{GH}((s{M}_i,{p}_i,\Gamma_i),(Y,q,H))\le \delta/3 \text{ for some space } (Y,q,H)\\
		&  \text{ satisfying the following conditions}\\
		& \textit{(C1)}\ H \text{ contains $\mathbb{R}^{k}\times\mathbb{Z}$ as a closed subgroup},\\
		& \textit{(C2)}\ \text{this extra $\mathbb{Z}$ subgroup of $H$ has generator whose }\\
		&\ \ \ \ \text{\ \ \ displacement at $q$ is less than $1$.}\}
	\end{align*}
	$S_i$ is not empty because $r_i\in S_i$ for $i$ large. We pick $s_i\in S_i$ with $$\inf(S_i)\le s_i\le\inf(S_i)+1/i.$$
	
    By Lemma \ref{dim_gap} and the same argument we applied before,  we conclude that $s_i\to\infty$. By our choice of $s_i$, we also have $r_i/s_i\to\infty$.
	
	We consider
	$$(s_i{M}_i,{p}_i,\Gamma_i)\overset{GH}\longrightarrow(Y_\infty,q_\infty,H_\infty).$$
	If $\dim(H_\infty)>k+l$, then it contains $\mathbb{R}^k\times\mathbb{Z}$ as a closed subgroup, and we get a contradiction by scaling down $s_i$ by a constant. If $\dim(H_\infty)<k+l$, or $\dim(H_\infty)=k+l$ but $\mathrm{Iso}(q_\infty,H_\infty)$ has dimension $<l$, or $\dim(H_\infty)=k+l$ with $\dim(\mathrm{Iso}(q_\infty,H_\infty))=l$ but number of connected components of $\mathrm{Iso}(q_\infty,H_\infty)$ being less than $m+1$, then we consider
	$$(s_i{M}_i,{p}_i,\Gamma_i)\overset{GH}\longrightarrow(Y_\infty,q_\infty,H_\infty)$$
	and its rescaling sequence ($r_i/s_i\to\infty$)
	$$(r_i{M}_i,{p}_i,\Gamma_i)\overset{GH}\longrightarrow({X}',{p}',G').$$
	Apply the induction assumptions and passing to a tangent cone at $p'$, we rule out these cases.
	
	The only remaining case is $(H_\infty)_0=\mathbb{R}^{k}\times \mathbb{T}^l$ with $\mathbb{T}^l$ fixing $q_\infty$ and $\mathrm{Iso}(q_\infty,H_\infty)$ having at least $m+1$ many components. According to Proposition \ref{dim_cpt}, $\mathrm{Iso}(q_\infty,H_\infty)$ has exactly $m+1$ many components. If $\#\pi_0(H_\infty)$ is finite, then by Proposition \ref{dim_cpt} again, $\#\pi_0(H_\infty)=m+1$ and $H_\infty=\mathbb{R}^k\times\mathrm{Iso}(q_\infty,H_\infty)$. Apply Lemmas \ref{dim_ind_untwisted} and \ref{dim_gap} here, we result in a desired contradiction. If $\#\pi_0(H_\infty)=\infty$, then $H_\infty$ contains a closed subgroup $\mathbb{R}^k\times\mathbb{Z}$, and we can scale down $s_i$ by a suitable constant to rule out this case.
\end{proof}

We proof some corollaries to end this section, which will be used in Section 4 to bound the number of short generators.

In the triple induction proof, recall that for a space $(X,p,G)$ with $G=\mathbb{R}^k\times \mathrm{Iso}(p,G)$, we have defined $\dim_R(G)=k$ and $\dim_T(G)=\dim(\mathrm{Iso}(p,G))$. One can regard the tuple $(\dim_T(G),\dim_R(G),\#\pi_0(G))$ as an order on the set of these spaces. We introduce a similar notion for general group actions.

\begin{defn}\label{defn_order}
	Let $(X,p,G)$ be a space. We denote $\overline{G}$ as the subgroup generated by $G_0$ and $\mathrm{Iso}(p,G)$. We define $\dim_T(\overline{G})=\dim(\mathrm{Iso}(p,G))$ and $\dim_R(\overline{G})=\dim(G)-\dim_T(\overline{G})$.
\end{defn}

\begin{defn}\label{def_order}
Let $(Y_1,q_1,H_1)$ and $(Y_2,q_2,H_2)$ be two spaces. We say that $$(Y_1,q_1,H_1)\lesssim(Y_2,q_2,H_2),$$ if one of the following holds:\\
(1) $\dim_T (\overline{H}_1)\le\dim_T(\overline{H}_2)$;\\
(2) $\dim_T (\overline{H}_1)=\dim_T(\overline{H}_2)$, $\dim_R (\overline{H}_1)\le\dim_R(\overline{H}_2)$;\\
(3) $\dim_T (\overline{H}_1)=\dim_T(\overline{H}_2)$, $\dim_R (\overline{H}_1)=\dim_R(\overline{H}_2)$, $\#\pi_0(\overline{H}_1)\le\#\pi_0(\overline{H}_2)$.

We say that $$(Y_1,q_1,H_1)\sim (Y_2,q_2,H_2),$$ if $\dim_T (\overline{H}_1)=\dim_T(\overline{H}_2)$, $\dim_R (\overline{H}_1)=\dim_R(\overline{H}_2)$ and $\#\pi_0(\overline{H}_1)=\#\pi_0(\overline{H}_2)$.
\end{defn}

Similarly, we can define $(Y_1,q_1,H_1)<(Y_2,q_2,H_2)$. With respect to this order, the three inductions in the proof of Theorem \ref{dimension}(1) mean that, if Theorem \ref{dimension}(1) holds for all $(X_1,x_1,G_1)$ with $(X_1,x_1,G_1)<(X,x,G)$, then it holds for $(X,x,G)$. With this definition, we derive the following Corollary from Theorem \ref{dimension} and Proposition \ref{dim_Z}:

\begin{cor}\label{dimension_order}
	Let $(M_i,p_i,\Gamma_i)$ be a sequence with the assumptions in Theorem \ref{dimension}. If the following two sequences converge $(r_i\to\infty)$:
	$$({M}_i,{p}_i,\Gamma_i)\overset{GH}\longrightarrow ({X},{p},G),$$
	$$(r_i{M}_i,{p}_i,\Gamma_i)\overset{GH}\longrightarrow ({X}',{p}',G'),$$
	then $(X',p',G')\lesssim (X,p,{G})$. Moreover, if $\sim$ holds, then $G'=\overline{G'}$.
\end{cor}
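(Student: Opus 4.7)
The plan is to combine Theorem \ref{dimension} (both parts) with Proposition \ref{dim_Z}, after first reducing $(X,p,G)$ to the normal form $G=\mathbb{R}^k\times \mathrm{Iso}(p,G)$ used throughout Section 3.3. For this reduction I would invoke Corollary \ref{dim_tangent} together with a standard diagonal extraction to select $t_i\to\infty$ with $r_i/t_i\to\infty$ such that
$$(t_iM_i,p_i,\Gamma_i)\overset{GH}\longrightarrow (C_pX,v,G_p),$$
where $G_p=\mathbb{R}^k\times K_p$, $K_p=\mathrm{Iso}(v,G_p)$, $K_{p,0}=\mathbb{T}^l$, with $k=\dim_R(\overline{G})$, $l=\dim_T(\overline{G})$, and $\#\pi_0(K_p)\le\#\pi_0(\mathrm{Iso}(p,G))$. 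Since $G_p=\overline{G_p}$ is already in reduced form, the three component counts coincide on this side, and one checks coordinate by coordinate that $(C_pX,v,G_p)\lesssim(X,p,G)$. It therefore suffices to prove $(X',p',G')\lesssim(C_pX,v,G_p)$, which I henceforth treat as the starting data.

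Next I would apply Theorem \ref{dimension}(1) to obtain $\dim(G')\le k+l$. The isotropy $\mathrm{Iso}(p',G')$ is closed in $G'$ and compact (the $G'$-action on $X'$ is proper), so Theorem \ref{dimension}(2) embeds it as a subgroup of $G_p$ fixing $v$, hence into $K_p$. This gives $\dim_T(\overline{G'})=\dim\mathrm{Iso}(p',G')\le l$; if strict we are done, and if equal then
$$\dim_R(\overline{G'})=\dim(G')-l\le k=\dim_R(\overline{G})$$
by Theorem \ref{dimension}(1), again yielding $\lesssim$ unless equality again. In the doubly-equal regime, the embedding $\mathrm{Iso}(p',G')\hookrightarrow K_p$ gives $\#\pi_0(\mathrm{Iso}(p',G'))\le\#\pi_0(K_p)$, while $\mathrm{Iso}(p',G')_0\cong\mathbb{T}^l$ and the orbit structure of $G'$ splits it as $\mathbb{R}^k\times\mathrm{Iso}(p',G')$ at the level of $G'_0$ (Remark \ref{split}), so $\#\pi_0(\overline{G'})=\#\pi_0(\mathrm{Iso}(p',G'))\le\#\pi_0(K_p)\le\#\pi_0(\overline{G})$. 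This completes $(X',p',G')\lesssim(X,p,G)$.

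For the ``moreover'' clause, assume $(X',p',G')\sim(X,p,G)$, so all three coordinates agree. Then $\dim(G')=\dim(G_p)=k+l$, $\mathrm{Iso}(p',G')$ matches $K_p$ in both dimension and component count, and $G'$ contains $\mathbb{R}^k$ as a closed subgroup (from the splitting in the previous paragraph). Proposition \ref{dim_Z}, applied to the reduced pair $(C_pX,v,G_p)$ and its rescaling limit $(X',p',G')$, then forces $G'=\mathbb{R}^k\times\mathrm{Iso}(p',G')=\overline{G'}$. The main technical obstacle I anticipate is the component-count bookkeeping: the tangent cone reduction can a priori drop components of the isotropy, and one has to verify that the resulting chain of inequalities still closes back up to $\#\pi_0(\overline{G})$ rather than to the potentially larger $\#\pi_0(\mathrm{Iso}(p,G))$. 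This is exactly where the fact that the reduced form satisfies $G_p=\overline{G_p}$ (and Remark \ref{split} on the $G'$ side) does the decisive work.
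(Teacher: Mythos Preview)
Your approach is essentially the paper's: the first part is deduced from Theorem \ref{dimension} (both statements, to bound $\dim_T$, $\dim_R$, and the component count in turn) and the second from Proposition \ref{dim_Z}, with the tangent-cone reduction via Corollary \ref{dim_tangent} supplying the normal form needed for the latter. The paper's own proof is a two-line sketch invoking exactly these two results; your write-up is the fleshed-out version, and the component-count subtlety you flag is real but is glossed over in the paper as well.
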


\begin{proof}
	The first part follows from Theorem \ref{dimension}.
	
	For the second part, when $(X',p',G')\sim(X,p,G)$, by definition, this means that $\overline{G}$ and $\overline{G'}$ share the same $\dim_T$, $\dim_R$, and $\# \pi_0$. The result $G'=\overline{G'}$ follows from Proposition \ref{dim_Z}.
\end{proof}

\begin{rem}
	Notice that Theorem \ref{dimension} can eliminate $G=S^1$ fixing base point with $G'=\mathbb{R}^2$, while Corollary \ref{dimension_order} cannot. However, Corollary \ref{dimension_order} is sufficient for the argument in next section and streamlines the proof (see proof of Theorem \ref{main_sg} in Section 4).
\end{rem}

\begin{cor}\label{dimension_cor}
	Let $(M_i,p_i,\Gamma_i)$ be a sequence with the assumptions in Theorem \ref{dimension} and $H_i$ be a subgroup of $\Gamma_i$ for each $i$. Suppose that the following two sequences converge $(r_i\to\infty)$:
	$$({M}_i,{p}_i,\Gamma_i,H_i)\overset{GH}\longrightarrow ({X},{p},G,H),$$
	$$(r_i{M}_i,{p}_i,\Gamma_i,H_i)\overset{GH}\longrightarrow ({X}',{p}',G',H')$$
	If $\overline{G}=\overline{H}$ and $H'$ is a proper subgroup of $G'$, then $(X',p',H')<(X,p,G)$.
\end{cor}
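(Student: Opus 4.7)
The plan is to invoke Corollary \ref{dimension_order} twice: once for the full group $\Gamma_i$, and once for the subgroup $H_i$. The second application is legitimate because any subgroup of the abelian group $\Gamma_i$ is abelian, and since the scaling $\Phi$-nonvanishing condition is imposed elementwise in Definition \ref{main_def_nonvanish}, it is inherited by any subgroup (passing to the closure of $H_i$ in $\Gamma_i$ if needed, at worst with a slightly worse positive function, which is harmless for the qualitative conclusion). Hence
$$(X',p',G')\lesssim (X,p,G), \qquad (X',p',H')\lesssim (X,p,H).$$
Because $\overline{G}=\overline{H}$, the triples $(\dim_T,\dim_R,\#\pi_0)$ agree for the two limit groups, so $(X,p,H)\sim (X,p,G)$ and therefore $(X',p',H')\lesssim (X,p,G)$.

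It remains to upgrade $\lesssim$ to strict $<$. I will suppose for contradiction that $(X',p',H')\sim (X,p,G)$ and derive $H'=G'$. First, $(X',p',H')\sim (X,p,H)$, so the moreover part of Corollary \ref{dimension_order} applied to $H_i$ gives $H'=\overline{H'}$. As $H'\subseteq G'$ are closed subgroups of $\mathrm{Isom}(X')$ with nested isotropy at $p'$, one has $\overline{H'}\subseteq \overline{G'}$, and in particular
$$\dim_T(\overline{H'})\le \dim_T(\overline{G'})\le \dim_T(\overline{G})=\dim_T(\overline{H'}),$$
which forces equalities. Combining $\dim H'\le \dim G'$ with
$$\dim_R(\overline{H'})\le \dim_R(\overline{G'})\le \dim_R(\overline{G})=\dim_R(\overline{H'})$$
gives $\dim_R$-equality, so $H'_0=G'_0$, and thus $\overline{H'}_0=\overline{G'}_0$. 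Since $\overline{H'}\subseteq \overline{G'}$ now share the identity component, $\#\pi_0(\overline{H'})\le \#\pi_0(\overline{G'})$, and the sandwich
$$\#\pi_0(\overline{H'})\le \#\pi_0(\overline{G'})\le \#\pi_0(\overline{G})=\#\pi_0(\overline{H'})$$
again yields equality. Hence $(X',p',G')\sim (X,p,G)$, and the moreover part of Corollary \ref{dimension_order} applied to $\Gamma_i$ gives $G'=\overline{G'}$. Then $\overline{H'}\subseteq \overline{G'}$ with the same identity component and same number of components, so $\overline{H'}=\overline{G'}$, and $H'=\overline{H'}=\overline{G'}=G'$, contradicting $H'\subsetneq G'$.

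There is no single hard step; the main obstacle is the bookkeeping of three invariants $(\dim_T,\dim_R,\#\pi_0)$, each squeezed between an upper bound from Corollary \ref{dimension_order} and a lower bound from the inclusion $\overline{H'}\subseteq \overline{G'}$. Once the three sandwich arguments are in place, the two applications of the moreover clause (first to $H_i$, then to $\Gamma_i$) close the loop and force $H'=G'$, contradicting the properness hypothesis.
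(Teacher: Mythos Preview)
Your proof is correct and follows essentially the same approach as the paper's: apply Corollary \ref{dimension_order} to both $\Gamma_i$ and $H_i$, assume $(X',p',H')\sim(X,p,G)$, use the moreover clause to get $H'=\overline{H'}$ and $G'=\overline{G'}$, and derive $H'=G'$. The only difference is packaging: the paper uses the chain $(X',p',H')\lesssim (X',p',G')\lesssim (X,p,G)$ (the first inequality coming directly from $H'\subseteq G'$) to conclude $(X',p',G')\sim(X,p,G)$ in one stroke, whereas you unfold this into three separate sandwich arguments on $\dim_T$, $\dim_R$, and $\#\pi_0$; your parenthetical about passing to closures of $H_i$ is unnecessary, since the scaling $\Phi$-nonvanishing condition is elementwise and inherited verbatim by any subgroup.
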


\begin{proof}
	By the assumption $\overline{G}=\overline{H}$ and Corollary \ref{dimension_order},
	$$(X',p',H')\lesssim (X',p',G')\lesssim (X,p,{G});$$
	$$(X',p',H')\lesssim (X,p,{H})\sim (X,p,{G}).$$
	Suppose that $(X',p',H')\sim(X,p,G)$ happens, then
	$$(X',p',H')\sim(X,p,H),\quad (X',p',G')\sim (X,p,G).$$
	By the second part of Corollary \ref{dimension_order}, we conclude $H'=\overline{H'}$ and $G'=\overline{G'}$. Since $H'$ is a proper subgroup of $G'$, $\overline{H'}$ is proper in $\overline{G'}$. It follows that
	$$(X',p',H')<(X',p',G').$$
	On the other hand,
	$$(X',p',H')\sim (X',p',G')\sim(X,p,{G}),$$
	a contradiction.
\end{proof}

\begin{rem}\label{rem_order}
	Later in Section 4, we bound the number of short generators by induction on the order introduced in Definition \ref{def_order}. Notice that for any space $(X,x,G)$, if there is a series of spaces
	$$(X,x,G)>(X_1,x_1,G_1)>(X_2,x_2,G_2)>...>(X_i,x_i,G_i)>...,$$
	then this series must stop at certain $k$, that is, $\overline{G_k}=\{e\}$.
\end{rem}

\section{Finite generation}
We prove Theorems \ref{main_sg} by applying Theorem \ref{dimension}. We mention that one can use Theorem \ref{dimension'} instead of \ref{dimension} to bound the number of short generators with a no small almost subgroup assumption around the base point.

\begin{thm}\label{sg_nsas}
	Given $n,R,\epsilon,\eta>0$, there exists a constant $C(n,R,\epsilon,\eta)$ such that the following holds.
	
	Let $(M,p)$ be a complete $n$-manifold with abelian fundamental group and
	$$\mathrm{Ric}\ge -(n-1).$$
	If $\pi_1(M,p)$-action on the Riemannian universal cover $(\widetilde{M},\tilde{p})$ has no $\epsilon$-small $\eta$-subgroup at $q$ with scale $r\in(0,1]$ for all $q\in B_1(\tilde{p})$, then $\#S(p,R)\le C(n,R,\epsilon,\eta)$.
\end{thm}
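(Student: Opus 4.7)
The plan is to adapt the proof sketch of Theorem \ref{main_sg} from the introduction, with the uniform no $\epsilon$-small $\eta$-subgroup assumption on $B_1(\tilde{p})$ replacing volume and scaling nonvanishing, and Theorem \ref{dimension'} in place of Theorem \ref{dimension}. I argue by contradiction: suppose a sequence of complete $n$-manifolds $(M_i, p_i)$ satisfies the hypotheses with $\#S(p_i, R) \to \infty$, and after extracting a subsequence let
\[
(\widetilde{M}_i, \tilde{p}_i, \Gamma_i) \overset{GH}{\longrightarrow} (\widetilde{X}, \tilde{p}, G).
\]
I then induct on the order $(\dim_T(\overline{G}), \dim_R(\overline{G}), \#\pi_0(\overline{G}))$ from Definition \ref{def_order}; Remark \ref{rem_order} guarantees termination.

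For the base case $\overline{G} = \{e\}$, the group $G$ is discrete and acts freely at $\tilde{p}$. Applying the no $\epsilon$-small $\eta$-subgroup condition to any finite subgroup $H$ (taking $A = H$ forces $D_{1,\tilde{p}}(H) \ge \epsilon$) rules out small finite subgroups in $\Gamma_i$, and an argument via truncations $\{e, \gamma_i^{\pm 1}, \ldots, \gamma_i^{\pm k_i}\}$ together with the discreteness of $G$ rules out sequences of single elements $\gamma_i \to e$. The argument of Corollary \ref{stable_nss} then gives $\#\Gamma_i(R) \le \#G(2R) < \infty$ uniformly, contradicting $\#S(p_i, R) \to \infty$.

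For the inductive step, let $\Gamma_{i,m}$ be the subgroup generated by the first $m$ short generators of $\Gamma_i$ at $p_i$ and write $\beta_i = \gamma_{i, m_i + 1}$. If some diagonal sequence $m_i \to \infty$ yields $(\widetilde{M}_i, \tilde{p}_i, \Gamma_{i, m_i}) \overset{GH}{\to} (\widetilde{X}, \tilde{p}, H)$ with strictly smaller order, we are done by induction. Otherwise I may take $m_i \to \infty$ with $(\widetilde{X}, \tilde{p}, H) \sim (\widetilde{X}, \tilde{p}, G)$ and $|\beta_i| \to 0$. Take the intermediate cover $\overline{M}_i = \widetilde{M}_i / \Gamma_{i, m_i}$, rescale by $r_i^{-1}$ with $r_i = \mathrm{diam}(\langle \overline{\beta}_i \rangle \bar{p}_i) \to 0$, and pass to
\[
(r_i^{-1} \widetilde{M}_i, \tilde{p}_i, \Gamma_{i, m_i}, \langle \Gamma_{i, m_i}, \beta_i \rangle) \overset{GH}{\longrightarrow} (\widetilde{X}', \tilde{p}', H', K').
\]
By the analog of Corollary \ref{dimension_cor} obtained from Theorem \ref{dimension'}, whenever $H'$ is properly contained in $K'$ we get $(\widetilde{X}', \tilde{p}', H') < (\widetilde{X}, \tilde{p}, G)$, so $(r_i^{-1} M_i, p_i, \Gamma_{i, m_i})$ is a new contradicting sequence of strictly smaller order. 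The hypothesis is preserved after rescaling since scales $r' \in (0, 1]$ in $r_i^{-1} \widetilde{M}_i$ correspond to scales $r_i r' \in (0, r_i] \subseteq (0, 1]$ in $\widetilde{M}_i$, and it descends to the subgroup $\langle \Gamma_{i, m_i}, \beta_i \rangle \subseteq \Gamma_i$ automatically.

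The main obstacle is the residual case $H' = K'$, in which no immediate strict decrease is available. Here I would invoke Theorem \ref{dimension'}(2) to transplant the nontrivial isotropy of $K'$ at $\tilde{p}'$ into $G$, forcing a strictly larger component count for $\mathrm{Iso}(\tilde{p}, \cdot)$ once $\beta_i$ is adjoined; a subinduction on $\#\pi_0$ of the isotropy then closes the remaining case, exactly along the lines sketched at the end of the introduction. Executing this carefully — in particular keeping track of the isotropy correspondences when $K'$ has multiple components, and ensuring the correct short-generator count is preserved along each dimension reduction — is where the bookkeeping is heaviest.
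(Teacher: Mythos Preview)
Your proposal follows the rough sketch from the introduction rather than the actual proof in Section~4, and this leads you into an obstacle the paper avoids entirely. You adjoin one short generator $\beta_i$ at a time, work with $K_i = \langle \Gamma_{i,m_i}, \beta_i\rangle$, and rescale by $r_i = \mathrm{diam}(\langle \overline{\beta}_i\rangle \bar{p}_i)$; this leaves open the possibility $H' = K'$, which you then propose to handle by a further subinduction on isotropy components that you do not carry out. The paper's actual argument sidesteps this completely: after first reducing to $\Gamma_i = \langle \Gamma_i(\epsilon_i)\rangle$ and passing to a tangent cone so that $H = G$ (not merely $H \sim G$), it rescales by $r_i = \mathrm{diam}(K_i \cdot \bar{p}_i)^{-1}$ with $K_i = \Gamma_i / H_i$ the \emph{full} quotient. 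This choice forces $\mathrm{diam}(\Lambda \cdot \bar{p}') = 1$ in the limit, so $H'$ is automatically a proper subgroup of $G'$, and Corollary~\ref{dimension_cor} (transported to the Theorem~\ref{dimension'} setting) gives the strict order drop $(\widetilde{X}',\tilde{p}',H') < (\widetilde{X},\tilde{p},G)$ directly. No residual case, no secondary isotropy induction.

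You are also missing two reductions that make the argument work. First, without replacing $M_i$ by $\widetilde{M}_i / \langle \Gamma_i(\epsilon_i)\rangle$ you cannot assume all short generators have length $\to 0$, which is needed to get $\mathrm{diam}(K_i \cdot \bar{p}_i) \to 0$. Second, you need the tangent-cone step to upgrade $H \sim G$ to $H = G$ before rescaling; otherwise Corollary~\ref{dimension_cor} does not apply since its hypothesis is $\overline{G} = \overline{H}$. Finally, the claim that the new sequence $r_i^{-1}\overline{M}_i$ still has short generators of length $\le 1$ requires the short computation at the end of the paper's proof (comparing $r_i^{-1}$ to $|\gamma_{i,m_i+1}|$), which you have not included.
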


\begin{thm}\label{milnor_nsas}
	Let $(M,p)$ be an open $n$-manifold with $\mathrm{Ric}\ge 0$. If there are $\epsilon,\eta>0$ such that the $\pi_1(M,p)$-action on the Riemannian universal cover $\widetilde{M}$ has no $\epsilon$-small $\eta$-subgroup at $q$ with scale $r>0$ for all $q\in\widetilde{M}$, then $\pi_1(M)$ is finitely generated.
\end{thm}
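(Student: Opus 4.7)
The plan is to deduce Theorem \ref{milnor_nsas} from Theorem \ref{sg_nsas} via a uniform rescaling argument, paralleling the way Theorem \ref{main_milnor} was obtained from Theorem \ref{main_sg} in the introduction. First I would invoke Wilking's reduction \cite{Wi00} to assume $\pi_1(M,p)$ is abelian. This step requires checking that the no $\epsilon$-small $\eta$-subgroup hypothesis is preserved by whatever construction Wilking's reduction provides; since the condition concerns symmetric subsets of the deck group and their orbits on the universal cover, and Wilking's reduction typically yields a cover or quotient whose deck group sits as a subgroup of the original $\pi_1(M,p)$-action, the hypothesis is inherited, because checking it on a subgroup involves only a smaller family of symmetric subsets.

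Next I would apply the scaling trick. For each $R \ge 1$, consider the rescaled manifold $(R^{-1}M, p)$ whose universal cover is $(R^{-1}\widetilde{M}, \tilde{p})$. Because $\mathrm{Ric}_M \ge 0$, the rescaled manifold still satisfies $\mathrm{Ric} \ge -(n-1)$. The key scale-invariance point is that the two quantities defining the no small almost subgroup condition, namely the ratio $d_H(Aq, A^2q)/\mathrm{diam}(Aq)$ in Definition \ref{def_almost_subgp} and the normalized displacement $r^{-1} D_{r,q}(A)$ in Definition \ref{main_def_nsas}, are both dimensionless. Hence the hypothesis, which we have assumed for every $q \in \widetilde{M}$ and every scale $r > 0$ in $\widetilde{M}$, transfers directly to the same property at every $q \in B_1(\tilde{p})$ with scale $r \in (0,1]$ in $R^{-1}\widetilde{M}$.

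Then Theorem \ref{sg_nsas} applied to $(R^{-1}M, p)$ with parameter $R = 1$ yields the bound $\#S^{R^{-1}M}(p, 1) \le C(n, 1, \epsilon, \eta)$. Since rescaling preserves the ordering of elements of $\pi_1$ by displacement length, it preserves Gromov's short generator procedure set-theoretically; a short generator of $R^{-1}M$-length at most $1$ is exactly a short generator of $M$-length at most $R$. Therefore $\#S^M(p, R) \le C(n, 1, \epsilon, \eta)$ for every $R \ge 1$, a bound independent of $R$. Since $\pi_1(M,p)$ is generated by $\bigcup_{R > 0} S(p, R)$, this forces $\pi_1(M)$ to be finitely generated.

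The main obstacle I anticipate is the first step: verifying that Wilking's reduction interacts cleanly with the no small almost subgroup hypothesis. Beyond that, the argument is essentially the above scaling observation together with a single appeal to Theorem \ref{sg_nsas}; no further equivariant Gromov-Hausdorff analysis is required, since all of the deep work has already been packaged into that theorem.
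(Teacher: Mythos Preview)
Your proposal is correct and follows exactly the approach the paper intends: the paper states Theorems \ref{sg_nsas} and \ref{milnor_nsas} but explicitly says it will only spell out the proofs of Theorems \ref{main_sg} and \ref{main_milnor}, leaving the no-small-almost-subgroup versions as direct analogs. Your argument---Wilking's reduction to the abelian case (with the hypothesis inherited by subgroups of the deck group), scale-invariance of the ratios defining the no-small-almost-subgroup condition, and then a single application of Theorem \ref{sg_nsas} to $R^{-1}M$---is precisely the adaptation of the paper's proof of Theorem \ref{main_milnor}, with the simplification that no volume lower bound needs to be tracked since Theorem \ref{sg_nsas} does not require one.
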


As indicated before, we only focus on Theorems \ref{main_sg} and \ref{main_milnor} in this paper.

\begin{proof}[Proof of Theorems \ref{main_sg}]
	Suppose that there exists a contradicting convergent sequence of $n$-manifolds with $\mathrm{Ric}_{M_i}\ge -(n-1)$ and $\mathrm{vol}(B_1(\tilde{p}_i))\ge v>0$
	\begin{center}
		$\begin{CD}
		(\widetilde{M}_i,\tilde{p}_i,\Gamma_i) @>GH>> (\widetilde{X},\tilde{p},G)\\
		@VV\pi_iV @VV\pi V\\
		(M_i,p_i) @>GH>> (X,p)
		\end{CD}$
	\end{center}
	satisfying the following conditions:\\
	(1) $\Gamma_i$ can be generated by loops of length less than $R$,\\
	(2) $\#S(p_i)\ge 2^i$,\\
	(3) there is a positive function $\Phi$ such that $\Gamma_i$-action is scaling $\Phi$-nonvanishing at $\tilde{p}_i$ for all $i$.
	
	To derive a contradiction, the goal is to show that $\#S(p_i)\le N$ for all $i$ large. We rule out such contradicting sequence above by induction on the order of limit space $(\widetilde{X},\tilde{p},G)$ (see Definition \ref{defn_order} and Remark \ref{rem_order}).
	
	If $G$ is discrete, then by Corollary \ref{stable_nss}, there is $N$ such that $\#\Gamma_i(R)\le N$ for all $i$ large. In particular, $\#S(p_i)$ cannot diverge to infinity, a contradiction.
	
	Assuming that the statement holds for all possible limit spaces $(\widetilde{X}_1,\tilde{x}_1,G_1)$ with $$(\widetilde{X}_1,\tilde{x}_1,G_1)<(\widetilde{X},\tilde{p},G),$$
	we show that it also holds for $(\widetilde{X},\tilde{p},G)$.
	
	Given each $\epsilon>0$, by basic properties of short basis and Bishop-Gromov relative volume comparison, the number of short generators with length between $\epsilon$ and $R$ is bounded by some constant $C(n,R,\epsilon)$. Thus the number of short generators with length less than $\epsilon$ is larger than $2^i-C(n,R,\epsilon)\to\infty$. By a diagonal argument and passing to a subsequence, we can pick $\epsilon_i\to 0$ such that number of short generators with length less than $\epsilon_i$ is larger than $2^i$. Replacing $M_i$ by $\widetilde{M}_i/\langle\Gamma_i(\epsilon_i)\rangle$,we can assume that $\Gamma_i=\langle\Gamma_i(\epsilon_i)\rangle$.
	
	We introduce some notations here. For an integer $m$, we denote $\gamma_{i,m}$ as the $m$-th short generator of $\Gamma_i$. For a sequence $m_i\to\infty$ below, we always assume that $m_i\le \#S(p_i)$. We consider $H_i$ as the subgroup in $\Gamma_i$ generated by first $m_i$ short generators and $H$ as a limit group of $H_i$:
	$$(\widetilde{M}_i,\tilde{p}_i,H_i)\overset{GH}\longrightarrow(\widetilde{X},\tilde{p},H).$$
	
	\textit{Case 1: There is a sequence $m_i\to\infty$ such that $(\widetilde{X},\tilde{p},H)<(\widetilde{X},\tilde{p},G)$.}
	
	If this happens, we replace $M_i$ by $\widetilde{M_i}/\Gamma_{i,m_i}$ and finish the induction step.
	
	\textit{Case 2: For any sequence $m_i\to\infty$, $(\widetilde{X},\tilde{p},H)\sim(\widetilde{X},\tilde{p},G)$.}
	
	We pass to tangent cone of $\widetilde{X}$ at $\tilde{p}$ (see Corollary \ref{dim_tangent}). By a standard diagonal argument, there is some $s_i\to\infty$ slowly such that $\epsilon_is_i\to 0$ and
	$$(s_i\widetilde{M}_i,\tilde{p}_i,\Gamma_i,H_i)\overset{GH}\longrightarrow(C_{\tilde{p}}\widetilde{X},\tilde{o},G_{\tilde{p}},H_{\tilde{p}}).$$
	Without lose of generality, we can assume that $G_{\tilde{p}}=H_{\tilde{p}}$ here. Otherwise, 
	$$(C_{\tilde{p}}\widetilde{X},\tilde{o},H_{\tilde{p}})<(\widetilde{X},\tilde{p},G)$$
	and we can apply the induction assumption to rule out such a sequence. We replace $M_i$ by $s_iM_i$ and continue the proof.
	
	Now we have
	$$(\widetilde{M}_i,\tilde{p}_i,\Gamma_i,H_i)\overset{GH}\longrightarrow (\widetilde{X},\tilde{p},G,H)$$
	with $G=H$. We consider intermediate coverings $\overline{M}_i=\widetilde{M}_i/H_i$ and $K_i=\Gamma_{i}/H_i$
	$$(\overline{M}_i,\bar{p}_i,K_i)\overset{GH}\longrightarrow(\overline{X},\bar{p},\{e\}).$$
	Together with the fact that $K_i$ is generated by elements with length less than $\epsilon_i\to 0$, we have $$\mathrm{diam}(K_i\cdot\bar{p}_i)\to 0.$$
	
	Put $r_i=\mathrm{diam}(K_i\cdot\bar{p}_i)^{-1}\to\infty$. Rescaling the above sequences by $r_i$ and passing to a subsequence, we obtain the following convergent sequences:

	\begin{center}
		$\begin{CD}
		(r_i\widetilde{M_i},\tilde{p}_i,\Gamma_i,H_i) @>GH>> (\widetilde{X}',\tilde{p}',G',H')\\
		@VVV @VVV\\
		(r_i\overline{M}_i,\bar{p}_i,K_i) @>GH>> (\overline{X}',\bar{p}',\Lambda)
		\end{CD}$
	\end{center}
	with $\mathrm{diam}(\Lambda\cdot \bar{p})=1$. In particular, we conclude that $H'$ is a proper subgroup of $G'$. By Corollary \ref{dimension_cor},
	$$(\widetilde{X}',\tilde{p}',H')<(\widetilde{X},\tilde{p},G).$$
	
	\textbf{Claim :} On $\overline{M}$, $\pi_1(\overline{M}_i,\bar{p}_i)$ can be generated by loops of length less than $1$.
	
	Indeed, $r_i|\gamma_{i,m_i}|\le 1$ because
	\begin{align*}
		r_i^{-1}&=\mathrm{diam}(K_i\cdot\bar{p}_i)\\
		&=\sup_{\gamma\in \Gamma_i} d(\gamma H_i\cdot \tilde{p}_i,H_i\cdot \tilde{p}_i)\\
		&\ge d(\gamma_{i,m_i+1} H_i\cdot \tilde{p}_i,H_i\cdot \tilde{p}_i)\\
		&=d(\gamma_{i,m_i+1} t\cdot \tilde{p}_i,\tilde{p}_i)\ (\text{for some } t\in H_i)\\
		&\ge d(\gamma_{i.m_i}\cdot \tilde{p}_i,\tilde{p}_i).
	\end{align*}
	The last inequality follows from the method by which we select short generators.
	
	Now we have the following new contradicting sequence:
	\begin{center}
		$\begin{CD}
		(r_i\widetilde{M}_i,\tilde{p}_i,\Gamma_{i,m_i}) @>GH>> (\widetilde{X}',\tilde{p}',H')\\
		@VVV @VVV\\
		(r_i\overline{M}_i,\bar{p}_i) @>GH>> (\overline{X}',\bar{p}')
		\end{CD}$
	\end{center}
	with $(\widetilde{X}',\tilde{p}',H')<(X,p,G)$. Applying the induction assumption, we can rule out the existence of such a sequence and complete the proof.
\end{proof}

\begin{rem}
    In the proof above, if $\dim_T(\overline{H'})=\dim_T(\overline{G})$ and $\dim_R(\overline{H'})=\dim_R(\overline{G})$, then
    $$(\widetilde{X}',\tilde{p}',H')<(\widetilde{X},\tilde{p},G)$$
    means $\#\pi_0(\mathrm{Iso}(\tilde{p}',H'))<\#\pi_0(\mathrm{Iso}(\tilde{p},G))$. Therefore, when the dimension does not decrease, we actually did an induction on the number of connected components of the isotropy subgroup, as mentioned in the introduction.
\end{rem}

Recall that to prove results on the Milnor conjecture, by \cite{Wi00} it suffices to check abelian fundamental groups.

\begin{thm}\cite{Wi00}\label{Wil_red}
	Let $M$ be an open manifold of $\mathrm{Ric}\ge 0$.
	If $\pi_1(M)$ is not finitely generated, then it contains an abelian subgroup, which is not finitely generated.
\end{thm}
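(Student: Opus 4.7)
The plan is to prove the contrapositive: assume every abelian subgroup of $\Gamma:=\pi_1(M)$ is finitely generated, and deduce that $\Gamma$ itself is finitely generated. Since $\Gamma$ is countable, write $\Gamma=\bigcup_{i\ge 1} H_i$ as an ascending union of finitely generated subgroups; if the chain stabilizes we are done, so assume that strict inclusions $H_i\subsetneq H_{i+1}$ occur infinitely often.

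I would first invoke the growth theory available under $\mathrm{Ric}\ge 0$: by Milnor's theorem each $H_i$ has polynomial growth of degree $\le n$, and by Gromov's polynomial growth theorem each $H_i$ is virtually nilpotent. The generalized Margulis lemma of Kapovitch--Wilking \cite{KW11} then provides a uniform constant $C(n)$ and a nilpotent subgroup $N_i\le H_i$ of index $\le C(n)$ and nilpotency class $\le n$. Choosing $N_i$ in a functorial way (for instance as the Fitting subgroup of $H_i$, or the intersection of all nilpotent subgroups of $H_i$ of index $\le C(n)$) and passing to a subsequence, one arranges $N_i\subseteq N_{i+1}$; because the indices $[H_i:N_i]$ are uniformly bounded, the chain of $N_i$ must still be strictly increasing infinitely often.

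With this setup I would induct on the common bound $c\le n$ for the nilpotency class. The base case $c=1$: each $N_i$ is abelian, so $\bigcup_i N_i$ is an abelian subgroup of $\Gamma$ that is not finitely generated, contradicting the hypothesis. For the inductive step, let $Z_i$ denote the last nontrivial term of the lower central series of $N_i$; it is central in $N_i$, hence abelian. Either $\bigcup_i Z_i$ is already not finitely generated (contradiction), or the $Z_i$ stabilize at some finitely generated central subgroup $Z_\infty$. Quotienting by $Z_\infty$ yields an ascending chain of nilpotent groups of class $\le c-1$ that is still strictly increasing infinitely often, and one applies the inductive hypothesis after verifying that every abelian subgroup of the quotient pulls back to an abelian-by-finitely-generated, hence finitely generated, subgroup of $\Gamma$.

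The main obstacle I anticipate is the coherent selection of the $N_i$: arranging $N_i\subseteq N_{i+1}$ with strictness preserved requires either a genuinely functorial choice of finite-index nilpotent subgroup (so that it behaves well under inclusion of ambient groups) or a more delicate subsequence extraction using the uniform bounds from \cite{KW11}. A secondary subtlety is the bookkeeping in the central-extension step, where one must ensure that failure of finite generation is genuinely propagated through successive quotients by stabilized abelian pieces.
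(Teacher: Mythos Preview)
The paper does not prove this statement; it is quoted from \cite{Wi00} and used as a black box in the proof of Theorem~\ref{main_milnor}. So there is no proof here to compare against, and I can only assess your argument on its own terms.

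Your outline has the right ingredients, but the inductive step contains a genuine error. You claim that an abelian subgroup $A$ of the quotient by $Z_\infty$ ``pulls back to an abelian-by-finitely-generated, hence finitely generated, subgroup of $\Gamma$.'' But the pullback $P$ sits in an exact sequence $1\to Z_\infty\to P\to A\to 1$, so $P$ is (finitely generated)-by-abelian, not abelian-by-(finitely generated), and such an extension need not be finitely generated. What actually closes the induction is the classical group-theoretic lemma that a nilpotent group all of whose abelian subgroups are finitely generated is itself finitely generated (equivalently polycyclic); for class~$2$ one sees this by taking a maximal abelian $M\supseteq Z(P)$, noting $M=C_P(M)$, and observing that conjugation embeds $P/M$ into $\mathrm{Hom}(M/Z(P),Z(P))$, which is finitely generated since both factors are. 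With that lemma the whole argument collapses: the union $N=\bigcup_i N_i$ is nilpotent of class at most $n$ (any $(n{+}1)$-fold commutator already lies in some $N_i$ and hence vanishes), every abelian subgroup of $N$ is finitely generated by hypothesis, so $N$ is finitely generated and the chain of $N_i$ stabilizes. This also sidesteps most of the coherent-selection difficulty you flagged, though you still need $N_i\subseteq N_{i+1}$ together with a uniform bound on $[H_i:N_i]$ to transfer stabilization back to the $H_i$; invoking \cite{KW11} for the latter is anachronistic relative to \cite{Wi00}, and extracting it from Gromov's theorem alone requires some care.
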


Theorem \ref{main_milnor} follows from Theorem \ref{main_sg} by a scaling trick.

\begin{proof}[Proof of Theorem \ref{main_milnor}]
	By Theorem \ref{Wil_red}, we can assume that $\pi_1(M,p)$ is abelian. By assumptions, there is $v>0$ such that
	$$\mathrm{vol}(R^{-1}B_R(\tilde{p}))\ge v>0$$
	for all $R>0$. Let $\{\gamma_1,...,\gamma_i,...\}$ be a set of short generators at $p$. We show that there are at most $C$ many short generators, where $C=C(n,1,v,\Phi)$ is the constant in Theorem \ref{main_sg}. Suppose that there are at least $C+1$ many short generators. We put $R$ as the length of $\gamma_{C+1}$. Then on $(R^{-1}\widetilde{M},\tilde{p})$, $\pi_1(M,p)$-action is scaling $\Phi$-nonvanishing, but there are $C+1$ many short generators of length $\le 1$, which is a contradiction to Theorem \ref{main_sg}.
\end{proof}

\Addresses

\end{document}